\documentclass[11pt,leqno]{article}
\oddsidemargin =0mm
\evensidemargin=0mm
\topmargin =-15mm
\textwidth =160mm
\textheight =240mm
\usepackage{amsmath,amssymb,mathrsfs,amsthm,bm,ddag}
\usepackage{eucal}

\usepackage{tikz}
\usetikzlibrary{intersections, calc, arrows}
\usetikzlibrary{decorations.markings}



\usepackage{xy}
\input{xy}
\xyoption{all}

\usepackage[hypertex]{hyperref}

\usepackage{threeparttable}

\usepackage{xspace}

\usepackage[geometry]{ifsym}

\usepackage{comment}


\makeatletter
\def\@seccntformat#1{\csname the#1\endcsname.\hspace{2ex}}

 \renewcommand{\subsection}%
  {\@startsection{subsection}%
  {2}%
  {\z@}%
  {2ex}
  {0ex}
  {\reset@font\normalsize\bfseries}}%


 \@addtoreset{equation}{subsection}


 \newcommand{\nsection}{\@startsection{section}{1}{\z@}%
     {-5ex}
     {1ex}
     {\reset@font\center\large\sc}}

 \renewenvironment{thebibliography}[1]
 {\nsection*{\refname\@mkboth{\refname}{\refname}}%
   \list{\@biblabel{\@arabic\c@enumiv}}%
   {\settowidth
   \labelwidth{\@biblabel{#1}}%
   \leftmargin
	\labelwidth
        \advance
	 \leftmargin
	 \labelsep
         \@openbib@code
         \usecounter{enumiv}%
         \let\p@enumiv\@empty
	 \parskip=0pt
	 \itemsep=1pt
	 \parsep=1pt
	 \itemindent=\z@
         \renewcommand\theenumiv{\@arabic\c@enumiv}}%
   	 \sloppy
   	 \clubpenalty4000
   	 \@clubpenalty\clubpenalty
   	 \widowpenalty4000%
   	 \footnotesize
   	 \sfcode`\.\@m}
  	 {\def\@noitemerr
    	 {\@latex@warning{Empty `thebibliography' environment}}%
   	 \endlist}

\makeatother



\newtheoremstyle{thm}
 {1em}
 {3pt}
 {\itshape}
 {}
 {\bf}
 {. ---}
 {0.5em}
 {}
\newtheoremstyle{dfn}
 {1em}
 {3pt}
 {}
 {}
 {\bf}
 {. {---}}
 {0.5em}
 {}

\swapnumbers
\theoremstyle{thm}
\newtheorem{thm}[subsection]{Theorem}
\newtheorem{lem}[subsection]{Lemma}
\newtheorem*{lem*}{Lemma}
\newtheorem{cor}[subsection]{Corollary}
\newtheorem*{cor*}{Corollary}
\newtheorem{prop}[subsection]{Proposition}
\newtheorem*{prop*}{Proposition}

\newtheorem*{conj*}{Conjecture}
\newtheorem*{thm*}{Theorem}

\theoremstyle{dfn}
\newtheorem{dfn}[subsection]{Definition}
\newtheorem*{dfn*}{Definition}
\newtheorem{ex}[subsection]{Example}
\newtheorem*{ex*}{Example}

\newtheorem*{rem*}{Remark}

\newcommand{\shom}{\mathop{\mc{H}om}\nolimits}

\usepackage{color}
\newenvironment{meta}{
\noindent \color{red}
\sffamily[}{\upshape]}

\usepackage{framed}

\specialcomment{comme}
{\begingroup\ttfamily\color{blue}}{\endgroup}
\excludecomment{comme}

\newsavebox{\circlebox}
\savebox{\circlebox}{\fontencoding{OMS}\selectfont\char13}
\newlength{\circleboxwdht}
\newcommand{\ccirc}[1]{
  \setlength{\circleboxwdht}{\wd\circlebox}
  \addtolength{\circleboxwdht}{\dp\circlebox}
  \raisebox{0.2\dp\circlebox}{
    \parbox[][\circleboxwdht][c]{\wd\circlebox}
    {\centering\scriptsize #1}}
  \llap{\usebox{\circlebox}}
}



\newcommand{\SH}{\mc{SH}}

\renewcommand{\H}{\mc{H}}
\newcommand{\Cat}{\mc{C}\mr{at}}

\newcommand{\Shv}{\mc{S}\mr{hv}}
\newcommand{\PSh}{\mc{PS}\mr{hv}}

\newcommand{\Spc}{\mc{S}\mr{pc}}
\newcommand{\sSet}{\mc{S}\mr{et}_{\Delta}}
\newcommand{\Cart}{\mc{C}\mr{art}}

\newcommand{\coCart}{\mr{co}\mc{C}\mr{art}}
\newcommand{\Str}{\mc{S}\mr{tr}}
\newcommand{\bp}{{}^{\backprime}}
\newcommand{\Prcat}{\mc{P}\mr{r}}
\newcommand{\PrL}{\mc{P}\mr{r}^{\mr{L}}}
\newcommand{\PrLsym}{\mc{P}\mr{r}^{\mr{L},\otimes}}
\newcommand{\LinCat}{\mc{L}\mr{in}\mc{C}\mr{at}}
\newcommand{\RMod}{\mr{RMod}}
\newcommand{\basech}{*}
\newcommand{\twoLinCat}{\mbf{LinCat}}

\newcommand{\Tri}{\mc{T}\mr{ri}}
\newcommand{\Fin}{\mc{F}\mr{in}_*}
\newcommand{\Mor}{\mr{Mor}}


\DeclareMathSymbol{\mlq}{\mathord}{operators}{``}
\DeclareMathSymbol{\mrq}{\mathord}{operators}{`'}

\newcounter{adjprop}

\begin{document}
\title{Enhanced bivariant homology theory attached to six functor
formalism}
\author{Tomoyuki Abe}
\maketitle

\begin{abstract}
 Bivariant theory is a unified framework for cohomology and Borel-Moore
 homology theories. In this paper, we extract an $\infty$-enhanced
 bivariant homology theory from Gaitsgory-Rozenblyum's six functor
 formalism.
\end{abstract}

\section*{Introduction}
Grothendieck's 6-functor formalism is very powerful in cohomology
theory. At the same time, if we want to axiomatize the formalism, it
requires a long list of relations between these functors, and when we
wish to establish a 6-functor formalism for some cohomology theory, we
need tremendous amount of work to verify these axioms.
After ideas of Lurie, Gaitsgory and Rozenblyum constructed a very
general machinery to construct an $\infty$-enhanced 6-functor formalisms
from minimal amount of data.
To proceed, let us recall the category of correspondences used
by Gaitsgory and Rozenblyum. We fix a base scheme $S$.
The $(\infty,2)$-category $\mbf{Corr}^{\mr{prop}}_{\mr{sep};\mr{all}}$
($\mbf{Corr}$ for short) has the objects the same as the category of
$S$-schemes $\mr{Sch}(S)$ (or its subcategory).
A morphism $F\colon X\rightarrow Y$ is a diagram of the form
\begin{equation*}
 \xymatrix{Z_F\ar[d]_g\ar[r]^-{f}&X\\
  Y,&}
\end{equation*}
where $g$ is separated. Given $1$-morphisms $F,G\colon X\rightarrow Y$,
a $2$-morphism $F\Rightarrow G$ is a diagram
\begin{equation*}
 \xymatrix{
  Z_F\ar[rd]^-{\alpha}\ar@/_10pt/[rdd]\ar@/^10pt/[rrd]&&\\
 &Z_G\ar[r]\ar[d]&X\\
 &Y,&}
\end{equation*}
where $\alpha$ is proper. Let $\mbf{Pres}$ be the $(\infty,2)$-category
of presentable stable $\infty$-categories with colimit commuting maps as
morphisms and natural transformations as $2$-morphisms. Gaitsgory and
Rozenblyum interpret a 6-functor formalism as a $2$-functor
$\mbf{D}\colon\mbf{Corr}\rightarrow\mbf{Pres}$.
In fact, for a correspondence $F$ as above, we have
$\mbf{D}(F)\colon\mbf{D}(X)\rightarrow\mbf{D}(Y)$. This encodes the data
of the functor $g_!f^*$.
The functoriality of $g_!$ with respect to proper morphism is
encoded in $2$-morphisms. Since $\mbf{D}(F)$ is a map in $\mbf{Pres}$,
it admits a right adjoint, which encodes the data of $g^!$, $f_*$.
If we need $\otimes$, $\shom$, we need to consider the $\infty$-category
of commutative algebra objects in the $\infty$-category of presentable
$\infty$-categories, but we do not go into that far in this
introduction.

Even though the data of 6-functor is
encoded very beautifully, it is not straightforward to extract concrete
data. For example, if we wish to extract an {\em $\infty$-functor} of
cohomology theory
\begin{equation*}
 \mc{H}^*\colon\mr{Sch}(S)^{\mr{op}}\rightarrow\mr{Sp},
\end{equation*}
where $\mr{Sp}$ is the $\infty$-category of spectra,
this already does not seem to be straightforward from the definition.
A goal of this paper is to ``decode'' the data from
Gaitsgory-Rozenblyum's 6-functor formalism so that we can handle it more
easily.

Let us go into more precise statement.
In general, when we are given a 6-functor formalism, we can attach 4
kinds of (co)homology theories: cohomology, Borel-Moore homology,
homology, compact support cohomology. First two and the last two
theories possess essentially the same information via duality theory.
Thus, we may focus on the first two theories. Given $f\colon
X\rightarrow S$ in $\mr{Sch}(S)$, cohomology and Borel-Moore homology of
$X$ can be defined by
\begin{equation*}
 \mr{H}^*(X):=\mr{Map}_{D(S)}(\mbf{1}_S,f_*f^*\mbf{1}_S),
  \qquad
  \mr{H}^{\mr{BM}}_*(X):=\mr{Map}_{D(S)}(\mbf{1}_S,f_*f^!\mbf{1}_S).
\end{equation*}
The cohomology theory is contravariant with respect to any morphism, and
Borel-Moore homology is covariant with respect to proper morphism.
Seemingly completely different theories,
Fulton and MacPherson \cite{FM} unified
these two theories into the so called {\em bivariant homology theory}.
Let $g\colon X\rightarrow Y$ be a morphism in $\mr{Sch}(S)$.
Then we define
\begin{equation*}
 \mr{H}(g):=\mr{Map}_{D(Y)}(\mbf{1}_Y,g_*g^!\mbf{1}_Y)
  \simeq
  \mr{Map}_{D(Y)}(g_!\mbf{1}_X,\mbf{1}_{Y}).
\end{equation*}
By definition, we have $\mr{H}^*(X)\simeq\mr{H}(\mr{id}_X)$,
$\mr{H}^{\mr{BM}}_*(X)\simeq\mr{H}(X\rightarrow S)$.
The main result of this paper gives an $\infty$-enhancement of the
bivariant homology.
In order to make this precise, we consider the category of
arrows $\widetilde{\mr{Ar}}$. Namely, the objects consist of
$S$-morphisms $X\rightarrow Y$. For morphisms, we do {\em not} use the
evident one: a morphism from $f'\colon X'\rightarrow Y'$ to $f\colon
X\rightarrow Y$ consists of a diagram of the following form:
\begin{equation*}
 \xymatrix{
  X'\ar[d]&X\times_{Y}Y'\ar[d]\ar@{}[rd]|\square\ar[r]\ar[l]_-{\alpha}
  &X\ar[d]\\
 Y'\ar@{=}[r]&Y'\ar[r]&Y
  }
\end{equation*}
where $\alpha$ is proper. We may check by hand that we have a morphism
$\mr{H}(f)\rightarrow\mr{H}(f')$. It is even not too hard to check that
bivariant homology theory is a (ordinary) functor
$\mr{H}\colon\widetilde{\mr{Ar}}^{\mr{op}}\rightarrow\mr{h}\mr{Sp}$.
Our main result gives an $\infty$-enhancement of this functor.
A simplified version can be written as follows
(cf.\ Theorem \ref{mainthmcons} for the detail):

\begin{thm*}
 Given a 6-functor formalism $\mbf{Corr}\rightarrow\mbf{Pres}$,
 there exists an $\infty$-functor
 \begin{equation*}
  \mc{H}\colon\widetilde{\mr{Ar}}^{\mr{op}}\rightarrow\mr{Sp}
 \end{equation*}
 such that $\mr{h}\mc{H}\simeq\mr{H}$ as a functor
 $\widetilde{\mr{Ar}}\rightarrow\mr{h}\mr{Sp}$.
\end{thm*}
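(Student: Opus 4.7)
My strategy is to factor $\mc{H}$ through the arrow $\infty$-category of $\mbf{Corr}$, then through that of $\mbf{Pres}$, and finally to extract mapping spectra. The basic observation is that a morphism $(h,\alpha)\colon f'\to f$ in $\widetilde{\mr{Ar}}$ is precisely the datum of a commuting square in $\mbf{Corr}$ built out of ``$f_!$-type'' and ``$h^*$-type'' spans.

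The first step is to construct a functor $\Phi\colon \widetilde{\mr{Ar}}^{\mr{op}}\to \mr{Ar}(\mbf{Corr})$ sending $f\colon X\to Y$ to the span $F_f:=(X,\mr{id}_X,f)\colon X\to Y$, for which $\mbf{D}(F_f)=f_!$. A morphism $(h,\alpha)\colon f'\to f$ in $\widetilde{\mr{Ar}}$ is sent to the square
\[
 \xymatrix{X\ar[r]^-{F_f}\ar[d]_-{A}&Y\ar[d]^-{H}\\X'\ar[r]^-{F_{f'}}&Y'}
\]
with $H:=(Y',h,\mr{id}_{Y'})\colon Y\to Y'$ and $A:=(X\times_Y Y',\mr{pr}_1,\alpha)\colon X\to X'$; the target leg $\alpha$ of $A$ is separated because it is proper. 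Both $H\circ F_f$ and $F_{f'}\circ A$ evaluate, as spans, to $(X\times_Y Y',\mr{pr}_1,f'\alpha)$, so the square commutes. Promoting this hand assignment to an $\infty$-functor with all higher coherences is the first main task.

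Next, post-compose with $\mr{Ar}(\mbf{D})\colon \mr{Ar}(\mbf{Corr})\to \mr{Ar}(\mbf{Pres})$ to obtain $\Psi$, sending $f$ to $f_!\colon D(X)\to D(Y)$ and the morphism $(h,\alpha)$ to a square with vertical sides $\mbf{D}(A)=\alpha_*\mr{pr}_1^{*}$ (using $\alpha_!\simeq\alpha_*$ for $\alpha$ proper) and $\mbf{D}(H)=h^{*}$. Finally, construct a functor $\Theta$ on a suitable enhancement of $\mr{Ar}(\mbf{Pres})$ sending $F\colon \mc{C}\to\mc{D}$ to $\mr{Map}_\mc{D}(F(\mbf{1}_\mc{C}),\mbf{1}_\mc{D})\in\mr{Sp}$. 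Functoriality of $\Theta$ in a square $(A,H)$ rests on: (i) the horizontal piece $H=h^*$ being symmetric monoidal, so that $H(\mbf{1}_\mc{D})\simeq\mbf{1}_{\mc{D}'}$; and (ii) the adjunction unit $\mbf{1}_{X'}\to \alpha_*\alpha^*\mbf{1}_{X'}=\mbf{D}(A)(\mbf{1}_X)$ coming from $\alpha^*\dashv\alpha_*$, which is available because $\alpha$ is proper. The composite $\Theta\circ\Psi$ then yields the desired $\mc{H}\colon \widetilde{\mr{Ar}}^{\mr{op}}\to\mr{Sp}$.

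The main obstacle is $\infty$-categorical coherence. The construction of $\Phi$ has to be performed inside the $(\infty,2)$-category $\mbf{Corr}$, so checking the square on the nose only records a shadow of the required data. More delicate still is $\Theta$: the assignment $F\mapsto \mr{Map}(F(\mbf{1}),\mbf{1})$ is not a functor on all of $\mr{Ar}(\mbf{Pres})$, so one must work with a refined arrow category recording a unit morphism $\mbf{1}_{\mc{C}'}\to A(\mbf{1}_\mc{C})$ together with the unit-preservation datum for $H$. Such refinements ought to come from the lax symmetric monoidal structure that $\mbf{D}$ inherits in the Gaitsgory-Rozenblyum framework, and they are most naturally packaged via a twisted arrow or pointed variant of the arrow construction. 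A concluding direct check that the chase $\phi\mapsto \bigl(f'_!\mbf{1}_{X'}\to f'_!\alpha_*\alpha^*\mbf{1}_{X'}\simeq h^*f_!\mbf{1}_X\xrightarrow{h^*\phi}\mbf{1}_{Y'}\bigr)$ recovers the Fulton-MacPherson construction on $\pi_0$ gives $\mr{h}\mc{H}\simeq \mr{H}$.
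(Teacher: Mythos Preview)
Your outline correctly diagnoses the two coherence obstacles but resolves neither, and the second one is where essentially all of the content lies. You acknowledge that $F\mapsto\mr{Map}_{\mc{D}}(F(\mbf{1}_{\mc{C}}),\mbf{1}_{\mc{D}})$ is not functorial on $\mr{Ar}(\mbf{Pres})$ and that one needs a ``refined arrow category'' recording a unit map $\mbf{1}_{\mc{C}'}\to A(\mbf{1}_{\mc{C}})$ on the vertical side and unit-preservation for $H$ on the horizontal side, but you construct neither that refined category, nor the lift of $\Psi$ into it, nor $\Theta$ out of it. The clause ``such refinements ought to come from the lax symmetric monoidal structure\ldots via a twisted arrow or pointed variant'' is an aspiration rather than a construction. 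Note also that the unit map you invoke depends on an identification $\mbf{D}(A)\simeq\alpha_*\alpha^*$, which in turn uses both $\alpha_!\simeq\alpha_*$ (properness) and $\mr{pr}_1^*\mbf{1}_X\simeq\alpha^*\mbf{1}_{X'}$; these identifications must themselves be tracked coherently across compositions of morphisms in $\widetilde{\mr{Ar}}$, and that bookkeeping is exactly the difficulty you have deferred.

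The paper's route is structurally different and never passes through arrow categories of $\mbf{Corr}$ or $\mbf{Pres}$. In \S\ref{constfun} it constructs a map of generalized $\infty$-operads $\mc{H}_M\colon\mr{Corr}^{\circledast}\to\mc{A}^{\circledast}$ via an enriched-string construction and an operadic left Kan extension, packaging all assignments $f\mapsto\Mor(f_!I_X,J_Y)$ together with their compositions at once; a separate construction (\ref{constonetohm}) supplies the natural transformation $\mbf{1}\to\mc{H}_M$ encoding precisely the unit maps you need. Then \S\ref{funtobitheo} converts this operadic data into a functor on $\widetilde{\mr{Ar}}^{\mr{op}}$ through a chain of steps---the shift $\overleftarrow{s}^*$, the functor $\Psi$, unit insertion, tensoring down to $\mc{A}$, and finally an ``integration'' functor $\mr{Int}$ (Proposition~\ref{propinfcatext})---whose purpose is to merge the functorialities coming from $1$- and $2$-morphisms of $\mbf{Corr}$ that you are attempting to assemble by hand in $\Phi$ and $\Theta$.
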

One of the obstacles of constructing such a functor is that the
functoriality of $\mr{H}^*(X)$ comes from $1$-morphism of $\mbf{Corr}$,
whereas that of $\mr{H}^{\mr{BM}}_*(X)$ comes from $2$-morphism of
$\mbf{Corr}$. In order to combine these two morphisms into one functor
as in the theorem, we need to ``integrate'' these two types of morphisms.

Our main motivation of the theorem is to construct such a functor for
theory of motives.
We plan to use the functor above to construct certain
elements in Chow groups which appear in ramification theory of
$\ell$-adic sheaves.
Since we need ``gluing'' of elements in Chow groups,
$\infty$-enhancement is crucial.

Before concluding the introduction, let us see the organization of this
paper. Throughout this paper, we use the language of $\infty$-categories
freely. In \S\ref{Notrecol}, we collect some preliminaries on
$\infty$-categories. Most of the material in this section should be more
or less well-known to experts, but we write here since we could not find
references. In \S\ref{dualconst}, we establish some duality type
theorem. Via straightening/unstraightening construction of Lurie,
Cartesian and coCartesian fibrations correspond to each other, and
contain essentially the same information, as long as we are considering
morphisms which preserve (co)Cartesian edges.
However, it is fairly inexplicit in nature if we pass through
straightening/unstraightening construction. We construct an explicit
model for such correspondence. This construction naturally appears in
\S\ref{constfun}. In \S\ref{stablecat}, we define the $(\infty,2)$-category
of stable $R$-linear $\infty$-categories. We heavily use the language of
(generalized) $\infty$-operads. The construction has already
appeared in \cite{GR}. The main construction is carried out in
\S\ref{constfun} and \S\ref{funtobitheo}. In \S\ref{constfun} we define
a lax functor
$\mbf{Corr}\dashrightarrow\mbf{B}\mr{Sp}^{\circledast}$, where
$\mbf{B}\mr{Sp}^{\circledast}$ is the $(\infty,2)$-category with single
object and morphisms corresponding to objects of $\mr{Sp}$. The
composition is defined by the monoidal structure of $\mr{Sp}$.
This functor sends the $1$-morphism $f\colon X\rightarrow Y$ to
$\mr{H}(f)$, and encodes the complete data of bivariant homology
theory. However, to go from this $(\infty,2)$-functor to the functor we
are looking for, we need one step more, which is carried out in
\S\ref{funtobitheo}.
Finally, in \S\ref{secExam}, we collect some examples of 6-functor
formalisms in the sense of Gaitsgory-Rozenblyum. Most of the part of
this section has already appeared elsewhere, but some of the
sources are not published and not even available in arXiv,
we decided to include this for the sake of completeness.

\subsection*{Conventions and notations}\mbox{}\\
\indent
When we say $\infty$-categories, it always mean quasi-categories,
in particular, $(\infty,1)$-categories.
We do {\em not} abbreviate $\infty$-category as category.
In principle, we follow the conventions of Lurie in \cite{HTT},
\cite{HA}. Exceptions are that we call $\infty$-operad what Lurie calls
planar $\infty$-operads, and that we denote by $\Spc$ the
$\infty$-category of spaces.

As in \cite{HTT}, we denote $\mr{N}((\sSet^+)^{\circ})$ by $\Cat_\infty$.
By definition, an object of $\Cat_\infty$ is a marked simplicial set $\mc{C}^{\natural}$ where $\mc{C}$ is an $\infty$-category.
Unless any confusion may arise, the object $\mc{C}^{\natural}$ is denoted simply by $\mc{C}$.
We denote by $\widehat{\Cat}_\infty$ the $\infty$-category of possibly large $\infty$-categories.

We denote by $\mbf{\Delta}$ the simplex category, whose objects will be
denoted by $[n]$ for $n\in\mb{N}$ as usual. A morphism
$[n]\rightarrow[m]$ corresponds to a function.
We denote by $\sigma^i\colon[0]\rightarrow[n]$ the map sending $0$
to $i\in[n]$. We denote by $\rho^i\colon[1]\rightarrow[n]$ for $0<i\leq
n$ the map sending $0$ to $i-1$ and $1$ to $i$. Both of these are inert
maps. We also denote by $d^i\colon[n-1]\rightarrow[n]$ increasing map
which avoids $i\in[n]$.

In principle, we use calligraphic fonts ({\it e.g.}\ $\mc{C}$) for
$\infty$-categories, and bold fonts ({\it e.g.}\ $\mbf{C}$) for
$(\infty,2)$-categories. For a map of simplicial sets $X\rightarrow S$
and a vertex $s\in S$, we denote by $X_s$ the fiber product
$X\times_{S,s}\Delta^0$. An equivalence of ($\infty$-)categories is
denoted by $\simeq$, and an isomorphism of simplicial sets is denoted by
$\cong$. For an $\infty$-category $\mc{C}$, the space of morphisms is
denoted by $\mr{Map}_{\mc{C}}(-,-)$.

We denote by $(-)\times^{\mr{cat}}_{(-)}(-)$ for a product
{\em in $\Cat_\infty$} in order to clarify the difference between the
fiber product as simplicial sets. If $f\colon\mc{D}\rightarrow\mc{C}$ is
a categorical fibration of $\infty$-categories and
$g\colon\mc{E}\rightarrow\mc{C}$ is a functor of $\infty$-categories,
then the functor
$\mc{D}\times_{\mc{C}}\mc{E}\rightarrow
\mc{D}\times^{\mr{cat}}_{\mc{C}}\mc{E}$ is a categorical equivalence.

\subsection*{Acknowledgment}\mbox{}\\
The author would like to thank Deepam Patel for continuous
discussions and encouragements.
Without him, this work would not have appeared.
He also thanks Andrew MacPherson, Rune Haugseng, Adeel Khan for some
discussions. This work is supported by JSPS KAKENHI Grant Numbers
16H05993, 18H03667, 20H01790.

\section{Some preliminaries on $\infty$-categories}
\label{Notrecol}
We will fix some notations, and recall some constructions in
$\infty$-category theory. The expositions are informal when there are
references.

\subsection{}
\label{defpushpullsim}
Let $f\colon S\rightarrow T$ be a map of simplicial sets.
Then we have the base change functor
$f^*\colon(\sSet)_{/T}\rightarrow(\sSet)_{/S}$.
As in \cite[4.1.2.7]{HTT}, $f^*$ admits a right adjoint $f_*$.
More explicitly, for $X\rightarrow S$, $f_*X\rightarrow T$ is the
simplicial set having the following universal property: for any simplicial set
$K$ over $T$, we have the following isomorphism as simplicial sets:
\begin{equation*}
 \mr{Fun}_T(K,f_*X)\cong\mr{Fun}_S(K\times_T S,X).
\end{equation*}

\subsection{}
\label{constsimplcat}
Assume we are given a functor of $\infty$-categories
$F\colon\mc{C}\rightarrow\Cat_\infty$. Since
$\Cat_\infty\simeq\mr{N}(\Cat_\infty^{\Delta})$ (where
$\Cat_\infty^{\Delta}$ is the simplicial category of $\infty$-categories
in \cite[3.0.0.1]{HTT}), we have the simplicial functor
$\mf{C}[F]\colon\mf{C}[\mc{C}]\rightarrow\Cat_\infty^{\Delta}$
(where $\mf{C}$ is the functor defined in \cite[1.1.5]{HTT}).
Now, we have the simplicial functor
$\mr{MAP}\colon(\Cat_\infty^{\Delta})^{\mr{op}}\times
\Cat_\infty^{\Delta}\rightarrow\Cat_\infty^{\Delta}$ sending
$(\mc{E},\mc{E}')$ to $\mr{Fun}(\mc{E},\mc{E}')$. Fix
$\mc{D}\in\Cat_\infty^{\Delta}$. Then we have the functor
\begin{equation*}
 \mr{Fun}(F,\mc{D})^{\Delta}
  \colon
  \mf{C}[\mc{C}]^{\mr{op}}
  \xrightarrow{\mf{C}[F]^{\mr{op}}\times\{\mc{D}\}}
  (\Cat_\infty^{\Delta})^{\mr{op}}\times\Cat_\infty^{\Delta}
  \xrightarrow{\mr{MAP}}
  \Cat_\infty^{\Delta}.
\end{equation*}
Taking the adjoint, we get the functor
$\mr{Fun}(F,\mc{D})\colon\mc{C}^{\mr{op}}\rightarrow\Cat_\infty$.

The unstraightening of this functor has an alternative description.
Let $f\colon\mc{X}\rightarrow\mc{C}^{\mr{op}}$ be a Cartesian
fibration. By \cite[3.2.2.12]{HTT}, the structural map
$f_*(\mc{D}\times\mc{X})\rightarrow\mc{C}^{\mr{op}}$ is a
coCartesian fibration.
This coCartesian fibration is denoted by $\Phi^{\mr{co}}(f,\mc{D})$.
By (dual version of) \cite[7.3]{GHN},
$\Phi^{\mr{co}}(\mr{Un}_{\mc{C}^{\mr{op}}}(F),\mc{D})$ is equivalent to
the unstraightening of $\mr{Fun}(F,\mc{D})$.
Dually, given a coCartesian fibration $g\colon\mc{Y}\rightarrow\mc{C}$,
we define $\Phi^{\mr{Cart}}(g,\mc{D}):=g_*(\mc{D}\times\mc{Y})$, which
is a Cartesian fibration over $\mc{C}$.

\subsection{}
\label{gammaanddual}
Let $\Gamma$ be the category whose objects are the pairs $([n],i)$ where
$i\in[n]$. A morphism $([n],i)\rightarrow([n'],i')$
consists of a map $\alpha\colon[n']\rightarrow[n]$ such that
$i\leq\alpha(i')$. We have the evident functor
$\gamma\colon\Gamma\rightarrow\mbf{\Delta}^{\mr{op}}$ sending $([n],i)$
to $[n]$.
This is a Cartesian fibration. The fiber over
$[n]\in\mbf{\Delta}^{\mr{op}}$ is $\Delta^n$.
We can check easily that this Cartesian fibration is equivalent to the
unstraightening of the evident functor
$\Delta^{\bullet}\colon\mbf{\Delta}\rightarrow\Cat_\infty$ sending
$[n]\in\mbf{\Delta}$ to $\Delta^n$.

The coCartesian fibration
$\gamma^\vee\colon\Gamma^\vee\rightarrow\mbf{\Delta}$ with the same
straightening can also be defined easily.
It is the category of objects $([n],i)$
where $i\in[n]$ and a map $([n],i)\rightarrow([n'],i')$ is a map
$\alpha\colon[n]\rightarrow[n']$ in $\mbf{\Delta}$ such that
$\alpha(i)\leq i'$. We have the evident functor
$\gamma^\vee\colon\Gamma^\vee\rightarrow\mbf{\Delta}$, which is a
coCartesian fibration.

\subsection{}
\label{maxminKancpx}
We denote by $\Spc$ the $\infty$-category of spaces. We have the functor
$\Spc\rightarrow\Cat_\infty$ by viewing a spaces as an
$\infty$-category. Let us see that this inclusion functor admits both
left and right adjoints.
Let $S$ be a simplicial set. We put the contravariant model structure on
$(\sSet)_{/S}$ and Cartesian model structure on
$(\sSet^+)_{/S}$.
Consider pairs of adjoint functors:
\begin{equation*}
 \xymatrix{
  (\sSet)_{/S}
  \ar@<.5ex>[r]^-{\iota}&
  (\sSet^+)_{/S},
  \ar@<.5ex>[l]^-{\theta}
  }
  \qquad
  \xymatrix{
  (\sSet^+)_{/S}
  \ar@<.5ex>[r]^-{\mu}&
  (\sSet)_{/S}.
  \ar@<.5ex>[l]^-{\iota}
  }
\end{equation*}
Here $\iota(X):=X^{\sharp}$, namely all the edges are marked,
$\mu(X,\mc{E}):=X$, and $\theta(X,\mc{E})$ be the simplicial subset of
$X$ consisting of all the simplices $\sigma$ such that every edge of
$\sigma$ belongs to $\mc{E}$.
We claim that the above pairs are Quillen adjunctions.
The second one is a Quillen adjunction by \cite[3.1.5.1]{HTT}.
For the first one, since the adjointness is easy to check, it suffices
to show that $\iota$ preserves cofibrations and weak equivalences.
Preservation of cofibrations is obvious.
The preservation of weak equivalences is shown in the proof of
\cite[3.1.5.6]{HTT}: for a morphism of simplicial sets
$f\colon X\rightarrow Y$ over $S$,
the induced map $f^\sharp\colon X^\sharp\rightarrow Y^\sharp$ is a
Cartesian equivalence if and only if $f$ is a contravariant equivalence.
We also have $\theta\circ\iota\simeq\mr{id}$,
$\mu\circ\iota\simeq\mr{id}$. Since $\iota$ preserves fibrant objects,
these imply that $\mr{R}\theta\circ\mr{L}\iota\simeq\mr{id}$, and
$\mr{L}\mu\circ\mr{R}\iota\simeq\mr{id}$.
In the special case where $S=\Delta^0$, we have the following result by
\cite[5.2.4.6]{HTT}, which is originally due to Joyal
\cite[6.15, 6.27]{J}:

\begin{lem*}
 \label{propspccatinf}
 The functor $\iota\colon\Spc\rightarrow\Cat_\infty$ admits a right
 adjoint $\theta$ and a left adjoint $\mu$ such that
 $\theta\circ\iota\simeq\mr{id}$, $\mu\circ\iota\simeq\mr{id}$.
 In particular, $\iota$ is fully faithful and commutes with small limits
 and colimits.
\end{lem*}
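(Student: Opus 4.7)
The plan is to specialize the discussion of the preceding paragraph to the case $S=\Delta^0$. Then $(\sSet)_{/\Delta^0}=\sSet$ with the contravariant model structure is a model for $\Spc$ (fibrant objects are Kan complexes), while $(\sSet^+)_{/\Delta^0}=\sSet^+$ with the Cartesian model structure is a model for $\Cat_\infty$ (fibrant objects are $\infty$-categories with equivalences marked). Under these identifications, the two Quillen adjunctions $\iota\dashv\theta$ and $\mu\dashv\iota$ discussed above pass to $\infty$-categorical adjunctions between $\Spc$ and $\Cat_\infty$, realizing $\iota$ simultaneously as a left adjoint (with right adjoint $\theta$) and as a right adjoint (with left adjoint $\mu$).

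To obtain the stated composition identities, I would first check them at the level of (marked) simplicial sets, where $\theta\circ\iota\cong\mr{id}$ and $\mu\circ\iota\cong\mr{id}$ are immediate from the definitions ($\iota$ marks every edge, $\theta$ retains simplices whose edges are marked, and $\mu$ forgets the marking). Since $\iota$ preserves weak equivalences — a point already noted above as a consequence of \cite[3.1.5.6]{HTT} — and sends every fibrant object in $\Spc$ to a fibrant object in $\Cat_\infty$ (a Kan complex fully marked is an $\infty$-category with all edges equivalences), both the left- and right-derived versions of $\iota$ are computed by $\iota$ itself; in particular the two $\infty$-adjunctions involve the same functor $\iota$. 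Passing to the homotopy-coherent level therefore yields $\mr{R}\theta\circ\mr{L}\iota\simeq\mr{id}$ and $\mr{L}\mu\circ\mr{R}\iota\simeq\mr{id}$.

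The remaining assertions are formal consequences. Fully faithfulness of $\iota$ follows from either composition identity together with the triangle identities for an adjunction: if the unit (resp.\ counit) is an equivalence, then the left (resp.\ right) adjoint is fully faithful. Commutation with small limits follows from $\iota$ being a right adjoint to $\mu$, and commutation with small colimits follows from $\iota$ being a left adjoint to $\theta$.

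The only real obstacle is the bookkeeping ensuring that the two model-categorical adjunctions genuinely induce $\infty$-adjunctions in which the middle functor is the same $\iota$. This is handled by the standard passage from Quillen adjunctions to $\infty$-adjunctions, combined with the fact that $\iota$ preserves weak equivalences, so that no confusion arises between $\mr{L}\iota$ and $\mr{R}\iota$. (Alternatively, as the excerpt indicates, one can simply cite \cite[5.2.4.6]{HTT}.)
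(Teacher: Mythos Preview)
Your proposal is correct and follows essentially the same approach as the paper: the paper's argument is precisely to specialize the preceding Quillen-adjunction discussion to $S=\Delta^0$, noting that $\iota$ preserves fibrant objects so that the point-set identities $\theta\circ\iota\cong\mr{id}$ and $\mu\circ\iota\cong\mr{id}$ descend to the derived level, and then citing \cite[5.2.4.6]{HTT} (attributed to Joyal). Your write-up simply spells out these steps in slightly more detail.
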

In the sequel, for a Cartesian or coCartesian fibration
$X\rightarrow S$ we often denote $\theta X$ by $X^{\simeq}_{/S}$.
When $S=\Delta^0$, we omit $/\Delta^0$ and simply write $X^{\simeq}$.

\begin{lem}
 \label{cartedgedetelem}
 Consider the following diagrams
 \begin{equation*}
  \xymatrix{
   X\ar[rr]^-{h}\ar[rd]_{f}&&Y\ar[ld]^{g}\\
  &S,&}\qquad
     \xymatrix{
     X\times_{S,f(e)}\Delta^1\ar[rr]^-{h'}\ar[rd]_{f'}&&
     Y\times_{S,f(e)}\Delta^1\ar[ld]^{g'}\\
  &\Delta^1,&}
 \end{equation*}
 such that $f$, $g$ are coCartesian fibrations, $h$ is an inner
 fibration which preserves coCartesian edges, and $e$ is an edge in
 $X$. Then $e$ is $h$-Cartesian if and only if $e$ is
 $h'$-Cartesian.
\end{lem}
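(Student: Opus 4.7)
\noindent The direction $h$-Cartesian $\Rightarrow$ $h'$-Cartesian is formal. Observing that $X\times_S\Delta^1\cong X\times_Y Y'$ where $Y':=Y\times_{S,f(e)}\Delta^1$, the inner fibration $h'$ is the literal base change of $h$ along $Y'\to Y$. Since $e$ lifts canonically to the edge $(e,\mr{id}_{\Delta^1})$ in $X\times_S\Delta^1$, and Cartesian edges are preserved under such base change (cf.\ \cite[2.4.1.3]{HTT}), this lifted edge is $h'$-Cartesian.

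For the converse, my plan is to use the mapping space characterization of $h$-Cartesian edges (\cite[2.4.1.10]{HTT} applied to $h$) together with the coCartesian structures of $f,g$ to reduce both properties to the same fiberwise condition. Writing $s:=f(e(0))$, $s':=f(e(1))$, $e=(x\to x')$, the edge $e$ is $h$-Cartesian iff for every $z\in X$ the square
\begin{equation*}
\xymatrix{
 \mr{Map}_X(z,x)\ar[r]\ar[d] & \mr{Map}_X(z,x')\ar[d]\\
 \mr{Map}_Y(h(z),h(x))\ar[r] & \mr{Map}_Y(h(z),h(x'))
}
\end{equation*}
is a homotopy pullback. Because $f$ and $g$ are coCartesian, both rows fiber over $\mr{Map}_S(f(z),s)\to\mr{Map}_S(f(z),s')$ (post-composition with $f(e)$), and the fiber of $\mr{Map}_X(z,x)\to\mr{Map}_S(f(z),s)$ over $\psi$ is computed by coCartesian transport as $\mr{Map}_{X_s}(\psi_!z,x)$; similarly for $Y$. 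The hypothesis that $h$ preserves coCartesian edges supplies a canonical equivalence $h(\psi_!z)\simeq\psi_!h(z)$, so the fiber square at $\psi$ depends only on $w:=\psi_!z\in X_s$.

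Cartesianness of the ambient square is then equivalent to cartesianness of every fiber square, and as $(z,\psi)$ varies $w=\psi_!z$ ranges over all of $X_s$ (the reverse direction takes $z=w$, $\psi=\mr{id}_s$). Thus the condition reduces to: for every $w\in X_s$, the square
\begin{equation*}
\xymatrix{
 \mr{Map}_{X_s}(w,x)\ar[r]\ar[d] & \mr{Map}_{X_{s'}}(f(e)_!w,x')\ar[d]\\
 \mr{Map}_{Y_s}(h(w),h(x))\ar[r] & \mr{Map}_{Y_{s'}}(f(e)_!h(w),h(x'))
}
\end{equation*}
is a homotopy pullback. Running the identical analysis for $h'$ over $\Delta^1$ produces exactly this condition: $z'\in X'_0=X_s$ contributes the same square (the unique non-identity edge of $\Delta^1$ has transport $f(e)_!$), while $z'\in X'_1=X_{s'}$ contributes a vacuously cartesian square because $\mr{Map}_{\Delta^1}(1,0)=\emptyset$ forces both the source and bottom-left of the square to be empty.

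The main obstacle is the fiberwise-detection step: verifying that homotopy cartesianness of the ambient square is detected by its fibers over the $\mr{Map}_S$-square, and that the identification of these fibers with mapping spaces in the fibers $X_s,X_{s'},Y_s,Y_{s'}$ is natural in $\psi$ and compatible with $h$. Once this bookkeeping is in place, matching the reduced $h$-Cartesian condition with the $h'$-Cartesian condition is immediate from the fact that $\Delta^1$ has a single non-identity morphism.
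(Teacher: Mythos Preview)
Your proof is correct and shares the paper's core idea: apply the mapping-space criterion for Cartesian edges (this is \cite[2.4.4.3]{HTT} rather than 2.4.1.10, though the two are equivalent) and use coCartesian transport along $f$ and $g$, together with the hypothesis that $h$ preserves coCartesian edges, to reduce to objects of the fiber $X_s$. The organizational difference is that the paper sidesteps your fiberwise-detection step entirely by first reducing to $S=\Delta^n$: since $h$-Cartesianness is a lifting condition against $\Lambda^n_n\hookrightarrow\Delta^n$, one may pull everything back along the resulting $\Delta^n\to S$; then $\mr{Map}_S(f(z),s)$ is either empty or contractible, so there is at most one $\psi$, and a single coCartesian lift $z\to z_0\in X_s$ reduces the square directly to one that the $h'$-Cartesian hypothesis handles (mapping spaces in $X$ and in $X\times_S\Delta^1$ from $z_0$ agree because $\mr{Map}_S$ is contractible). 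Your route also works and is arguably more transparent, but the paper's reduction buys a shorter argument by making the fibering over $\mr{Map}_S$ trivial---precisely the bookkeeping you flag as the main obstacle.
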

\begin{proof}
 The proof is almost a copy of \cite[5.2.2.3]{HTT}.
 The ``only if'' direction is obvious, so let us show the ``if'' direction.
 Since we need to
 check a certain right lifting property with respect to
 $\Lambda^n_n\rightarrow\Delta^n$, we may assume that
 $S=\Delta^n$, in particular, $S$ is an $\infty$-category.
 Let $e\colon x\rightarrow y$ be an $h'$-Cartesian edge.
 By \cite[2.4.4.3]{HTT}, it suffices to show that the
 following diagram is homotopy pullback diagram for any $z\in X$:
 \begin{equation*}
  \xymatrix{
   \mr{Map}(z,x)\ar[r]\ar[d]&\mr{Map}(z,y)\ar[d]\\
  \mr{Map}(h(z),h(x))\ar[r]&\mr{Map}(h(z),h(y)).
   }
 \end{equation*}
 If there is no map from $f(z)$ to $f(x)$, we have nothing to prove, so
 we may assume that there {\em is} a map, in fact a unique map,
 $\epsilon\colon f(z)\rightarrow f(x)$. Let $\epsilon'\colon
 z\rightarrow z_0$ be a $f$-coCartesian edge lifting $\epsilon$.
 For $w\in X$, consider the following diagrams:
 \begin{equation*}
  \xymatrix{
   \mr{Map}(z_0,w)\ar[r]\ar[d]&\mr{Map}(z,w)\ar[d]\\
  \mr{Map}(f(z_0),f(w))\ar[r]&\mr{Map}(f(z),f(w)),
   }\qquad
   \xymatrix{
   \mr{Map}(h(z_0),h(w))\ar[r]\ar[d]&\mr{Map}(h(z),h(w))\ar[d]\\
  \mr{Map}(f(z_0),f(w))\ar[r]&\mr{Map}(f(z),f(w)).
   }
 \end{equation*}
 Both diagrams are homotopy pullback diagram. Indeed, the left one is a
 homotopy pullback since $\epsilon'$ is $f$-coCartesian, and the right
 one is since $h(\epsilon')$ is $g$-coCartesian by the assumption.
 Thus, if $w$ is either $x$ or $y$, the top horizontal maps are
 equivalences. Thus, we are reduced to showing that
 \begin{equation*}
  \xymatrix{
   \mr{Map}(z_0,x)\ar[r]\ar[d]&\mr{Map}(z_0,y)\ar[d]\\
  \mr{Map}(h(z_0),h(x))\ar[r]&\mr{Map}(h(z_0),h(y)).
   }
 \end{equation*}
 is a homotopy pullback diagram, which follows since $e$ is
 $h'$-Cartesian.
\end{proof}

\begin{lem}
 \label{fiberprodcatmorphc}
 Let $f\colon\mc{C}\rightarrow\mc{E}$, $g\colon\mc{D}\rightarrow\mc{E}$
 be functors of $\infty$-categories, and assume that $g$ is a
 categorical fibration. Let $(C_0,D_0)$, $(C_1,D_1)$ be vertices of
 $\mc{C}\times_{\mc{E}}\mc{D}$. For $i=0,1$, put
 $E_i:=f(C_i)=g(D_i)$. Then
 \begin{equation*}
  \xymatrix{
   \mr{Map}_{\mc{C}\times_{\mc{E}}\mc{D}}
   \bigl((C_0,D_0),(C_1,D_1)\bigr)\ar[r]\ar[d]&
   \mr{Map}_{\mc{C}}(C_0,C_1)\ar[d]\\
  \mr{Map}_{\mc{D}}(D_0,D_1)\ar[r]&
   \mr{Map}_{\mc{E}}(E_0,E_1)
   }
 \end{equation*}
 is a homotopy Cartesian diagram.
\end{lem}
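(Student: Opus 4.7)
The plan is to reduce the claim to a direct computation with mapping spaces modeled as strict fibers, exploiting the hypothesis that $g$ is a categorical fibration to convert strict fiber products of simplicial sets into homotopy fiber products of Kan complexes.

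First I would model the mapping spaces via the standard formula
\[
 \mr{Map}_{\mc{X}}(X_0,X_1)\;\simeq\;\{(X_0,X_1)\}\times_{\mc{X}\times\mc{X}}\mc{X}^{\Delta^1},
\]
valid for any $\infty$-category $\mc{X}$ (this simplicial set is a Kan complex by Joyal's theorem and correctly represents the mapping space). Since $(-)^{\Delta^1}=\Fun(\Delta^1,-)$ is a right adjoint, it preserves the pullback defining $\mc{C}\times_{\mc{E}}\mc{D}$, yielding an isomorphism of simplicial sets
\[
 (\mc{C}\times_{\mc{E}}\mc{D})^{\Delta^1}\;\cong\;\mc{C}^{\Delta^1}\times_{\mc{E}^{\Delta^1}}\mc{D}^{\Delta^1},
\]
and likewise $(\mc{C}\times_{\mc{E}}\mc{D})^2\cong\mc{C}^2\times_{\mc{E}^2}\mc{D}^2$.

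Next I would rearrange the resulting iterated strict fiber product. Plugging the above into the formula for $\mr{Map}_{\mc{C}\times_{\mc{E}}\mc{D}}((C_0,D_0),(C_1,D_1))$ and reassociating the pullbacks identifies it, as a simplicial set, with the strict fiber product
\[
 \mr{Map}_{\mc{C}}(C_0,C_1)\times_{\mr{Map}_{\mc{E}}(E_0,E_1)}\mr{Map}_{\mc{D}}(D_0,D_1).
\]
This is a purely formal manipulation of limits in $\sSet$.

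The essential point — and the step that uses the hypothesis on $g$ — is to promote this strict fiber product to a homotopy fiber product. Since $g$ is a categorical fibration and $\Delta^1$ is cofibrant in the Joyal model structure, $g^{\Delta^1}\colon\mc{D}^{\Delta^1}\to\mc{E}^{\Delta^1}$ is a categorical fibration, and the induced map
\[
 \mc{D}^{\Delta^1}\longrightarrow \mc{E}^{\Delta^1}\times_{\mc{E}\times\mc{E}}(\mc{D}\times\mc{D})
\]
is a categorical fibration as well. Pulling back along $\{(D_0,D_1)\}\to\mc{D}\times\mc{D}$ produces a categorical fibration $\mr{Map}_{\mc{D}}(D_0,D_1)\to\mr{Map}_{\mc{E}}(E_0,E_1)$; since both sides are Kan complexes, this map is automatically a Kan fibration. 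Therefore the strict fiber product above computes the homotopy fiber product, which is exactly the statement of the lemma.

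The only mildly delicate point is the bookkeeping in the limit-rearrangement, but this is routine once the ambient pullback structure is fixed; the real content is the standard principle that categorical fibrations between Kan complexes are Kan fibrations, applied to $g^{\Delta^1}$ after restriction to the two chosen endpoints.
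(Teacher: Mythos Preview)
Your proof is correct and follows essentially the same approach as the paper's: both model mapping spaces as fibers of $\Fun(\Delta^1,-)\to\Fun(\partial\Delta^1,-)$, use that $\Fun(\Delta^1,-)$ preserves the strict pullback, and invoke the categorical-fibration hypothesis on $g$ to identify the resulting strict fiber product with a homotopy fiber product. The only cosmetic difference is in the last step: the paper phrases it as ``the square is Cartesian in $\Cat_\infty$, hence in $\Spc$ since $\Spc\hookrightarrow\Cat_\infty$ preserves limits'' (Lemma~\ref{propspccatinf}), whereas you argue directly that $\mr{Map}_{\mc{D}}(D_0,D_1)\to\mr{Map}_{\mc{E}}(E_0,E_1)$ is a categorical fibration between Kan complexes, hence a Kan fibration.
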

\begin{proof}
 Since $g$ is a categorical fibration,
 $\mr{Fun}(\Delta^1,\mc{D})\rightarrow\mr{Fun}(\Delta^1,\mc{E})$ is a
 categorical fibration as well by \cite[2.2.5.4]{HTT}.
 Thus,
 \begin{equation*}
  \mr{Fun}(\Delta^1,\mc{P})
   \cong
   \mr{Fum}(\Delta^1,\mc{C})
   \times_{\mr{Fun}(\Delta^1,\mc{E})}
   \mr{Fun}(\Delta^1,\mc{D})
   \simeq
   \mr{Fum}(\Delta^1,\mc{C})
   \times^{\mr{cat}}_{\mr{Fun}(\Delta^1,\mc{E})}
   \mr{Fun}(\Delta^1,\mc{D})
 \end{equation*}
 in $\Spc$.
 Now, we have
 $\mr{Map}_{\mc{C}}(C_0,C_1)\simeq
 \mr{Hom}_{\mc{C}}(C_0,C_1):=
 \mr{Fun}(\Delta^1,\mc{C})\times
 _{\mr{Fun}(\partial\Delta^1,\mc{C})}\{(C_0,C_1)\}$ by
 \cite[2.2.4.1, 4.2.1.8]{HTT}.
 The product is in fact a product in $\Cat_\infty$ since
 $\mr{Fun}(\Delta^1,\mc{C})\rightarrow
 \mr{Fun}(\partial\Delta^1,\mc{C})$ is a categorical fibration by
 \cite[3.1.4.3]{HTT}.
 Thus the square in question is a Cartesian square in $\Cat_\infty$.
 By Lemma \ref{propspccatinf}, the claim follows.
\end{proof}

\begin{lem}
 \label{critsubseg}
 Let $\mc{C}_i$ {\normalfont(}$i=0,1,2${\normalfont)}, $\mc{D}$ be $\infty$-categories
 and $\mc{C}'_i$, $\mc{D}'$ be its subcategories.
 Assume we are given a homotopy commutative diagram
 $(\Delta^1)^{3}\rightarrow\Cat_\infty$
 \begin{equation*}
  \xymatrix@R=15pt{
  &\mc{D}'\ar'[d][dd]\ar[ld]_{f'}\ar[rr]^-{g'}&&\mc{C}'_2\ar[dd]\ar[dl]\\
  \mc{C}'_1\ar[rr]\ar[dd]&&\mc{C}'_0\ar[dd]&\\
  &\mc{D}\ar'[r]^{g}[rr]\ar[dl]_{f}&&\mc{C}_2\ar[dl]\\
  \mc{C}_1\ar[rr]&&\mc{C}_0,&
  }
 \end{equation*}
 where the vertical arrows are inclusions. This induces the diagram
 \begin{equation*}
  \xymatrix@C=50pt{
   \mc{D}'\ar[r]^-{F':=f'\times g'}\ar[d]_{G}&
   \mc{C}'_1\times^{\mr{cat}}_{\mc{C}'_0}\mc{C}'_2
   =:\mc{C}'
   \ar[d]^{H}\\
  \mc{D}\ar[r]^-{F:=f\times g}&
   \mc{C}_1\times^{\mr{cat}}_{\mc{C}_0}\mc{C}_2
   =:\mc{C}
   }
 \end{equation*}
 where vertical arrows are the canonical functors.
 Assume the following:
 \begin{enumerate}
  \item The functor $F$ is a categorical equivalence;

  \item\label{critsubseg-2}
       The canonical functors $\mc{C}'_i\rightarrow\mc{C}_i$,
       $\mc{D}'\rightarrow\mc{D}$ are categorical fibrations;
       
  \item\label{critsubseg-3}
       An object $d\in\mc{D}$ is an object of $\mc{D}'$ precisely when
       $f(d)$ is in $\mc{C}'_1$ and $g(d)$ is in $\mc{C}'_2$;

  \item\label{critsubseg-4}
       A map $a\colon d\rightarrow d'$ in $\mc{D}$ such that
       $d,d'\in\mc{D}'$ is a map in $\mc{D}'$ precisely when $f(a)$ is
       in $\mc{C}'_1$ and $g(a)$ is in $\mc{C}'_2$.
 \end{enumerate}
 Then $F'$ is a categorical equivalence as well.
\end{lem}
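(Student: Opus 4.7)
The strategy is to verify that $F'$ is a categorical equivalence by showing it is essentially surjective on objects and induces homotopy equivalences on all mapping spaces; conditions (3)--(4) say, at the object and morphism levels, that $\mc{D}'$ is pulled back from $\mc{C}'$ along $F$, and (2) supplies the lifting properties needed to promote this to genuine equivalences. The key auxiliary fact, applied to each subcategory appearing in (2), is that for a subcategory inclusion $\mc{A}' \hookrightarrow \mc{A}$ that is simultaneously a categorical fibration, (i) $\mc{A}'$ is replete in $\mc{A}$---every object of $\mc{A}$ equivalent to one of $\mc{A}'$ already lies in $\mc{A}'$---which follows by lifting an equivalence $J \to \mc{A}$ through the categorical fibration, and (ii) for $X, Y \in \mc{A}'$ the inclusion $\mr{Map}_{\mc{A}'}(X, Y) \hookrightarrow \mr{Map}_{\mc{A}}(X, Y)$ is the inclusion of those connected components that contain some morphism of $\mc{A}'$, which follows from the description of a subcategory as a pullback against a subcategory of the homotopy category, cf.\ \cite[1.2.11]{HTT}.

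For essential surjectivity, fix $c' \in \mc{C}'$ with components $c'_1, c'_2$, and choose $d \in \mc{D}$ with $F(d) \simeq H(c')$ in $\mc{C}$. The componentwise equivalences $f(d) \simeq c'_1$ in $\mc{C}_1$ and $g(d) \simeq c'_2$ in $\mc{C}_2$, together with (i) applied to $\mc{C}'_i \subset \mc{C}_i$, force $f(d) \in \mc{C}'_1$ and $g(d) \in \mc{C}'_2$; condition (3) then gives $d \in \mc{D}'$. Lifting each of the component equivalences through $\mc{C}'_i \hookrightarrow \mc{C}_i$ and reassembling via Lemma \ref{fiberprodcatmorphc} produces an equivalence $F'(d) \simeq c'$ in $\mc{C}'$.

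For full faithfulness, given $d_0, d_1 \in \mc{D}'$, consider the commutative square
\begin{equation*}
 \xymatrix@C=40pt{
  \mr{Map}_{\mc{D}'}(d_0, d_1) \ar[r] \ar[d]_{F'_*} &
  \mr{Map}_{\mc{D}}(d_0, d_1) \ar[d]^{F_*} \\
  \mr{Map}_{\mc{C}'}(F'd_0, F'd_1) \ar[r] &
  \mr{Map}_{\mc{C}}(Fd_0, Fd_1).
 }
\end{equation*}
The top horizontal arrow is the inclusion of a union of connected components by (ii); the bottom arrow has the same property by applying (ii) in each factor $\mc{C}_i$ and reassembling with Lemma \ref{fiberprodcatmorphc}. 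Condition (4) translates to the assertion that this square is Cartesian as a square of spaces---a morphism in $\mr{Map}_{\mc{D}}(d_0, d_1)$ lies in $\mr{Map}_{\mc{D}'}$ iff its image under $F_*$ lies in $\mr{Map}_{\mc{C}'}$---so since $F_*$ is a homotopy equivalence and the horizontal maps are Kan fibrations onto their images, $F'_*$ is a homotopy equivalence as well. The main obstacle I foresee is precisely the bottom-row identification: because $\mc{C}$ is itself a categorical pullback rather than a plain ambient category, one must apply (ii) componentwise in each $\mc{C}_i$ and then package the result through Lemma \ref{fiberprodcatmorphc}, arranging (by replacement if necessary) one of the defining maps to be a genuine categorical fibration before invoking that lemma.
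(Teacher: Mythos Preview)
Your proof is correct and follows essentially the same strategy as the paper's: reduce to essential surjectivity plus full faithfulness, using that subcategory inclusions which are categorical fibrations are replete and induce component inclusions on mapping spaces. The paper packages this last fact through the monomorphism theory of \cite{AFR} (showing $\mc{C}'\to\mc{C}$ and $F'$ are monomorphisms in $\Cat_\infty$), whereas you extract the needed consequences by hand; both routes are fine.

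The one genuine difference is in the full faithfulness step. You argue directly with the homotopy Cartesian square of mapping spaces. The paper instead applies $\mr{Map}_{\Cat_\infty}(\Delta^1,-)$ to the entire cube, observes that the resulting diagram again satisfies the hypotheses of the lemma at the level of spaces, and then simply \emph{repeats the essential surjectivity argument} for $\mr{Map}(\Delta^1,\mc{D}')\to\mr{Map}(\Delta^1,\mc{C}'_1)\times^{\mr{cat}}_{\mr{Map}(\Delta^1,\mc{C}'_0)}\mr{Map}(\Delta^1,\mc{C}'_2)$. This has the virtue of treating objects and morphisms uniformly and sidesteps the issue you flag at the end---identifying $\mr{Map}_{\mc{C}'}\hookrightarrow\mr{Map}_{\mc{C}}$ as a component inclusion when $\mc{C}'$ is only a categorical (not strict) pullback---since the monomorphism property of $H$ gives this for free. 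Your componentwise reassembly via Lemma~\ref{fiberprodcatmorphc} also works, but the paper's trick is a bit slicker.
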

\begin{proof}
 Let $\mc{A}$ be a (ordinary) category, and $\mc{B}$ be its
 subcategory.
 The functor $\mr{N}\mc{B}\rightarrow\mr{N}\mc{A}$ is a monomorphism in
 $\Cat_\infty$ (in the sense of \cite[5.5.6.13]{HTT} which coincides
 with the one in \cite[A.1]{AFR}) if $\mc{B}\rightarrow\mc{A}$ is
 an isofibration ({\it i.e.}\ if $b\in\mc{B}$ and $f\colon
 a\xrightarrow{\sim}b$ in $\mc{A}$, then $a$ and $f$ belongs to
 $\mc{B}$) by \cite[A.6]{AFR}.
 For any $\infty$-category $\mc{E}$, the functor
 $\mc{E}\rightarrow\mr{N}\mr{h}\mc{E}$ is a categorical fibration,
 so by \cite[5.5.6.12]{HTT},
 the functors $\mc{D}'\rightarrow\mc{D}$,
 $\mc{C}'_i\rightarrow\mc{C}_i$ are monomorphisms.
 By \cite[A.5]{AFR}, the functor
 $\mc{C}'\rightarrow\mc{C}$ is a monomorphism as well.
 By \cite[A.4]{AFR}, $F'$ is a monomorphism.

 Let us show that $F'$ is essentially surjective.
 Let $p_i\colon\mc{C}'_i\rightarrow\mc{C}'_0$ be the given map.
 Consider a triple $(C_1,C_2,\alpha)$ where $C_i\in\mc{C}'_i$ and
 $\alpha\colon p_1(C_1)\xrightarrow{\sim}p_2(C_2)$.
 This induces a functor
 $\Delta^0\rightarrow\mc{C}'_1\times^{\mr{cat}}_{\mc{C}'_0}\mc{C}'_2$ (up to
 contractible choices), and defines an object of the fiber product.
 Denote the associated object by $\mc{C}'(C_1,C_2,\alpha)$.
 Any object of the fiber product is equivalent
 to an object associated to a triple of the form above, because
 $\mc{C}'_1\rightarrow\mc{C}'_0$ can be factored into categorically
 equivalence followed by categorical fibration.
 Since $F$ is a categorical equivalence, there exists an object $D$ such
 that $F(D)\simeq H(\mc{C}'(C_1,C_2,\alpha))$.
 By \ref{critsubseg-3}, $D$ belongs to $\mc{D}'$  if $f(D)$ and $g(D)$
 belongs to $\mc{C}'_1$ and $\mc{C}'_2$ respectively. Since $f(D)\simeq
 C_1$ and $g(D)\simeq C_2$, combining with \ref{critsubseg-2},
 $D\in\mc{D}'$. Since $\mc{C}'\rightarrow\mc{C}$ is a monomorphism, the
 functor $\mc{C}'^{\simeq}\rightarrow\mc{C}^{\simeq}$ is a monomorphism
 by \cite[A.6]{AFR} and thus fully faithful (cf.\ \cite[A.1]{AFR}).
 This implies that $F'(D)$ and $\mc{C}'(C_1,C_2,\alpha)$ are equivalent,
 and $F'$ is essentially surjective.

 It remains to show the full faithfulness. For a simplicial set, recall
 $\mr{Map}_{\Cat_\infty}(K,-)\simeq\mr{Fun}(K,-)^{\simeq}$.
 If $\mc{A}\rightarrow\mc{B}$ is a categorical fibration, then
 $\mr{Fun}(K,\mc{A})\rightarrow\mr{Fun}(K,\mc{B})$ is a categorical
 fibration for any simplicial set $K$ by \cite[2.2.5.4]{HTT}, and so is
 the map $\mr{Map}(K,\mc{A})\rightarrow\mr{Map}(K,\mc{B})$ by Lemma
 \ref{propspccatinf}.
 Note that $\mr{Map}_{\Cat_\infty}$ is a model for the mapping space of
 $\Cat_\infty$ by the definition of Boardman-Vogt weak equivalence
 (cf.\ \cite[A.3.2.1]{HTT}). Thus, by dual of \cite[at the beginning of
 \S5.5.2]{HTT}, we have the following diagram of spaces induced by
 taking $\mr{Map}_{\Cat_\infty}(\Delta^1,-)$ to the diagram in the
 statement of the lemma
 \begin{equation*}
  \xymatrix@C=50pt{
   \mr{Map}(\Delta^1,\mc{D}')\ar[r]\ar[d]_{\widetilde{G}}&
   \mr{Map}(\Delta^1,\mc{C}'_1)
   \times^{\mr{cat}}_{\mr{Map}(\Delta^1,\mc{C}'_0)}
   \mr{Map}(\Delta^1,\mc{C}'_2)\ar[d]\\
  \mr{Map}(\Delta^1,\mc{D})\ar[r]&
   \mr{Map}(\Delta^1,\mc{C}_1)
   \times^{\mr{cat}}_{\mr{Map}(\Delta^1,\mc{C}_0)}
   \mr{Map}(\Delta^1,\mc{C}_2).
   }
 \end{equation*}
 The map $\widetilde{G}$ is a monomorphism by \cite[A.6]{AFR}, thus fully
 faithful (cf.\ \cite[A.1]{AFR}). This is the same if we replace $\mc{D}$,
 $\mc{D}'$ by $\mc{C}_i$, $\mc{C}'_i$ respectively.
 The lower horizontal functor is a categorical equivalence,
 and the upper horizontal functor is fully faithful. Repeating the
 argument of the essential surjectivity of $F'$, the upper
 horizontal functor is essentially surjective,
 and thus categorical equivalence as required.
\end{proof}

\subsection{}
The following lemma should be well-known to experts, but since we could
not find a reference, we write here for record.

\begin{lem*}
 Let $\mc{C}$ be an $\infty$-category.
 Consider a diagram
 $F\colon K:=K_1\times K_2\rightarrow\mc{C}$ where $K_1$, $K_2$ are
 simplicial sets.
 For any simplicial subsets $K'\subset K$, assume that the functor
 $F|_{K'}$ admits a limit.
 Let $F_1\colon K_1\rightarrow\mr{Fun}(K_2,\mc{C})$ and $F_2\colon
 K_2\rightarrow\mr{Fun}(K_1,\mc{C})$ be functors induced by $F$.
 Then we have a canonical equivalence
 \begin{equation*}
  \invlim_{K_2}(\invlim_{K_1} F_1)
   \simeq
   \invlim_K F
   \simeq
   \invlim_{K_1}(\invlim_{K_2} F_2).
 \end{equation*}
\end{lem*}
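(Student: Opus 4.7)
The plan is to reduce to the case $\mc{C} = \Spc$ via Yoneda and then invoke the exponential equivalence $\Fun(K_1 \times K_2, \Spc) \simeq \Fun(K_1, \Fun(K_2, \Spc))$, turning the Fubini statement into an identification of mapping spaces out of constant diagrams.

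First, for each $c \in \mc{C}$ the representable functor $\mr{Map}_{\mc{C}}(c, -) \colon \mc{C} \to \Spc$ preserves all limits that exist in $\mc{C}$. Since both sides of the claimed equivalence are objects of $\mc{C}$, it suffices by Yoneda to verify the corresponding identification after applying $\mr{Map}(c, -)$ for every $c$, functorially in $c \in \mc{C}^{\mr{op}}$. Inside $\Spc$ every small limit exists automatically, and one has the model $\invlim_L G \simeq \mr{Map}_{\Fun(L, \Spc)}(\mr{const}_{*}, G)$. Under the exponential equivalence above, a constant diagram on $K_1 \times K_2$ corresponds to the doubly constant diagram on $K_1$ with value $\mr{const}_{*} \in \Fun(K_2, \Spc)$, so
\begin{align*}
 \invlim_K \mr{Map}(c, F)
 &\simeq \mr{Map}_{\Fun(K, \Spc)}\bigl(\mr{const}_{*},\, \mr{Map}(c, F)\bigr)\\
 &\simeq \mr{Map}_{\Fun(K_1, \Spc)}\bigl(\mr{const}_{*},\, \invlim_{K_2} \mr{Map}(c, F_2)\bigr)\\
 &\simeq \invlim_{K_1}\invlim_{K_2}\mr{Map}(c, F),
\end{align*}
and the symmetric manipulation yields the other half of the statement.

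The genuinely subtle point, which I expect to be the main obstacle, is making sense of $\invlim_{K_1} F_1$ as an $\infty$-functor $K_2 \to \mc{C}$ in the first place (and symmetrically for $F_2$); only then can the Yoneda reduction even be stated. This is where the hypothesis that $F|_{K'}$ admits a limit for \emph{every} simplicial subset $K' \subset K$ does the real work: applied to $K' = K_1 \times \{k_2\}$ it supplies the pointwise limit at each vertex, while applied to $K_1 \times \sigma$ for higher simplices $\sigma$ of $K_2$ it supplies the coherence data needed to assemble the pointwise limits into a functor, by the standard relative-limit yoga in $\infty$-categories (cf.\ \cite[\S4.3]{HTT}). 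Once this functoriality is in place, representable functors transport $\invlim_{K_1} F_1$ to the iterated limit in $\Spc$ vertex by vertex, and the spaces-level Fubini identification above closes the argument.
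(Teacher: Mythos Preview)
Your approach is correct and genuinely different from the paper's. The paper never leaves $\mc{C}$: after dualizing to colimits it replaces $K_2$ by the nerve of a poset via a cofinal map \cite[4.2.3.15]{HTT}, forms the simplicial set $K_G$ of \cite[4.2.3.1]{HTT} from the family $K_I=\{I\}\times K_1$, and then invokes the decomposition results \cite[4.2.3.4, 4.2.3.8--4.2.3.10]{HTT} together with the relative-join extension \cite[4.2.2.7]{HTT} to identify the iterated colimit with the total one. Your Yoneda-plus-exponential reduction is more conceptual and closer to the classical categorical argument; the paper's route is a direct model-level verification that stays combinatorial throughout.

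One correction to your handling of the point you flag as subtle. The explanation that limits of $F|_{K_1\times\sigma}$ for higher simplices $\sigma$ of $K_2$ ``supply the coherence data'' is not the operative mechanism: each such limit is merely an object of $\mc{C}$ and does not by itself furnish a morphism between the pointwise limits. What actually produces $\invlim_{K_1}F_1$ as an object of $\mr{Fun}(K_2,\mc{C})$ is the standard fact that a diagram in a functor category admits a limit as soon as every pointwise evaluation does (detection is \cite[5.1.2.3]{HTT}; existence follows from the relative-limit material of \cite[\S4.3]{HTT} applied to the trivial fibration $\mc{C}\times K_2\to K_2$). Consequently your argument only consumes the instances $K'=K_1\times\{k_2\}$, $K'=\{k_1\}\times K_2$, and $K'=K$ of the blanket hypothesis.
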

\begin{proof}
 Let us show the first equivalence. By taking the opposite category, we
 show the equivalence for colimits instead of limits.
 By \cite[4.2.3.15]{HTT}, there exists a (left) cofinal map
 $\mr{N}(\mc{I})\rightarrow K_2$ from an partially ordered set
 $\mc{I}$. In view of \cite[4.1.1.13]{HTT}, we may replace $K_2$ by
 $\mr{N}(\mc{I})$. For each $I\in\mc{I}$, let $K_I:=\{I\}\times K_1$ and
 we have the functor $G\colon\mr{N}(\mc{I})\rightarrow(\sSet)_{/K}$
 sending $I$ to $K_I$.
 In \cite[4.2.3.1]{HTT}, the simplicial set $K_G$ is defined.
 In view of \cite[4.2.3.9]{HTT}, the hypotheses of \cite[4.2.3.8]{HTT}
 is satisfied. Now, by construction, we have the evident inclusion
 $K_G\rightarrow K\diamond_{\mr{N}(\mc{I})}\mr{N}(\mc{I})$.
 By using \cite[4.2.2.7]{HTT}, $F$ admits an extension
 $\widetilde{F}\colon
 K\diamond_{\mr{N}(\mc{I})}\mr{N}(\mc{I})\rightarrow\mc{C}$.
 Since $\widetilde{F}|_{K_G}$ satisfies the hypotheses of
 \cite[4.2.3.4]{HTT}, and we invoke \cite[4.2.3.10]{HTT} to conclude.
\end{proof}

\begin{cor}
 \label{basechform}
 Let $\mc{C}$ be an $\infty$-category, and consider a diagram
 $F\colon(\Lambda^2_2)^{\triangleright}\rightarrow\mc{C}$ and a map
 $t\rightarrow F(\infty)$, where $\infty$ is the cone point. Then we
 have the canonical equivalence
 $(F(0)\times_{F(2)}F(1))\times_{F(\infty)} t\simeq
 (F(0)\times_{F(\infty)} t)\times_{(F(2)\times_{F(\infty)} t)}
 (F(1)\times_{F(\infty)} t)$.
\end{cor}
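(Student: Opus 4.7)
The plan is to realize both sides as iterated limits of a single double diagram indexed by $\Lambda^2_2\times\Lambda^2_2$ and then apply the Fubini-type lemma of the previous subsection.

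First, I would construct a functor $G\colon \Lambda^2_2\times\Lambda^2_2\to\mc{C}$ assembled from the data of $F$ and of $t\to F(\infty)$. Labelling the two factors with vertices $\{0,1,2\}$ (with maps $0\to 2\leftarrow 1$) and $\{\alpha,\beta,\gamma\}$ (with maps $\alpha\to\gamma\leftarrow\beta$), the prescribed values are $G(i,\alpha)=F(i)$, $G(i,\beta)=t$, $G(i,\gamma)=F(\infty)$, with first-coordinate edges on the $\alpha$-row given by the morphisms of $F|_{\Lambda^2_2}$ and by identities on the $\beta$- and $\gamma$-rows, and second-coordinate edges on column $i$ given by the cone morphism $F(i)\to F(\infty)$ (coming from $(\Lambda^2_2)^{\triangleright}$) on $\alpha\to\gamma$ and by $t\to F(\infty)$ on $\beta\to\gamma$. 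Concretely, one produces $G$ via the exponential law as a map $\Lambda^2_2\to \mr{Fun}(\Lambda^2_2,\mc{C})$ whose values on $\alpha,\beta,\gamma$ are $F|_{\Lambda^2_2}$, the constant functor at $t$, and the constant functor at $F(\infty)$, and whose two natural transformations $\alpha\Rightarrow\gamma$, $\beta\Rightarrow\gamma$ are respectively the cone $F|_{\Lambda^2_2}\Rightarrow\mr{const}_{F(\infty)}$ extracted from $F$ and the constant natural transformation determined by the edge $t\to F(\infty)$.

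Second, I would invoke the preceding Fubini lemma on $G$. In one order, the inner limit over the $\{\alpha,\beta,\gamma\}$-factor at fixed first coordinate $i$ is the cospan limit $F(i)\times_{F(\infty)}t$, whose outer limit over $\{0,1,2\}$ is the right-hand side of the asserted equivalence. In the other order, the inner limit over the $\{0,1,2\}$-factor yields $F(0)\times_{F(2)}F(1)$ on the $\alpha$-row, $t$ on the $\beta$-row, and $F(\infty)$ on the $\gamma$-row (limits of constant diagrams, since $\Lambda^2_2$ has $2$ as terminal object, remain unchanged); the outer limit over $\{\alpha,\beta,\gamma\}$ is then the left-hand side. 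The Fubini lemma identifies these two iterated limits, which is exactly the stated equivalence.

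The main obstacle is the $\infty$-categorical assembly of $G$ with all the prescribed values and coherence data, and checking the hypothesis of the Fubini lemma that $G|_{K'}$ admits a limit for every $K'\subset\Lambda^2_2\times\Lambda^2_2$. All the relevant restrictions are either pullbacks that appear in the statement of the corollary or constant diagrams over a simplicial set with a terminal vertex, so existence follows once one accepts the pullbacks written in the statement. Once $G$ has been built, the equivalence is a formal consequence of the preceding lemma.
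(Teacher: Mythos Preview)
Your proposal is correct and takes essentially the same approach as the paper: build a functor $\Lambda^2_2\times\Lambda^2_2\to\mc{C}$ with the indicated values and apply the Fubini lemma of the preceding subsection. The only difference is in how the double diagram is assembled: the paper glues the given data into a functor $F'$ on $D:=(\Lambda^2_2)^{\triangleright}\coprod_{\infty}\Delta^1$, embeds $D$ into $\Lambda^2_2\times\Lambda^2_2$, and takes the right Kan extension, whereas you construct it directly via the exponential law as a map $\Lambda^2_2\to\mr{Fun}(\Lambda^2_2,\mc{C})$. The right Kan extension route sidesteps any manual coherence bookkeeping, but since $\Lambda^2_2$ has no nondegenerate $2$-simplices your direct construction is equally legitimate; the resulting functors agree.
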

\begin{proof}
 Let us construct a functor
 $\widetilde{F}\colon\Lambda^2_2\times\Lambda^2_2\rightarrow\mc{C}$ as
 follows. Let
 $D:=(\Lambda^2_2)^{\triangleright}\coprod_{\infty,\{*\},[1]}\Delta^1$.
 We have the functor $F'\colon D\rightarrow\mc{C}$ sending $\Delta^1$ to
 $t\rightarrow F(\infty)$. Let
 $i\colon D\rightarrow\Lambda^2_2\times\Lambda^2_2$ be the
 inclusion. Let $\widetilde{F}$ to be a right Kan extension of $F$ along
 $i$. Now, the claim follows by applying the lemma.
\end{proof}

\subsection{}
\label{cartfibcocartnot}
Recall that $\Cat_\infty=\mr{N}((\sSet^+)^{\circ})$.
Using this specific model of the $\infty$-category of $\infty$-categories,
let $\Cart_\infty$ be the full subcategory of $\mr{Fun}(\Delta^1,\Cat_\infty)$ spanned by maps of marked simplicial sets
$\mc{C}^{\natural}\rightarrow\mc{D}^{\natural}$ whose underlying map $\mc{C}\rightarrow\mc{D}$ is a Cartesian fibration.
By evaluating at $1\in\Delta^1$, we have the map $\mr{Fun}(\Delta^1,\Cat_\infty)\rightarrow\Cat_\infty$,
which is in fact a Cartesian fibration since $\Cat_\infty$ admits limits.
This induces the Cartesian fibration $\theta\colon\Cart_\infty\rightarrow\Cat_\infty$.
Indeed, since $\Cart_\infty\hookrightarrow\mr{Fun}(\Delta^1,\Cat_\infty)$ is a full subcategory, it is an inner fibration.
Thus, $\theta$ is an inner fibration.
Moreover, Cartesian fibrations are stable under base change.
Since a Cartesian fibration is a categorical fibration by \cite[3.3.1.7]{HTT},
the pullback in $\Cat_\infty$ of a Cartesian fibration can be computed as the pullback of the simplicial set,
and $\Cart_\infty$ is stable under the base change in $\Cat_\infty$.
Thus, $\theta$ is a Cartesian fibration.
For an $\infty$-category $\mc{C}$, we denote
$\Cart_\infty\times_{\Cat_\infty}\{\mc{C}\}$ by $\Cart(\mc{C})$.
Note that since $\theta$ is a categorical fibration, given a categorical
equivalence $\mc{C}\xrightarrow{\sim}\mc{C}'$, the base change functor
$\Cart(\mc{C}')\rightarrow\Cart(\mc{C})$ is a categorical equivalence
as well by \cite[3.3.1.3]{HTT} applied to the case where $S$ is the
category with two objects and one isomorphism and $T$ is an inclusion
from $\Delta^0$ to an object of $S$.
Dually, we put $\coCart_\infty$ to be the full subcategory of
$\mr{Fun}(\Delta^1,\Cat_\infty)$ spanned by coCartesian fibrations, and
define $\coCart(\mc{C})$ to be the fiber.

Let $\coCart^{\mr{str}}_\infty$ be the subcategory of $\coCart_\infty$
consisting of simplices $\Delta^n\rightarrow\coCart_\infty$ such that
all the edges are of the form
\begin{equation*}
 \xymatrix{
  \mc{D}\ar[r]^-{r}\ar[d]_{q}&\mc{D}'\ar[d]^{p}\\
  \mc{C}\ar[r]&\mc{C}'
  }
\end{equation*}
such that $r$ sends $q$-coCartesian edges to $p$-coCartesian edges.
The functor $\coCart^{\mr{str}}_\infty\rightarrow\Cat_\infty$ is
Cartesian as well. The fiber over $\mc{C}\in\Cat_\infty$ is denoted by
$\coCart^{\mr{str}}(\mc{C})$. We have the equivalences
\begin{equation*}
 \mr{Fun}(\mc{C},\Cat_\infty)
  \xrightarrow[\mr{Un}_{\mc{C}}]{\sim}
  \mr{N}((\sSet^+)_{/\mc{C}}^{\circ})
  \xrightarrow{\sim}
  \coCart^{\mr{str}}(\mc{C}),
\end{equation*}
where the first one is the unstraightening functor.
The category $(\sSet^+)_{/\mc{C}}^{\circ}$ is endowed with coCartesian
model structure, and let us construct the second equivalence.
The functor of simplicial categories
$(\sSet^+)_{/\mc{C}}^{\circ}\rightarrow
((\sSet^+)_{/*})^{\circ}_{/\mc{C}}$ sending $X$ to
$X\times_{\mc{C}^{\sharp}}\mc{C}^{\natural}\rightarrow
\mc{C}^{\natural}$, where $\mc{C}$ is considered to be a coCartesian
fibered over $\Delta^0$, induces the functor
$\mr{N}((\sSet^+)_{/\mc{C}}^{\circ})\rightarrow
(\Cat_\infty)_{/\mc{C}}\simeq(\Cat_\infty)^{/\mc{C}}$,
using \cite[6.1.3.13, 4.2.1.5]{HTT}.
By definition, $\Cart^{\mr{str}}(\mc{C})$ is a subcategory of
$(\Cat_\infty)^{/\mc{C}}$, and the above functor factors through
$\Cart^{\mr{str}}(\mc{C})$, which is the desired functor.
This functor is essentially surjective by definition. It remains to show
that it is fully faithful. Let $\mc{D}$, $\mc{D}'$ be coCartesian
fibrations over $\mc{C}$. Let
$\mr{Fun}_{\mc{C}}(\mc{D},\mc{D}')^{\mr{coCart}}$ be the full
subcategory of $\mr{Fun}_{\mc{C}}(\mc{D},\mc{D}')$ spanned by functors
preserving coCartesian edges. Then we have
\begin{equation*}
 \mr{Map}_{\mc{C}}^{\sharp}(\mc{D}^{\natural},\mc{D}'^{\natural})
  \cong
  \mr{Map}_{\mc{C}}^{\flat}(\mc{D}^{\natural},\mc{D}'^{\natural})
  ^{\simeq}
  \simeq
  \bigl(\mr{Fun}_{\mc{C}}(\mc{D},\mc{D}')^{\mr{coCart}}\bigr)^{\simeq},
\end{equation*}
where the first isomorphism follows by \cite[3.1.3.1]{HTT}.
In view of \cite[3.1.4.4]{HTT}, we get the claim.

\begin{lem}
 \label{cartsegco}
 Let $\mc{D}\simeq\mc{C}_1\coprod_{\mc{C}_0}\mc{C}_2$
 be a pushout in $\Cat_\infty$.
 Then we have an equivalence
 $\alpha\colon\Cart(\mc{D})\simeq\Cart(\mc{C}_1)
 \times_{\Cart(\mc{C}_0)}\Cart(\mc{C}_1)$
 in $\Cat_\infty$.
\end{lem}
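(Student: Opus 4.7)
The plan is to first establish the analogous statement for the straightened version $\Cart^{\mr{str}}(-)$ and then transfer the conclusion to $\Cart(-)$ by comparing the two subcategories. Using the equivalence $\Cart^{\mr{str}}(\mc{C}) \simeq \mr{Fun}(\mc{C}^{\mr{op}}, \Cat_\infty)$ recalled at the end of \S\ref{cartfibcocartnot} and the fact that $\mr{Fun}(-, \Cat_\infty)$ sends colimits in $\Cat_\infty$ to limits, the pushout $\mc{D}^{\mr{op}} \simeq \mc{C}_1^{\mr{op}} \coprod_{\mc{C}_0^{\mr{op}}} \mc{C}_2^{\mr{op}}$ yields
\begin{equation*}
 \alpha^{\mr{str}}\colon \Cart^{\mr{str}}(\mc{D}) \xrightarrow{\sim}
 \Cart^{\mr{str}}(\mc{C}_1) \times^{\mr{cat}}_{\Cart^{\mr{str}}(\mc{C}_0)} \Cart^{\mr{str}}(\mc{C}_2).
\end{equation*}
The non-full inclusion $\Cart^{\mr{str}}(\mc{C}) \hookrightarrow \Cart(\mc{C})$ is a bijection on objects and an equality on maximal Kan complexes, since any categorical equivalence between Cartesian fibrations over a common base automatically preserves Cartesian edges. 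Hence essential surjectivity of $\alpha$ will follow immediately from that of $\alpha^{\mr{str}}$.

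For full faithfulness, put $\mc{X}_i := \mc{X} \times_{\mc{D}} \mc{C}_i$ and $\mc{Y}_i := \mc{Y} \times_{\mc{D}} \mc{C}_i$ for $\mc{X}, \mc{Y} \in \Cart(\mc{D})$. Using $\mr{Map}_{\Cart(\mc{C})}(\mc{X}, \mc{Y}) \simeq \mr{Fun}_{\mc{C}}(\mc{X}, \mc{Y})^{\simeq}$ and the preservation of homotopy pullbacks by $(-)^{\simeq}$ (Lemma~\ref{propspccatinf}), I will reduce the desired equivalence of mapping spaces to the $\Cat_\infty$-level statement
\begin{equation*}
 \mr{Fun}_{\mc{D}}(\mc{X}, \mc{Y}) \simeq
 \mr{Fun}_{\mc{C}_1}(\mc{X}_1, \mc{Y}_1)
 \times^{\mr{cat}}_{\mr{Fun}_{\mc{C}_0}(\mc{X}_0, \mc{Y}_0)}
 \mr{Fun}_{\mc{C}_2}(\mc{X}_2, \mc{Y}_2).
\end{equation*}
Granting a pushout decomposition $\mc{X} \simeq \mc{X}_1 \coprod_{\mc{X}_0} \mc{X}_2$ in $(\Cat_\infty)_{/\mc{D}}$, this equivalence will be immediate upon applying $\mr{Fun}(-, \mc{Y})$ and restricting to the fiber over the structural map.

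The hard part is establishing that pushout decomposition, which amounts to a descent property for the Cartesian fibration $\theta\colon \Cart_\infty \to \Cat_\infty$. I expect to derive it from the straightening $F\colon \mc{D}^{\mr{op}} \to \Cat_\infty$ of $\mc{X}$, which is uniquely determined by its restrictions $F_i := F|_{\mc{C}_i^{\mr{op}}}$ via the pushout $\mc{D}^{\mr{op}} \simeq \mc{C}_1^{\mr{op}} \coprod_{\mc{C}_0^{\mr{op}}} \mc{C}_2^{\mr{op}}$. Care is required because the total space of a (co)Cartesian fibration agrees with the colimit of its straightening only up to localization at the (co)Cartesian edges, so one cannot directly invoke ``Grothendieck-as-colimit''; instead one computes $\mr{Map}_{\mc{X}}(x, y)$ for $x \in \mc{X}_1$ and $y \in \mc{X}_2$ directly via the Cartesian fibration structure and matches the resulting expression, via co-Yoneda, against the coend formula governing mapping spaces in a pushout of $\infty$-categories.
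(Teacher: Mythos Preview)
Your approach to essential surjectivity coincides with the paper's: both reduce to the equivalence $\Cart^{\mr{str}}(\mc{C})\simeq\mr{Fun}(\mc{C}^{\mr{op}},\Cat_\infty)$ and use that $(-)^{\simeq}$ agrees on $\Cart^{\mr{str}}$ and $\Cart$.

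For full faithfulness the paper takes a genuinely different and much shorter route. Rather than proving a descent statement for the total space $\mc{X}$, it invokes the ``swap'' equivalence
\[
 \mr{Map}_{\Cat_\infty}(\Delta^1,\Cart(\mc{C}))\simeq
 \mr{Map}_{\Cat_\infty}(\mc{C}^{\mr{op}},\coCart(\Delta^1))
\]
from \cite[Ch.12, 2.1.3]{GR}. This moves the $\Delta^1$ into the second argument, so that the pushout $\mc{D}^{\mr{op}}\simeq\mc{C}_1^{\mr{op}}\coprod_{\mc{C}_0^{\mr{op}}}\mc{C}_2^{\mr{op}}$ now sits in the \emph{source} of a mapping space into a fixed target $\coCart(\Delta^1)$, and the desired pullback decomposition is immediate. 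Together with the $\Delta^0$ case this suffices, since a functor of $\infty$-categories is an equivalence once $\mr{Map}(\Delta^0,-)$ and $\mr{Map}(\Delta^1,-)$ of it are.

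Your route via $\mc{X}\simeq\mc{X}_1\coprod_{\mc{X}_0}\mc{X}_2$ would work, but there is a gap in the execution you propose. You plan to compute $\mr{Map}_{\mc{X}}(x,y)$ and compare it with ``the coend formula governing mapping spaces in a pushout of $\infty$-categories''; however, no such simple formula exists in general --- mapping spaces in a pushout of $\infty$-categories are computed by a zig-zag (hammock) construction, not a single coend, and making that comparison rigorous is at least as hard as the original lemma. If you want to salvage this direction, the clean argument is model-categorical: replace the pushout by a strict one in $\sSet$ with one leg a monomorphism; then for any map $\mc{X}\to\mc{D}$ the simplicial-set identity $\mc{X}=\mc{X}_1\cup_{\mc{X}_0}\mc{X}_2$ holds on the nose, and since $\mc{X}_0\to\mc{X}_1$ is again a monomorphism this is a homotopy pushout in the Joyal model structure. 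That said, the paper's swap trick avoids all of this.
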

\begin{proof}
 The author learned the proof from Lysenko's
 notes\footnote{See
 \href
 {https://lysenko.perso.math.cnrs.fr/notes/comments_Gaitsgory_Lurie_Tamagawa.pdf}
 {lysenko.perso.math.cnrs.fr/notes/comments\_Gaitsgory\_Lurie\_Tamagawa.pdf}.
 }.
 First, for any $\infty$-category $\mc{C}$, we have
 \begin{equation*}
  \mr{Map}_{\Cat_\infty}(\Delta^0,\Cart(\mc{C}))
   \simeq
   \Cart^{\mr{str}}(\mc{C})^{\simeq}
   \simeq
   \mr{Map}_{\Cat_\infty}(\mc{C},\Cat_\infty),
 \end{equation*}
 where the second equivalence is the straightening/unstraightening
 equivalence. Since the pushout is in $\Cat_\infty$, we have
 \begin{align*}
  \mr{Map}_{\Cat_\infty}(\mc{D},\Cat_\infty)
  \simeq
  \mr{Map}_{\Cat_\infty}(\mc{C}_1,\Cat_\infty)
  \times^{\mr{cat}}_{\mr{Map}_{\Cat_\infty}(\mc{C}_0,\Cat_\infty)}
  \mr{Map}_{\Cat_\infty}(\mc{C}_2,\Cat_\infty)
 \end{align*}
 by \cite[at the beginning of \S5.5.2]{HTT}.
 Combining these equivalences, $\mr{Map}_{\Cat_\infty}(\Delta^0,\alpha)$
 is an equivalence.
 It remains to show that $\mr{Map}_{\Cat_\infty}(\Delta^1,\alpha)$ is an
 equivalence. By \cite[Ch.12, 2.1.3]{GR}, we have an equivalence
 \begin{equation*}
  \mr{Map}_{\Cat_\infty}(\Delta^1,\Cart(\mc{C}))
   \simeq
   \mr{Map}_{\Cat_\infty}(\mc{C}^{\mr{op}},\coCart(\Delta^1))
 \end{equation*}
 for any $\infty$-category $\mc{C}$. Since $(-)^{\mr{op}}$ is an
 auto-equivalence of $\Cat_\infty$, we have
 $\mc{D}^{\mr{op}}\simeq\mc{C}_1^{\mr{op}}\coprod_{\mc{C}_0^{\mr{op}}}
 \mc{C}_2^{\mr{op}}$, we have
 \begin{align*}
  \mr{Map}_{\Cat_\infty}&(\mc{D}^{\mr{op}},\coCart(\Delta^1))\\
   &\simeq
   \mr{Map}_{\Cat_\infty}(\mc{C}_1^{\mr{op}},\coCart(\Delta^1))
   \times^{\mr{cat}}
   _{\mr{Map}_{\Cat_\infty}(\mc{C}_0^{\mr{op}},\coCart(\Delta^1))}
   \mr{Map}_{\Cat_\infty}(\mc{C}_1^{\mr{op}},\coCart(\Delta^1)).
 \end{align*}
 Combining these equivalences, $\mr{Map}(\Delta^1,\alpha)$ is an
 equivalence as required.
\end{proof}

\subsection{}
\label{twisarrcat}
Let $\mc{C}$ be an $\infty$-category.
We define the $\infty$-category of arrows to be
$\mr{Ar}\mc{C}:=\mr{Fun}(\Delta^1,\mc{C})$.
On the other hand, the $\infty$-category of
{\em twisted arrows} denoted by $\mr{Tw}\mc{C}$ is studied extensively
in \cite[\S 5.2.1]{HA}.
Informally, this is the category of morphisms
$c\rightarrow c'$ in $\mc{C}$, with a map $(c_0\rightarrow
c'_0)\rightarrow(c_1\rightarrow c'_1)$ given by a diagram
$\Delta^3\rightarrow\mc{C}$ depicted as
\begin{equation*}
 \xymatrix{
  c_0\ar[d]\ar[r]&c_1\ar[d]\\
 c'_1&c'_1.\ar[l]}
\end{equation*}
We put $\mr{Tw}^{\mr{op}}\mc{C}:=(\mr{Tw}\mc{C})^{\mr{op}}$.
For a functor $F\colon K\rightarrow\mc{C}$ from a simplicial set $K$, we
denote by
$\mr{Tw}^{\mr{op}}_F\mc{C}:=\mr{Tw}^{\mr{op}}\mc{C}\times_{\mc{C}}K$.

\subsection{}
\label{operadsrecall}
We use two types of operads in this paper: $\infty$-operads and planar
$\infty$-operads in the sense of \cite{HA}.
In this paper, after \cite{GH}, we call {\em operad} what Lurie calls
planar $\infty$-operad, and {\em symmetric operad} what Lurie calls
$\infty$-operad. We only recall $\infty$-operads very briefly.

Recall that a function $a\colon[n]\rightarrow[m]$ is said to be
{\em inert} if there exists $i\in[m]$ such that $a(j)=i+j$. An
{\em active map} is a map $a$ such that $a(0)=0$, $a(n)=m$. The
corresponding maps in $\mbf{\Delta}^{\mr{op}}$ are also called
inert and active maps.
A {\em generalized $\infty$-operad} is an inner fibration
$f\colon\mc{C}^{\circledast}\rightarrow\mbf{\Delta}^{\mr{op}}$
satisfying the following three conditions: 1.\
for any $X\in\mc{C}^{\circledast}$ and an inert edge $f(x)\rightarrow y$
in $\mbf{\Delta}^{\mr{op}}$, there exists a $f$-coCartesian edge
$X\rightarrow Y$ lifting the inert edge; 2.\ the induced map
\begin{equation*}
 \mc{C}^{\circledast}_{[n]}\rightarrow
  \mc{C}^{\circledast}_{\{0,1\}}
  \times^{\mr{cat}}_{\mc{C}^\circledast_{\{1\}}}
  \mc{C}^{\circledast}_{\{1,2\}}
  \times^{\mr{cat}}_{\mc{C}^\circledast_{\{2\}}}
  \dots
  \times^{\mr{cat}}_{\mc{C}^\circledast_{\{n-1\}}}
  \mc{C}^{\circledast}_{\{n-1,n\}}
\end{equation*}
is a categorical equivalence (Segal condition); 3.\ for any
$C\in\mc{C}^{\circledast}_{[n]}$, we have a map from $C$ to the diagram
\begin{equation*}
  \xymatrix@R=7pt{
  C_{\{0,1\}}\ar[rd]&&C_{\{1,2\}}\ar[ld]\ar[rd]&&\dots&&
  C_{\{n-1,n\}}\ar[ld]\\
 &C_{\{1\}}&&C_{\{2\}}&\dots&C_{\{n-1\}}&
  }
\end{equation*}
which exhibits $C$ as a $\pi$-limit.
Here the functor
$\mc{C}^{\circledast}_{\{i,i+1\}}\rightarrow\mc{C}^{\circledast}_{\{j\}}$
($j=i,i+1$) is induced by the assumption that $f$ is coCartesian over
inert edges in $\mbf{\Delta}^{\mr{op}}$.
An $f$-coCartesian edges in $\mc{C}^{\circledast}$ over an inert map in
$\mbf{\Delta}^{\mr{op}}$ are called {\em inert edges}.
A generalized $\infty$-operad is an {\em $\infty$-operad}
if $\mc{C}^{\circledast}_{[0]}$ is contractible.
A map of generalized $\infty$-operads from $\mc{O}^{\circledast}$ to $\mc{C}^{\circledast}$ is a functor
$\mc{O}^{\circledast}\rightarrow\mc{C}^{\circledast}$ over $\mbf{\Delta}^{\mr{op}}$ which preserves inert edges.
The $\infty$-category of maps of generalized $\infty$-operads from $\mc{O}^{\circledast}$ to $\mc{C}^{\circledast}$
is denoted by $\mr{Alg}_{\mc{O}}(\mc{C})$.
Just as $\Cat_\infty$, $\infty$-operads form an $\infty$-category.
The $\infty$-category of (generalized) $\infty$-operads is denoted by
$\mr{Op}_{\infty}^{\mr{ns},(\mr{gen})}$ (cf.\ \cite[\S3.2]{GH}).

We generally use $\mc{C}^{\otimes}$ for (generalized) symmetric
$\infty$-operads and $\mc{C}^{\circledast}$ for (generalized)
$\infty$-operads.

\begin{dfn*}
 A map $\mc{M}^{\circledast}\rightarrow\mc{N}^{\circledast}$ of
 generalized $\infty$-operads is said to be {\em base preserving} if the
 induced map
 $\mc{M}^{\circledast}_{[0]}\rightarrow\mc{N}^{\circledast}_{[0]}$ is a
 categorical equivalence.
\end{dfn*}

\subsection{}
\label{algmonoidfuncto}
We have the bifunctor of symmetric $\infty$-operads
$\mr{N}\Fin\times\mr{N}\Fin\rightarrow\mr{N}\Fin$.
As in the proof of \cite[3.2.4.3]{HA}, we have the left Quillen
bifunctor $(\sSet^+)_{/\mf{P}}\times(\sSet^+)_{/\mf{P}}\rightarrow
(\sSet^+)_{/\mf{P}}$, where $\mf{P}$ is the categorical pattern
defining the $\infty$-category of symmetric $\infty$-operads
$\mr{Op}_\infty$ (cf.\ \cite[proof of 2.1.4.6]{HA}),
which is identical to $\odot$ in \cite[2.2.5.5]{HA}.
Thus, if we fix a fibration of symmetric $\infty$-operads
$\mc{C}^{\otimes}\rightarrow\mr{N}\Fin$, we have a functor
$\mr{Op}_\infty^{\mr{op}}\rightarrow\mr{Op}_\infty$
sending $\mc{O}^{\otimes}$ to $\mr{Alg}_{\mc{O}}(\mc{C})^{\otimes}$.
In particular, considering the embedding
$\Cat_\infty\rightarrow\mr{Op}_\infty$ (cf.\ \cite[2.1.4.11]{HA}),
we have the functor
$\mr{Fun}(-,\mc{C}^{\otimes})\colon\Cat_\infty^{\mr{op}}
\rightarrow\mr{Op}_\infty$ sending $\mc{D}$ to
$\mr{Alg}_{\mc{D}}(\mc{C})^{\otimes}\simeq
\mr{Fun}(\mc{D},\mc{C}^{\otimes})$.
Let $\mr{Op}_\infty^{\mr{co},\mr{pres}}$ be the subcategory of
$\mr{Op}_\infty$ spanned by coCartesian fibration
$\mc{O}^{\otimes}\rightarrow\mr{N}\Fin$ which comes from
$\mr{CAlg}(\PrL)$, and colimit preserving morphisms which preserve
coCartesian edges.
If $\mc{C}^{\otimes}\rightarrow\mr{N}\Fin$ is a coCartesian fibration
coming from $\mr{CAlg}(\PrL)$, then \cite[3.2.4.3]{HA} further
implies that the functor $\mr{Fun}(-,\mc{C}^{\otimes})$ factors through
$\Cat_\infty\rightarrow\mr{Op}_{\infty}^{\mr{co},\mr{pres}}
\simeq\mr{CAlg}(\PrL)$.

\subsection{}
\label{inftwocatintro}
In this paper, we follow \cite{GR} for the terminology for
$(\infty,2)$-categories. In particular, we employ complete Segal
$\infty$-category model for $(\infty,2)$-category.
Before recalling the definition of $(\infty,2)$-category, let us recall
the relation between $\infty$-categories and complete Segal spaces.
First, we have the pair of adjoint functors
\begin{equation*}
 \xymatrix{
  \mr{Fun}(\mbf{\Delta}^{\mr{op}},\Spc)
  \ar@<.5ex>[r]^-{\mr{JT}}&
  \Cat_\infty
  \ar@<.5ex>[l]^-{\mr{Seq}_\bullet}.
  }
\end{equation*}
Indeed, by \cite[4.11]{JT} taking \cite[1.5.1]{H} into account, we
have an equivalence $\Cat_\infty\simeq\mc{CSS}$, where $\mc{CSS}$
denotes the $\infty$-category of complete Segal spaces. By definition of
the complete Segal space model structure, $\mc{CSS}$ is a localization
of $\mr{Fun}(\mbf{\Delta}^{\mr{op}},\Spc)$, and we get the adjoint
functors above. The construction shows that, for an $\infty$-category
$\mc{C}$, the adjunction
$\mr{JT}(\mr{Seq}_\bullet(\mc{C}))\rightarrow\mc{C}$ is a categorical
equivalence.
For $\mc{C}\in\Cat_\infty$, we can compute $\mr{Seq}_\bullet(\mc{C})$ as
follows. Let $\Delta^{\bullet}\colon\mbf{\Delta}\rightarrow\Cat_\infty$
be the cosimplicial object such that $\Delta^{\bullet}([n]):=\Delta^n$.
For an $\infty$-category $\mc{C}$, we have
$\mr{Seq}_\bullet(\mc{C}):=\theta\circ\mr{Fun}(\Delta^{\bullet},\mc{C})$,
where $\theta$ is the functor in Lemma \ref{propspccatinf}, by
\cite[4.10]{JT}. Explicitly,
$\mr{Seq}_n(\mc{C})\simeq\mr{Fun}(\Delta^n,\mc{C})^{\simeq}
\simeq\mr{Map}_{\Cat_\infty}(\Delta^n,\mc{C})$.

The observation above shows that we may think of an $\infty$-category as an
object of $\mr{Fun}(\mbf{\Delta}^{\mr{op}},\Spc)$ which is a complete
Segal space.
In our treatment, following \cite{GR}, we upgrade this
picture, and use complete Segal
$\infty$-category model for a model of $(\infty,2)$-category.
An {\em $(\infty,2)$-category} $\mbf{C}$ is a functor
$\mc{C}_\bullet\colon\mbf{\Delta}^{\mr{op}}\rightarrow
\Cat_\infty$ satisfying the following conditions:
\begin{itemize}
 \item The $\infty$-category $\mc{C}_0$ is a space;

 \item (Segal condition) The functor
       $\mc{C}_n\rightarrow\mc{C}_1\times^{\mr{cat}}_{\mc{C}_0}\mc{C}_1
       \times^{\mr{cat}}_{\mc{C}_0}\dots\times^{\mr{cat}}
       _{\mc{C}_0}\mc{C}_1$ induced by inert maps
       (namely, the maps $[1]\cong\{i,i+1\}\subset[n]$ in $\mbf{\Delta}$) is an
       equivalence for $n\geq1$;
       
 \item (completeness) There exists an $\infty$-category $\mc{C}$ such
       that $\mr{Seq}_\bullet(\mc{C})\simeq\theta\circ\mc{C}_\bullet$.
\end{itemize}
By definition the composition $\mbf{\Delta}^{\mr{op}}
\xrightarrow{\mc{C}_\bullet}\Cat_\infty\xrightarrow{\theta}\Spc$ is a
complete Segal space, and yields an $\infty$-category. This
$\infty$-category is called the {\em underlying $\infty$-category} of
$\mbf{C}$.

\begin{ex*}
 Let $p\colon\mc{A}^{\circledast}\rightarrow\mbf{\Delta}^{\mr{op}}$ be a
 monoidal $\infty$-category. By straightening, this coCartesian
 fibration corresponds to a functor
 $\mbf{\Delta}^{\mr{op}}\rightarrow\Cat_\infty$.
 Since $p$ is a an $\infty$-operad, it satisfies the Segal condition,
 and $\mc{A}_0$ is a contractible Kan complex.
 Unfortunately, this Segal $\infty$-category may not be complete.
 By \cite[1.2.13]{LG}, we can localize the Segal $\infty$-category into
 a complete Segal $\infty$-category. This complete Segal space is called
 the classifying $(\infty,2)$-category of $\mc{A}^{\circledast}$ denoted
 by $\mbf{B}\mc{A}^{\circledast}$.
\end{ex*}

\subsection{}
We recall the $(\infty,2)$-category of correspondences used in
\cite{GR}. Let $\mc{C}$ be a category\footnote{We may assume $\mc{C}$ to
be an $\infty$-category, but for simplicity, we assumed this. For details
see \cite{GR}.}.
We need 3 classes of morphisms in $\mc{C}$ denoted by
$\mathit{vert}$, $\mathit{horiz}$, $\mathit{adm}$ satisfying certain
axioms (cf. \cite[Ch.7, 1.1.1]{GR}). To define the $(\infty,2)$-category
$\mbf{Corr}^{\mathit{adm}}_{\mathit{vert};\mathit{horiz}}(\mc{C})$, we
should define its associated Segal space
$\mr{Seq}_\bullet(\mbf{Corr}^{\mathit{adm}}
_{\mathit{vert};\mathit{horiz}}(\mc{C}))$. For $n\geq0$, 
$\mr{Seq}_n(\mbf{Corr}^{\mathit{adm}}_
{\mathit{vert};\mathit{horiz}}(\mc{C}))$ is the category of diagrams of
the form
\begin{equation*}
 \xymatrix{
  X_{n0}\ar[r]\ar[d]\ar@{}[rd]|\square&
  \dots\ar[r]\ar@{}[rd]|\square&
  X_{20}\ar[d]\ar[r]\ar@{}[rd]|\square&X_{10}\ar[d]\ar[r]
  &X_{00}.\\
 X_{n1}\ar[d]\ar[r]&\dots\ar[r]&
  X_{21}\ar[r]\ar[d]&X_{11}&\\
 X_{n2}\ar[r]\ar[d]&\dots\ar[r]&
  X_{22}&&\\
 \vdots\ar[d]&\dots&&&&\\
 X_{nn},&&&&
  }
\end{equation*}
where horizontal arrows are in the class $\mathit{horiz}$ and the
vertical arrows are in the class $\mathit{vert}$. A morphism in
$\mr{Seq}_n(\mbf{Corr}^{\mathit{adm}}
_{\mathit{vert};\mathit{horiz}}(\mc{C}))$ is a morphism of diagrams
$X_{\bullet\bullet}\rightarrow Y_{\bullet\bullet}$ such that each
morphism $X_{ij}\rightarrow Y_{ij}$ is in $\mathit{adm}$ and
$X_{kk}\rightarrow Y_{kk}$ is an equivalence. 
We may check that this is an $(\infty,2)$-category, and even an ordinary
$2$-category (cf.\ \cite[Ch.7]{GR}).

\section{Dualizing coCartesian fibrations}
\label{dualconst}
Let $f\colon X\rightarrow S$ be a Cartesian fibration. Via
straightening/unstraightening construction, there exists a coCartesian
fibration $f'\colon X'\rightarrow S^{\mr{op}}$ with the same
straightening as $f$.
The existence of such coCartesian fibration readily
follows from straightening/unstraightening theorem, but the
construction is far from explicit.
As far as the author knows, there are two models for $f'$. One is in
\cite[14.4.2]{SAG}, and the other is in \cite{BGN}.
In this section, we construct yet another model of $f'$ at least when
$S$ is an $\infty$-category.
This model naturally appears in a construction in \S\ref{constfun}.

\subsection{}
Let $\mc{C}$ be an $\infty$-category.
Using the notation of \ref{cartfibcocartnot}, we have the auto-functor
\begin{equation*}
 \mb{D}\colon\coCart^{\mr{str}}(\mc{C}^{\mr{op}})
  \simeq
  \mr{Fun}(\mc{C}^{\mr{op}},\Cat_\infty)
  \simeq
  \Cart^{\mr{str}}(\mc{C}).
\end{equation*}
When $\mc{C}=\Delta^0$, the functor $\mb{D}$ is equivalent to the
identity functor. Let $f\colon\mc{D}\rightarrow\mc{C}^{\mr{op}}$ be in
$\coCart^{\mr{str}}(\mc{C}^{\mr{op}})$.
We have a Cartesian fibration $\mb{D}(f)\rightarrow\mc{C}$.
Then by the functoriality of straightening/unstraightening functor, we
have an equivalence $\mc{D}_{v}\cong\mb{D}(\mc{D})_v$ for each object
$v\in\mc{C}$.
In the following we sometimes denote $\mb{D}(f)$ by $\mb{D}(\mc{D})$ or
$\mb{D}_{\mc{C}}(\mc{D})$ if no confusion may arise. Note that, by
construction,
$\mb{D}(f)^{\mr{op}}\cong\mb{D}^{-1}(f^{\mr{op}})$.

\subsection{}
Our goal of this section is to compare some Cartesian fibration with
$\mb{D}(f)$. For a preparation, we give a criterion to detect
$\mb{D}(f)$. A diagram of $\infty$-categories
$\mc{C}\leftarrow\mc{M}\rightarrow\mc{D}$ is said to be a
{\em weak pairing} if it is an object of $\mr{CPair}$ (cf.\
\cite[5.2.1.14, 5.2.1.15]{HA}). In other words, weak pairing is a
diagram which is equivalent to a pairing (cf.\ \cite[5.2.1.5]{HA}),
namely a diagram such that the induced map
$\mc{M}\rightarrow\mc{C}\times\mc{D}$ is equivalent to a right
fibration. We often say $\mc{M}\rightarrow\mc{C}\times\mc{D}$ is a weak
pairing without referring to the diagram.
A weak pairing is said to be {\em perfect} if it is contained in the
subcategory $\mr{CPair}^{\mr{perf}}$ (cf.\ \cite[5.2.1.20]{HA}), namely
a paring which is equivalent to the pairing
$\mr{Tw}\mc{C}\rightarrow\mc{C}\times\mc{C}^{\mr{op}}$ for some
$\infty$-category $\mc{C}$.
Definition \cite[5.2.1.8]{HA} makes sense also for weak pairings, so we
may talk about left universality {\it etc.}
Let $\lambda\colon\mc{M}\rightarrow\mc{C}\times\mc{D}$ be a weak pairing
and take an equivalence
\begin{equation*}
 \xymatrix{
  \mc{M}\ar[r]^-{\sim}_-{\gamma}\ar[d]_{\lambda}&
  \mc{M}'\ar[d]^{\lambda'}\\
 \mc{C}\times\mc{D}\ar[r]^-{\sim}_{\alpha\times\beta}&
  \mc{C}'\times\mc{D}'
  }
\end{equation*}
where $\lambda'$ is a pairing. Then $M\in\mc{M}$, such that
$\lambda(M)=(C,D)$, is left universal if and only if $\gamma(M)$ is left
universal because
$\mc{M}\times_{\mc{C}}\{C\}\xrightarrow{\sim}
\mc{M}'\times_{\mc{C'}}\{\alpha(C)\}$ and \cite[1.2.12.2]{HTT}.

\begin{lem*}
 \label{conddual}
 Let $S$ be a simplicial set, and consider the following diagram
 \begin{equation*}
  \xymatrix{
   \mc{M}
   \ar[rd]_{p}\ar[rr]^-{\nu}&&
   \mc{C}\times_S\mc{D}
   \ar[ld]^{q=f\times g}\\
  &S&
   }
 \end{equation*}
 Assume that $p$, $f$, $g$ are Cartesian fibrations, and $\nu$ sends
 $p$-Cartesian edges to $q$-Cartesian edges.
 If the following conditions are satisfied, then
 $\mc{D}\cong\mb{D}(f^{\mr{op}})$.

 \begin{itemize}
  \item For any vertex $s\in S$, $\nu_s:=\nu\times_S s$ is a perfect
	weak pairing
	{\normalfont(}resp.\ $\nu_s^{\mr{op}}$ is a perfect weak pairing{\normalfont)};
	
  \item For any $p$-Cartesian edge $x\rightarrow y$ in $\mc{M}$,
	if $y$ is right universal
	{\normalfont(}resp.\ right universal with respect to $\nu_{p(y)}^{\mr{op}}${\normalfont)}, then so is $x$.
 \end{itemize}
\end{lem*}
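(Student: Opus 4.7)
The plan is to produce a fiberwise equivalence $\phi_s \colon \mc{D}_s \xrightarrow{\sim} \mc{C}_s^{\mr{op}}$ for each $s \in S$ from the perfect pairing structure, verify that these equivalences are compatible with Cartesian transport along edges of $S$, and then conclude via the classification of Cartesian fibrations by their straightenings. Since $\mb{D}(f^{\mr{op}}) \to S$ is a Cartesian fibration whose straightening sends $s$ to $\mc{C}_s^{\mr{op}}$, producing such a natural fiberwise equivalence is essentially the same data as an equivalence $\mc{D} \simeq \mb{D}(f^{\mr{op}})$.

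For the first step, for each vertex $s \in S$ the perfect weak pairing $\nu_s \colon \mc{M}_s \to \mc{C}_s \times \mc{D}_s$ exhibits $\mc{D}_s$ as the opposite of $\mc{C}_s$ by \cite[5.2.1.20]{HA}; concretely, the induced equivalence $\phi_s$ sends $D \in \mc{D}_s$ to the first projection of $\nu_s(M)$ for any right universal $M \in \mc{M}_s$ over $D$.

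For the naturality step, given an edge $e \colon s \to s'$ in $S$ and an object $D' \in \mc{D}_{s'}$, I would pick a right universal $M' \in \mc{M}_{s'}$ over $D'$ with $\nu(M') = (C', D')$, and lift $e$ to a $p$-Cartesian edge $\tilde{\epsilon} \colon M \to M'$ in $\mc{M}$. The second hypothesis forces $M$ to be right universal, and the first hypothesis forces $\nu(\tilde{\epsilon})$ to be $q$-Cartesian in $\mc{C} \times_S \mc{D}$, so its component in $\mc{C}$ is an $f$-Cartesian lift of $e$ with target $C'$, and its component in $\mc{D}$ is a $g$-Cartesian lift of $e$ with target $D'$. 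Reading off the sources shows that $\phi_s$ intertwines the Cartesian transports along $e$ on the two sides.

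Finally, the data assembled above yields an equivalence of straightenings $S^{\mr{op}} \to \Cat_\infty$, hence the desired equivalence $\mc{D} \simeq \mb{D}(f^{\mr{op}})$. The main obstacle I expect is promoting the pointwise and edge-level compatibility to full homotopy-coherent naturality: checking $\phi_s$ on objects and on 1-edges is immediate from the hypotheses, but assembling the higher coherences requires either invoking straightening as a black box (using that $\phi_s$ and its compatibility data are essentially canonical from \cite{HA}) or constructing the comparison functor directly from $\nu$ using an explicit model of $\mb{D}(f^{\mr{op}})$ such as the one developed in the rest of this section. The parenthetical variant (where $\nu_s^{\mr{op}}$ is a perfect weak pairing and right universality is tested with respect to $\nu_s^{\mr{op}}$) follows by the formally dual argument.
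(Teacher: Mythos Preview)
Your outline is correct in spirit and you have correctly identified the real obstacle: promoting the fiberwise equivalences and edge-level compatibility to a homotopy-coherent equivalence of functors $S^{\mr{op}}\to\Cat_\infty$. But this is exactly the step your proposal does not carry out, and it is the entire content of the lemma. Checking compatibility on objects and $1$-edges of $S$ does not produce a map of functors; one needs a device that packages the coherence automatically.

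The paper resolves this as follows. Rather than producing $\phi_s$ by hand, it straightens the whole diagram $\nu$ to a functor $M\colon S\to\mr{CPair}$, and uses the second hypothesis to see that $M$ factors through the subcategory $\mr{CPair}^{\mr{R}}$ of right-representable pairings. Then \cite[5.2.1.19]{HA} supplies a right adjoint $\mr{Tw}^{\mr{Pair}}$ to the forgetful functor $\phi\colon\mr{CPair}^{\mr{R}}\to\Cat_\infty$, and the unit $\mr{id}\to\mr{Tw}^{\mr{Pair}}\circ\phi$ is a natural transformation of functors, hence automatically coherent. Postcomposing with $M$ and unstraightening gives a map of Cartesian fibrations $\mc{C}\times_S\mc{D}\to\mc{C}\times_S\mb{D}(f^{\mr{op}})$ over $S$ preserving Cartesian edges; perfectness of $\nu_s$ then makes this an equivalence on each fiber, and \cite[3.3.1.5]{HTT} finishes. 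So the adjunction of \cite[5.2.1.19]{HA} is the missing key lemma in your sketch.

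One further point: the parenthetical variant is not handled by a ``formally dual argument'' in the paper. Instead, the paper applies the dualizing functor $\mb{D}$ to the entire diagram (so $p,f,g$ become new Cartesian fibrations $p',f',g'$ with $\mb{D}(\nu^{\mr{op}})_s\simeq\nu_s^{\mr{op}}$), reduces to the non-resp case for this new diagram, and then undoes $\mb{D}$ at the end.
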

\begin{proof}
 First, let us show the non-resp claim.
 Recall that the $\infty$-category $\mr{CPair}$ is a full subcategory of
 $\mr{Fun}(\Lambda^2_0,\Cat_\infty)$. Thus, $\nu$ corresponds to a
 functor $S\rightarrow\mr{CPair}$ by straightening.
 By the second condition, this functor induces a functor
 $M\colon S\rightarrow\mr{CPair}^\mr{R}$
 (cf.\ \cite[5.2.1.16]{HA} for the notation).
 By \cite[5.2.1.19]{HA}, the functor
 $\phi\colon\mr{Pair}^{\mr{R}}\rightarrow\Cat_\infty$ sending
 $\nu$ to $\mc{C}$ admits a right adjoint $\mr{Tw}^{\mr{Pair}}$ such
 that $\mr{Tw}^{\mr{Pair}}\mc{C}\simeq
 (\mr{Tw}\mc{C}\rightarrow\mc{C}\times\mc{C}^{\mr{op}})$.
 Thus, we
 have the natural transformation
 $\mr{id}\rightarrow\mr{Tw}^{\mr{Pair}}\circ\phi$.
 Thus, we have the natural transformation
 $M\rightarrow\mr{Tw}^{\mr{Pair}}\circ\phi\circ M$.
 Put $\mr{Tw}(f):=\mr{Un}_S(\mr{Tw}^{\mr{Pair}}\circ\mr{St}_S(f))$
 and recall that
 $\mb{D}(f^{\mr{op}})\simeq\mr{Un}_S(\chi\circ\mr{St}_S(f))$,
 where $\chi$ is the unique non-trivial automorphism of $\Cat_\infty$,
 by \cite[14.4.2.4]{SAG}.
 By unstraightening, the natural transformation induces a diagram of
 Cartesian fibrations over $S$
 \begin{equation*}
  \xymatrix{
   \mc{M}\ar[r]\ar[d]&\mr{Tw}(f)\ar[d]\\
  \mc{C}\times_S\mc{D}\ar[r]&\mc{C}\times_S\mb{D}(f^{\mr{op}})
   }
 \end{equation*}
 where horizontal functors send $p$-Cartesian edges to Cartesian
 edges of $\mr{Tw}(f)$. Invoking \cite[3.3.1.5]{HTT}, horizontal
 functors are equivalences if and only if they are equivalences for each
 fibers of $S$. Since the construction is functorial with respect to
 $S$, the perfectness of $\nu_s$ implies that the horizontal
 functors are in fact equivalences, which implies that
 $\mc{D}\simeq\mb{D}(f^{\mr{op}})$ as required.
 Finally, let us show the resp claim. Consider the following
 diagram
 \begin{equation*}
  \xymatrix{
   \mb{D}(p^{\mr{op}})
   \ar[rd]_{p'}\ar[rr]^-{\mb{D}(\nu^{\mr{op}})}&&
   \mb{D}(q^{\mr{op}})
   \ar[ld]^{q'}\\
  &S.&
   }
 \end{equation*}
 Then $p'$, $q'$ are Cartesian fibrations by construction,
 $\mb{D}(\nu^{\mr{op}})_s\simeq\nu_s^{\mr{op}}$, and
 $\mb{D}(q^{\mr{op}})\simeq\mb{D}(f^{\mr{op}})\times_S\mb{D}(g^{\mr{op}})$.
 Since a $p$-Cartesian edge $x\rightarrow y$ yields a $p'$-Cartesian edge
 $x'\rightarrow y'$ by construction of $\mb{D}$, we may apply the
 non-resp claim, which implies that
 $\mb{D}(g^{\mr{op}})\simeq\mb{D}(f'^{\mr{op}})\simeq\mc{C}$, where
 $f'\colon\mb{D}(f^{\mr{op}})\rightarrow S$ is the Cartesian fibration.
 Thus, taking $\mb{D}^{-1}$, we get the claim.
\end{proof}

\begin{lem}
 \label{carttwar}
 Let $f\colon\mc{A}\rightarrow\mc{B}$ be an inner fibration of $\infty$-categories,
 and let $g\colon\mr{Tw}^{\mr{op}}\mc{A}\rightarrow\mr{Tw}^{\mr{op}}\mc{B}$ be the induced map.
 Let $e\colon\Delta^1\rightarrow\mr{Tw}^{\mr{op}}\mc{A}$ be an edge,
 and let $\widetilde{e}\colon\Delta^3\rightarrow\mc{A}$ be the associated map defining $e$ depicted as follows:
 \begin{equation*}
  \xymatrix{
   \widetilde{e}(1)\ar[d]&\widetilde{e}(0)
   \ar[l]_{\alpha}\ar[d]\\
  \widetilde{e}(2)\ar[r]^-{\beta}&\widetilde{e}(3).
   }
 \end{equation*}
 \begin{enumerate}
  \item\label{carttwar-1}
       The map $g$ is an inner fibration. If, moreover, $f$ is a
       categorical fibration, so is $g$.
	
  \item\label{carttwar-2}
       Assume that $\alpha:=\widetilde{e}(\Delta^{\{0,1\}})$ is an
       $f$-coCartesian edge and
       $\beta:=\widetilde{e}(\Delta^{\{2,3\}})$ is an $f$-Cartesian
       edge. Then $e$ is a $g$-Cartesian edge.
	 
  \item\label{carttwar-3}
       Assume that $\alpha$ is an $f$-Cartesian edge and
       $\beta$ is an $f$-coCartesian edge. Then $e$ is a
       $g$-coCartesian edge.
 \end{enumerate}
 \end{lem}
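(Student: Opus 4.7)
The plan is to prove the three claims in sequence. For (\ref{carttwar-1}), I would factor $g$ through the pullback
\begin{equation*}
\mr{Tw}^{\mr{op}}\mc{A}\xrightarrow{g_1}\widetilde{\mc{N}}:=\mr{Tw}^{\mr{op}}\mc{B}\times_{\mc{B}^{\mr{op}}\times\mc{B}}(\mc{A}^{\mr{op}}\times\mc{A})\xrightarrow{g_2}\mr{Tw}^{\mr{op}}\mc{B}.
\end{equation*}
The map $g_2$ is the pullback of $f^{\mr{op}}\times f$ and so is an inner fibration (resp.\ categorical fibration) whenever $f$ is. The source and target of $g_1$ are left fibrations over $\mc{A}^{\mr{op}}\times\mc{A}$ (applying the dual of \cite[5.2.1.3]{HA} to $\mc{A}$, and then pulling back), classifying the functors $\mr{Map}_{\mc{A}}(-,-)$ and $\mr{Map}_{\mc{B}}(f-,f-)$; the fiber of $g_1$ over $(a,a')$ is the induced map $\mr{Map}_{\mc{A}}(a,a')\to\mr{Map}_{\mc{B}}(f(a),f(a'))$, which is a Kan fibration between Kan complexes because $f$ is an inner fibration (apply $\mr{Fun}(\Delta^1,-)$ to $f$ and restrict). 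The standard criterion for a map between left fibrations over a common base then shows $g_1$ is itself a left fibration, in particular a categorical fibration. Composing, $g$ is an inner fibration, and a categorical fibration under the stronger hypothesis.

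For (\ref{carttwar-2}), I would invoke the mapping-space criterion \cite[2.4.4.3]{HTT}, reducing to showing, for each $w=(w_0\to w_1)\in\mr{Tw}^{\mr{op}}\mc{A}$, that the square
\begin{equation*}
\xymatrix{
\mr{Map}_{\mr{Tw}^{\mr{op}}\mc{A}}(w,v_0)\ar[r]\ar[d]&\mr{Map}_{\mr{Tw}^{\mr{op}}\mc{A}}(w,v_1)\ar[d]\\
\mr{Map}_{\mr{Tw}^{\mr{op}}\mc{B}}(g(w),g(v_0))\ar[r]&\mr{Map}_{\mr{Tw}^{\mr{op}}\mc{B}}(g(w),g(v_1))
}
\end{equation*}
(horizontal maps given by post-composition with $e$) is homotopy Cartesian, where $v_0,v_1$ denote the source and target of $e$. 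From the left fibration of (\ref{carttwar-1}), for any $v=(v_{\mr{s}}\to v_{\mr{t}})$ one obtains
\begin{equation*}
\mr{Map}_{\mr{Tw}^{\mr{op}}\mc{A}}(w,v)\simeq\bigl(\mr{Map}_{\mc{A}}(v_{\mr{s}},w_0)\times\mr{Map}_{\mc{A}}(w_1,v_{\mr{t}})\bigr)\times_{\mr{Map}_{\mc{A}}(v_{\mr{s}},v_{\mr{t}})}\{v\},
\end{equation*}
where the base map sends $(\rho,\lambda)$ to $\lambda\circ w\circ\rho$. Under this identification, post-composition with $e$ acts on the product by $(\rho,\lambda)\mapsto(\rho\circ\alpha,\beta\circ\lambda)$, and the relation $v_1\simeq\beta\circ v_0\circ\alpha$ recorded by $\widetilde{e}$ makes the filler condition automatically compatible. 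Consequently the homotopy fiber of the top horizontal map over a chosen basepoint decouples as the product of the homotopy fibers of $-\circ\alpha\colon\mr{Map}_{\mc{A}}(\widetilde{e}(1),w_0)\to\mr{Map}_{\mc{A}}(\widetilde{e}(0),w_0)$ and $\beta\circ-\colon\mr{Map}_{\mc{A}}(w_1,\widetilde{e}(2))\to\mr{Map}_{\mc{A}}(w_1,\widetilde{e}(3))$, and likewise for the bottom row using $f\alpha,f\beta$. The overall square is thus homotopy Cartesian iff each of the two resulting squares comparing mapping spaces in $\mc{A}$ to those in $\mc{B}$ is homotopy Cartesian for all $w_0,w_1$---which by \cite[2.4.4.3]{HTT} and its dual is precisely the hypothesis that $\alpha$ is $f$-coCartesian and $\beta$ is $f$-Cartesian. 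Part (\ref{carttwar-3}) follows by the formally dual argument using the coCartesian mapping-space criterion, which swaps the required conditions on $\alpha$ and $\beta$.

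The main obstacle I expect is the careful bookkeeping of directional conventions for $\mr{Tw}^{\mr{op}}$ relative to $\mr{Tw}$, together with verifying that the product decomposition of homotopy fibers is natural in $f$, so that the two auxiliary squares really do exhaust the content of the mapping-space criterion.
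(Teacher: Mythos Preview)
Your approach via the mapping-space criterion \cite[2.4.4.3]{HTT} is genuinely different from the paper's. The paper proves (\ref{carttwar-2}) and (\ref{carttwar-3}) by a direct combinatorial horn-filling argument: the lifting problem for $\Lambda^n_n\hookrightarrow\Delta^n$ unwinds to a lifting problem for $f$ along an inclusion $K\hookrightarrow\Delta^{2n+1}$, and one builds $\Delta^{2n+1}$ from $K$ through an explicit two-stage filtration, using the coCartesian hypothesis on $\alpha$ to attach simplices containing the edge $\Delta^{\{0,1\}}$ and then the Cartesian hypothesis on $\beta$ for those containing $\Delta^{\{2n,2n+1\}}$. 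Your route avoids this combinatorics entirely; in exchange it leans on part (\ref{carttwar-1}) and on carefully tracking pullback squares of mapping spaces.

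However, your ``decoupling'' step is not correct as stated. With $A_i,B_i,C_i$ the relevant mapping spaces in $\mc{A}$ (so $A_0=\mr{Map}_{\mc{A}}(\widetilde e(1),w_0)$, $C_0=\mr{Map}_{\mc{A}}(\widetilde e(1),\widetilde e(2))$, etc.), one has $\mr{Map}_{\mr{Tw}^{\mr{op}}\mc{A}}(w,v_i)\simeq(A_i\times B_i)\times_{C_i}\{v_i\}$, and the homotopy fibre of the top horizontal map is
\[
(\mr{fib}_A\times\mr{fib}_B)\times_{\mr{fib}_C}\{v_0\},
\]
not simply $\mr{fib}_A\times\mr{fib}_B$: the extra constraint $\lambda_0\circ w\circ\rho_0\simeq v_0$ is not forced by $\rho_0\alpha\simeq\rho_1$, $\beta\lambda_0\simeq\lambda_1$ and $\lambda_1 w\rho_1\simeq v_1$. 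The repair is straightforward: observe that all four component squares (the $A$-, $B$-, $C$-, and constant-point squares, the $C$-square handled by factoring $\gamma\mapsto\beta\gamma\alpha$ through $\gamma\mapsto\gamma\alpha$ and pasting) are homotopy Cartesian under your hypotheses, and then use that limits commute with limits to conclude the target square is Cartesian. Once patched this way, your argument is correct and pleasantly short.

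For (\ref{carttwar-1}), the paper proves directly that $g_1^{\mr{op}}$ is a right fibration by reducing to the inner-anodyne inclusions of \cite[5.2.1.3]{HA}; your appeal to a ``standard criterion'' that fiberwise Kan fibrations between left fibrations are left fibrations needs a reference or a short argument, as this is not automatic.
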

\begin{proof}
 Let us show the first claim.
 It suffices to show that the induced map
 $\mr{Tw}\mc{A}\rightarrow
 \mr{Tw}\mc{B}\times_{(\mc{B}\times\mc{B}^{\mr{op}})}
 (\mc{A}\times\mc{A}^{\mr{op}})$ is
 a right fibration. Indeed, we need to show the right lifting property
 of the map with respect to the inclusion
 $\Lambda^n_i\rightarrow\Delta^n$ for $0<i\leq n$. Unwinding the
 definition, it suffices to solve the right lifting problem of $f$
 with respect to $K\hookrightarrow\Delta^{2n+1}$, where $K$ is the
 same simplicial subset of $\Delta^{2n+1}$ appearing in
 the proof of \cite[5.2.1.3]{HA}. Since $K\hookrightarrow\Delta^{2n+1}$
 is shown to be an inner anodyne in {\em ibid.}, the claim follows.

 Let us prove the second claim. It amounts to solving the lifting
 problem on the left for \ref{carttwar-2} and right for
 \ref{carttwar-3}:
  \begin{equation*}
   \xymatrix{
    \Delta^{\{n-1,n\}}
    \ar@{^{(}->}[r]\ar@/^{3ex}/[rr]^{e}&
    \Lambda^n_n\ar[d]\ar[r]&
    \mr{Tw}^{\mr{op}}\mc{A}\ar[d]^{g}\\
   &\Delta^n\ar[r]\ar@{-->}[ur]&\mr{Tw}^{\mr{op}}\mc{B},
    }
    \qquad
    \xymatrix{
    \Delta^{\{0,1\}}
    \ar@{^{(}->}[r]\ar@/^{3ex}/[rr]^{e}&
    \Lambda^0_n\ar[d]\ar[r]&
    \mr{Tw}^{\mr{op}}\mc{A}\ar[d]^{g}\\
   &\Delta^n\ar[r]\ar@{-->}[ur]&\mr{Tw}^{\mr{op}}\mc{B}.
    }
  \end{equation*}
 We first treat \ref{carttwar-2}.
 Unwinding the definition, it suffices to solve the following lifting
 problem of marked simplicial sets:
 \begin{equation}
  \label{mainliftprob}
  \xymatrix@C=30pt{
   E
   \ar@{^{(}->}[r]\ar@/^{3ex}/[rr]^-{\psi}&
   K
   \ar[r]\ar@{^{(}->}[d]_{\varphi}&
   \mc{A}
   \ar[d]^{f}\\
  &\Delta^{2n+1}
   \ar[r]\ar@{-->}[ur]&\mc{B}.
   }
 \end{equation}
 Here, $E:=\Delta^{\{0,1\}}\cup\Delta^{\{2n,2n-1\}}$, the edge
 $\psi(\Delta^{\{0,1\}})$ (resp.\ $\psi(\Delta^{\{2n,2n+1\}})$) is an
 $f$-coCartesian (resp.\ $f$-Cartesian) edge, and $K$ is the
 union of the simplicial subsets $\Delta^I\subset\Delta^{2n+1}$ where
 $I=[2n+1]\setminus\{i,2n+1-i\}$ for $0<i\leq n$.
 
 Let $\Sigma$ be the simplicial subset $\Delta^{[2n+1]\setminus\{2n\}}$
 of $\Delta^{2n+1}$. Put $K_1:=K\cup\Sigma$. It suffices to check the
 following two claims
 \begin{enumerate}
  \item The map $f$ has right lifting property with respect to
	$K\hookrightarrow K_1$;
  \item The map $f$ has right lifting property with respect to
	$K_1\hookrightarrow\Delta^{2n+1}$.
 \end{enumerate}
 Let us show the first claim. Note that any simplex of $\Sigma$ which is
 not in $K$ contains $\{1\}$ as a vertex.
 Thus, we can divide the simplices of $K_1$ which do not belong to $K$
 into the following two classes:
 \begin{itemize}
  \item $A_k$ is the set of simplices which have $N$ vertices, where
	$N\leq k+2$, and contain the vertices $\{0,1\}$;
	
  \item $B_k$ is the set of simplices which have $k+1$ vertices and
	contain $\{1\}$ but do not contain $\{0\}$. For $\sigma\in A_k$,
	we denote by $\sigma'\in B_k$ the simplex obtained by deleting
	the vertex $\{0\}$.
 \end{itemize}
 Let $K_1^{(k)}:=K\cup\bigcup_{\sigma\in A_k}\sigma$, so
 $K_1^{(2n-1)}=K_1$. Then any element of $B_k$ is a simplex of
 $K_1^{(k)}$ because for any $\Delta^I\in B_k$,
 $\Delta^{I\sqcup\{0\}}\in A_k$. On the other hand no element of $B_{k+1}$
 belongs to $K_1^{(k)}$ because of the dimension reason.
 For $k<n-1$, we have $A_k=B_k=\emptyset$ because for any
 $I\subset[2,2n-1]$ with $\#I=k$, we
 can find $2\leq i\leq n+1$ such that $\{i,2n+1-i\}\cap I=\emptyset$.
 This implies that $K_1^{(n-2)}=K$.
 On the other hand, for $2n-1\geq k\geq n-1$, $A_k$, $B_k$ are
 non-empty.

 We solve the lifting problem with respect to $K\hookrightarrow
 K_1^{(k)}$ inductively.
 We assume we have a map $K_1^{(k)}\rightarrow\mc{A}$ solving the
 lifting problem.
 We choose a total ordering $\sigma_1<\dots<\sigma_a$ of the simplices
 in $A_{k+1}$ which do not belong to $K_1^{(k)}$,
 and we have the sequence $K_1^{(k)}=:L_0\subset L_1\subset\dots\subset
 L_a=:K_1^{(k+1)}$ where $L_i=L_{i-1}\cup\sigma_i$. Fix $i$ and put
 $L':=L_{i-1}$, $L:=L_i$.
 Let us solve the lifting problem with respect to
 $L'\hookrightarrow L$.
 For this, write $\sigma_i=\Delta^I$ with $I\subset[0,2n+1]$.
 Recall that $\{0,1\}\subset I$ by definition.
 Now, for $j\in I\setminus\{0,1\}$, $\Delta^{I\setminus\{j\}}$ belongs
 to $A_k$ or $K$, and thus $\Delta^{I\setminus\{j\}}\subset L'$.
 This holds also for $j=1$ since $\Delta^{I\setminus\{1\}}$ belongs to
 $K$. On the other hand, $\Delta^{I\setminus\{0\}}$ is not a simplex of
 $L'$ because $\Delta^{I\setminus\{0\}}\in B_k$ and
 $\sigma'_1,\dots,\sigma'_a$ are all different to each other.
 This implies that $L'\hookrightarrow
 L'\coprod_{\Lambda^{k+2}_0}\Delta^I=L$.
 Consider the following diagram:
 \begin{equation*}
  \xymatrix{
   \Delta^{\{0,1\}}\ar[r]\ar@/^3ex/[rrr]^-{\psi}&
   \Lambda^{k+2}_0\ar[r]\ar[d]&L'\ar[r]\ar[d]&
   \mc{A}\ar[d]^{f}\\&
  \Delta^{I}\ar[r]&L\ar[r]&\mc{B}.
   }
 \end{equation*}
 Since $\psi(\Delta^{\{0,1\}})$ is an $f$-coCartesian edge, we have a
 map $\Delta^I\rightarrow\mc{A}$ making the diagram commutative. Thus,
 we have a lifting $L\rightarrow\mc{A}$, and the first claim follows.

 Let us show the second claim. The idea of the proof is essentially the
 same as the first claim. If a simplex of $\Delta^{2n+1}$ does not
 contain the vertex $\{2n\}$, then it is contained in $K_1$.
 Thus, we can divide the simplices of $\Delta^{2n+1}$ which do not
 belong to $K_1$ into the following two
 classes:
 \begin{itemize}
  \item $C_k$ is the set of simplices which have $N$ vertices, where
	$N\leq k+2$, and contain the vertices $\{2n,2n+1\}$;
  \item $D_k$ is the set of simplices which have $k+1$ vertices and
	contain $\{2n\}$ but do not contain $\{2n+1\}$.
 \end{itemize}
 Let $K_2^{(k)}:=K_1\cup\bigcup_{\sigma\in C_k}\sigma$, so
 $K_2^{(2n-1)}=\Delta^{2n+1}$. As in the previous case, any element of
 $D_k$ is a simplex of $K^{(k)}_2$, and any element of $D_{k+1}$ does
 not belong to $K^{(k)}_2$. The sets $C_k$, $D_k$ are non-empty if and
 only if $2n-1\geq k\geq n-1$. We solve the lifting problem
 with respect to $K_1\hookrightarrow K_2^{(k)}$ inductively.
 Assume we have a map $K_2^{(k)}\rightarrow\mc{A}$ solving the
 problem. We choose a total ordering $\tau_1<\dots<\tau_b$ of the
 simplices in $C_{k+1}$ which do not belong to $K_2^{(k)}$,
 and we form a sequence $K_2^{(k)}=:M_0\subset\dots\subset
 M_b=:K_2^{(k+1)}$ where
 $M_{i}=M_{i-1}\cup\tau_i$. Fix $i$ and put $M':=M_{i-1}$, $M:=M_i$,
 $\tau:=\tau_i=\Delta^J$.
 We solve the lifting problem with respect to $M'\hookrightarrow M$.
 Since $\{2n,2n+1\}\subset J$, for any $j\in J\setminus\{2n,2n+1\}$,
 $\Delta^{J\setminus\{j\}}$ belongs to $C_k$ or $K_1$, and thus
 contained in $M'$. The same holds for $j=2n$ because it is contained in
 $K_1$. Finally, $\Delta^{I\setminus\{2n+1\}}$ is not contained in $M'$,
 which implies that $M'\hookrightarrow
 M'\coprod_{\Lambda^{k+2}_{k+2}}\Delta^J=M$. Since
 $\psi(\Delta^{\{2n,2n+1\}})$ is a $f$-Cartesian edge, the lifting
 problem is solved.

 Let us prove \ref{carttwar-3}. For this, it suffices to check the
 lifting problem (\ref{mainliftprob}) where
 $E:=\Delta^{\{n-1,n\}}\cup\Delta^{\{n+1,n+2\}}$, the edge
 $\psi(\Delta^{\{n+1,n+2\}})$ (resp.\ $\psi(\Delta^{\{n-1,n\}})$) is an
 $f$-coCartesian (resp.\ $f$-Cartesian) edge,
 and $K$ is the union of the simplicial subsets
 $\Delta^I\subset\Delta^{2n+1}$ where $I=[2n+1]\setminus\{i,2n+1-i\}$
 for $0\leq i<n$. The proof is essentially the same as \ref{carttwar-2},
 so we only indicate the difference.
 Put $\sigma:=\Delta^{[2n+1]\setminus\{n-1\}}$. The definition of $A_k$
 is replaced by the set of simplices which contain vertices
 $\{n+1,n+2\}$, $B_k$ is the one which contain $n+2$ but not $n+1$.
 We proceed as before, and define $L'\hookrightarrow L$ similarly.
 Small difference from the previous argument is that, in this case,
 $L=L'\coprod_{\Lambda^{k+2}_i}\Delta^{I}$ for $0\leq i<k+2$
 so that $\Delta^{\{i,i+1\}}$ in $\Lambda^{k+2}_i$ is mapped to the edge
 $\Delta^{\{n+1,n+2\}}$ in $\Delta^I$. The lifting problem can be solved
 for $i=0$ by the assumption that the edge $\psi(\Delta^{\{n+1,n+2\}})$
 is coCartesian, and for $0<i<k+2$ since $f$ is an inner fibration.
 The later part also works similarly, and we omit the detail.
\end{proof}

\subsection{}
 Let $\mc{C}$ be an $\infty$-category, and let
 $(\Phi,\Theta)\colon\mr{Tw}^{\mr{op}}
 \mc{C}\rightarrow\mc{C}^{\mr{op}}\times\mc{C}$ be the canonical
 functor.
 Recall the notation $\mr{Tw}^{\mr{op}}_K\mc{C}$ from
 \ref{twisarrcat}. In particular, for a vertex
 $v\colon\Delta^0\rightarrow\mc{C}$ and an edge
 $\phi\colon\Delta^1\rightarrow\mc{C}$, we have the $\infty$-categories
 $\mr{Tw}_v^{\mr{op}}\mc{C}$ and $\mr{Tw}_{\phi}^{\mr{op}}\mc{C}$.
 Let $X\rightarrow\mr{Tw}^{\mr{op}}\mc{C}$ be a map simplicial
 sets. By definition, vertices of $\Theta_*(X)\rightarrow\mc{C}$
 (see \ref{defpushpullsim} for the notation)
 over $v$ correspond to functors $\mr{Tw}_v^{\mr{op}}\mc{C}\rightarrow
 X$ over $\mr{Tw}^{\mr{op}}\mc{C}$.
 
\begin{dfn*}
 Let $f\colon\mc{D}\rightarrow\mc{C}^{\mr{op}}$ be a
 coCartesian fibration.
 We define $\mc{D}^{\vee}$ to be the full
 subcategory of $\Theta_*\Phi^*(\mc{D})$ (see \ref{defpushpullsim} for the notation)
 spanned by the vertices $G\colon\mr{Tw}^{\mr{op}}_v\mc{C}\rightarrow\mc{D}$
 over $\mc{C}^{\mr{op}}$ for some $v\in C$
 such that $G$ sends edges of $\mr{Tw}^{\mr{op}}_v\mc{C}$ to
 $f$-coCartesian edges in $\mc{D}$.
\end{dfn*}

Our goal of this section is the following theorem.

\begin{thm}
 \label{dualthm}
 Let $\mc{C}$ be an $\infty$-category and let
 $f\colon\mc{D}\rightarrow\mc{C}^{\mr{op}}$ be a coCartesian
 fibration. Then the map $f^\vee\colon\mc{D}^\vee\rightarrow\mc{C}$ is a
 Cartesian fibration, and this is equivalent to $\mb{D}(f)$.
\end{thm}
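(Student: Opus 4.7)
The plan is to prove the theorem in two stages: first that $f^{\vee}\colon\mc{D}^{\vee}\to\mc{C}$ is a Cartesian fibration, and second that it is equivalent to $\mb{D}(f)$ via Lemma~\ref{conddual}.

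For the first stage, note that by the universal property of $\Theta_{*}$, an edge of $\mc{D}^{\vee}$ over $\phi\colon v\to w$ in $\mc{C}$ is a functor $G_{\phi}\colon\mr{Tw}^{\mr{op}}\mc{C}\times_{\mc{C}}\Delta^{1}\to\mc{D}$ over $\mc{C}^{\mr{op}}$ sending every edge to an $f$-coCartesian edge. Given $G_{w}\in\mc{D}^{\vee}_{w}$, I construct its candidate $f^{\vee}$-Cartesian lift over $\phi$ by coCartesian extension of $G_{w}$ along the inclusion $\mr{Tw}^{\mr{op}}\mc{C}\times_{\mc{C}}\{w\}\hookrightarrow\mr{Tw}^{\mr{op}}\mc{C}\times_{\mc{C}}\Delta^{1}$. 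For $(c\to v)$ in the $v$-fiber of $\mr{Tw}^{\mr{op}}_{\phi}\mc{C}$, the edge $(c\to v)\to(c\to v\xrightarrow{\phi}w)$ lies over the identity of $c$ in $\mc{C}^{\mr{op}}$, so the coCartesian condition forces $G_{\phi}(c\to v)$ to be equivalent to $G_{w}(c\to w$ via $\phi)$; one then checks directly that this $G_{\phi}$ preserves the coCartesian-edge condition everywhere, hence lies in $\mc{D}^{\vee}$. To verify $G_{\phi}$ is $f^{\vee}$-Cartesian, Lemma~\ref{cartedgedetelem} reduces the question to the case $\mc{C}=\Delta^{n}$ for small $n$, where $\mr{Tw}^{\mr{op}}(\Delta^{n})$ is an explicit small $\infty$-category and a direct verification applies.

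For the second stage, apply Lemma~\ref{conddual} with $S=\mc{C}$, the lemma's Cartesian fibration being $f^{\mr{op}}\colon\mc{D}^{\mr{op}}\to\mc{C}$, and the lemma's target being $\mc{D}^{\vee}$; its conclusion yields $\mc{D}^{\vee}\simeq\mb{D}((f^{\mr{op}})^{\mr{op}})=\mb{D}(f)$. To set up the pairing $\nu\colon\mc{M}\to\mc{D}^{\mr{op}}\times_{\mc{C}}\mc{D}^{\vee}$, I take $\mc{M}$ to be the $\infty$-category parametrizing twisted arrows in $\mc{D}$ whose projection to $\mc{C}^{\mr{op}}$ is an identity, so that the fiber of $\mc{M}\to\mc{C}$ over $v$ is $\mr{Tw}(\mc{D}_{v})$, and I verify via Lemma~\ref{carttwar} that $\mc{M}\to\mc{C}$ is a Cartesian fibration whose Cartesian transport is induced by coCartesian transport in $\mc{D}$. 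The first projection $\mc{M}\to\mc{D}^{\mr{op}}$ is the source of the twisted arrow; the second projection $\mc{M}\to\mc{D}^{\vee}$ sends $(d_{0}\to d_{1})\in\mr{Tw}(\mc{D}_{v})$ to the functor $\mr{Tw}^{\mr{op}}_{v}\mc{C}\to\mc{D}$ obtained by coCartesian transport of $d_{1}$, which is inverse up to equivalence to evaluation at $\mr{id}_{v}$.

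The fiberwise pairing $\nu_{v}$ is then the standard twisted arrow pairing on $\mc{D}_{v}$, hence perfect. Right-universal objects of $\mr{Tw}(\mc{D}_{v})$ are identity morphisms, which are preserved under Cartesian transport in $\mc{M}$ by construction. The main obstacle is the globalized construction of the second projection $\mc{M}\to\mc{D}^{\vee}$ as a genuine $\infty$-functor together with the verification that it sends Cartesian edges to Cartesian edges: via the universal property of $\Theta_{*}$, one must produce a coherent map $\mc{M}\times_{\mc{C}}\mr{Tw}^{\mr{op}}\mc{C}\to\mc{D}$ over $\mc{C}^{\mr{op}}$ implementing the fiberwise coCartesian transport of the target of the twisted arrow. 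This is essentially a coherent, fibered form of the Yoneda lemma, and the required compatibility between Cartesian transport in $\mc{M}$ and the cocartesian-extension construction defining $\mc{D}^{\vee}$ follows from the second assertion of Lemma~\ref{carttwar}.
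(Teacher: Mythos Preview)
Your strategic outline matches the paper's: one applies Lemma~\ref{conddual} to a fiberwise twisted-arrow pairing. The genuine gap is the one you yourself flag at the end. You need a strict functor $\mc{M}\to\mc{D}^{\vee}$ over $\mc{C}$, and your description of $\mc{M}$ as ``twisted arrows in $\mc{D}$ whose projection to $\mc{C}^{\mr{op}}$ is an identity'' does not give you one: the second projection requires, for each twisted arrow $(d_0\to d_1)$ over $v$, a \emph{coherent} choice of a functor $\mr{Tw}^{\mr{op}}_{v}\mc{C}\to\mc{D}$ extending $d_1$. Producing this coherently over all of $\mc{M}$ is exactly the Yoneda-type problem you allude to, and invoking Lemma~\ref{carttwar} does not solve it---that lemma tells you which edges of $\mr{Tw}^{\mr{op}}$ are (co)Cartesian, not how to assemble a global functor.

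The paper resolves this by building all three players \emph{inside} pushforwards along $\Theta$. Concretely, $\mc{M}$ is defined as the full subcategory of $\Theta_{*}(\mr{Tw}^{\mr{op}}\mc{D}^{\mr{op}})$ on those sections $G\colon\mr{Tw}^{\mr{op}}_{v}\mc{C}\to\mr{Tw}^{\mr{op}}\mc{D}^{\mr{op}}$ such that $\Phi_{\mc{D}}\circ G$ sends edges to $f$-coCartesian edges and $\Theta_{\mc{D}}\circ G$ sends edges to equivalences. With this definition, the projections $\mc{M}\to\mc{D}^{\vee}\subset\Theta_{*}\Phi^{*}\mc{D}$ and $\mc{M}\to\widetilde{\mc{D}^{\mr{op}}}\subset\Theta_{*}\Theta^{*}\mc{D}^{\mr{op}}$ are induced by the structure maps $\mr{Tw}^{\mr{op}}\mc{D}^{\mr{op}}\to\mc{D}$ and $\mr{Tw}^{\mr{op}}\mc{D}^{\mr{op}}\to\mc{D}^{\mr{op}}$ under $\Theta_{*}$---no coherence to manage. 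Note also that the paper does not map directly to $\mc{D}^{\mr{op}}$ but to an auxiliary $\widetilde{\mc{D}^{\mr{op}}}$, and separately proves this is equivalent to $\mc{D}^{\mr{op}}$; your direct projection $\mc{M}\to\mc{D}^{\mr{op}}$ has the same coherence issue.

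Two smaller points. First, your invocation of Lemma~\ref{cartedgedetelem} in Stage~1 is misplaced: that lemma concerns a map $h$ between two coCartesian fibrations over a common base, whereas $f^{\vee}\colon\mc{D}^{\vee}\to\mc{C}$ is not of that form. The paper instead proves that $\mc{M}$, $\mc{D}^{\vee}$, and $\widetilde{\mc{D}^{\mr{op}}}$ are Cartesian over $\mc{C}$ uniformly, by showing that a right Kan extension along $\mr{Tw}^{\mr{op}}_{w}\mc{C}\hookrightarrow\mr{Tw}^{\mr{op}}_{\phi}\mc{C}$ exists (using Lemma~\ref{carttwar} to identify the relevant $p$-Cartesian edges) and then applying \cite[B.4.8]{HA}. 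Second, the fiberwise identification $\mc{M}_{v}\simeq\mr{Tw}^{\mr{op}}\mc{D}_{v}^{\mr{op}}$ is not automatic from the definition of $\mc{M}$; the paper establishes it by a left Kan extension argument showing that evaluation at $(v\to v)$ is an equivalence.
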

\begin{proof}
 Consider the following diagram:
 \begin{equation*}
  \xymatrix{
   \mc{D}^{\mr{op}}\ar[d]&
   \mr{Tw}^{\mr{op}}\mc{D}^{\mr{op}}
   \ar[d]\ar[r]^-{\Phi_\mc{D}}\ar[l]_-{\Theta_\mc{D}}&
   \mc{D}\ar[d]^{f}\\
  \mc{C}&
   \mr{Tw}^{\mr{op}}\mc{C}
   \ar[r]^-{\Phi}\ar[l]_-{\Theta}&
   \mc{C}^{\mr{op}}.
   }
 \end{equation*}
 Consider the subcategory $\widetilde{\mc{D}^{\mr{op}}}$ of
 $\Theta_*\Theta^*(\mc{D}^{\mr{op}})$
 spanned by the vertices
 $\mr{Tw}^{\mr{op}}_v\mc{C}\rightarrow\mc{D}^{\mr{op}}$ over
 $\mc{C}$ such that any edge of $\mr{Tw}^{\mr{op}}_v\mc{C}$ is sent to
 equivalences. We will later show that this category is equivalent to
 $\mc{D}^{\mr{op}}$.
 We define $\mc{M}$ to be the full subcategory of
 $\Theta_*(\mr{Tw}^{\mr{op}}\mc{D}^\mr{op})$
 spanned by the vertices $G\colon\mr{Tw}^{\mr{op}}_{v}\mc{C}\rightarrow
 \mr{Tw}^{\mr{op}}\mc{D}^\mr{op}$ over $\mr{Tw}^{\mr{op}}\mc{C}$
 satisfying the following conditions:
 \begin{itemize}
  \item The composition $\Phi_{\mc{D}}\circ G$ sends edges of
	$\mr{Tw}^{\mr{op}}_v\mc{C}$ to $f$-coCartesian edges;
	
  \item The composition $\Theta_{\mc{D}}\circ G$ sends edges of
	$\mr{Tw}^{\mr{op}}_v\mc{C}$ to equivalences.
 \end{itemize}
 By the first condition, the natural map
 $\mr{Tw}^{\mr{op}}\mc{D}^{\mr{op}}\rightarrow\Phi^*\mc{D}$ induces the
 map $\mc{M}\rightarrow\mc{D}^\vee$, and by the second condition,
 $\mr{Tw}^{\mr{op}}\mc{D}^{\mr{op}}\rightarrow\Theta^*\mc{D}^{\mr{op}}$
 induces the map $\mc{M}\rightarrow\widetilde{\mc{D}^{\mr{op}}}$.
 Thus, we have the map
 $\nu\colon\mc{M}\rightarrow\mc{D}^\vee\times_{\mc{C}}
 \widetilde{\mc{D}^{\mr{op}}}$.

 Let us show that $\mc{M}$, $\mc{D}^\vee$, and
 $\widetilde{\mc{D}^{\mr{op}}}$ are Cartesian fibrations over $\mc{C}$.
 Since the verifications are similar, and that for $\mc{M}$
 is much more complicated than the other two, we concentrate on this.
 Let $\pi\colon\mc{M}\rightarrow\mc{C}$ be the map.
 Take a vertex $m\in\mc{M}$ and an edge $\phi\colon v\rightarrow
 w:=\pi(m)$ in $\mc{C}$. We wish to take a right Kan extension as
 follows:
 \begin{equation*}
  \xymatrix@C=40pt{
   \mr{Tw}^{\mr{op}}_w\mc{C}
   \ar[r]^-{m}\ar@{^{(}->}[d]&
   \mr{Tw}^{\mr{op}}\mc{D}^{\mr{op}}
   \ar[d]^{p}\\
  \mr{Tw}^{\mr{op}}_\phi\mc{C}
   \ar[r]\ar@{-->}[ur]&
   \mr{Tw}^{\mr{op}}\mc{C}.
   }
 \end{equation*}
 In order to apply \cite[4.3.2.15]{HTT} to check the existence, take a
 vertex $C:=(v'\rightarrow v)$ of $\mr{Tw}^{\mr{op}}_\phi\mc{C}$.
 Then $\bigl(\mr{Tw}^{\mr{op}}_w\mc{C}\bigr)_{C/}$ has an initial object
 $C\rightarrow(v'\rightarrow w)$ which can be depicted as
 \begin{equation*}
  \xymatrix{
   v'\ar[d]_{C}&v'\ar[l]_-{=}\ar[d]\\
  v\ar[r]^-{\phi}&w.
   }
 \end{equation*}
 Choose a following diagram
 $D:=\Lambda^3_2\coprod_{\Delta^{\{1,3\}}}\Delta^{\{1,2',3\}}
 \rightarrow\mc{D}^{\mr{op}}$ of the following form:
 \begin{equation*}
  \xymatrix@C=50pt{
   {}_{v}D(1)
   \ar[dd]_{\mlq\mlq\phi^*m(w\rightarrow w)\mrq\mrq}
   ^{\ccirc{3}}
   \ar@{=>}[rr]&&
   {}_{w}D(2)
   \ar[dd]^{m(w\rightarrow w)}_{\ccirc{1}}\\
  &{}_{v'}D(0)
   \ar[rd]_(.4){\simeq m(v'\rightarrow w)}^{\ccirc{2}}
   \ar@{=>}[ul]\ar@{=>}[ur]
   \ar@{-->}[dl]&\\
  {}_vD(2')
   \ar@{=>}[rr]&&
   {}_wD(3).
   }
 \end{equation*}
 Here, ``$\Rightarrow$'' are $f^{\mr{op}}$-Cartesian edges and the big
 outer square is a Cartesian pullback square over $v\rightarrow w$.
 The left subscripts indicate the image of the object in $\mc{C}$
 ({\it e.g.}\ ${}_vD(1)$ is over $v$). The object $m(w\rightarrow w)$ is
 {\it a priori} an object of $\mr{Tw}^{\mr{op}}\mc{D}^{\mr{op}}$, but
 this determines an edge in $\mc{D}^{\mr{op}}$ which yields the edge
 $\ccirc{1}$. The same procedure yields an edge $m(v'\rightarrow w)$,
 and $\ccirc{2}$ is an edge equivalent to this edge. We can take such an
 edge because $\Theta_{\mc{D}}\circ m$ sends edges of
 $\mr{Tw}^{\mr{op}}_w\mc{C}$ to equivalences in $\mc{D}^{\mr{op}}$. The
 edge $\ccirc{3}$ is an edge that should be equivalent to
 $\phi^*m(w\rightarrow w)$ when $\pi$ is shown to be Cartesian.
 
 Since $D\hookrightarrow\Delta^{3}\coprod_{\Delta^{\{0,1,3\}}}
 \Delta^{\{0,1,2',3\}}$ is an inner anodyne and $f^{\mr{op}}$ is an
 inner fibration, we can complete the dashed arrow so that the diagram
 is commutative and the image in $\mc{C}$ is compatible with the map
 $C\rightarrow(v'\rightarrow w)$.
 The diagram $\Delta^{\{0,2',3\}}$ can be considered as a map from
 $(D(0)\rightarrow D(2'))$ to $(D(0)\rightarrow D(3))$ in
 $\mr{Tw}^{\mr{op}}\mc{D}^{\mr{op}}$ over
 $C\rightarrow(v'\rightarrow w)$.
 It suffices to show, by \cite[4.3.1.4]{HTT}, that this edge in
 $\mr{Tw}^{\mr{op}}\mc{D}^{\mr{op}}$ is a $p$-Cartesian edge.
 This follows by Lemma \ref{carttwar}, taking \cite[2.4.1.5]{HTT} into
 account. Applying \cite[B.4.8]{HA}, we know that this edge in $\mc{M}$
 is a $\pi$-Cartesian edge.
 Furthermore, by construction, the map $\nu$ sends Cartesian edges to
 Cartesian edges.
 
 Now, let us check that $\nu$ satisfies the conditions in Lemma
 \ref{conddual}. For this, let us analyze the fibers of $\mc{M}$ over
 $\mc{C}$. Fix a vertex $v\in\mc{C}$. Objects of $\mc{M}_v$ correspond
 to functors $\mr{Fun}_{\mr{Tw}^{\mr{op}}_{\mc{C}}}
 \bigl(\mr{Tw}_v^{\mr{op}}\mc{C},
 \mr{Tw}^{\mr{op}}\mc{D}^{\mr{op}}\bigr)$ satisfying some conditions.
 The map $i\colon\{*\}\rightarrow\mr{Tw}^{\mr{op}}_v\mc{C}$ sending the
 unique object to the object $v\rightarrow v$ yields the map
 $i^*\colon\mc{M}_v\rightarrow\mr{Tw}^{\mr{op}}\mc{D}_v^{\mr{op}}$.
 We show that this is a categorical equivalence.
 Consider the following diagram:
 \begin{equation*}
  \xymatrix@C=40pt{
   \{*\}
   \ar[r]^-{F}\ar@{^{(}->}[d]_{i}&
   \mr{Tw}^{\mr{op}}\mc{D}^{\mr{op}}\ar[d]^{p}\\
  \mr{Tw}^{\mr{op}}_v\mc{C}
   \ar[r]\ar@{-->}[ur]&
   \mr{Tw}^{\mr{op}}\mc{C}.
   }
 \end{equation*}
 Let us show that for any functor $F$, there exists a left Kan extension.
 Take $C=(w\rightarrow v)$ in
 $\mr{Tw}^{\mr{op}}_v\mc{C}$.
 Then $\{*\}_{/C}$ has an initial object
 $\{*\}\rightarrow(w\rightarrow v)$.
 By \cite[4.3.2.15, 4.3.1.4]{HTT}, it suffices to check that the map
 in $\mr{Tw}^{\mr{op}}\mc{D}^{\mr{op}}$ corresponding to the diagram
 \begin{equation*}
  \xymatrix{
   d\ar[d]_{F(*)}&d_w\ar[l]_{\alpha}\ar[d]\\
  d'\ar[r]^-{\sim}&d''}
 \end{equation*}
 in $\mc{D}^{\mr{op}}$, where $\alpha$ is an
 $f^{\mr{op}}$-Cartesian edge over $v\leftarrow w$,
 is a $p$-coCartesian edge. This follows by Lemma \ref{carttwar}.
 By construction, the left Kan extension can be regarded as an object of
 $\mc{M}_v$.
 Invoking \cite[4.3.2.17]{HTT}, $i^*$ admits a left adjoint
 $i_!\colon\mr{Tw}^{\mr{op}}\mc{D}_v^{\mr{op}}\rightarrow\mc{M}_v$. By
 the characterization of left Kan extension functor
 \cite[4.3.2.16]{HTT} and the definition of $\mc{M}$,
 $i_!$ is essentially surjective. Since
 $\mr{id}\xrightarrow{\sim}i^*i_!$, $i_!$ is fully faithful,
 thus, $i_!$ is a categorical equivalence. This implies that $i^*$ is
 also a categorical equivalence because it is so on the level of
 homotopy categories.

 On the other hand, the canonical map
 $\mc{D}^{\mr{op}}\rightarrow\Theta_*\Theta^*\mc{D}^{\mr{op}}$ induces a
 map $\iota\colon\mc{D}^{\mr{op}}\rightarrow
 \widetilde{\mc{D}^{\mr{op}}}$. This map is in fact an
 equivalence. Indeed, since
 $\widetilde{\mc{D}^{\mr{op}}}\rightarrow\mc{C}$ is a Cartesian
 fibration and $\iota$ sends Cartesian edges to Cartesian
 edges, it suffices to check that the fibers are equivalence by
 \cite[3.3.1.5]{HTT}. In order to see the equivalence,
 we may proceed as the proof of the equivalence $i_!$. Likewise, we have
 a canonical equivalence $(\mc{D}^\vee)_v\xrightarrow{\sim}\mc{D}_v$.

 By construction, we have the following commutative diagram of
 $\infty$-categories:
 \begin{equation*}
  \xymatrix@C=50pt{
   \mc{M}_v
   \ar[r]^-{i^*}\ar[d]_{\nu_v}&
   \mr{Tw}^{\mr{op}}\mc{D}_v^{\mr{op}}
   \ar[d]^{\Phi\times\Theta}\\
  (\mc{D}^\vee\times_{\mc{C}}\widetilde{\mc{D}^{\mr{op}}})_v
   \ar[r]&
   \mc{D}_v\times\mc{D}_v^{\mr{op}}.
   }
 \end{equation*}
 Here, the horizontal maps are equivalence. This implies that
 $\nu_v^{\mr{op}}$ is in fact a perfect weak pairing. By the description
 of Cartesian edges in $\mc{M}$, the preservation also holds, and the
 conditions of Lemma \ref{conddual} are satisfied. Thus, we have
 $\mc{D}^\vee\cong\mb{D}((\widetilde{\mc{D}^{\mr{op}}})^{\mr{op}})
 \xleftarrow[\iota]{\sim}\mb{D}(\mc{D})$.
\end{proof}

\section{Stable $R$-linear categories}
\label{stablecat}
We construct the $(\infty,2)$-category of stable $R$-linear categories
for an $\mb{E}_\infty$-ring $R$ ({\em e.g.}\ ordinary commutative ring,
which is more precisely a discrete $\mb{E}_\infty$-ring).
This has already been outlined in \cite[Ch.1, 8.3]{GR}, and the only
contribution of ours is to make the construction rigorous.

\subsection{}
\label{dfnofbcstar}
First, we recall the construction of \cite[4.1]{GH}.
Let $i\colon\Delta^0\rightarrow\mbf{\Delta}^{\mr{op}}$ the map classifying $[0]\in\mbf{\Delta}^{\mr{op}}$.
We may take the right Kan extension functor
$\mr{RKE}_i\colon\Cat_\infty\simeq\mr{Fun}(\Delta^0,\Cat_\infty)\rightarrow\mr{Fun}(\mbf{\Delta}^{\mr{op}},\Cat_\infty)$.
By \cite[4.3.2.17]{HTT}, for a functor $\mc{D}_\bullet\colon\mbf{\Delta}^{\mr{op}}\rightarrow\Cat_\infty$,
we have an equivalence $\mr{Fun}(\mc{D}_\bullet,\mr{RKE}_i(\mc{C}))\simeq\mr{Fun}(\mc{D}_0,\mc{C})$.
If we are given a functor $\mc{E}\rightarrow\mc{D}_0$, we denote
$\mc{D}_\bullet\times_{\mr{RKE}_i(\mc{D}_0)}\mr{RKE}_i(\mc{E})$, where the fiber product
is taken in $\mr{Fun}(\mbf{\Delta}^{\mr{op}},\Cat_\infty)$, by
$\mc{D}_\bullet\basech\mc{E}$. If $\mc{E}\rightarrow\mc{D}_0$ is a
categorical fibration, then $\mc{D}_\bullet\basech\mc{E}$ can be
computed termwise by \cite[5.1.2.3]{HTT}.
Unwinding the construction, we have $(\mc{D}_{\bullet}\basech\mc{E})([n])\simeq\mc{D}_{\bullet}([n])\times_{\mc{D}_0^{\times(n+1)}}\mc{E}^{\times(n+1)}$.
This construction can be regarded as the ``base change'' of $\mc{D}_{\bullet}$.

We can also have coCartesian fibration version of the above construction.
Let $\Gamma'$ be the category with objects $([n],i)$ where $[n]\in\mbf{\Delta}$ and $i\in[n]$,
and a morphism $([n],i)\rightarrow([n'],i')$ is an order-preserving function $\alpha\colon[n']\rightarrow[n]$ such that $\alpha(i')=i$.
Then the evident functor $\gamma'\colon\Gamma'\rightarrow\mbf{\Delta}^{\mr{op}}$ is a Cartesian fibration.
For an $\infty$-category $\mc{C}$ and using the notation of \ref{defpushpullsim}, let $\mc{C}^{\times}:=\gamma'_*(\Gamma'\times\mc{C})$.
Then by \ref{constsimplcat} (or by direct computation), $\mc{C}^{\times}$ is a model of the unstraightening of $\mr{RKE}_i(\mc{C})$.
For a coCartesian fibration $\mc{X}\rightarrow\mbf{\Delta}^{\mr{op}}$ and a map $\mc{C}\rightarrow\mc{X}_0$,
we put $\mc{X}\basech\mc{C}:=\mc{X}\times_{\mc{X}_0^{\times}}\mc{C}^{\times}$ in $\coCart(\mbf{\Delta}^{\mr{op}})$.
We also have a version for Cartesian fibration over $\mbf{\Delta}$.
All of these constructions are compatible via straightening/unstraightening constructions.

Finally, by \cite[4.1.3]{GH},
$\mc{C}^{\times}\rightarrow\mbf{\Delta}^{\mr{op}}$ is a generalized
$\infty$-operad. If $\mc{X}$ is a generalized $\infty$-operad,
$\mc{X}\basech\mc{C}$ is a generalized $\infty$-operads as well.
Given a map of generalized $\infty$-operads
$\mc{C}^{\circledast}\rightarrow\mc{D}^{\circledast}$ and a functor
$\mc{C}_0\rightarrow\mc{E}$ of $\infty$-categories over $\mc{D}_0$, we
have the induced map
$\mc{C}^{\circledast}\rightarrow\mc{D}^{\circledast}\basech\mc{E}$.

\subsection{}
Let $F\colon\mc{C}\rightarrow\Cat_\infty$ be a functor.
Applying the construction of \ref{constsimplcat} for
$\mc{D}=\Cat_\infty$, we have a functor
$Y_F:=\mr{Fun}(F,\Cat_\infty)\colon\mc{C}^{\mr{op}}
\rightarrow\widehat{\Cat}_\infty$, where $\widehat{\Cat}_\infty$ is the
$\infty$-category of (not necessarily small) $\infty$-categories (cf.\
\cite[3.0.0.5]{HTT}).
Recall that $Y_F$ is the functor sending $c\in\mc{C}$ to
$\mr{Fun}(F(c),\Cat_\infty)$.
On the other hand, recall from \ref{cartfibcocartnot} that we have the
Cartesian fibration $\Cart^{\mr{str}}_\infty\rightarrow\Cat_\infty$.
This induces the Cartesian fibration
$F'\colon\Cart^{\mr{str}}_\infty\times_{\Cat_\infty,F}\mc{C}
\rightarrow\mc{C}$. We define
$Y'_F:=\mr{St}(F')\colon\mc{C}^{\mr{op}}\rightarrow
\widehat{\Cat}_\infty$ (cf.\ \ref{cartfibcocartnot}).
The following lemma enables us to identify these two constructions:

\begin{lem*}
 \label{comptwoconst}
 We have a canonical equivalence $Y_F\simeq Y'_F$ of functors.
\end{lem*}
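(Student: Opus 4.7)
The plan is to exhibit both $Y_F$ and $Y'_F$ as the composition of $F^{\mr{op}} \colon \mc{C}^{\mr{op}} \to \Cat_\infty^{\mr{op}}$ with a universal functor $\Cat_\infty^{\mr{op}} \to \widehat{\Cat}_\infty$, and then identify those two universal functors via the straightening/unstraightening equivalence.

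For $Y'_F$, this is essentially tautological: $F'$ is by construction the pullback along $F$ of the universal Cartesian fibration $\theta \colon \Cart^{\mr{str}}_\infty \to \Cat_\infty$, and pullback compatibility of the straightening functor gives $Y'_F \simeq \mr{St}(\theta) \circ F^{\mr{op}}$, where $\mr{St}(\theta)$ sends $\mc{E} \in \Cat_\infty$ to $\Cart^{\mr{str}}(\mc{E})$. For $Y_F$, inspection of the construction in \ref{constsimplcat} reveals that $Y_F(c) = \mr{Fun}(F(c), \Cat_\infty)$ depends only on the value $F(c) \in \Cat_\infty$; applying that construction with $F$ replaced by $\mr{id}_{\Cat_\infty}$ and $\mc{D} = \Cat_\infty$ yields a universal functor $\mr{Fun}(-, \Cat_\infty) \colon \Cat_\infty^{\mr{op}} \to \widehat{\Cat}_\infty$, and naturality of the construction in the source variable produces $Y_F \simeq \mr{Fun}(-, \Cat_\infty) \circ F^{\mr{op}}$.

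It thus remains to produce a natural equivalence $\mr{St}(\theta) \simeq \mr{Fun}(-, \Cat_\infty)$ of functors on $\Cat_\infty^{\mr{op}}$. Pointwise, this follows from the Cartesian analogue of the equivalence displayed at the end of \ref{cartfibcocartnot}, namely $\Cart^{\mr{str}}(\mc{E}) \simeq \mr{Fun}(\mc{E}^{\mr{op}}, \Cat_\infty)$, combined with the natural equivalence $\mr{Fun}((-)^{\mr{op}}, \Cat_\infty) \simeq \mr{Fun}(-, \Cat_\infty)$ induced by the auto-equivalence $(-)^{\mr{op}}$ of $\Cat_\infty$; on objects, this second equivalence sends $G \colon \mc{E} \to \Cat_\infty$ to $(-)^{\mr{op}} \circ G^{\mr{op}} \colon \mc{E}^{\mr{op}} \to \Cat_\infty$, and its naturality in $\mc{E}$ is a direct verification from the fact that pullback commutes with taking opposites.

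The main obstacle is upgrading this pointwise identification to a coherent natural equivalence of functors on all of $\Cat_\infty^{\mr{op}}$. This coherence is controlled by the universal example: since $\Cart^{\mr{str}}_\infty$ is built as a (sub)category of $\mr{Fun}(\Delta^1, \Cat_\infty)$ inside the fixed model $\Cat_\infty = \mr{N}((\sSet^+)^{\circ})$, every identification above traces back to a single application of (un)straightening to $\theta$ itself, and the required naturality in $\mc{E}$ then follows from the functoriality of base change of Cartesian fibrations along morphisms in $\Cat_\infty$.
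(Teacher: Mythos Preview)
Your overall strategy matches the paper's: both reduce to the universal case $\mc{C}=\Cat_\infty$ and then must identify the straightening of the universal Cartesian fibration $\theta$ with the functor $\mc{E}\mapsto\mr{Fun}(\mc{E}^{\mr{op}},\Cat_\infty)$ (or its $(-)^{\mr{op}}$-twist). However, your final paragraph does not actually carry out this identification; it only asserts that ``the required naturality in $\mc{E}$ then follows from the functoriality of base change''. This is the entire content of the lemma, and it is not a formality. Straightening $\theta$ once gives you a functor $\mr{St}(\theta)\colon\Cat_\infty^{\mr{op}}\to\widehat{\Cat}_\infty$ whose value at $\mc{E}$ is $\Cart^{\mr{str}}(\mc{E})$, but comparing \emph{that} functor with $\mr{Fun}((-)^{\mr{op}},\Cat_\infty)$ requires a version of straightening/unstraightening that is itself natural in the base $\mc{E}$, i.e.\ an actual functor $\Cat_\infty^{\mr{op}}\to\mr{Fun}(\Delta^1,\widehat{\Cat}_\infty)$ sending $\mc{E}$ to the equivalence $\mr{Fun}(\mc{E}^{\mr{op}},\Cat_\infty)\xrightarrow{\sim}\Cart^{\mr{str}}(\mc{E})$. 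Producing such a functor is precisely the point; base-change functoriality of Cartesian fibrations only tells you how the \emph{target} side behaves, not how the equivalence itself varies.

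The paper handles this by invoking the functor $\chi$ of \cite[A.32]{GHN}, which packages the unstraightening equivalences into a single functor $\Cat_\infty^{\mr{op}}\to\mr{Fun}(\Delta^1,\widehat{\Cat}_\infty)$; this immediately gives $G_0\simeq G_1$ with $G_0\simeq Y_F$. The remaining work---identifying $G_1$ with $Y'_F$---is then done via a careful comparison using Cartesian fibrations of relative categories (in the sense of Hinich) and the explicit model $\mbf{E}\to\sSet\times\Delta^1$ from \cite[A.31]{GHN}. Concretely, the paper builds a specific relative Grothendieck fibration $X\to\sSet$ whose localization realizes both $\mr{Un}(G_1)$ and $\mr{Un}(Y'_F)$ over $\Cat_\infty$, and then matches them fiberwise using \cite[3.1.3.5]{HTT}. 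If you want to avoid citing \cite[A.32]{GHN} as a black box, you must supply an argument of roughly this shape; the sentence you have now is not a proof.
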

\begin{proof}
 For $i\in\{0,1\}$, let
 \begin{equation*}
  G_i\colon\mc{C}^{\mr{op}}\xrightarrow{F^{\mr{op}}}
   \Cat_\infty^{\mr{op}}
   \xrightarrow{\chi}
   \mr{Fun}(\Delta^1,\widehat{\Cat}_\infty)
   \xrightarrow{\{i\}\rightarrow\Delta^1}
   \widehat{\Cat}_\infty,
 \end{equation*}
 where $\chi$ is the map defined in \cite[A.32]{GHN}.
 Informally, $\chi$ is the functor sending $\mc{C}\in\Cat_\infty$ to the
 unstraightening equivalence
 $\mr{Fun}(\mc{C}^{\mr{op}},\Cat_\infty)\xrightarrow{\sim}
 \Cart^{\mr{str}}(\mc{C})$.
 Because unstraightening is an equivalence, we have $G_0\simeq G_1$.
 By construction, $Y_F$ is equivalent to $G_0$, thus it remains to show
 that $G_1\simeq Y'_F$. It suffices to show the equivalence for
 $\mc{C}=\Cat_\infty$.

 For a relative category $(\mc{C},W)$, we denote by $\mc{L}(\mc{C},W)$
 the $\infty$-localization (cf.\ \cite[1.1.2]{H}).
 Consider a Cartesian fibration
 $r\colon(\mc{M},W_{\mc{M}})\rightarrow(\mc{C},W_{\mc{C}})$ of relative
 categories in the sense of \cite[2.1.1]{H}.
 We note that this condition is slightly different from the relative
 Grothendieck fibration compatible with $W_{\mc{C}}$ in the sense of
 \cite[A.28]{GHN}, since \cite[A.28]{GHN} requires that {\em all} the
 $r$-Cartesian morphisms are in $W_{\mc{M}}$ whereas \cite{H} asks only
 for $r$-Cartesian morphisms lifting morphisms in $W_{\mc{C}}$ but
 $W_{\mc{C}}$ needs to be saturated (cf.\ \cite[1.1.2]{H}). However
 the construction of \cite[A.30]{GHN} can be carried out for Hinich's
 one\footnote{We can also make use of Hinich's construction
 \cite[2.2.2]{H} instead of \cite[A.30]{GHN}, which is very similar in
 spirit.}
 as well. Namely, the functor $r$ corresponds to a normal pseudo-functor
 $\mr{St}(r)\colon\mc{C}\rightarrow\mr{RelCat}_{(2,1)}$, and yields an
 $\infty$-functor
 $\mr{St}(r)_\infty\colon\mc{L}(\mc{C},W_{\mc{C}})\rightarrow
 \Cat_\infty$ by \cite[A.25]{GHN}.
 Since the straightening/unstraightening construction of Lurie is
 compatible with Grothendieck construction, we have the
 commutative diagram of $\infty$-categories
 \begin{equation*}
 \xymatrix{
  \mr{N}\mc{M}\ar[r]\ar[d]&\mr{Un}(\mr{St}(r)_\infty)\ar[d]\\
 \mr{N}\mc{C}\ar[r]&\mc{L}(\mc{C},W_{\mc{C}}).
  }
 \end{equation*}
 This diagram induces a map
 $\mc{L}(\mc{M},W_{\mc{M}})\rightarrow\mr{Un}(\mr{St}(r)_\infty)$ over
 $\mc{L}(\mc{C},W_{\mc{C}})$. This map is nothing but the functor
 $\theta$ in \cite[2.2.2 (34)]{H}, which is proved to be categorical
 equivalence in \cite{H}.

 We let $q\colon X\rightarrow\sSet$ be
 the pullback of the Grothendieck fibration
 $\mbf{E}\rightarrow\sSet\times\Delta^1$, defined in \cite[A.31]{GHN},
 by the map $\sSet\rightarrow\sSet\times\Delta^1$ defined by
 $\{0\}\rightarrow\Delta^1$.
 Explicitly, $X$ is the category\footnote{In the 2nd line of the proof
 of \cite[A.31]{GHN}, they say that $Y\rightarrow S^{\sharp}$ is a
 fibrant map in $\sSet^+$. We think this is a typo, and this should be
 replaced by ``a fibrant map in $(\sSet^+)_{/S}$''.
 }
 whose fiber over $S\in\sSet$ is $(\sSet^+)_{/S}^{\circ}$. Objects of
 $(\sSet^+)_{/S}^{\circ}$ can be written as $A^{\natural}\rightarrow
 S^\sharp$ where $A\rightarrow S$ is a Cartesian fibration by
 \cite[3.1.4.1]{HTT}. Given Cartesian fibrations $A\rightarrow S$ and
 $B\rightarrow T$, a map $f$ from $B^{\natural}\rightarrow T^{\sharp}$
 to $A^{\natural}\rightarrow S^{\sharp}$ in $X$ over $T\rightarrow S$ in
 $\sSet$ is the map of marked simplicial sets $B^{\natural}\rightarrow
 A^{\natural}$ compatible with $T\rightarrow S$.
 We slightly modify the marking of $X$ from \cite{GHN}:
 the map $f$ in $X$ is marked if $B\rightarrow A$ and $T\rightarrow S$
 are categorical equivalences. Note that when $S=T$ and $f$ is marked,
 the map $B\rightarrow A$ is a Cartesian equivalence by
 \cite[3.3.1.5]{HTT}, so the relative category of the fiber is
 $((\sSet^+)_{/S},W_S)$ where $W_S$ is the categorical equivalence.
 Moreover, given a categorical equivalence $X\rightarrow Y$ between
 Cartesian fibrations over $S$ and a map $T\rightarrow S$, the base
 change $X\times_S T\rightarrow Y\times_S T$ is a categorical
 equivalence by \cite[3.3.1.5]{HTT}. Combining
 with \cite[3.3.1.3]{HTT}, all the conditions of \cite[2.1.1]{H} are
 satisfied except for the saturatedness of the marking.

 We denote by $[n]$ the category whose nerve is $\Delta^n$.
 Next, we consider the relative category $((\sSet)^{[1]},W')$ where a map
 $(f,g)\colon(X\rightarrow Y)\rightarrow (X'\rightarrow Y')$ of
 $(\sSet)^{[1]}$ is in $W'$ precisely if $f,g\in W_{\mr{J}}$,
 where $W_{\mr{J}}$ is the collection of Joyal equivalent maps of
 $\sSet$.
 Then we have a map $X\rightarrow((\sSet)^{[1]},W')$ of relative
 categories which induces the map
 $X\rightarrow\mc{L}((\sSet)^{[1]},W')\simeq
 \mr{Fun}(\Delta^1,\Cat_\infty)$.
 Here, the equivalence follows by composing the equivalences
 \begin{equation*}
  \mc{L}((\sSet)^{[1]},W')
   \simeq
   \mc{L}((\sSet,W_{\mr{J}})^{[1]})
   \simeq
   \mc{L}((\sSet^+,W^+)^{[1]})
   \simeq
   \mr{Fun}(\Delta^1,\Cat_\infty),
 \end{equation*}
 where $W^+$ denotes the collection of Cartesian equivalences of
 marked simplicial sets, the middle two model categories are endowed
 with projective model structures, the middle equivalence follows by
 \cite[1.5.1]{H}, and the last equivalence follows by
 \cite[4.2.4.4]{HTT}.
 This implies that the
 marking of $X$ is saturated (cf.\ \cite[1.1.2]{H}) because maps in $X$
 is marked precisely when its image in $\mr{Fun}(\Delta^1,\Cat_\infty)$
 are equivalence. Thus, $q$ is a Cartesian fibration of relative
 categories. Consider the following diagram:
 \begin{equation*}
  \xymatrix@C=10pt{
   \mr{Un}(\mr{St}(q)_\infty)\ar[dr]_{\mr{Un}(G_1)}&
   \mc{L}(X)\ar[r]\ar[d]^{\mc{L}(q)}\ar[l]_-{\sim}&
   \mc{L}(\sSet^{[1]})\ar[d]\ar@{-}[r]^-{\sim}&
   \mr{Fun}(\Delta^1,\Cat_\infty)\ar[d]&
   \Cart_\infty\ar[l]\ar[dl]^-{\mr{Un}(Y'_F)}\\&
  \mc{L}(\sSet)\ar@{=}[r]&
   \mc{L}(\sSet)\ar@{-}[r]^-{\sim}&
   \Cat_\infty.&
   }
 \end{equation*}
 In view of the above observation and functoriality, this diagram is
 commutative, and all the vertical maps are Cartesian fibrations.
 The map
 $\mr{Un}(\mr{St}(q)_\infty)\rightarrow\mr{Fun}(\Delta^1,\Cat_\infty)$
 preserves Cartesian edges since
 $\mc{L}(X)\rightarrow\mc{L}(\sSet^{[1]})$ preserves Cartesian edges by
 the construction in \cite[2.2.2]{H}.
 Finally, since $\mr{Un}(G_1)$
 and $\mr{Un}(Y'_F)$ are equivalent over each fiber of $\Cat_\infty$, we
 get $G_1\simeq Y'_F$ by \cite[3.1.3.5]{HTT} as required.
\end{proof}

\subsection{}
Before constructing the $(\infty,2)$-category of $\mc{A}$-linear
categories, we recall the definition of the $(\infty,2)$-category of
$\infty$-categories $\mbf{Cat}_\infty$ since the construction is a
prototype of the construction of $\twoLinCat$. Recall the functor
$\theta\colon\widehat{\Cat}_\infty\rightarrow\widehat{\Spc}$ from
\ref{maxminKancpx} associating an to $\infty$-category $\mc{C}$ the
maximum Kan complex $\mc{C}^{\simeq}$.
Let $\Delta^{\bullet}\colon\mbf{\Delta}\rightarrow\Cat_\infty$ be the
evident functor sending $[n]$ to $\Delta^n$.
We have an equivalence $\mr{Seq}_{\bullet}(\Cat_\infty)\simeq\theta\circ
Y_{\Delta^{\bullet}}\colon\mbf{\Delta}^{\mr{op}}\rightarrow
\widehat{\Spc}$, where $\mr{Seq}_{\bullet}$ is the functor defined in
\ref{inftwocatintro}, by definition. We upgrade this construction by
letting
$\mr{Seq}_{\bullet}(\mbf{Cat}_\infty):=Y'_{\Delta^{\bullet}}\colon
\mbf{\Delta}^{\mr{op}}\rightarrow\widehat{\Cat}_\infty$.

\begin{prop*}[{\cite[Ch.10, 2.4.2]{GR}}]
 The simplicial $\infty$-category $\mr{Seq}_{\bullet}(\mbf{Cat}_\infty)$
 defines an $(\infty,2)$-category such that the underlying
 $\infty$-category is $\Cat_\infty$.
\end{prop*}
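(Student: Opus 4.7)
The three defining conditions of an $(\infty,2)$-category must be verified for the simplicial $\infty$-category $Y'_{\Delta^{\bullet}}$. The central input is the preceding lemma, which provides a canonical equivalence $Y'_{\Delta^{\bullet}} \simeq Y_{\Delta^{\bullet}}$, where $Y_{\Delta^{\bullet}}([n]) \simeq \mr{Fun}(\Delta^n, \Cat_\infty)$. This identification reduces every check to the concrete functor $Y_{\Delta^{\bullet}}$.

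For the Segal condition, the plan is to first establish the pushout decomposition
\begin{equation*}
\Delta^n \simeq \Delta^{\{0,1\}} \coprod_{\Delta^{\{1\}}} \Delta^{\{1,2\}} \coprod_{\Delta^{\{2\}}} \cdots \coprod_{\Delta^{\{n-1\}}} \Delta^{\{n-1,n\}}
\end{equation*}
in $\Cat_\infty$, iterated along the consecutive vertex inclusions. Applying the contravariant functor $\mr{Fun}(-, \Cat_\infty)$ transforms this iterated colimit into the iterated categorical fiber product of copies of $\mr{Fun}(\Delta^1, \Cat_\infty)$ over $\mr{Fun}(\Delta^0, \Cat_\infty) \simeq \Cat_\infty$, which is precisely the Segal decomposition for $Y_{\Delta^{\bullet}}$, hence for $Y'_{\Delta^{\bullet}}$.

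For completeness together with the identification of the underlying $\infty$-category, observe that $\theta \circ Y'_{\Delta^{\bullet}} \simeq \theta \circ Y_{\Delta^{\bullet}}$ by the lemma, and
\begin{equation*}
\theta \, Y_{\Delta^{\bullet}}([n]) = \mr{Fun}(\Delta^n, \Cat_\infty)^{\simeq} \simeq \mr{Map}_{\Cat_\infty}(\Delta^n, \Cat_\infty) = \mr{Seq}_n(\Cat_\infty)
\end{equation*}
by the formula recalled in \ref{inftwocatintro}. Thus the underlying Segal space is complete with associated $\infty$-category $\Cat_\infty$, which simultaneously verifies the completeness axiom and proves the last assertion of the proposition.

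The main obstacle is the pushout decomposition of $\Delta^n$ in $\Cat_\infty$ (rather than merely in $\sSet$). The classical argument proceeds by noting that the vertex inclusions $\{i\} \hookrightarrow \Delta^1$ are Joyal cofibrations and that $\Delta^n$ is a fibrant object of the Joyal model structure (being the nerve of a linear poset); consequently the iterated pushout at the level of simplicial sets already models the pushout in $\Cat_\infty$. With this in hand, the preservation-of-colimits argument for $\mr{Fun}(-, \Cat_\infty)$ is standard.
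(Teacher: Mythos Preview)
Your proof is correct and, for completeness and the identification of the underlying $\infty$-category, it is exactly the paper's argument.

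For the Segal condition the packaging differs slightly. The paper invokes Lemma~\ref{cartsegco} directly on the Cartesian-fibration side, i.e.\ on $Y'_{\Delta^{\bullet}}$, using that $\Delta^n$ is an iterated pushout of $\Delta^1$'s in $\Cat_\infty$. You instead first pass through Lemma~\ref{comptwoconst} to replace $Y'_{\Delta^{\bullet}}$ by $Y_{\Delta^{\bullet}}=\mr{Fun}(\Delta^{\bullet},\Cat_\infty)$ and then argue directly that $\mr{Fun}(-,\Cat_\infty)$ converts the spine pushout decomposition of $\Delta^n$ into the Segal pullback. Both rest on the same underlying fact (pushouts in the source become pullbacks after applying $\mr{Fun}(-,\Cat_\infty)$, which is essentially what the proof of Lemma~\ref{cartsegco} unpacks). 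Your route has the virtue of being self-contained and transparent; the paper's route is terser because it recycles a lemma proved for later use. The content is the same.
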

\begin{proof}
 The Segal condition holds by \ref{cartsegco}. We need to show the
 completeness. For this, it suffices to show that the associated Segal
 space $\mr{Seq}_{\bullet}(\mbf{Cat}_\infty)^{\simeq}$ is complete.
 By Lemma \ref{comptwoconst}, this Segal space is naturally equivalent
 to $\mr{Seq}_{\bullet}(\Cat_\infty)$, thus complete.
\end{proof}

\subsection{}
Now, we move to the definition of the $(\infty,2)$-category of
$\mc{A}$-linear stable categories.

\begin{dfn*}
 Let $\mbf{\Delta}_+$ be the augmented simplex category.
 For a simplicial set $S$, we define $\mr{RM}_S$ to be the simplicial
 subset of $S^{\triangleright}\times\mbf{\Delta}^{\mr{op}}_+$ spanned by
 all vertex but $(\infty,[-1])$, where $\infty\in S^{\triangleright}$ is
 the cone point. For $s\in S$, the vertex $(s,[n])$ is denoted by
 $(0_s,\underbrace{1,\dots,1}_{n+1})$ for $n\geq-1$, and $(\infty,[n])$
 is denoted by $(\underbrace{1,\dots,1}_{n+1})$.
\end{dfn*}

If $S$ is an $\infty$-category, $\mr{RM}_S$ is an $\infty$-category as
well. The construction of $\mr{RM}_S$ is functorial with respect to
$S$.

\begin{lem}
 \label{baRMpro}
 \begin{enumerate}
  \item\label{baRMpro-1}
       We have a canonical isomorphism $\mr{RM}_{\Delta^0}\cong\mr{RM}$ where $\mr{RM}$ is the
       category defined in {\normalfont\cite[7.1.4]{GH}}\footnote{
       We think that in \cite[7.1.1]{GH}, we should use $\mr{Simp}(\Delta^1)^{\mr{op}}$ instead of $\mr{Simp}(\Delta^1)$.}.
	
  \item\label{baRMpro-3}
       If $S$ is an $\infty$-category, then the map
       $\mr{RM}_S\rightarrow\mr{RM}$ is a coCartesian
       fibration\footnote{In \cite[7.1.4]{GH}, it is said that $\mr{RM}$
       is a double $\infty$-category, which implies that the map
       $\mr{RM}\rightarrow\mbf{\Delta}^{\mr{op}}$ is a coCartesian
       fibration. Unlike $\mr{BM}$, which is indeed a double
       $\infty$-category, we think that $\mr{RM}$ is not.
       Indeed, since there is no map from $(0)\in\mr{RM}$, the map
       $[0]\rightarrow[1]$ in $\mbf{\Delta}^{\mr{op}}$ cannot be lifted
       to a map from $(0)\in\mr{RM}$.
       However, only the fact that $\mr{RM}$ is a
       generalized $\infty$-operad is used in \cite{GH}.
       This can be checked as follows (or direct computation):
       The conditions (i), (ii) of \cite[2.2.6]{GH} are easy to check.
       The condition (iii) follows since $\mr{BM}$ is a generalized
       $\infty$-operad and the embedding $\mr{RM}\rightarrow\mr{BM}$ is
       fully faithful and preserves inert edges.}
       of generalized $\infty$-operads.

  \item\label{baRMpro-2}
       Let $\mf{a}\,(\cong\mbf{\Delta}^{\mr{op}})$ be the fiber over $[1]\in\Delta^1$ of the Cartesian
       fibration $\mr{RM}\rightarrow\Delta^1$ sending $(a_0,\dots,a_n)$ to $[a_0]$.
       The map of generalized $\infty$-operads $(S\times\mr{RM})\coprod_{S\times\mf{a}}\mf{a}\rightarrow\mr{RM}_S$
       is an equivalence (in $\mr{Op}^{\mr{ns},\mr{gen}}_{\infty}$).
       In particular, if $T\rightarrow S$ is a cofibration of simplicial sets and $T\rightarrow S'$ is a
       map, the map $\mr{RM}_S\coprod_{\mr{RM}_T}\mr{RM}_{S'}\rightarrow\mr{RM}_{S\coprod_T S'}$ is a categorical equivalence.
 \end{enumerate}
\end{lem}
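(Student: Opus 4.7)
The plan for \ref{baRMpro-1} is immediate unpacking of definitions: since $(\Delta^0)^{\triangleright}\cong\Delta^1$, the simplicial set $\mr{RM}_{\Delta^0}$ is the full simplicial subset of $\Delta^1\times\mbf{\Delta}^{\mr{op}}_+$ obtained by deleting the vertex $(1,[-1])$, and the vertex labeling from the definition matches that of $\mr{RM}$ in \cite[7.1.4]{GH}.

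For \ref{baRMpro-3}, I would realize $\mr{RM}_S\to\mr{RM}$ as a base change of the coCartesian fibration $p\times\mr{id}\colon S^{\triangleright}\times\mbf{\Delta}^{\mr{op}}_+\to\Delta^1\times\mbf{\Delta}^{\mr{op}}_+$, where $p\colon S^{\triangleright}\to\Delta^1$ sends $S$ to $0$ and the cone point to $1$ (a coCartesian fibration since $\infty$ is terminal in the cone direction). By definition, $\mr{RM}_S$ is precisely the preimage of $\mr{RM}$ under $p\times\mr{id}$, so the coCartesian fibration assertion follows from the stability of coCartesian fibrations under pullback. To upgrade this to a map of generalized $\infty$-operads over $\mbf{\Delta}^{\mr{op}}$, I would verify the Segal condition on the fibers of $\mr{RM}_S\to\mbf{\Delta}^{\mr{op}}$, which decompose as suitable products involving $S^{\triangleright}$ and the corresponding fibers of $\mr{RM}$, and then check preservation of inert edges directly from the pullback description.

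For \ref{baRMpro-2}, the key geometric observation is that the canonical map $S\times\Delta^1\to S\star\Delta^0=S^{\triangleright}$ collapses $S\times\{1\}$ to the cone point. This factors through the alternative join $(S\times\Delta^1)\coprod_{S\times\{1\}}\Delta^0$, and the induced comparison to $S^{\triangleright}$ is a categorical equivalence by the standard argument (cf.\ \cite[4.2.1]{HTT}). Taking the product with $\mbf{\Delta}^{\mr{op}}_+$ and restricting to the simplices that avoid the forbidden vertex $(\infty,[-1])$ unwinds precisely to the map in the statement. The ``in particular'' clause then follows formally from the main equivalence together with the fact that the functors $S\mapsto S\times\mr{RM}$ and $S\mapsto\mr{RM}_S$ commute with pushouts along cofibrations, since colimits in $\sSet$ are computed objectwise and the forbidden-vertex restriction is stable under such colimits.

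The main obstacle will be to upgrade the categorical equivalence, originally at the level of simplicial sets, to an equivalence in $\mr{Op}_{\infty}^{\mr{ns},\mr{gen}}$. For this I plan to check fiberwise over $\mbf{\Delta}^{\mr{op}}$ that the comparison map is a categorical equivalence and preserves inert edges, reducing to explicit computations in the low-dimensional fibers appearing in $\mr{RM}$.
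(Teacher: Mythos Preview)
Your arguments for parts~\ref{baRMpro-1} and~\ref{baRMpro-3} match the paper's: the identification $\mr{RM}_S\cong S^{\triangleright}\times_{\Delta^1}\mr{RM}$ immediately gives the coCartesian fibration claim by base change, and the generalized $\infty$-operad structure is then a Segal-condition check. (One small correction: the operation you want in part~\ref{baRMpro-2} is not ``product with $\mbf{\Delta}^{\mr{op}}_+$ and delete a vertex'' but rather pullback along $\mr{RM}\to\Delta^1$; this is the content of the identifications $(\star)$ and $(\star\star)$ in the paper.)

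For the main claim of part~\ref{baRMpro-2}, your geometric core --- reduce to the comparison $(S\diamond\Delta^0,\mc{E})\to(S^{\triangleright},\mc{E}')$ over $\Delta^1$ --- is exactly the paper's reduction. The gap is in your ``upgrade'' step. Checking the comparison fiberwise over $\mbf{\Delta}^{\mr{op}}$ and verifying inert-edge preservation would suffice \emph{if} both sides were already known to be generalized $\infty$-operads, but the pushout $(S\times\mr{RM})\coprod_{S\times\mf{a}}\mf{a}$ (with its inherited marking) is not obviously fibrant in the categorical-pattern model structure $(\sSet^+)_{/\mf{P}}$, and $\mr{RM}\to\mbf{\Delta}^{\mr{op}}$ is not coCartesian, so a fiberwise criterion does not directly apply. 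The paper resolves this by invoking \cite[B.4.2]{HA}: it verifies the eight hypotheses there to conclude that $\pi'_!\circ\pi^*\colon(\sSet^+)_{/\mf{Q}}\to(\sSet^+)_{/\mf{P}}$ is left Quillen (where $\mf{Q}$ is the coCartesian pattern on $\Delta^1$), hence preserves weak equivalences between arbitrary objects. This transports the coCartesian equivalence over $\Delta^1$ --- which the paper establishes by reducing to $S=\Delta^0,\Delta^1$ and a simplicial homotopy --- to the desired $\mf{P}$-equivalence, with no fibrancy hypothesis needed on the source. Your plan omits this mechanism, and without it the passage from ``categorical equivalence of underlying simplicial sets'' to ``equivalence in $\mr{Op}^{\mr{ns},\mr{gen}}_\infty$'' is unjustified.

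Your argument for the ``in particular'' clause is fine; indeed $S\mapsto S^{\triangleright}$ preserves pushouts and pullback in $\sSet$ is universal, so the comparison map is even an isomorphism of simplicial sets.
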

\begin{proof}
 To see \ref{baRMpro-1}, we have the functor
 $\mr{RM}\rightarrow\Delta^1\times\mbf{\Delta}^{\mr{op}}_+$ by sending
 $(a_0,\dots,a_n)$ ($a_i\in\{0,1\}$) to $(0,[n-1])$ if $a_0=0$ and to
 $(1,[n])$ if $a_0=1$. It is easy to check that this induces an
 isomorphism we need. Via this identification, we see that the notation
 $(0_s,1\dots,1)$ is compatible with that of $\mr{RM}$. Note that the
 map $\mr{RM}\rightarrow\Delta^1$ is a Cartesian fibration.
 Let us check \ref{baRMpro-3}.
 We have isomorphisms of simplicial sets
 \begin{equation}
  \label{compRMope1}\tag{$\star$}
  \mr{RM}_S
   \cong
   (S^{\triangleright}\times\mbf{\Delta}^{\mr{op}}_+)
   \times_{(\Delta^1\times\mbf{\Delta}^{\mr{op}}_+)}
   \mr{RM}
   \cong
   S^{\triangleright}\times_{\Delta^1}\mr{RM}.
 \end{equation}
 Since
 $S^{\triangleright}\times\mbf{\Delta}^{\mr{op}}_+\rightarrow
 (\Delta^0)^{\triangleright}\times\mbf{\Delta}^{\mr{op}}_+$ is a
 coCartesian fibration, and since the map $\mr{RM}_S\rightarrow\mr{RM}$
 is the base change of this map by the isomorphisms above,
 it is coCartesian as well. In order to show that $\mr{RM}_S$ is a
 generalized $\infty$-operad, we only need to check the Segal
 condition by (non-symmetric analogue of) \cite[2.1.2.12]{HA}.
 The verification is straightforward.

 Finally, let us prove \ref{baRMpro-2}.
 We use the theory of categorical patterns \cite[\S B]{HA}.
 Let $\mf{P}$ be the categorical pattern
 $(\mc{E}_{\mr{int}},\mr{all},\{\mc{G}^{\mbf{\Delta}}_{[n]/}
 \rightarrow\mbf{\Delta}^{\mr{op}}\}_{n})$ where $\mc{E}_{\mr{int}}$ is
 the set of inert maps and $\mc{G}^{\mbf{\Delta}}_{[n]/}$ is the
 simplicial set defined in \cite[2.3.1]{GH}.
 The associated $\infty$-category is
 $\mr{Op}^{\mr{ns},\mr{gen}}_{\infty}$ by \cite[3.2.9]{GH}.
 Since $S\times\mf{a}\rightarrow S\times\mr{RM}$ is a cofibration in
 $(\sSet^+)_{/\mf{P}}$, the pushout is a homotopy pushout.
 For a generalized $\infty$-operad $\mc{O}^{\circledast}$, let
 $\overline{\mc{O}^{\circledast}}$ be the marked simplicial set
 $(\mc{O}^{\circledast},\mc{E}_{\mc{O}})$ where $\mc{E}_{\mc{O}}$ the
 set of inert edges.
 We have an isomorphism of simplicial sets
 \begin{equation}
  \label{compRMope2}\tag{$\star\star$}
  (S^{\flat}\times\overline{\mr{RM}})
   \coprod_{S^{\flat}\times\overline{\mf{a}}}\overline{\mf{a}}
   \cong
   \bigl(
   (S^{\flat}\times(\Delta^1)^{\sharp})
   \coprod_{S^{\flat}\times\{1\}^{\flat}}\{1\}^{\flat}
   \bigr)
   \times_{(\Delta^1)^{\sharp}}\overline{\mr{RM}}
   \cong
   (S\diamond\Delta^0,\mc{E})\times_{(\Delta^1)^{\sharp}}
   \overline{\mr{RM}},
 \end{equation}
 where $\mc{E}$ is the marking induced by
 $(S^{\flat}\times(\Delta^1)^{\sharp})
 \coprod_{S^{\flat}\times\{1\}^{\flat}}\{1\}^{\flat}$,
 and
 the first isomorphism holds since for any (marked simplicial)
 sets $B,C,D,A'$ over $A$ we have
 $(B\times_AA')\coprod_{(C\times_AA')}(D\times_AA')\cong
 (B\coprod_C D)\times_A A'$.
 We wish to apply \cite[B.4.2]{HA} to the following diagram of marked
 simplicial sets
 \begin{equation*}
  (\Delta^1)^{\sharp}\xleftarrow{\pi}
   \overline{\mr{RM}}
   \xrightarrow{\pi'}
   \overline{\mbf{\Delta}^{\mr{op}}}.
 \end{equation*}
 We consider the categorical pattern
 $\mf{Q}:=(\mr{all},\mr{all},\emptyset)$ on $\Delta^1$ and the
 categorical pattern $\mf{P}$ on $\mbf{\Delta}^{\mr{op}}$. Note that
 $(\sSet^+)_{/\mf{Q}}$ is the coCartesian model structure over
 $\Delta^1$ by \cite[B.0.28]{HA}.
 Then all the conditions of \cite[B.4.2]{HA} are satisfied:
 (5), (6), (8) hold since, in our situation,
 $A=\emptyset$, (1), (4) hold since $\mr{RM}\rightarrow\Delta^1$ is a
 Cartesian fibration, (3) holds since we are taking the set of all
 2-simplices, (2), (7) are easy to check. Thus, the functor
 $\pi'_!\circ\pi^*$ is a left Quillen functor. In particular, it
 preserves weak equivalences because any object is cofibrant.
 By presentations (\ref{compRMope1}) and (\ref{compRMope2}), it remains
 to show that the map $(S\diamond\Delta^0,\mc{E})\rightarrow
 (S^{\triangleright},\mc{E}')$, where $\mc{E}'$ is the union of
 degenerate edges and the edges lying over the unique non-degenerate
 edge of $\Delta^1$, is a coCartesian equivalence.
 By a similar argument to \cite[4.2.1.2]{HTT}, we are reduced to
 checking the equivalence in the cases where $S=\Delta^0,\Delta^1$. For
 $S=\Delta^0$, it is in fact an isomorphism, and for $S=\Delta^1$, we
 can construct a simplicial homotopy. Since $(\sSet^+)_{/\Delta^1}$ is a
 simplicial model category by \cite[3.1.4.4]{HTT}, simplicially
 homotopic objects are weakly equivalent, and the equivalence follows.
 The second claim of \ref{baRMpro-2} readily follows from the first
 one.
\end{proof}

\begin{dfn}
 \label{dfnofRModandoth}
 Let $\mc{C}$ be an $\infty$-category.
 \begin{enumerate}
  \item We define $\RMod_\mc{C}$ to be the full subcategory of
	\begin{equation*}
	 \mr{Fun}(\Delta^1,\coCart^{\mr{str}}(\mr{RM}))
	  \times_{\mr{ev}_{\{1\}},\coCart^{\mr{str}}(\mr{RM})}
	  \{\mr{RM}_{\mc{C}}\rightarrow\mr{RM}\},
	\end{equation*}
	where $\mr{ev}_{\{1\}}$ denotes the map evaluating at $1\in\Delta^1$,
	spanned by (homotopy commutative) diagram of $\infty$-categories
	\begin{equation*}
	 \xymatrix{
	  \mc{M}^{\circledast}\ar[r]^-{r}\ar[d]&
	  \mr{RM}_{\mc{C}}\ar[d]\\
	 \mr{RM}\ar@{=}[r]&\mr{RM}
	  }
	\end{equation*}
	such that $r$ is a base preserving (cf.\ Definition
	\ref{operadsrecall}) coCartesian fibration of
	generalized $\infty$-operads.

  \item Let $\rho\colon\mbf{\Delta}\xrightarrow{\Delta^{\bullet}}
	\Cat_\infty\rightarrow\coCart^{\mr{str}}(\mr{RM})$, where the
	second functor sends $\mc{C}$ to
	$\mr{RM}_{\mc{C}}\rightarrow\mr{RM}$.
	Consider the following diagram
	\begin{equation}
	 \label{dfnofRModandoth-diag}
	 \xymatrix{
	  \bp\RMod_{\mbf{\Delta}}
	  \ar@{^{(}->}[r]&
	  \mc{X}\ar[r]\ar[d]\ar@{}[rd]|\square&
	  \mr{Fun}(\Delta^1,\coCart^{\mr{str}}(\mr{RM}))\ar[d]^{\mr{ev}_{\{1\}}}\\
	 &\mbf{\Delta}\ar[r]^-{\rho}&\coCart^{\mr{str}}(\mr{RM}).
	  }
	\end{equation}
	We define $\bp\RMod_{\mbf{\Delta}}$ to be the full subcategory
	of $\mc{X}$ spanned by objects in $\RMod_{\Delta^n}$ over
	$[n]\in\mbf{\Delta}$.

  \item We put
	$\mr{alg}\colon\bp\RMod_{\mbf{\Delta}}\xrightarrow
	{\Delta^{\{0\}}\rightarrow\Delta^1}
	\coCart^{\mr{str}}(\mr{RM})\rightarrow\coCart^{\mr{str}}(\mf{a})$,
	where the second functor is induced by the base change by the
	inclusion $\mf{a}\rightarrow\mr{RM}$.
 \end{enumerate}
\end{dfn}

Let $\mc{C}$ be an $\infty$-category, $c\in\mc{C}$ be an object, and $i_c\colon\{c\}\hookrightarrow\mc{C}$ be the canonical functor.
For $(\mc{M}^{\circledast}\rightarrow\mr{RM}_{\mc{C}})\in\RMod_{\mc{C}}$ the induced map
$\mc{M}^{\circledast}\times_{\mr{RM}_{\mc{C}},i_c}\mr{RM}_{c}\rightarrow\mr{RM}_c\simeq\mr{RM}$ is a pseudo-enriched $\infty$-category in the
sense of \cite[7.2.5]{GH} (cf.\ \cite[7.2.8]{GH}).
For a coCartesian fibration $\mc{M}^{\circledast}\rightarrow\mr{RM}$ of generalized $\infty$-operads,
we sometimes denote $\mc{M}^{\circledast}_{(0,1)}$ by $\mc{M}$, and call it the {\em underlying $\infty$-category}.
Let $\mc{A}:=\mc{M}^{\circledast}\times_{\mr{RM}}\mf{a}$.
For an object $(0,1,\dots,1)\in\mr{RM}$ over $[n+1]\in\mbf{\Delta}^{\mr{op}}$,
we have an equivalence $\mc{M}^{\circledast}_{(0,1\dots,1)}\simeq\mc{M}\times\mc{A}^{\times n}$.
With this identification, an object $X$ of $\mc{M}^{\circledast}_{(0,1,\dots,1)}$ can be written as $X=(M_0,A_1,\dots,A_n)$.
This object is often denoted by $M_0\boxtimes A_1\boxtimes\dots\boxtimes A_n$.

\begin{rem*}
 \begin{enumerate}
  \item The back-prime $\bp(-)$ is put to indicate that the object is
	Cartesian over $\mbf{\Delta}$. When we take $\mb{D}$ of
	\S\ref{dualconst}, we erase the back-prime to indicate that it
	is a coCartesian fibration.

  \item The reason we employed {\em right} module rather than {\em left}
	module is the same as \cite[7.2.13]{GH}. However, in our
	application, we restrict our attention to modules over
	$\mb{E}_\infty$-ring, in which case the $\infty$-category of
	right and left modules can be identified (cf.\
	\cite[D.1.2.5]{SAG}).
 \end{enumerate}
\end{rem*}

\begin{lem}
 \label{basicproprmodsimp}
 \begin{enumerate}
  \item Let $\mc{C}$ be an $\infty$-category, and consider the diagram
	\begin{equation}
	 \label{basicproprmodsimp-diag}\tag{$\star$}
	 \xymatrix{
	  \mc{M}^{\circledast}
	  \ar[dr]_{p}\ar[rr]^-{r}&&
	  \mc{N}^{\circledast}\ar[ld]^{q}\\
	 &\mr{RM}_{\mc{C}}\ar[r]_-{s}&\mr{RM}
	  }
	\end{equation}
	where $p$ and $q$ are coCartesian fibrations of generalized
	$\infty$-operads. Furthermore, assume that for each $x\in\mc{C}$, the
	pullback diagram is in $\coCart^{\mr{str}}(\mr{RM})$.
	Then $r$ sends $(s\circ	p)$-coCartesian edge to $(s\circ
	q)$-coCartesian edge.
	 
  \item\label{basicproprmodsimp-2}
       The map $\alpha\colon\bp\RMod_{\mbf{\Delta}}\rightarrow\mbf{\Delta}$ is a Cartesian fibration,
       and satisfies the Segal condition.
       Moreover, the map $\mr{alg}$ sends an $\alpha$-Cartesian edge to an equivalent edge.
 \end{enumerate}
 \end{lem}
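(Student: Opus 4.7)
The plan is to handle the two parts of the lemma by different means: part (1) is a fiberwise detection argument for coCartesian edges, while part (2) consists of elementary manipulations of Cartesian fibrations combined with the Segal-type compatibility already established in Lemma \ref{baRMpro}.

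For part (1), first note that $s\colon\mr{RM}_{\mc{C}}\to\mr{RM}$, being the base change of $\mc{C}^{\triangleright}\to\Delta^1$, is a coCartesian fibration, so $s\circ p$ is a coCartesian fibration as well. By the standard composition criterion for coCartesian edges, an edge $e$ of $\mc{M}^{\circledast}$ is $(s\circ p)$-coCartesian precisely when $e$ is $p$-coCartesian and $p(e)$ is $s$-coCartesian; the latter in turn means that $p(e)$, viewed via the projection $\mr{RM}_{\mc{C}}\to\mc{C}^{\triangleright}$, lies entirely over some $x\in\mc{C}$ or else travels from such an $x$ to the cone point. To verify that $r(e)$ is $(s\circ q)$-coCartesian, I would appeal to the dual of Lemma \ref{cartedgedetelem} in order to reduce the check to the base change along $q(r(e))\colon\Delta^1\to\mr{RM}$. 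After pulling back, the edge sits in a fiber of $\mr{RM}_{\mc{C}}\to\mr{RM}$ at a specific $x\in\mc{C}$, and the hypothesis that the restriction to $\{x\}\subset\mc{C}$ lies in $\coCart^{\mr{str}}(\mr{RM})$ gives exactly the desired preservation of coCartesian edges. The main obstacle here is bookkeeping: one needs to ensure that an $(s\circ p)$-coCartesian edge, after base change, really does decompose into pieces each controlled by the fiberwise hypothesis.

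For part (2), the map $\mr{ev}_{\{1\}}\colon\mr{Fun}(\Delta^1,\coCart^{\mr{str}}(\mr{RM}))\to\coCart^{\mr{str}}(\mr{RM})$ is a Cartesian fibration, with Cartesian edges given by constant paths. Pulling back along $\rho$ in diagram (\ref{dfnofRModandoth-diag}) exhibits $\mc{X}\to\mbf\Delta$ as a Cartesian fibration whose Cartesian edges over $\phi\colon[m]\to[n]$ are realized by the base change $\mc{M}^{\circledast}\times_{\mr{RM}_{\Delta^n}}\mr{RM}_{\Delta^m}$. Since base change preserves the property of being a base-preserving coCartesian fibration of generalized $\infty$-operads, the full subcategory $\bp\RMod_{\mbf\Delta}\subset\mc{X}$ is closed under these Cartesian edges, so $\alpha$ is itself a Cartesian fibration. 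The Segal condition follows from Lemma \ref{baRMpro}(\ref{baRMpro-2}): the spine decomposition $\mr{RM}_{\Delta^n}\simeq\mr{RM}_{\Delta^{\{0,1\}}}\coprod_{\mr{RM}_{\{1\}}}\cdots\coprod_{\mr{RM}_{\{n-1\}}}\mr{RM}_{\Delta^{\{n-1,n\}}}$, being a homotopy pushout of generalized $\infty$-operads via the categorical-pattern argument employed there, descends after taking right modules to a product decomposition of $\bp\RMod_{\Delta^n}$.

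Finally, for the assertion about $\mr{alg}$, an $\alpha$-Cartesian edge over $\phi\colon[m]\to[n]$ is represented by the pullback square
\[
\xymatrix{
\mc{M}^{\circledast}\times_{\mr{RM}_{\Delta^n}}\mr{RM}_{\Delta^m}\ar[r]\ar[d] & \mc{M}^{\circledast}\ar[d]\\
\mr{RM}_{\Delta^m}\ar[r] & \mr{RM}_{\Delta^n}.
}
\]
Applying $\mr{alg}$ one first takes the source and then base changes to $\mf{a}\subset\mr{RM}$. The key observation is that $\mf{a}\subset\mr{RM}$ lies entirely over $\{1\}\subset\Delta^1$, where $\mc{C}^{\triangleright}\to\Delta^1$ has singleton fiber; hence the fiber of $\mr{RM}_{\Delta^k}\to\mr{RM}$ over any object of $\mf{a}$ is the cone point of $(\Delta^k)^{\triangleright}$, so the map $\mr{RM}_{\Delta^m}\times_{\mr{RM}}\mf{a}\to\mr{RM}_{\Delta^n}\times_{\mr{RM}}\mf{a}$ is an isomorphism. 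Thus $\mr{alg}$ of the Cartesian edge reduces to the identity on $\mc{M}^{\circledast}\times_{\mr{RM}}\mf{a}$, which is in particular an equivalence.
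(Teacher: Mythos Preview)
Your treatment of part~(1), the Cartesian-fibration claim in part~(2), and the claim about $\mr{alg}$ are all essentially correct and match the paper's approach, up to one minor mis-citation: the reduction you want in part~(1) is not the dual of Lemma~\ref{cartedgedetelem} but rather the ``locally coCartesian implies coCartesian'' criterion \cite[2.4.2.8]{HTT}. Lemma~\ref{cartedgedetelem} concerns $h$-Cartesian edges for a map $h$ \emph{between} fibrations, not coCartesian edges over the base, so it does not apply in the form you invoke it. The paper proceeds exactly as you outline: once $p(e)$ is known to be $s$-coCartesian it can be replaced by an equivalent edge lying inside some $\mr{RM}_x$, and then the fiberwise hypothesis finishes the job.

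Your argument for the Segal condition, however, has a genuine gap. You correctly note that Lemma~\ref{baRMpro}(\ref{baRMpro-2}) gives a pushout decomposition of $\mr{RM}_{\Delta^n}$ in $\Cat_\infty$, and hence (via Lemma~\ref{cartsegco}) a product decomposition of $\coCart(\mr{RM}_{\Delta^n})$. But $\RMod_{\Delta^n}$ is not $\coCart(\mr{RM}_{\Delta^n})$: it is a full subcategory of $\coCart^{\mr{str}}(\mr{RM})^{/\mr{RM}_{\Delta^n}}$, cut out by the condition ``base-preserving coCartesian fibration of generalized $\infty$-operads''. The sentence ``descends after taking right modules'' hides exactly the non-trivial step. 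The paper addresses this by first replacing $\RMod_{\mc{C}}$ by an equivalent (non-full) subcategory $\RMod_{\mc{C}}^{\sim}\subset\coCart(\mr{RM}_{\mc{C}})$---this uses part~(1) of the lemma and a marked-anodyne argument---and then invoking Lemma~\ref{critsubseg}, which requires verifying that both membership of objects and membership of morphisms in $\RMod_{\Delta^n}^{\sim}$ can be tested after restriction to each $\mr{RM}_{\Delta^{\{i\}}}$. The object check in particular needs a genuine argument: one must show that if each $\iota_i^*\mc{M}^{\circledast}$ is a generalized $\infty$-operad then so is $\mc{M}^{\circledast}$, which the paper does by analysing the Segal map for $\mc{M}^{\circledast}$ over the Segal map for $\mr{RM}_{\Delta^n}$. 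None of this is visible in your sketch.
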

\begin{proof}
 First note that $s$ is a coCartesian fibration by Lemma \ref{baRMpro}.
 Let $e$ be an $(s\circ p)$-coCartesian edge. We wish to show that $r(e)$
 is an $(s\circ q)$-coCartesian edge. Note that we are allowed to
 replace $e$ by an edge equivalent to it, since being a coCartesian edge
 is preserved by equivalence. Since $s\circ q$ is a coCartesian
 fibration, it suffices to show that $r(e)$ is a {\em locally} $(s\circ
 q)$-coCartesian edge by \cite[2.4.2.8]{HTT}. Since $e$ is an
 $(s\circ p)$-coCartesian edge, $p(e)$ is an $s$-coCartesian edge.
 This implies that, by replacing $e$ by its equivalent edge, we may
 assume that there exists $x\in\mc{C}$ such that $p(e)$ sits
 inside $\mr{RM}_{x}$ in $\mr{RM}_{\mc{C}}$.
 Thus, it suffices to show that
 \begin{equation*}
  \mc{M}^{\circledast}\times_{\mr{RM}_{\mc{C}}}
   \mr{RM}_{x}
   \rightarrow
   \mc{N}^{\circledast}\times_{\mr{RM}_{\mc{C}}}
   \mr{RM}_{x}
 \end{equation*}
 preserves coCartesian edges over $\mr{RM}_{x}$.
 This follows by assumption.

 Let us show the second claim. We first show that it is a Cartesian
 fibration. By \cite[2.3.2.5]{HTT} and the fact that a fully faithful
 inclusion is an inner fibration, the map is an inner fibration.
 Because any base preserving coCartesian fibration of generalized $\infty$-operad
 is stable by base change of generalized $\infty$-operad, we get the claim.
 By construction, the claim for $\mr{alg}$ follows as well.
 
 We are left to show the Segal condition.
 Let $\RMod_{\mc{C}}^{\sim}$ be the subcategory of
 $\coCart(\mr{RM}_{\mc{C}})$ spanned by simplices
 $\Delta^n\rightarrow\coCart(\mr{RM}_{\mc{C}})$ all of whose vertices
 $\mc{M}^{\circledast}\rightarrow\mr{RM}_{\mc{C}}$ are base preserving
 coCartesian fibration of generalized $\infty$-operads,
 and all of whose edges are of the form (\ref{basicproprmodsimp-diag})
 such that the base change to $\mr{RM}_x$ for any $x\in\mc{C}$ is
 in $\coCart^{\mr{str}}(\mr{RM}_x)$. Then the evident map
 $\theta\colon\RMod_{\mc{C}}\rightarrow\RMod_{\mc{C}}^{\sim}$ is a
 trivial fibration. Indeed, let
 $D:=(\Delta^1\times\Delta^1)\coprod_{\Delta\times\{1\}}\{*\}$, and
 $D':=\Delta^1\times\{0\}\cup\{1\}\times\Delta^1$.
 Then $D'^{\flat}\rightarrow D^{\flat}$ is a
 (Cartesian) marked anodyne.
 This implies that the map
 $\theta'\colon\mr{Map}^{\flat}(D^{\flat},\Cat_\infty^{\natural})
 \rightarrow\mr{Map}^{\flat}(D'^{\flat},\Cat_\infty^{\natural})$ is a
 trivial fibration by \cite[3.1.2.3]{HTT}.
 This map is isomorphic to
 $\mr{Fun}(D,\Cat_\infty)\rightarrow\mr{Fun}(D',\Cat_\infty)$.
 We have the inclusion
 \begin{equation*}
  \RMod_{\mc{C}}^{\sim}\hookrightarrow
   \mr{Fun}(\Delta^1,\Cat_\infty)\times_{\{1\},\Cat_\infty}
   \{\mr{RM}_{\mc{C}}\}
   \hookrightarrow
   \mr{Fun}(D',\Cat_\infty)
 \end{equation*}
 where the second map sends $F\colon\mc{X}\rightarrow\mr{RM}_{\mc{C}}$
 to
 $\mc{X}\xrightarrow{F}\mr{RM}_{\mc{C}}\rightarrow\mr{RM}$. Similarly,
 $\RMod_{\mc{C}}$ can be viewed as a subcategory of
 $\mr{Fun}(D,\Cat_\infty)$.
 In view of the first claim, $\theta$ is a base change of $\theta'$,
 thus $\theta$ is a trivial fibration as well.

 Thus, in order to check the Segal condition for $\RMod$, it
 suffices to show that the canonical functor
 \begin{equation*}
  \RMod_{\Delta^{m}}^{\sim}\rightarrow
   \RMod_{\Delta^{\{0,\dots,n\}}}^{\sim}
   \times^{\mr{cat}}_{\RMod_{\Delta^{\{n\}}}^{\sim}}
   \RMod_{\Delta^{\{n,\dots,m\}}}^{\sim}
 \end{equation*}
 is a categorical equivalence. Since $\RMod_S^{\sim}$ is a
 subcategory of $\coCart(\mr{RM}_S)$, we only need to check
 the conditions of Lemma \ref{critsubseg}.
 The Segal map is an equivalence for $\coCart(\mr{RM}_S)$ by Lemma
 \ref{baRMpro}.\ref{baRMpro-2} and Lemma \ref{cartsegco}.
 For the rest, it suffices to show the following assertions:
 \begin{enumerate}
  \item An object $\mc{M}^{\circledast}\in\coCart(\mr{RM}_{\Delta^n})$
	is in $\RMod_{\Delta^n}^{\sim}$ if and only if the restriction
	$\iota_i^*\mc{M}^{\circledast}$ belongs to
	$\RMod_{\Delta^{\{i\}}}^{\sim}$ for any $i$.
	Here, $\iota_i\colon\mr{RM}_{\Delta^{\{i\}}}\rightarrow
	\mr{RM}_{\Delta^n}$ is the canonical functor;
	
  \item Given $f\colon\mc{M}\rightarrow\mc{N}$ in
	$\coCart(\mr{RM}_{\Delta^n})$ such that
	$\mc{M},\mc{N}\in\RMod_{\Delta^n}^{\sim}$, $f$ is a map in
	$\RMod_{\Delta^n}^{\sim}$ if and only if $\iota_i^*(f)$ is in
	$\RMod_{\Delta^{\{i\}}}^{\sim}$ for any $i$.
 \end{enumerate}
 The second assertion follows by the definition of
 $\RMod_{\mc{C}}^{\sim}$.
 Let us show the first assertion. The coCartesian fibration
 $p\colon\mc{M}^{\circledast}\rightarrow\mr{RM}_{\Delta^n}$ is in
 $\RMod_{\Delta^n}^{\sim}$ if and only if, the map
 $\mc{M}^{\circledast}\rightarrow\mbf{\Delta}^{\mr{op}}$ exhibits
 $\mc{M}^{\circledast}$ as a generalized $\infty$-operad, and the
 induced map
 $\mc{M}^{\circledast}_{[0]}\rightarrow\mr{RM}_{\Delta^n,[0]}$ is an
 equivalence. We have the induced coCartesian
 fibration
 $p_{[0]}\colon\mc{M}^{\circledast}_{[0]}\rightarrow
 \mr{RM}_{\Delta^n,[0]}$. This is an equivalence if and only if it is so
 after pulling-back by map
 $\mr{RM}_{\Delta^{\{i\}},[0]}\rightarrow\mr{RM}_{\Delta^n,[0]}$ for any
 $i$ by \cite[3.3.1.5]{HTT}. Thus the equivalence is equivalent to the
 equivalence of $(\iota^*_i\mc{M}^{\circledast})_{[0]}\rightarrow
 \mr{RM}_{\Delta^{\{i\}},[0]}$ for any $i$.
 Now, we may assume that $p_{[0]}$ is an equivalence. In view of (an
 analogue of) \cite[2.1.2.12]{HA}, the map
 $\mc{M}^{\circledast}\rightarrow\mbf{\Delta}^{\mr{op}}$ is a
 generalized $\infty$-operad if and only if the map $\pi$ below induced
 by inert maps
 \begin{align*}
  \mc{M}^{\circledast}_{[m]}
  &\xrightarrow{\pi}
  \mc{M}^{\circledast}_{\{0,1\}}
  \times^{\mr{cat}}_{\mc{M}^{\circledast}_{\{1\}}}
  \mc{M}^{\circledast}_{\{1,2\}}
  \times^{\mr{cat}}\dots
  \times^{\mr{cat}}_{\mc{M}^{\circledast}_{\{m-1\}}}
  \mc{M}^{\circledast}_{\{m-1,m\}}\\
  &\xrightarrow{\alpha}
  \mc{M}^{\circledast}_{\{0,1\}}
  \times^{\mr{cat}}_{\mr{RM}_{\Delta^n,\{1\}}}
  \mc{M}^{\circledast}_{\{1,2\}}
  \times^{\mr{cat}}\dots
  \times^{\mr{cat}}_{\mr{RM}_{\Delta^n,\{m-1\}}}
  \mc{M}^{\circledast}_{\{m-1,m\}}
 \end{align*}
 is an equivalence. Since $\alpha$ is an equivalence, it suffices to
 show that $\alpha\circ\pi$ is an equivalence if and only it is so after
 pullback by $\iota_i$ for any $i$. Consider the following diagram:
 \begin{equation*}
  \xymatrix{
   \mc{M}^{\circledast}_{[m]}
   \ar[r]^-{\alpha}\ar[d]&
   \mc{M}^{\circledast}_{\{0,1\}}
   \times^{\mr{cat}}_{\mr{RM}_{\Delta^n,\{1\}}}
   \mc{M}^{\circledast}_{\{1,2\}}
   \times^{\mr{cat}}\dots
   \times^{\mr{cat}}_{\mr{RM}_{\Delta^n,\{m-1\}}}
   \mc{M}^{\circledast}_{\{m-1,m\}}
   \ar[d]^{\beta}\\
  \mr{RM}_{\Delta^n,[m]}
   \ar[r]^-{\gamma}&
   \mr{RM}_{\Delta^n,\{0,1\}}
   \times^{\mr{cat}}_{\mr{RM}_{\Delta^n,\{1\}}}
   \mr{RM}_{\Delta^n,\{1,2\}}\times\dots
   \times^{\mr{cat}}_{\mr{RM}_{\Delta^n,\{m-1\}}}
   \mr{RM}_{\Delta^n,\{m-1,m\}}
   }
 \end{equation*}
 Since
 $\sigma^i_!\colon\mr{RM}_{S,[1]}\rightarrow\mr{RM}_{S,[0]}$ is a
 coCartesian fibration for $i=0,1$, the fiber product of the target of
 $\alpha$ can be computed by fiber products in the category of
 simplicial sets. With respect to this model of the fiber product,
 $\beta$ is a coCartesian fibration. By direct computation, $\gamma$ is
 an isomorphism of simplicial sets. Thus, by \cite[3.3.1.5]{HTT} again,
 the equivalence of $\alpha$ is equivalent to the equivalence of
 $\alpha$ over each vertices of $\mr{RM}_{\Delta^n,[m]}$. Thus,
 $\mc{M}^{\circledast}$ is a generalized $\infty$-operad if and only if it is so
 after pullback by $\iota_i$.
\end{proof}

\subsection{}
\label{modelofrmod}
Let $\mc{C}$ be an $\infty$-category and $\mc{A}^{\circledast}\rightarrow\mbf{\Delta}^{\mr{op}}$ be a monoidal $\infty$-category,
in other words a coCartesian fibration of $\infty$-operads.
First, we put
$\RMod_{\mc{C},\mc{A}}:=\RMod_{\mc{C}}\times^{\mr{cat}}_{\coCart^{\mr{str}}(\mf{a})}\{\mc{A}^{\circledast}\}$.
Since
$\mr{alg}\colon\bp\RMod_{\mbf{\Delta}}\rightarrow\coCart^{\mr{str}}(\mf{a})\times\mbf{\Delta}$ is a functor in
$\Cart^{\mr{str}}(\mbf{\Delta})$, we may define
\begin{equation*}
 \bp\RMod_{\mbf{\Delta},\mc{A}}:=
  \bp\RMod_{\mbf{\Delta}}\times^{\mr{cat}}
  _{\coCart^{\mr{str}}(\mf{a})\times\mbf{\Delta}}
  (\{\mc{A}^{\circledast}\}\times\mbf{\Delta}).
\end{equation*}
in $\Cart^{\mr{str}}(\mbf{\Delta})$ by Lemma \ref{basicproprmodsimp}.\ref{basicproprmodsimp-2}.
Using the notation of \ref{dfnofbcstar}, we further put
\begin{equation*}
 \bp\RMod^{\circledast}:=
  \bp\RMod_{\mbf{\Delta}}\basech(\RMod_{\Delta^0})^{\simeq},
  \qquad
  \bp\RMod_{\mc{A}}^{\circledast}:=
  \bp\RMod_{\mbf{\Delta},\mc{A}}\basech
  (\RMod_{\Delta^0,\mc{A}})^{\simeq}.
\end{equation*}
We denote by $\RMod^{\circledast}$, $\RMod^{\circledast}_{\mc{A}}$ the dual coCartesian fibration of
$\bp\RMod^{\circledast}$, $\bp\RMod^{\circledast}_{\mc{A}}$.
Note that since $\bp\RMod_{\mbf{\Delta}}$ satisfies the Segal condition by Lemma \ref{basicproprmodsimp}, we have an equivalence
$\bp\RMod_{\mc{A}}^{\circledast}\simeq\bp\RMod_{\mbf{\Delta}}\basech(\RMod_{\Delta^0,\mc{A}})^{\simeq}$.

\begin{rem*}
 In the definition of $\RMod_{\mc{C},\mc{A}}$, we used the fiber product
 in $\Cat_\infty$, which is determined only up to contractible choices.
 If we need to fix a specific model for the fiber product, we may use
 $\RMod_S\times_{\coCart^{\mr{str}}(\mf{a})}
 (\coCart^{\mr{str}}(\mf{a})_{\mc{A}/})^{\mr{init}}$. Here,
 $\mc{C}^{\mr{init}}$ denotes the full subcategory spanned by initial
 objects, which is a contractible Kan complex by \cite[1.2.12.9]{HTT}.
 The fiber product is a fiber product in $\Cat_\infty$ by
 \cite[2.1.2.2]{HTT}. In other words, an object of $\RMod_{S,\mc{A}}$ is
 a pair of an object $\mc{M}^{\circledast}\rightarrow\mr{RM}_S$ in
 $\Cat_\infty^{\mr{RM}_S}$ and an equivalence
 $\mc{A}^{\circledast}\xrightarrow{\sim}
 \mc{M}^{\circledast}\times_{\mr{RM}_S}\mf{a}$.
\end{rem*}

\begin{prop}
 \label{inftwoRMod}
 The map
 $\rho\colon\bp\RMod^{\circledast}_{\mc{A}}\rightarrow\mbf{\Delta}$ is a
 complete Segal space, and defines an $(\infty,2)$-category whose
 underlying $\infty$-category is categorically equivalent to
 $\RMod^{\mb{A}_\infty}_{\mc{A}}(\Cat_\infty)$
 {\normalfont(}cf.\ {\normalfont\cite[4.2.2.10]{HA}} for the notation{\normalfont)}.
\end{prop}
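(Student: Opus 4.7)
The plan is to verify the three defining axioms of an $(\infty,2)$-category for the simplicial $\infty$-category $\bp\RMod^{\circledast}_{\mc{A},\bullet}\colon\mbf{\Delta}^{\mr{op}}\rightarrow\widehat{\Cat}_\infty$ obtained by straightening $\rho$, and then to identify the underlying $\infty$-category. That the fiber over $[0]$ is a space is immediate from the $\basech$ construction of \ref{dfnofbcstar}: by construction the fiber of $\bp\RMod^{\circledast}_{\mc{A}}$ over $[0]$ is tautologically $(\RMod_{\Delta^0,\mc{A}})^{\simeq}$, a Kan complex.

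For the Segal condition, my starting point is Lemma \ref{basicproprmodsimp}.\ref{basicproprmodsimp-2}, which supplies the Segal condition for $\bp\RMod_{\mbf{\Delta}}$. I would then check that this passes to the fiber product $\bp\RMod_{\mbf{\Delta},\mc{A}}$: since $\mr{alg}$ sends Cartesian edges to equivalences (same lemma), imposing that the underlying algebra is $\mc{A}$ is compatible with the Segal decomposition over $\mbf{\Delta}$. Finally, the $\basech$ operation preserves the Segal condition via the explicit formula $(\mc{D}_\bullet\basech\mc{E})([n])\simeq\mc{D}_n\times_{\mc{D}_0^{\times(n+1)}}\mc{E}^{\times(n+1)}$: the new Segal map is obtained from the old one by a fiber product with the identity on $\mc{E}^{\times(n+1)}$, and hence remains an equivalence.

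For completeness and the identification of the underlying $\infty$-category, I would treat both at once by identifying the associated Segal space $\theta\circ\bp\RMod^{\circledast}_{\mc{A},\bullet}$ with $\mr{Seq}_\bullet(\RMod^{\mb{A}_\infty}_{\mc{A}}(\Cat_\infty))$. Since any equivalence in $\RMod_{\Delta^n,\mc{A}}$ restricts to an equivalence at each vertex, the functor $\theta$ collapses the $\basech$ construction on the underlying spaces, giving $\theta((\bp\RMod^{\circledast}_{\mc{A}})_{[n]})\simeq(\RMod_{\Delta^n,\mc{A}})^{\simeq}$. It therefore suffices to identify the latter with $\mr{Map}_{\Cat_\infty}(\Delta^n,\RMod^{\mb{A}_\infty}_{\mc{A}}(\Cat_\infty))$, functorially in $[n]$. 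The key tool is Lemma \ref{baRMpro}.\ref{baRMpro-2}, which presents $\mr{RM}_{\Delta^n}$ as $(\Delta^n\times\mr{RM})\coprod_{\Delta^n\times\mf{a}}\mf{a}$ in generalized $\infty$-operads. Under this presentation, a base-preserving coCartesian fibration of generalized operads over $\mr{RM}_{\Delta^n}$ with restriction $\mc{A}^{\circledast}$ over $\mf{a}$ corresponds, by straightening over $\Delta^n$, to a functor $\Delta^n\rightarrow\RMod^{\mb{A}_\infty}_{\mc{A}}(\Cat_\infty)$ in the sense of \cite[4.2.2.10]{HA}.

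Once this identification of simplicial spaces is established, completeness is inherited from $\mr{Seq}_\bullet$ of an $\infty$-category (cf.\ \ref{inftwocatintro}), and the underlying $\infty$-category of $\bp\RMod^{\circledast}_{\mc{A}}$ is $\RMod^{\mb{A}_\infty}_{\mc{A}}(\Cat_\infty)$ by construction. I expect the main obstacle to be this final identification: bridging our formulation (base-preserving coCartesian fibrations of generalized $\infty$-operads over $\mr{RM}_{\Delta^n}$) with the standard formulation of right modules in \cite{HA}, and verifying the equivalence at the level of simplicial $\infty$-categories --- and not merely at the level of objects --- in a manner functorial in $[n]$.
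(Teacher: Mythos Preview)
Your proposal is correct and follows essentially the same route as the paper. The paper likewise deduces the Segal condition from Lemma~\ref{basicproprmodsimp}, and for completeness identifies $(\RMod_{\Delta^n,\mc{A}})^{\simeq}$ with $\mr{Fun}(\Delta^n,\RMod^{\mb{A}_\infty}_{\mc{A}}(\Cat_\infty))^{\simeq}$ via Lemma~\ref{baRMpro}.\ref{baRMpro-2}; the only points you leave implicit are that the paper passes through the intermediate identification $\RMod_{\Delta^n}^{\simeq}\simeq\mr{Alg}_{\mr{RM}_{\Delta^n}}(\Cat_\infty)^{\simeq}$ together with \cite[7.1.9]{GH}, and that the functoriality-in-$[n]$ concern you flag is resolved by invoking Lemma~\ref{comptwoconst}.
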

\begin{proof}
 In view of Lemma \ref{basicproprmodsimp}, it remains to show the
 completeness and compute the underlying $\infty$-category. We have
 \begin{equation*}
  \RMod_{\Delta^n}^{\simeq}
   \simeq
   (\coCart(\mr{RM}_{\Delta^n})^{\mr{bp}})^\simeq
   \simeq
   (\coCart^{\mr{str}}(\mr{RM}_{\Delta^n})^{\mr{bp}})^{\simeq}
   \simeq
   \mr{Alg}_{\mr{RM}_{\Delta^n}}(\Cat_\infty)^{\simeq}.
 \end{equation*}
 Here $\coCart^{-}(\mr{RM}_{\Delta^n})^{\mr{bp}}$ denotes the full
 subcategory of $\coCart^{-}(\mr{RM}_{\Delta^n})$ spanned by vertices
 $\mc{M}^{\circledast}\rightarrow\mr{RM}_{\Delta^n}$ which is base
 preserving coCartesian fibration of generalized $\infty$-operads.
 On the other hand, we have
 \begin{align}
  \notag
  \mr{Alg}_{\mr{RM}_S}(\Cat_\infty)
  \times^{\mr{cat}}_{\mr{Alg}(\Cat_\infty)}\{\mc{A}\}
  &\simeq
  \bigl(\mr{Alg}_{\mr{RM}}(\Cat_\infty)^{S}
  \times^{\mr{cat}}_{\mr{Alg}(\mr{Cat}_\infty)^{S}}
  \mr{Alg}(\Cat_\infty)\bigr)
  \times^{\mr{cat}}_{\mr{Alg}(\Cat_\infty)}\{\mc{A}\}
  \\\label{compalgRM}
  &\simeq
  \bigl(\mr{RMod}^{\mb{A}_\infty}(\Cat_\infty)^{S}
  \times^{\mr{cat}}_{\mr{Alg}(\mr{Cat}_\infty)^{S}}
  \mr{Alg}(\Cat_\infty)\bigr)
  \times^{\mr{cat}}_{\mr{Alg}(\Cat_\infty)}\{\mc{A}\}
  \\\notag
  &\simeq
  \mr{Fun}\bigl(S,\mr{RMod}^{\mb{A}_\infty}_{\mc{A}}
  (\Cat_\infty)\bigr),
 \end{align}
 where the first equivalence follows by Lemma
 \ref{baRMpro}.\ref{baRMpro-2}, and the second by \cite[7.1.9]{GH}. By
 Lemma \ref{comptwoconst}, the composition of functors
 $\mbf{\Delta}^{\mr{op}}\xrightarrow{\rho'}\Cat_\infty
 \xrightarrow{\kappa}\mr{Spc}$, where $\rho'$ is the functor
 associated with the Cartesian fibration $\rho$, is equivalent to
 $\mr{Seq}_\bullet(\mr{RMod}^{\mb{A}_\infty}_{\mc{A}}(\Cat_\infty))$.
 Thus, the proposition follows.
\end{proof}

\subsection{}
\label{presmoncatint}
The monoidal $\infty$-category $\mc{A}^{\circledast}$ is said to be {\em presentable} if it comes from an object of $\mr{Alg}(\PrL)$.
In other words, this is equivalent to saying that $\mc{A}$ is presentable and the tensor product $\otimes\colon\mc{A}\times\mc{A}\rightarrow\mc{A}$
preserve small colimits separately in each variable (cf.\ \cite[4.8.1.15]{HA}).
The $\infty$-category $\RMod_{\mc{C}}$ makes sense if we replace the $\infty$-category of small $\infty$-categories $\Cat_\infty$
by the $\infty$-category of possibly large $\infty$-categories $\widehat{\Cat}_\infty$.
We denote by $\widehat{\RMod}_{\mc{C}}$ by the $\infty$-category defined by this replacement.

\begin{dfn*}
 For an $\infty$-category $\mc{C}$, let $\LinCat_{\mc{C},-}$ be the {\em full} subcategory of $\widehat{\RMod}_{\mc{C}}$ spanned by
 coCartesian fibrations $p\colon\mc{M}^{\circledast}\rightarrow\mr{RM}_\mc{C}$ which is at the same time presentable
 (namely each fiber is presentable and the functor associated to each edge of $\mr{RM}_{\mc{C}}$ commutes with colimits, cf.\ \cite[5.5.3.2]{HTT}).
 We put $\LinCat_{\mc{C},\mc{A}}:=\LinCat_{\mc{C},-}\times^{\mr{cat}}_{\Cat_\infty^{\mbf{\Delta}^{\mr{op}}}}\{\mc{A}^{\circledast}\}$.
 Let $\bp\LinCat_{\mbf{\Delta}}$ be the full subcategory of $\bp\RMod_{\mbf{\Delta}}$ spanned by vertices $\LinCat_{\Delta^n,-}$ over
 $[n]\in\mbf{\Delta}^{\mr{op}}$.
 We define $\bp\LinCat^{\circledast}_{\mc{A}}:=\bp\LinCat_{\mbf{\Delta}}\basech\LinCat_{\Delta^0,\mc{A}}^{\simeq}$.
\end{dfn*}

\begin{rem*}
 \begin{enumerate}
  \item One may wonder why $\LinCat_{\mc{C},-}$ is a {\em full} subcategory of $\RMod_{\mc{C}}$.
	Indeed, the subcategory $\RMod^{\mb{A}_\infty}_{\mc{A}}(\PrL)$ of $\RMod^{\mb{A}_\infty}_{\mc{A}}(\widehat{\Cat}_\infty)$ is {\em not} full.
	However, we will use $\LinCat_{\mc{C},-}$ to construct the $(\infty,2)$-category $\twoLinCat_{\mc{A}}$, and the underlying
	$\infty$-category of $\twoLinCat_{\mc{A}}$ does not coincide with $\LinCat_{\Delta^0,\mc{A}}$.

  \item From now on, we ignore the size of sets,
	and in particular, we often denote $\widehat{\Cat}_\infty$ and $\widehat{\RMod}$ by $\Cat_\infty$ and $\RMod$ unless there may be a confusion.
 \end{enumerate}
\end{rem*}

\begin{prop}
 \label{inftwolincat}
 Let $\mc{A}$ be a presentable monoidal $\infty$-category.
 The simplicial $\infty$-category $\bp\LinCat_{\mc{A}}^{\circledast}$ is
 an $(\infty,2)$-category whose associated $\infty$-category is
 equivalent to $\RMod^{\mb{A}_\infty}_{\mc{A}}(\PrL)$.
\end{prop}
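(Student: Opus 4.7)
The plan is to mimic the proof of Proposition \ref{inftwoRMod} with extra care for the non-fullness issue flagged in the remark preceding the statement. First I would verify the Cartesian fibration and Segal properties for $\bp\LinCat_{\mbf{\Delta}}\to\mbf{\Delta}$. By Lemma \ref{basicproprmodsimp}, the corresponding map for $\bp\RMod_{\mbf{\Delta}}$ is a Cartesian fibration satisfying the Segal condition, so I only need to check that the full subcategory spanned by presentable objects is closed under the relevant operations. Pullback of a presentable coCartesian fibration remains presentable, giving the Cartesian property. For the Segal condition, I would use Lemma \ref{critsubseg} to reduce to the fact that a coCartesian fibration $\mc{M}^{\circledast}\to\mr{RM}_{\Delta^n}$ is presentable precisely when its pullbacks to each $\mr{RM}_{\Delta^{\{i\}}}$ and $\mr{RM}_{\Delta^{\{i,i+1\}}}$ are: presentability of fibers is pointwise, and colimit‑preservation of the functors encoded by the coCartesian lifts of edges of $\Delta^n$ is witnessed in the restrictions to consecutive pairs.

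Next I would analyze the effect of the $\basech$ construction with $\LinCat_{\Delta^0,\mc{A}}^{\simeq}$, which is a space, so level $0$ of the resulting simplicial $\infty$-category is automatically a space. Since $\basech$ is defined as a fiber product and $\theta$ commutes with limits by Lemma \ref{propspccatinf}, for each $n$ I get
\begin{equation*}
 \theta\bigl(\bp\LinCat^{\circledast}_{\mc{A},[n]}\bigr)
 \;\simeq\;\bp\LinCat_{\Delta^n,\mc{A}}^{\simeq}.
\end{equation*}
The Segal condition for $\bp\LinCat_{\mbf{\Delta},\mc{A}}$ is obtained from the Segal condition for $\bp\LinCat_{\mbf{\Delta}}$ by taking fibers over $\mc{A}$, and then $\basech$ preserves Segal.

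The third step is the identification of the underlying Segal space, where the subtlety of the remark appears. From equivalence (\ref{compalgRM}) in the proof of Proposition \ref{inftwoRMod}, we have $\RMod_{\Delta^n,\mc{A}}\simeq\mr{Fun}(\Delta^n,\RMod^{\mb{A}_\infty}_{\mc{A}}(\widehat{\Cat}_\infty))$. Under this equivalence, the full subcategory $\LinCat_{\Delta^n,\mc{A}}$ corresponds to those functors $F\colon\Delta^n\to\RMod^{\mb{A}_\infty}_{\mc{A}}(\widehat{\Cat}_\infty)$ such that each $F(i)$ lies in $\RMod^{\mb{A}_\infty}_{\mc{A}}(\PrL)$ and each $F(i\to j)$ is colimit‑preserving. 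Passing to maximal subgroupoids collapses the two potentially different $\infty$-categories (the full subcategory of $\mr{Fun}(\Delta^n,\RMod^{\mb{A}_\infty}_{\mc{A}}(\widehat{\Cat}_\infty))$, and $\mr{Fun}(\Delta^n,\RMod^{\mb{A}_\infty}_{\mc{A}}(\PrL))$) because any natural equivalence automatically has colimit‑preserving components. Hence
\begin{equation*}
 \bp\LinCat_{\Delta^n,\mc{A}}^{\simeq}
 \;\simeq\;
 \mr{Fun}\bigl(\Delta^n,\RMod^{\mb{A}_\infty}_{\mc{A}}(\PrL)\bigr)^{\simeq}
 \;\simeq\;
 \mr{Seq}_n\bigl(\RMod^{\mb{A}_\infty}_{\mc{A}}(\PrL)\bigr).
\end{equation*}

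Finally, Lemma \ref{comptwoconst} upgrades the pointwise equivalences to an equivalence of Segal spaces, so the underlying Segal space of $\bp\LinCat^{\circledast}_{\mc{A}}$ is $\mr{Seq}_\bullet(\RMod^{\mb{A}_\infty}_{\mc{A}}(\PrL))$, which is complete because it comes from an $\infty$-category. This establishes both completeness and the identification of the underlying $\infty$-category with $\RMod^{\mb{A}_\infty}_{\mc{A}}(\PrL)$. The main obstacle is Step 3: verifying rigorously that the equivalence of (\ref{compalgRM}) restricts to give the stated description of $\LinCat_{\Delta^n,\mc{A}}$, and in particular that the non‑full inclusion $\RMod^{\mb{A}_\infty}_{\mc{A}}(\PrL)\hookrightarrow\RMod^{\mb{A}_\infty}_{\mc{A}}(\widehat{\Cat}_\infty)$ causes no issue once we restrict to the maximal subgroupoid, which is precisely the conceptual content of the remark preceding the proposition.
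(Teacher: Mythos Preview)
Your proposal is correct and follows essentially the same approach as the paper. The paper's proof is slightly more streamlined in Step~3: rather than routing through $\widehat{\Cat}_\infty$ via (\ref{compalgRM}) and then restricting, it directly identifies $\LinCat_{\Delta^n,-}^{\simeq}\simeq\mr{Alg}_{\mr{RM}_{\Delta^n}}(\PrL)^{\simeq}$ and then reruns the computation (\ref{compalgRM}) verbatim with $\PrL$ in place of $\widehat{\Cat}_\infty$, obtaining $\mr{Alg}_{\mr{RM}_S}(\PrL)\times_{\mr{Alg}(\PrL)}\{\mc{A}\}\simeq\mr{Fun}(S,\RMod^{\mb{A}_\infty}_{\mc{A}}(\PrL))$; both arguments rest on the same underlying observation that equivalences between presentable $\infty$-categories are automatically colimit-preserving, which is exactly the point you isolate as the main obstacle.
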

\begin{proof}
 Let us show the Segal condition.
 For this, it suffices to check the Segal condition for $\bp\LinCat_{\mbf{\Delta}}$.
 By Lemma \ref{critsubseg}, in view of Proposition \ref{inftwoRMod}, we only need to show the following claim:
 An object $\mc{M}\in\RMod_{\Delta^n}$ belongs to $\LinCat_{\Delta^n,-}$ if and only if $\kappa_i^*\mc{M}$ belongs to
 $\LinCat_{\Delta^{\{i,i+1\}},-}$ for any $0\leq i<n$.
 Here $\kappa_i\colon\mr{RM}_{\Delta^{\{i,i+1\}}}\rightarrow\mr{RM}_{\Delta^n}$.
 The verification is straightforward.

 Now, we need to show that it is complete.
 We have $\LinCat_{\Delta^n,-}^{\simeq}\simeq\mr{Alg}_{\mr{RM}_{\Delta^n}}(\PrL)^{\simeq}$.
 Similarly to the computation (\ref{compalgRM}), we have
 \begin{equation*}
  \mr{Alg}_{\mr{RM}_S}(\PrL)\times_{\mr{Alg}(\PrL)}
   \{\mc{A}\}
   \simeq
   \mr{Fun}(S,\RMod^{\mb{A}_\infty}_{\mc{A}}(\PrL)).
 \end{equation*}
 As in Proposition \ref{inftwoRMod}, use Lemma \ref{comptwoconst} to show that this equivalence induces the equivalence between the
 underlying Segal space of $\bp\LinCat^{\circledast}$ and $\mr{Seq}_\bullet(\RMod^{\mb{A}_\infty}_{\mc{A}}(\PrL))$ to conclude.
\end{proof}

\begin{dfn}
 \label{dfnoftwolincat}
 We define $\twoLinCat_{\mc{A}}$ to be the $(\infty,2)$-category such
 that the equivalence
 $\mr{Seq}_{\bullet}(\twoLinCat_{\mc{A}}^{\mr{2-op}})\simeq
 \mr{St}(\bp\LinCat^{\circledast}_{\mc{A}})$, where $\mr{St}$ denotes
 the straightening functor, holds.
 If $R$ is an $\mb{E}_2$-ring, then $\RMod_R$ can naturally be
 considered as an object in $\mr{Alg}(\PrL)$ by \cite[7.1.2.6]{HA}.
 In this case, we denote
 $\twoLinCat_{\RMod_R}$ by $\twoLinCat_R$. The underlying
 $\infty$-category of $\twoLinCat_{\mc{A}}$ and $\twoLinCat_R$ are
 denoted by $\LinCat_{\mc{A}}$ and $\LinCat_R$.
 Note that, by Proposition \ref{inftwolincat}, $\LinCat_R$ coincides with $\mr{LinCat}^{\mr{St}}_R$ in \cite[D.1.5.1]{SAG}.
 For a $1$-morphism $F\colon \mc{C}\rightarrow\mc{D}$ in $\twoLinCat_{\mc{A}}$,
 the corresponding monoidal functor of generalized $\mr{RM}$-operads is
 denoted by $F^{\circledast}\colon\mc{C}^{\circledast}\rightarrow\mc{D}^{\circledast}$.
\end{dfn}

\begin{rem*}
 \begin{enumerate}
  \item We are taking $(-)^{\mr{2-op}}$ in order to have the forgetful
	functor $\twoLinCat_{\mc{A}}\rightarrow\mbf{Cat}_\infty$. See
	\cite[Ch.10, 2.4.5]{GR}. In \cite[Ch.1, 8.3.1]{GR}, they used
	mixture of Cartesian and coCartesian fibrations to define
	$\mr{Seq}_n(\twoLinCat_{\mc{A}})$. We did not employ this
	approach in order to avoid too much complications.
	
  \item The $(\infty,2)$-category $\mbf{Pres}$ appeared in Introduction
	is by definition $\twoLinCat_{\mb{S}}$, where $\mb{S}$ is the
	sphere spectrum. The underlying $\infty$-category is
	$\PrL_{\mr{st}}$, the full subcategory of $\PrL$ spanned by
	stable presentable $\infty$-categories, by \cite[4.8.2.18]{HA}.
 \end{enumerate}
\end{rem*}

\subsection{}
The following lemma is an useful criterion to detect adjoint maps in $\twoLinCat_R$.

\begin{lem*}
 \label{adjcritlincat}
 Let $F\colon\mc{C}\rightarrow\mc{D}$ be a $1$-morphism in $\twoLinCat_{\mc{A}}$.
 Then the following are equivalent:
 \begin{enumerate}
  \item\label{adjcritlincat-1}
       The functor $F$ admits a left {\normalfont(}resp.\ right{\normalfont)} adjoint in
       $\twoLinCat_{\mc{A}}$ in the sense of {\normalfont\cite[Ch.12, 1.1.3]{GR}};
	
  \item\label{adjcritlincat-2}
       There exists a monoidal functor $G^{\circledast}\colon\mc{D}^{\circledast}\rightarrow\mc{C}^{\circledast}$
       which is left {\normalfont(}resp.\ right{\normalfont)} adjoint to $F^{\circledast}$ relative to $\mr{RM}$ in the sense of
       {\normalfont\cite[7.3.2.2]{HA}} and $G^{\circledast}_{(01)}$ commutes with small colimits.

  \setcounter{adjprop}{\theenumi}
 \end{enumerate}
 Moreover, if $R$ is an $\mb{E}_2$-ring, and $\mc{A}^{\circledast}=\mr{LMod}_R$,
 then the above two conditions are equivalent to
 \begin{enumerate}
  \setcounter{enumi}{\theadjprop}
  \item\label{adjcritlincat-3}
       $F^{\circledast}_{(01)}$ admits a left adjoint
	{\normalfont(}resp.\ right adjoint which commutes with small colimits{\normalfont)}.
 \end{enumerate}
\end{lem*}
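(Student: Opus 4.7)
The plan is to prove $(1)\Leftrightarrow(2)$ by unwinding the definition of adjunction in an $(\infty,2)$-category in light of the construction of $\twoLinCat_{\mc{A}}$, and then to deduce $(2)\Leftrightarrow(3)$ in the $R$-linear case by combining Lurie's criterion for relative adjoints with the automatic $R$-linearity of adjoints between $R$-linear colimit-preserving functors.

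For $(1)\Leftrightarrow(2)$, recall from \cite[Ch.12, 1.1.3]{GR} that $F$ admits a right adjoint in $\twoLinCat_{\mc{A}}$ precisely when there exist a $1$-morphism $G\colon\mc{D}\to\mc{C}$ and $2$-morphisms $\eta\colon\mr{id}_{\mc{C}}\Rightarrow GF$, $\epsilon\colon FG\Rightarrow\mr{id}_{\mc{D}}$ satisfying the triangle identities. By construction of $\bp\LinCat^{\circledast}_{\mc{A}}$ and Definition \ref{dfnoftwolincat}, a $1$-morphism in $\twoLinCat_{\mc{A}}$ from $\mc{D}$ to $\mc{C}$ is exactly a base-preserving coCartesian fibration of generalized $\mr{RM}$-operads $G^{\circledast}\colon\mc{D}^{\circledast}\to\mc{C}^{\circledast}$ whose underlying $G^{\circledast}_{(01)}$ preserves small colimits; the $2$-morphisms are natural transformations of such functors over $\mr{RM}$. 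Consequently, the adjunction data in $\twoLinCat_{\mc{A}}$ is exactly the data of a relative adjoint of $F^{\circledast}$ over $\mr{RM}$ in the sense of \cite[7.3.2.2]{HA}, together with the extra constraint that $G^{\circledast}_{(01)}$ preserves colimits (automatic in the left-adjoint case because left adjoints preserve colimits).

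For $(2)\Leftrightarrow(3)$ with $\mc{A}^{\circledast}=\mr{LMod}_R$ for an $\mb{E}_2$-ring $R$, the implication $(2)\Rightarrow(3)$ is immediate by restriction to the $(0,1)$-fiber. For $(3)\Rightarrow(2)$, given an adjoint $G_0$ of $F^{\circledast}_{(01)}$ satisfying the appropriate colimit-preservation, we build a monoidal relative adjoint $G^{\circledast}$ of $F^{\circledast}$ via the relative-adjoint-functor theorem of \cite[\S7.3.2]{HA}. That theorem produces the relative adjoint once pointwise adjoints are given on each fiber of $\mr{RM}$ and a Beck-Chevalley condition holds along coCartesian edges of $\mr{RM}$. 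The pointwise data is furnished by $G_0$ on the $(0,1)$-fiber and by the identity on the $\mf{a}$-factors (our morphisms fix $\mc{A}$). The Beck-Chevalley condition along the edges that encode the $\mc{A}$-action unpacks to the assertion that the canonical lax $R$-linear structure on $G_0$ arising from the adjunction is in fact strongly $R$-linear. Since the $R$-action $M\otimes_R(-)$ is implemented by colimit-indexed operations and $G_0$ preserves small colimits by hypothesis, this strong $R$-linearity follows from a formal mate computation, using that $\mr{LMod}_R$ is the presentable monoidal $\infty$-category associated to the $\mb{E}_2$-ring $R$.

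The main obstacle is the verification of the Beck-Chevalley condition in the preceding paragraph, which crucially uses the $R$-linearity hypothesis; for general presentable monoidal $\mc{A}$, the right adjoint of a monoidal functor is only lax monoidal and so $(3)\Rightarrow(2)$ fails in that generality. A secondary technical difficulty is to identify the $(\infty,2)$-categorical $2$-cells of $\twoLinCat_{\mc{A}}$ with natural transformations of maps of $\mr{RM}$-operads, which is needed for $(1)\Leftrightarrow(2)$; this is taken care of by the $\mr{Seq}_\bullet$-presentation of $\twoLinCat_{\mc{A}}$ obtained in Proposition \ref{inftwolincat} together with a careful unpacking of the mapping categories of $\bp\LinCat^{\circledast}_{\mc{A}}$ in terms of $\mr{RM}_{\Delta^1}$-operad maps.
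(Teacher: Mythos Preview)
Your overall strategy matches the paper's, but there are two places where you wave your hands precisely where the paper does real work.

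For $(1)\Leftrightarrow(2)$: you assert that $1$-morphisms and $2$-morphisms in $\twoLinCat_{\mc{A}}$ are ``exactly'' monoidal functors over $\mr{RM}$ and their natural transformations, deferring the justification to a ``careful unpacking'' of $\bp\LinCat^{\circledast}_{\mc{A}}$. But by Definition \ref{dfnoftwolincat} a $1$-morphism is an object of $\LinCat_{\Delta^1,\mc{A}}$, i.e.\ a coCartesian fibration over $\mr{RM}_{\Delta^1}$, and a $2$-morphism is a morphism of such fibrations. Translating this into a functor (and natural transformation) over $\mr{RM}$ is not formal: it requires the equivalence
\[
  \coCart(\mr{RM}_S)\;\simeq\;\coCart(S\times\mr{RM})\times^{\mr{cat}}_{\coCart(S\times\mf{a})}\coCart(\mf{a}),
\]
which the paper obtains by combining Lemma~\ref{baRMpro}\,\ref{baRMpro-2} (the pushout description $\mr{RM}_S\simeq(S\times\mr{RM})\coprod_{S\times\mf{a}}\mf{a}$) with Lemma~\ref{cartsegco}. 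The paper then transports $G^{\circledast}$, the unit, and the counit through this equivalence, checking at each step that the restriction to $\mf{a}$ is an equivalence (since $F^{\circledast}|_{\mf{a}}$ is the identity). Without this equivalence you have not actually constructed the $2$-cells in $\twoLinCat_{\mc{A}}$, so your $(2)\Rightarrow(1)$ direction is incomplete.

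For $(3)\Rightarrow(2)$: your claim that the Beck--Chevalley condition holds because ``the $R$-action $M\otimes_R(-)$ is implemented by colimit-indexed operations'' is the right intuition but not yet a proof. The paper makes this precise: for the right-adjoint case it cites \cite[D.1.5.3]{SAG} directly; for the left-adjoint case it invokes \cite[7.3.2.11]{HA} and must verify that $\phi_{M,C}\colon G(M)\otimes_{\mc{C}}C\to G(M\otimes_{\mc{D}}C)$ is an equivalence for every $C\in\mr{LMod}_R$. The argument is to write $C$ as a filtered colimit of perfect $R$-modules via \cite[7.2.4.2]{HA}, reduce (using that $G$ is a left adjoint, hence colimit-preserving) to perfect $C$, and then check the two generating cases: $\phi_{M,R^n[m]}$ is an equivalence since both sides commute with finite sums and shifts, and $\phi_{M,C'}$ is an equivalence for any retract $C'$ of $C$ once $\phi_{M,C}$ is. Your ``formal mate computation'' does not supply these steps, and in particular does not explain why $\mc{A}=\mr{LMod}_R$ (as opposed to a general presentable monoidal $\mc{A}$) is what makes the argument go through: it is precisely that $\mr{LMod}_R$ is generated under colimits by the unit $R$.
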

\begin{proof}
 Let us show the equivalence of \ref{adjcritlincat-1} and
 \ref{adjcritlincat-2}.
 We only show the non-resp claim,
 since a proof for right adjoints can be obtained simply by replacing
 left by right.
 Let us show \ref{adjcritlincat-1} to \ref{adjcritlincat-2}.
 Since $F$ admits a left adjoint, there exists a coCartesian fibration
 $\mc{M}^{\circledast}_G\rightarrow\mr{RM}_{\Delta^1}$, a unit map
 $\alpha\colon\mc{D}^{\circledast}\times_{\mr{RM}}\mr{RM}_{\Delta^1}
 \rightarrow\mc{M}^{\circledast}_{F\circ G}$, a counit map
 $\mc{M}^{\circledast}_{G\circ F}\rightarrow\mc{C}^{\circledast}
 \times_{\mr{RM}}\mr{RM}_{\Delta^1}$ satisfying some conditions.
 By taking the base change by the canonical map
 $S\times\mr{RM}\rightarrow\mr{RM}_S$ in Lemma \ref{baRMpro}, the data
 yields a pair of adjoint functors relative to $\mr{RM}$. Let us show
 \ref{adjcritlincat-2} to \ref{adjcritlincat-1}.
 Combining Lemma \ref{baRMpro} and (dual version of) Lemma
 \ref{cartsegco}, we have an equivalence
 $\coCart(S\times\mr{RM})\times^{\mr{cat}}_{\coCart(S\times\mf{a})}
 \coCart(\mf{a})\simeq\coCart(\mr{RM}_S)$. First, let us construct a
 functor $G\colon\mc{D}\rightarrow\mc{C}$ in $\twoLinCat_{\mc{A}}$.
 Since $G^{\circledast}$ is assumed monoidal, the left adjoint
 $G^{\circledast}$ yields an object in $\widetilde{G}$ in
 $\coCart(\Delta^1\times\mr{RM})$. Since
 $\widetilde{G}|_{\Delta^1\times\mf{a}}$ is a left adjoint to the
 equivalence $F^{\circledast}|_{\mf{a}}$, the restriction of
 $\widetilde{G}$ is equivalence as well. This implies that
 $\widetilde{G}$ induces an object of
 $\coCart(S\times\mr{RM})\times^{\mr{cat}}_{\coCart(S\times\mf{a})}
 \coCart(\mf{a})$, and the equivalence above yields a an object in
 $\coCart(\mr{RM}_{\Delta^1})$. Since $G^{\circledast}_{(01)}$ commutes
 with small colimits, $\widetilde{G}$ yields a $1$-morphism
 $\mc{D}\rightarrow\mc{C}$ in $\twoLinCat_{\mc{A}}$.
 Let us construct a unit
 map $\alpha$ similarly.
 Because $F^{\circledast}$ admits a left adjoint relative to
 $\mr{RM}$, we have a unit map
 $\widetilde{\alpha}\colon\mc{D}^{\circledast}\times\Delta^1
 \rightarrow\mc{M}^{\circledast}_{F\circ
 G}\times_{\mr{RM}_{\Delta^1}}(\mr{RM}\times\Delta^1)$ over
 $\Delta^1\times\mr{RM}$
 The restriction of $\widetilde{\alpha}$ to $\Delta^1\times\mf{a}$ is a
 unit map of the adjunction of
 $F^{\circledast}\times_{\mr{RM}}{\mf{a}}$. Since
 $F^{\circledast}\times_{\mr{RM}}\mf{a}$ is the identity,
 $\widetilde{\alpha}\times_{\Delta^1\times\mr{RM}}(\Delta^1\times\mf{a})$
 is an equivalence. Consequently, $\widetilde{\alpha}$ yields a map in
 $\coCart(S\times\mr{RM})\times_{\coCart(S\times\mf{a})}\coCart(\mf{a})$,
 which induces a desired unit map $\alpha$ using the equivalence above.
 Similarly, we construct a counit map. In order to show that these maps
 actually gives an adjoint pair $(G,F)$, we need to show that certain
 compositions of maps given by unit and counit maps are
 equivalences. The relative adjunction on $\mr{RM}$ yields corresponding
 relations in $\coCart(S\times\mr{RM})$, so in order to show the
 relations in $\coCart(\mr{RM}_S)$, we use the above equivalence again.

 Let us show the equivalence between \ref{adjcritlincat-2} and
 \ref{adjcritlincat-3}.
 The \ref{adjcritlincat-2} to \ref{adjcritlincat-3}
 direction is obvious, so we will show the other direction. 
 First, consider the case where $F^{\circledast}_{(01)}$ admits a right
 adjoint $G^{\circledast}_{(01)}$.
 In this case, by \cite[7.3.2.9]{HA}, we have a right adjoint
 $G^{\circledast}$ relative to $\mr{RM}$. This functor is observed to be
 monoidal when $G^{\circledast}_{(01)}$ commutes with small colimits in
 \cite[D.1.5.3]{SAG}, thus the claim follows. Next, assume that
 $F^{\circledast}_{(01)}$ admits a left adjoint, denoted by $G$.
 We wish to check the conditions of \cite[7.3.2.11]{HA}. Similarly to
 the proof of \cite[7.3.2.7]{HA}, the condition (1) follows from our
 assumption, we only need to check (2).
 For this, it suffices to show that the induced map
 $\phi_{M,C}\colon G(M)\otimes_{\mc{C}}C\rightarrow
 G(M\otimes_{\mc{D}}C)$ for any $C\in\mc{A}\simeq\mr{LMod}_R$ and
 $M\in\mc{D}$ is an equivalence.
 Since $G$ admits a right adjoint, $G$ commutes with small colimits.
 By \cite[7.2.4.2]{HA}, $C$ can be written as a small filtered
 colimit of perfect $R$-modules, and it suffices to show the equivalence
 when $C$ is a perfect $R$-module. Thus, it suffices to show the
 following two assertions:
 \begin{itemize}
  \item The map $\phi_{M,R^n[m]}$ is an equivalence for any integers
	$n\geq0$, $m$;

  \item If $\phi_{M,C}$ is an equivalence, then $\phi_{M,C'}$ is an
	equivalence for any retract $C'$ of $C$.
 \end{itemize}
 Since the formation of $\phi$ commutes with pushouts, we have
 $\phi_{M,C[m]}\simeq\phi_{M,C}[m]$, which implies the first assertion.
 For the second assertion, let $I\colon\Delta^2\rightarrow\mc{A}$ be a
 diagram such that $I(0)=I(2)=C'$, $I(1)=C$ and
 $I(\Delta^{\{0,2\}})=\mr{id}$. Then this induces a diagram
 $J:=\mr{cof}(\phi_{M,I})\colon\Delta^2\rightarrow\mc{D}$ such that
 $J(0)=J(2)=\mr{cof}(\phi_{M,C'})$, $J(1)=\mr{cof}(\phi_{M,C})$ and
 $J(\Delta^{\{0,2\}})=\mr{id}$. Since $\phi_{M,C}$ is assumed to be an
 equivalence, $\mr{cof}(\phi_{M,C})\simeq 0$.
 Since initial objects can be detected in the homotopy category, a
 retract of $0$ is $0$. Thus $\phi_{M,C'}\simeq 0$ as required.
\end{proof}

\subsection{}
\label{prmodfunct}
Let $R$ be an $\mb{E}_\infty$-ring. Then $\LinCat_R$ is equipped with
canonical symmetric monoidal structure by \cite[D.2.3.3]{SAG}.
Before concluding this section, we make some construction in terms of
$\mr{CAlg}(\LinCat_R)$, which is used to construct a motivic theory
associated to an algebra object in \S\ref{secExam}.

Let $\mc{K}$ be a collection of small simplicial sets.
Let $\mr{Mon}^{\mr{pr}}_{\mr{Assoc}}(\Cat_\infty)^{\otimes}$ be the
full subcategory of
$\mr{Mon}^{\mc{K}}_{\mr{Assoc}}(\Cat_\infty)^{\otimes}$ (cf.\
\cite[4.8.5.14]{HA}) spanned by vertices
$(\mc{C}_1^{\otimes},\dots,\mc{C}_n^{\otimes})$ such that
$\mc{C}_i$ is presentable for any $i$.
Arguing similarly to the proof of \cite[4.8.5.16 (1)]{HA},
$\mr{Mon}^{\mr{pr}}_{\mr{Assoc}}(\Cat_\infty)^{\otimes}$ is a symmetric
$\infty$-operad.
In \cite[4.8.5.10]{HA}, the full subcategories $\Prcat^{\mr{Alg}}$,
$\Prcat^{\mr{Mod}}$ of $\Cat_\infty^{\mr{Alg}}(\mc{K})$ and
$\Cat_\infty^{\mr{Mod}}(\mc{K})$ are introduced.
Informally, $\Prcat^{\mr{Alg}}$ is the $\infty$-category of pairs
$(\mc{C}^{\otimes},A)$ where
$\mc{C}^{\otimes}\in\mr{Mon}^{\mr{pr}}_{\mr{Assoc}}(\Cat_\infty)$ and
$A\in\mr{Alg}(\mc{C})$, and $\Prcat^{\mr{Mod}}$ is the $\infty$-category
of pairs $(\mc{C}^{\otimes},\mc{M})$ where $\mc{M}$ is an
$\infty$-category left-tensored over $\mc{C}^{\otimes}$ with some
suitable presentability.
We can promote these
$\infty$-categories to symmetric monoidal $\infty$-categories, similarly
to \cite[4.8.5.14]{HA} as follows. The $\infty$-category
$\Prcat^{\mr{Alg},\otimes}$ is simply
\begin{equation*}
 \mr{Mon}^{\mr{pr}}_{\mr{Assoc}}(\Cat_\infty)^{\otimes}
  \times_{\mr{Mon}_{\mr{Assoc}}^{\mc{K}}(\Cat_\infty)^{\otimes}}
  \Cat_\infty^{\mr{Alg}}(\mc{K})^{\otimes}.
\end{equation*}
We define $\Prcat^{\mr{Mod},\otimes}$ to be the full subcategory of
$\Cat_\infty^{\mr{Mon}}(\mc{K})^{\otimes}$ spanned by the objects of the
form
$((\mc{C}_1^{\otimes},\mc{M}_1),\dots,(\mc{C}_n^{\otimes},\mc{M}_n))$
such that $\mc{C}_i$ and $\mc{M}_i$ are presentable for any $i$.
We have the following diagram
\begin{equation*}
 \xymatrix@C=30pt{
  \Prcat^{\mr{Alg},\otimes}
  \ar[rr]^-{\Theta}\ar[rd]_-{\phi}&&
  \Prcat^{\mr{Mod},\otimes}\ar[dl]^-{\psi}\\
 &\mr{Mon}^{\mr{pr}}_{\mr{Assoc}}(\Cat_\infty)^{\otimes}.&
  }
\end{equation*}
The maps $\phi$ is a coCartesian fibration by \cite[4.8.5.16]{HA}.
Now, the symmetric monoidal structure of $\PrL$ is closed by
\cite[4.8.1.18]{HA}. This implies that the tensor product of $\PrL$
commutes with all small colimits separately in each variable.
Thus, the argument of \cite[4.8.5.1]{HA} can be applied to show that the
functor
$\Prcat^{\mr{Mod}}\rightarrow
\mr{Mon}^{\mr{pr}}_{\mr{Assoc}}(\Cat_\infty)$ is a coCartesian
fibration. Similarly to the proof of \cite[4.8.5.16]{HA}, the map
$\psi$ is also a coCartesian fibration.
Since $\mr{Mon}_{\mr{Assoc}}^{\mc{K}}(\Cat_\infty)^{\otimes}$ can be
identified with $\mr{Alg}(\Cat_\infty(\mc{K}))^{\otimes}$ (cf.\ proof of
\cite[4.8.5.16]{HA}), we can identify
$\mr{Mon}^{\mr{pr}}_{\mr{Assoc}}(\Cat_\infty)^{\otimes}$ with
$\mr{Alg}(\PrL)^{\otimes}$.

The monoidal category of spectra $\mr{Sp}^{\otimes}=:\mbf{1}$ is an
initial object of the $\infty$-category $\mr{CAlg}(\PrL)$.
Thus, we have a map $\mbf{1}\rightarrow\mr{Mod}_R$ by
\cite[3.2.1.9]{HA}. By \cite[3.4.3.4]{HA}, we have a map of symmetric
$\infty$-operads
$\LinCat_R^{\otimes}:=\mr{Mod}_{\mr{Mod}_R}(\PrL)^{\otimes}
\rightarrow\mr{Mod}_{\mbf{1}}(\PrL)^{\otimes}\simeq\PrLsym$,
where the last equivalence is by \cite[3.4.2.1]{HA}.
On the other hand, we have the bifunctor (cf.\ \cite[2.2.5.3]{HA}) of
symmetric $\infty$-operads
$\mr{Comm}^{\otimes}\times\mr{Assoc}^{\otimes}\rightarrow
\mr{Comm}^{\otimes}$ (cf.\ \cite[3.2.4.4]{HA}).
For any symmetric $\infty$-operad $\mc{C}^{\otimes}$, this induces the
map
\begin{equation*}
 \mr{CAlg}(\mc{C})\rightarrow
  \mr{Alg}_{\mr{Comm}\otimes\mr{Assoc}}(\mc{C})
  \simeq
  \mr{CAlg}(\mr{Alg}(\mc{C})),
\end{equation*}
where the last map follows by definition. Thus, this induces the functor
\begin{equation*}
 \mr{CAlg}(\LinCat_R)
  \rightarrow
  \mr{CAlg}(\PrL)
  \rightarrow
  \mr{CAlg}(\mr{Alg}(\PrL)).
\end{equation*}
Taking the adjoint, we have
$\mr{Comm}^{\otimes}\times\mr{CAlg}(\LinCat_R)\rightarrow
\mr{Alg}(\PrL)^{\otimes}\simeq\mr{Mon}^{\mr{pr}}
_{\mr{Assoc}}(\Cat_\infty)^{\otimes}$.
We take the base change of $\Prcat^{\mr{Alg},\otimes}$ and
$\Prcat^{\mr{Mod},\otimes}$ by this map, which we denote by
$\Prcat^{\mr{Alg},\otimes}_{\mc{L}}$ and
$\Prcat^{\mr{Mod},\otimes}_{\mc{L}}$.

Let us construct a functor
$\Prcat^{\mr{Mod},\otimes}_{\mc{L}}\rightarrow\LinCat_R^{\otimes}$ over
$\mr{Comm}^{\otimes}$.
Informally, this functor sends $(\mc{C}^{\otimes},\mc{M})$ to $\mc{M}$
considered as an object of $\LinCat_R$ via the structural map
$\mr{Mod}_R\rightarrow\mc{C}^{\otimes}$.
Let $\mc{C}$ be an $\infty$-category with initial object
$\emptyset\in\mc{C}$. The map $\mc{C}^{\emptyset/}\rightarrow\mc{C}$ is
a trivial fibration by \cite[4.2.1.6]{HTT}, we way take a quasi-inverse
$\mc{C}\rightarrow\mc{C}^{\emptyset/}$. This induces a map
$I_{\emptyset}\colon\Delta^1\times\mc{C}\rightarrow
\Delta^0\diamond\mc{C}\rightarrow\mc{C}$
sending $(0,c)$ to $\emptyset$ and $(1,c)$ to $c$.
The object $\mr{Mod}_R$ in $\mr{CAlg}(\LinCat_R)$ is an initial object
(cf.\ \cite[3.2.1.9, 3.4.4.7]{HA}).
Thus, applying the above observation, we have the diagram
\begin{equation*}
 \xymatrix@C=40pt{
  \Delta^{\{1\}}\times\Prcat^{\mr{Mod},\otimes}
  \ar@{=}[r]\ar@{^{(}->}[d]&
  \Prcat^{\mr{Mod},\otimes}\ar[d]^{\psi_{\mc{L}}}\\
 \Delta^1\times\Prcat^{\mr{Mod},\otimes}
  \ar[r]^-{I}\ar@{--}[ur]^{\widetilde{\rho}}&
  \mr{Comm}^{\otimes}\times\mr{CAlg}(\LinCat_R)
  }
\end{equation*}
where $I$ is the map induced by $I_{\mr{Mod}_R}$.
We wish to take the right Kan extension of the above diagram.
For the existence, in view of \cite[4.3.2.15]{HTT}, it suffices to check
the following:
\begin{quote}
 Let
 $F\colon
 ((\mc{C}_1^{\otimes},\mc{M}_1),\dots,(\mc{C}_m^{\otimes},\mc{M}_m))
 \rightarrow
 ((\mc{D}_1^{\otimes},\mc{N}_1),\dots,(\mc{D}_m^{\otimes},\mc{N}_m))$ be
 the map covering the identity $\left<m\right>\rightarrow\left<m\right>$
 in $\Fin$ such that the map $\mc{M}_i\rightarrow\mc{N}_i$ is an
 equivalence for any $i$. Then $F$ is a $\psi_{\mc{L}}$-Cartesian edge.
\end{quote}
Since $\psi_{\mc{L}}$ is a coCartesian fibration since $\psi$ is,
it suffices to check that the edge is locally $\psi_{\mc{L}}$-Cartesian
by \cite[5.2.2.4]{HTT}. This is, indeed, locally Cartesian since it is
inner fibration and \cite[4.2.3.2]{HA}.
By using this right Kan extension, we have
\begin{equation*}
 \rho\colon
  \Prcat^{\mr{Mod},\otimes}=
  \Delta^{\{0\}}\times\Prcat^{\mr{Mod},\otimes}
  \xrightarrow{\widetilde{\rho}}
  \psi_{\mc{L}}^{-1}(\mr{Comm}^{\otimes}\times\{\mr{Mod}_R\})
  \simeq
  \LinCat_R^{\otimes}.
\end{equation*}
For the last equivalence, see \cite[4.8.5.19]{HA}.
By construction, this is a map of symmetric $\infty$-operads.
Summing up, we have the following diagram
\begin{equation*}
 \xymatrix@C=30pt{
  \Prcat^{\mr{Alg},\otimes}_{\mc{L}}
  \ar[r]^-{\Theta}\ar[rd]&
  \Prcat^{\mr{Mod},\otimes}_{\mc{L}}\ar[d]\ar[r]&
  \LinCat_R^{\otimes}\times\mr{CAlg}(\LinCat_R)\ar[dl]\\
 &\mr{Comm}^{\otimes}\times\mr{CAlg}(\LinCat_R)\ar[r]^-{f}&
  \mr{CAlg}(\LinCat_R).
  }
\end{equation*}
Let $\mc{C}^{\otimes}\rightarrow\mc{O}^{\otimes}\times S$ be a
coCartesian $S$-family of $\mc{O}$-monoidal $\infty$-categories. Let
$p\colon\mc{O}^{\otimes}\times S\rightarrow S$ be the projection. We
denote by $p_*^{\otimes}(\mc{C}^{\otimes})$ the full subcategory of
$p_*(\mc{C}^{\otimes})$ spanned by vertices corresponding to
$\mr{Alg}_{\mc{O}}(\mc{C}_s)$.
Since $p_*^{\otimes}(\mc{C}^{\otimes})$ contains all the coCartesian
edges connecting vertices in it, the map
$p_*^{\otimes}(\mc{C}^{\otimes})\rightarrow S$ is a coCartesian
fibration as well.
Let $\mc{C}^{\otimes}\rightarrow\mc{C}'^{\otimes}$ be a map over
$\mc{O}^{\otimes}\times S$ of $S$-family of $\mc{O}$-monoidal
$\infty$-categories such that for each vertex $s\in S$, the induced map
$\mc{C}^{\otimes}_s\rightarrow\mc{C}'^{\otimes}_s$ is a map of symmetric
$\infty$-operads.
Then we have $p_*^{\otimes}(\mc{C}^{\otimes})\rightarrow
p_*^{\otimes}(\mc{C}'^{\otimes})$.
This observation being applied to the above diagram yields a map
\begin{align*}
 \Xi\colon
 \Prcat^{\mr{CAlg}}_{\mc{L}}:=
  p_*^{\otimes}(\Prcat^{\mr{Alg},\otimes}_{\mc{L}})
  \rightarrow
  p_*^{\otimes}&(\LinCat_R^{\otimes}\times\mr{CAlg}(\LinCat_R))\\
  &\simeq
  \mr{CAlg}(\LinCat_R)\times\mr{CAlg}(\LinCat_R)
\end{align*}
over $\mr{CAlg}(\LinCat_R)$, where the last $\infty$-category is
considered over $\mr{CAlg}(\LinCat_R)$ by the second projection.
Note that the fiber of $\Prcat^{\mr{CAlg}}_{\mc{L}}$ over
$\mc{C}^{\otimes}\in\mr{CAlg}(\LinCat_R)$ is $\mr{CAlg}(\mr{Alg}(\mc{C}))$.
By the proof of \cite[4.8.5.21]{HA},
$\mr{Comm}^{\otimes}\otimes\mr{Assoc}^{\otimes}\simeq\mr{Comm}^{\otimes}$,
and in particular, $\mr{CAlg}(\mr{Alg}(\mc{C}))\simeq\mr{CAlg}(\mc{C})$.
Thus, the fiber $\Xi_{\mc{C}}$ of $\Xi$ over $\mc{C}^{\otimes}$ sends
$A\in\mr{CAlg}(\mc{C})$ to $(\mr{Mod}_A(\mc{C}),\mc{C}^{\otimes})$.
What we have done so far can be summarized as follows:

\begin{lem*}
 Let $R$ be an $\mb{E}_\infty$-ring. Then we have the following diagram
 of coCartesian fibrations
 \begin{equation*}
  \xymatrix{
   \Prcat^{\mr{CAlg}}_{\mc{L}}
   \ar[rd]_{\phi^{\mr{CAlg}}}\ar[rr]^-{\Xi}&&
   \mr{CAlg}(\LinCat_R)\times\mr{CAlg}(\LinCat_R)\ar[ld]^-{\mr{pr}_2}\\
  &\mr{CAlg}(\LinCat_R).&
   }
 \end{equation*}
 Here, the fiber of $\phi^{\mr{CAlg}}$ over
 $\mc{C}^{\otimes}\in\mr{CAlg}(\LinCat_R)$ is $\mr{CAlg}(\mc{C})$, and
 $\Xi$ sends $A\in\mr{CAlg}(\mc{C})$ to
 $(\mr{Mod}_A(\mc{C}),\mc{C}^{\otimes})$.
\end{lem*}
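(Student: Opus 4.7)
The plan is to establish the three assertions by unpacking the construction laid out in the paragraphs preceding the lemma; since the objects $\Prcat^{\mr{CAlg}}_{\mc{L}}$, $\phi^{\mr{CAlg}}$, and $\Xi$ have already been produced, the proof is largely a matter of organization and coherent bookkeeping rather than new argument.

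First I would verify that $\phi^{\mr{CAlg}}$ is a coCartesian fibration. The input is the coCartesian fibration $\phi\colon\Prcat^{\mr{Alg},\otimes}\to\mr{Mon}^{\mr{pr}}_{\mr{Assoc}}(\Cat_\infty)^{\otimes}$ from \cite[4.8.5.16]{HA}. Base change along the map $\mr{Comm}^{\otimes}\times\mr{CAlg}(\LinCat_R)\to\mr{Mon}^{\mr{pr}}_{\mr{Assoc}}(\Cat_\infty)^{\otimes}$ preserves the coCartesian fibration property, and applying the operadic pushforward $p^{\otimes}_*$ along the projection $p\colon\mr{Comm}^{\otimes}\times\mr{CAlg}(\LinCat_R)\to\mr{CAlg}(\LinCat_R)$ yields $\phi^{\mr{CAlg}}$; that $p^{\otimes}_*$ sends coCartesian $\mr{CAlg}(\LinCat_R)$-families of symmetric monoidal $\infty$-categories to coCartesian fibrations was already observed above the statement. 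The same pushforward sends the composite $\rho\circ\Theta$, fiberwise a map of symmetric $\infty$-operads, to the desired $\Xi$ as a map over $\mr{CAlg}(\LinCat_R)$ preserving coCartesian edges.

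Second, for the fiber identification, I would unwind the definition of $p^{\otimes}_*$: a vertex of $(\phi^{\mr{CAlg}})^{-1}(\mc{C}^{\otimes})$ corresponds to a map of symmetric $\infty$-operads from $\mr{Comm}^{\otimes}$ to the fiber of the $\mr{CAlg}(\LinCat_R)$-family $\Prcat^{\mr{Alg},\otimes}_{\mc{L}}$ over $\mc{C}^{\otimes}$, which by definition is an object of $\mr{CAlg}(\mr{Alg}(\mc{C}))$. The tensor product equivalence $\mr{Comm}^{\otimes}\otimes\mr{Assoc}^{\otimes}\simeq\mr{Comm}^{\otimes}$ cited in the proof of \cite[4.8.5.21]{HA} then identifies this naturally with $\mr{CAlg}(\mc{C})$, and functoriality in $\mc{C}^{\otimes}$ is immediate.

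Third, I would identify $\Xi$ on objects: given $A\in\mr{CAlg}(\mc{C})$, viewed as the vertex $(\mc{C}^{\otimes},A)\in\Prcat^{\mr{Alg},\otimes}_{\mc{L}}$, the functor $\Theta$ sends this to $(\mc{C}^{\otimes},\mr{Mod}_A(\mc{C}))\in\Prcat^{\mr{Mod},\otimes}_{\mc{L}}$ by the definition of $\Theta$ in \cite[4.8.5.16]{HA}, and subsequent composition with the right Kan extension $\rho$ equips $\mr{Mod}_A(\mc{C})$ with the $R$-linear structure induced from the initial morphism $\mr{Mod}_R\to\mc{C}^{\otimes}$ in $\mr{CAlg}(\LinCat_R)$, while the second coordinate remembers $\mc{C}^{\otimes}$. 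The main obstacle I anticipate is not any individual step but the coherence check: namely, that the three successive operations — base change, operadic pushforward, and right Kan extension — assemble into a well-defined map of coCartesian fibrations, which requires tracking the coCartesian lifts through each stage and confirming they paste compatibly.
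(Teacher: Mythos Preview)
Your proposal is correct and mirrors the paper's approach exactly: the lemma is explicitly introduced as a summary (``What we have done so far can be summarized as follows'') of the preceding construction, so there is no separate proof in the paper, and your unpacking of that construction---the coCartesian fibration via \cite[4.8.5.16]{HA} and base change, the fiber identification via $\mr{CAlg}(\mr{Alg}(\mc{C}))\simeq\mr{CAlg}(\mc{C})$, and the object-level description of $\Xi$ via $\Theta$ and $\rho$---reproduces precisely the content of those paragraphs. Your anticipated ``coherence check'' is already handled by the paper's remark that $p^{\otimes}_*$ carries maps of $S$-families of $\mc{O}$-monoidal $\infty$-categories (fiberwise operad maps) to maps between the resulting coCartesian fibrations.
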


\section{Construction of the bivariant $(\infty,2)$-functor}
\label{constfun}
Assume we are given an $(\infty,2)$-functor
$F\colon\mbf{C}\rightarrow\twoLinCat_{\mc{A}}$.
For each $X\in\mbf{C}$,
assume we are given an object $I_X\in F(X)$. We do not ask any
compatibilities of $I_X$ with $F$.
For example, if $F(X)$ is a symmetric monoidal, then
$I_X$ may be taken as a unit object. Then we may consider the assignment
to each $1$-morphism $f\colon X\rightarrow Y$ in $\mbf{C}$ the object
$\mr{Mor}_{F(Y)}(F(f)(I_X),I_Y)$, where $\mr{Mor}_{\mc{M}}$ is the
morphism object of $\mc{M}\in\LinCat_{\mc{A}}$.
It is natural to ask the functoriality of this construction.
This will be encoded in the non-unital right-lax functor
$\mbf{C}\dashrightarrow\mbf{B}\mc{A}^{\circledast}$, which will be
constructed in this section.
The construction is a ``family version'' of \cite[4.7.1]{HA} and
\cite[7.3, 7.4]{GH}.

\subsection{}
\label{setupmodspM}
Let us construct a universal $\mc{A}$-module over $\bp\RMod_{\mbf{\Delta},\mc{A}}$.
As in Remark \ref{modelofrmod}, we fix a model of $\bp\RMod_{\mbf{\Delta},\mc{A}}$.
We also fix a model for $\RMod_{\Delta^n,\mc{A}}$ so that $\RMod_{\Delta^n,\mc{A}}=\bp\RMod_{\mbf{\Delta},\mc{A}}\times_{\mbf{\Delta}}\{[n]\}$.
Consider the composition of functors
\begin{equation*}
 \bp\RMod_{\mbf{\Delta},\mc{A}}
  \rightarrow
  \bp\RMod_{\mbf{\Delta}}
  \rightarrow
  \mr{Fun}(\Delta^1,\coCart^{\mr{str}}(\mr{RM}))
  \xrightarrow[\mr{St}]{\sim}
  \mr{Fun}(\Delta^1,\mr{Fun}(\mr{RM},\Cat_\infty)),
\end{equation*}
where the second functor is the composition of the upper horizontal functors of (\ref{dfnofRModandoth-diag}).
This induces the functor
$\mr{RM}\times\bp\RMod_{\mbf{\Delta},\mc{A}}\rightarrow\mr{Fun}(\Delta^1,\Cat_\infty)$.
By taking the unstraightening, we have the commutative diagram
\begin{equation*}
 \label{funddiagmod}
 \xymatrix{
  \mc{M}^{\mr{univ},\circledast}\ar[rd]_{f}\ar[rr]^-{h}&&
  \mr{RM}^{\mr{univ}}\ar[ld]^{g}\\
 &\mr{RM}\times\bp\RMod_{\mbf{\Delta},\mc{A}}&
  }
\end{equation*}
such that $f$, $g$ are coCartesian fibrations and $h$ preserves coCartesian edges.
Furthermore, if we replace $\mc{M}^{\mr{univ},\circledast}$ by equivalence, we may assume that $h$ is a categorical fibration.
By construction, it is equipped with an equivalence
$\mc{A}^{\circledast}\xrightarrow{\sim}\mc{M}^{\mr{univ},\circledast}\times_{\mr{RM}}\mf{a}$.
We put
$\mc{M}^{\mr{univ},\circledast}_{[n]}:=\mc{M}^{\mr{univ},\circledast}\times_{\mbf{\Delta}}\{[n]\}$.
We often abbreviate $\bp\RMod_{\mbf{\Delta},\mc{A}}$ simply by $\bp\RMod$.

By construction, we have the equivalence
$\mr{RM}^{\mr{univ}}_{[n]}\simeq\mr{RM}_{\Delta^n}
\times\bp\RMod_{\Delta^n}$.
This induces a map
$\mr{RM}^{\mr{univ}}_{[n]}\rightarrow\mr{RM}_{\Delta^n}$.
Note that since all the equivalence
in $\mr{RM}_{\Delta^n}$ are degenerate edges, in other words,
$\mr{RM}_{\Delta^n}$ is {\em gaunt}, any functor
$\mc{C}\rightarrow\mr{RM}_{\Delta^n}$ from an $\infty$-category is a
categorical fibration by \cite[2.3.1.5, 2.4.6.5]{HTT}.

\begin{lem}
 \label{compofcartRModM}
 Let $\phi\colon[m]\rightarrow[n]$ be a map in $\mbf{\Delta}$.
 We have the following pullback diagram in $\Cat_\infty$:
 \begin{equation*}
  \xymatrix{
   \mc{M}^{\mr{univ},\circledast}_{[n]}
   \times_{\mr{RM}_{\Delta^n}}\mr{RM}_{\Delta^m}
   \ar[r]\ar[d]&
   \mc{M}^{\mr{univ},\circledast}_{[m]}\ar[d]\\
   \RMod_{\Delta^n}\ar[r]^-{\phi_*}&\RMod_{\Delta^m}.
    }
 \end{equation*}
\end{lem}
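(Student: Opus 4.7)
The plan is to unwind the construction of $\mc{M}^{\mr{univ},\circledast}$ from \ref{setupmodspM} and reduce the assertion to a naturality square that becomes a pullback after unstraightening.

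First, I would identify the classifying functor of $\mc{M}^{\mr{univ},\circledast}_{[n]}$. By construction, the coCartesian fibration $\mc{M}^{\mr{univ},\circledast}_{[n]}\to\mr{RM}_{\Delta^n}\times\RMod_{\Delta^n,\mc{A}}$ arises by unstraightening the restriction to $\{[n]\}\subseteq\mbf{\Delta}$ of the composite $\bp\RMod_{\mbf{\Delta},\mc{A}}\to\mr{Fun}(\Delta^1,\coCart^{\mr{str}}(\mr{RM}))\xrightarrow{\sim}\mr{Fun}(\Delta^1\times\mr{RM},\Cat_\infty)$. Via the adjunction between unstraightening and $\mr{Fun}(\mr{RM},\Cat_\infty)\simeq\coCart^{\mr{str}}(\mr{RM})$, this amounts to the functor
\[
\RMod_{\Delta^n,\mc{A}}\longrightarrow\coCart^{\mr{str}}(\mr{RM}_{\Delta^n}),\qquad (\mc{M}^{\circledast}\to\mr{RM}_{\Delta^n})\mapsto(\mc{M}^{\circledast}\to\mr{RM}_{\Delta^n}),
\]
i.e.\ the tautological forgetful functor.

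Second, I would relate the two possible pullback operations. The map $\phi\colon[m]\to[n]$ induces $\phi\colon\mr{RM}_{\Delta^m}\to\mr{RM}_{\Delta^n}$, hence a base-change functor $\phi^{*}\colon\coCart^{\mr{str}}(\mr{RM}_{\Delta^n})\to\coCart^{\mr{str}}(\mr{RM}_{\Delta^m})$. On the other hand, by Lemma \ref{basicproprmodsimp}, the Cartesian lift of $\phi$ in $\bp\RMod_{\mbf{\Delta},\mc{A}}\to\mbf{\Delta}$ at an object $\mc{M}^{\circledast}\to\mr{RM}_{\Delta^n}$ is given by the pullback $\mc{M}^{\circledast}\times_{\mr{RM}_{\Delta^n}}\mr{RM}_{\Delta^m}\to\mr{RM}_{\Delta^m}$, which is the description of the functor $\phi_{*}$. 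Consequently, the composites
\[
\RMod_{\Delta^n,\mc{A}}\xrightarrow{\phi_{*}}\RMod_{\Delta^m,\mc{A}}\to\coCart^{\mr{str}}(\mr{RM}_{\Delta^m})\quad\text{and}\quad\RMod_{\Delta^n,\mc{A}}\to\coCart^{\mr{str}}(\mr{RM}_{\Delta^n})\xrightarrow{\phi^{*}}\coCart^{\mr{str}}(\mr{RM}_{\Delta^m})
\]
are canonically equivalent, both sending $\mc{M}^{\circledast}$ to $\mc{M}^{\circledast}\times_{\mr{RM}_{\Delta^n}}\mr{RM}_{\Delta^m}$.

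Third, I would translate the equivalence of these two functors into the claimed pullback square via unstraightening over $\mr{RM}_{\Delta^m}\times\RMod_{\Delta^n,\mc{A}}$, which commutes with base change. The unstraightening of the first composite is $\mc{M}^{\mr{univ},\circledast}_{[m]}\times_{\RMod_{\Delta^m,\mc{A}}}\RMod_{\Delta^n,\mc{A}}$, while that of the second is $\mc{M}^{\mr{univ},\circledast}_{[n]}\times_{\mr{RM}_{\Delta^n}}\mr{RM}_{\Delta^m}$. Their equivalence over $\RMod_{\Delta^n,\mc{A}}$ is exactly the content of the lemma. The only genuine technical point is checking that the classifying functor of $\mc{M}^{\mr{univ},\circledast}_{[n]}$ is, as asserted, the tautological forgetful functor — once this is verified from the explicit construction in \ref{setupmodspM}, the rest is formal compatibility of unstraightening with base change.
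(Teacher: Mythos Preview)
Your proposal is correct and follows essentially the same strategy as the paper: both arguments reduce to identifying the classifying functor of $\mc{M}^{\mr{univ},\circledast}_{[n]}$ over $\mr{RM}_{\Delta^n}\times\RMod_{\Delta^n}$ with the tautological forgetful map and then invoking compatibility of unstraightening with pullback. The paper carries out your flagged technical point explicitly by introducing an auxiliary category $\mc{X}\subset\mr{Fun}(\Delta^1\times\Delta^1,\Cat_\infty)$ of pullback squares over $\mr{RM}_{\Delta^m}\to\mr{RM}_{\Delta^n}$, proving that the forgetful map $\mc{X}\to\RMod_{\Delta^n}^{\sim}$ is a trivial fibration via \cite[4.3.2.15]{HTT}, and then unstraightening the tautological functor $\tau\colon(\Delta^1\times\Delta^1)\times\mc{X}\to\Cat_\infty$; restricting along the two inclusions $\iota_0,\iota_1\colon\Delta^1\hookrightarrow\Delta^1\times\Delta^1$ and using the bent arrows through $\RMod_{\Delta^m}^{\sim}$ and $\RMod_{\Delta^n}^{\sim}$ is exactly what pins down the identifications you asserted in steps~1 and~3.
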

\begin{proof}
 Let $\mc{X}$ be the subcategory of
 $F\in\mr{Fun}(\Delta^1\times\Delta^1,\Cat_\infty)$ spanned by the
 vertices of the form
 \begin{equation*}
  \xymatrix{
   \mc{N}_0^{\circledast}\ar[d]\ar[r]&\mc{N}_1^{\circledast}\ar[d]\\
  \mr{RM}_{\Delta^m}\ar[r]^-{\phi_*}&\mr{RM}_{\Delta^n},
   }\qquad
   \xymatrix{
   F(0,0)\ar[r]\ar[d]&F(0,1)\ar[d]\\
  F(1,0)\ar[r]&F(1,1).
   }
 \end{equation*}
 such that
 $\mc{N}_0^{\circledast}\rightarrow
 \mr{RM}_{\Delta^m}\in\RMod_{\Delta^m}^\sim$,
 using the notation of the proof of Lemma \ref{basicproprmodsimp},
 and $\mc{N}_1^{\circledast}\rightarrow\mr{RM}_{\Delta^n}
 \in\RMod_{\Delta^n}^\sim$, and the square is a pullback square.
 Maps are those which induce maps in $\RMod_{\Delta^m}^{\sim}$ and
 $\RMod_{\Delta^n}^{\sim}$.
 We have the functor $\mc{X}\rightarrow\RMod_{\Delta^n}^{\sim}$.
 This is a trivial fibration by \cite[4.3.2.15]{HTT}.
 Let
 $\iota_i\colon\Delta^1\times\{i\}\rightarrow\Delta^1\times\Delta^1$.
 We have the following commutative diagram
 \begin{equation*}
  \xymatrix@C=30pt{
   \Delta^1\times\RMod_{\Delta^n}^{\sim}
   \ar@/_10pt/[rrd]&
   \Delta^1\times\mc{X}
   \ar@{^{(}->}[r]^-{\iota_1\times\mr{id}}
   \ar[l]_-{\mr{id}\times\iota_1^*}^-{\sim}&
   (\Delta^1\times\Delta^1)\times\mc{X}\ar[d]^{\tau}&
   \Delta^1\times\mc{X}
   \ar@{_{(}->}[l]_-{\iota_0\times\mr{id}}
   \ar[r]^-{\mr{id}\times\iota_0^*}&
   \Delta^1\times\RMod_{\Delta^m}^{\sim}
   \ar@/^10pt/[lld]\\
  &&\Cat_\infty.&&
   }
 \end{equation*}
 Unstraightening $\tau$, we get the pullback diagram of the form
 \begin{equation*}
  \xymatrix{
   \mc{N}_0^{\mr{univ},\circledast}\ar[r]\ar[d]_{\alpha}&
   \mc{N}_1^{\mr{univ},\circledast}\ar[d]^{\beta}\\
  \mr{RM}_{\Delta^m}\times\mc{X}\ar[r]&
   \mr{RM}_{\Delta^n}\times\mc{X}.
   }
 \end{equation*}
 The commutative diagram above identifies
 $\alpha$ with
 $\mc{M}^{\mr{univ},\circledast}_{[m]}\times_{\RMod_{\Delta^m}}
 \mc{X}\rightarrow\mr{RM}_{\Delta^m}\times\mc{X}$ and
 $\beta$ with $\mc{M}^{\mr{univ},\circledast}_{[n]}
 \rightarrow\mr{RM}_{\Delta^n}\times\RMod_{\Delta^n}$, and the lemma
 follows.
\end{proof}

\subsection{}
Recall that $\mr{Tw}^{\mr{op}}\mbf{\Delta}$ is the category
consisting of maps $[k]\rightarrow[n]$ in $\mbf{\Delta}$, and a morphism
$([k]\rightarrow[n])\rightarrow([k']\rightarrow[n'])$ is a commutative
diagram
\begin{equation*}
 \xymatrix{
  [k]\ar[d]&[k']\ar[l]\ar[d]\\
 [n]\ar[r]&[n']}
\end{equation*}
in $\mbf{\Delta}$.
We have the functor
$(\Phi,\Theta)\colon\mr{Tw}^{\mr{op}}\mbf{\Delta}\rightarrow\mbf{\Delta}^{\mr{op}}\times\mbf{\Delta}$
sending $([k]\rightarrow[n])$ to $([k],[n])$.
The functor $\Theta$ is a coCartesian fibration.
We denote by $\mr{Tw}^{\mr{op}}_{[n]}\mbf{\Delta}$ the fiber $\Theta^{-1}([n])$.
We define $\mr{Tw}^{\mr{op}}\mbf{\Delta}'$ by the full subcategory of $\mr{Tw}^{\mr{op}}\mbf{\Delta}$ spanned by the maps
$\sigma^i\colon[0]\rightarrow[n]$.
The category $\mr{Tw}^{\mr{op}}\mbf{\Delta}$ is an analogue of $\mr{Po}^{\mr{op}}$ in \cite[4.7.1]{HA},
but larger since Lurie considers inert maps whereas we consider all the maps in $\mbf{\Delta}$.
The inclusion $\Delta^1\times\mbf{\Delta}^{\mr{op}}\subset\Delta^1\times\mbf{\Delta}^{\mr{op}}_+$ factors through $\mr{RM}$,
and defines the map
$\overline{\chi}\colon\Delta^1\times\mbf{\Delta}^{\mr{op}}\rightarrow\mr{RM}$.
The notation is compatible with \cite[7.3.6]{GH}.

We have the commutative diagram of functors
\begin{equation*}
 \xymatrix@C=90pt{
  &\mr{Tw}^{\mr{op}}\mbf{\Delta}'
  \ar[d]_-{\{0\}}
  \ar@/_10pt/[dl]_{\Theta'}
  \ar@/^10pt/[rd]^-{\pi'}&\\
  \mbf{\Delta}&
  \Delta^1\times\mr{Tw}^{\mr{op}}\mbf{\Delta}
  \ar[l]_-{\Theta\circ\mr{pr}_2}^-{=:\overline{\Theta}}
  \ar[r]^-{\bigl(\overline{\chi}\circ(\mr{id}\times\Phi),
  \Theta\circ\mr{pr}_2\bigr)}
  _-{=:\overline{\pi}}&
  \mr{RM}\times\mbf{\Delta}\\
 &\mr{Tw}^{\mr{op}}\mbf{\Delta}
  \ar[u]^{\{1\}\times\mr{id}}_{=:i}\ar@/^10pt/[ul]^{\Theta}&
  }
\end{equation*}
Via the structural map $\mr{RM}\times\bp\RMod_{\mc{A}}\rightarrow
\mr{RM}\times\mbf{\Delta}$, the map
$\mc{M}^{\mr{univ},\circledast}\rightarrow\mr{RM}^{\mr{univ}}$
can be considered over $\mr{RM}\times\mbf{\Delta}$.
Thus, using the notation of \ref{defpushpullsim}, the diagram induces a diagram as follows:
\begin{equation}
 \label{commadjstrdiag}
 \xymatrix{
  \Theta'_*\pi'^*(\mc{M}^{\mr{univ},\circledast})
  \ar[d]&
  \overline{\Theta}_*\overline{\pi}^*(\mc{M}^{\mr{univ},\circledast})
  \ar[l]_-{\iota}\ar[d]\ar[r]^-{\alpha}&
  \Theta_*(\overline{\pi}\circ i)^*(\mc{M}^{\mr{univ},\circledast})\\
 \Theta'_*\pi'^*(\mr{RM}^{\mr{univ}})&
  \overline{\Theta}_*\overline{\pi}^*(\mr{RM}^{\mr{univ}})
  \ar[l]&
  }
\end{equation}

\subsection{}
\label{analeasydualap}
Let us analyze $\Theta_*(\overline{\pi}\circ
i)^*\mc{M}^{\mr{univ},\circledast}$ first.
By definition, the composition $\overline{\pi}\circ i$ is equal to the
composition
$\mr{Tw}^{\mr{op}}\mbf{\Delta}
\xrightarrow{\Phi\times\Theta}
\mbf{\Delta}^{\mr{op}}\times\mbf{\Delta}
\xrightarrow{\mf{a}\times\mr{id}}
\mr{RM}\times\mbf{\Delta}$.
We also have the canonical equivalence
$(\mf{a}\times\mr{id})^*\mc{M}^{\mr{univ},\circledast}\simeq
\mc{A}^{\circledast}\times\bp\RMod$.

\begin{dfn*}
 A vertex of $\Theta_*(\overline{\pi}\circ i)^*
 \mc{M}^{\mr{univ},\circledast}$ corresponds to a map
 $f\colon\mr{Tw}^{\mr{op}}_{[n]}\mbf{\Delta}\rightarrow
 \mc{A}^{\circledast}\times\bp\RMod$ over
 $\mr{Tw}^{\mr{op}}\mbf{\Delta}\rightarrow\mbf{\Delta}^{\mr{op}}
 \times\mbf{\Delta}$.
 We consider the full subcategory $\bp\mc{B}$
 of $\Theta_*(\overline{\pi}\circ i)^*
 \mc{M}^{\mr{univ},\circledast}$ spanned by the vertices satisfying
 \begin{enumerate}
  \item The composition $\mr{pr}_1\circ
	f\colon\mr{Tw}^{\mr{op}}_{[n]}\mbf{\Delta}
	\rightarrow\mc{A}^{\circledast}$
	sends an edge of $\mr{Tw}^{\mr{op}}_{[n]}$ to a coCartesian edge
	of $\mc{A}^{\circledast}$ over $\mbf{\Delta}^{\mr{op}}$;
       
  \item The composition $\mr{pr}_2\circ
	f\colon\mr{Tw}^{\mr{op}}_{[n]}\mbf{\Delta}\rightarrow\bp\RMod$
	sends any edge of $\mr{Tw}^{\mr{op}}_{[n]}\mbf{\Delta}$ to an
	equivalent edge in $\bp\RMod$.
 \end{enumerate}
\end{dfn*}

\begin{lem*}
 We have the canonical equivalence
 $(\mc{A}^{\circledast})^\vee\times_{\mbf{\Delta}}
 \bp\RMod\xrightarrow{\sim}\bp\mc{B}$ over $\mbf{\Delta}$.
\end{lem*}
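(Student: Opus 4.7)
The plan is to split off the two factors $\mc{A}^{\circledast}$ and $\bp\RMod$ inside the pushforward and recognise the two conditions defining $\bp\mc{B}$ as carving out one piece of each factor.

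First I would unwind the construction: because $\overline{\pi}\circ i=(\mf{a}\times\mr{id})\circ(\Phi\times\Theta)$ and $(\mf{a}\times\mr{id})^*\mc{M}^{\mr{univ},\circledast}\simeq\mc{A}^{\circledast}\times\bp\RMod$ as noted at the start of \ref{analeasydualap}, one has
\[
(\overline{\pi}\circ i)^*\mc{M}^{\mr{univ},\circledast}
\simeq\Phi^*\mc{A}^{\circledast}\times_{\mr{Tw}^{\mr{op}}\mbf{\Delta}}\Theta^*\bp\RMod.
\]
Applying the right adjoint $\Theta_*$, which preserves fibre products, and using $\Theta_*(\mr{Tw}^{\mr{op}}\mbf{\Delta})\cong\mbf{\Delta}$, I would obtain
\[
\Theta_*(\overline{\pi}\circ i)^*\mc{M}^{\mr{univ},\circledast}
\cong
\Theta_*\Phi^*\mc{A}^{\circledast}\times_{\mbf{\Delta}}\Theta_*\Theta^*\bp\RMod.
\]
Under this identification, condition (1) cuts the first factor down to $(\mc{A}^{\circledast})^{\vee}$ by the very definition of the latter preceding Theorem \ref{dualthm}, while condition (2) cuts the second factor down to the full subcategory $\widetilde{\bp\RMod}\subset\Theta_*\Theta^*\bp\RMod$ spanned by those sections $\mr{Tw}^{\mr{op}}_{[n]}\mbf{\Delta}\to\bp\RMod_{[n]}$ which invert every edge.

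It then remains to produce a canonical equivalence $\bp\RMod\xrightarrow{\sim}\widetilde{\bp\RMod}$ over $\mbf{\Delta}$. The unit of $\Theta^{*}\dashv\Theta_{*}$ yields a canonical map $\bp\RMod\to\Theta_*\Theta^*\bp\RMod$ whose fibrewise values are constant sections, so it factors through $\widetilde{\bp\RMod}$. Mirroring the treatment of $\widetilde{\mc{D}^{\mr{op}}}\xrightarrow{\sim}\mc{D}^{\mr{op}}$ in the proof of Theorem \ref{dualthm}, I would first verify that $\widetilde{\bp\RMod}\to\mbf{\Delta}$ is a Cartesian fibration and that the canonical map preserves Cartesian edges, then apply \cite[3.3.1.5]{HTT} to reduce to a fibrewise equivalence. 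Over $[n]\in\mbf{\Delta}$ the claim is that $M\mapsto\mr{const}_{M}$ gives an equivalence from $\bp\RMod_{[n]}$ onto the full subcategory of $\mr{Fun}(\mr{Tw}^{\mr{op}}_{[n]}\mbf{\Delta},\bp\RMod_{[n]})$ spanned by edge-inverting functors. Since $\mr{id}_{[n]}$ is an initial object of $\mr{Tw}^{\mr{op}}_{[n]}\mbf{\Delta}$ (the morphisms being the reverse direction on the top row of the defining square), the constant-functor map is the left Kan extension $i_!$ along $\{\mr{id}_{[n]}\}\hookrightarrow\mr{Tw}^{\mr{op}}_{[n]}\mbf{\Delta}$ in the sense of \cite[4.3.2.17]{HTT}; it is fully faithful because $\mr{id}\xrightarrow{\sim}i^*i_!$ is an equivalence, and essentially surjective onto edge-inverting functors because any such $F$ is canonically equivalent to $\mr{const}_{F(\mr{id}_{[n]})}$ via $F$ applied to the unique maps out of the initial object.

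The principal technicality is the first identification---justifying that $\Theta_*$ splits the fibre product as asserted at the level of $\infty$-categories. This I would handle using the categorical-fibration replacements arranged in \ref{setupmodspM}, so that the relevant pullbacks are computed by the strict simplicial-set fibre products of \ref{defpushpullsim} and $\Theta_*$ commutes with them on the nose, together with \cite[5.1.2.3]{HTT}. The remaining manipulations are formal.
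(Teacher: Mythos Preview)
Your proposal is correct and follows essentially the same approach as the paper: decompose $\Theta_*(\overline{\pi}\circ i)^*\mc{M}^{\mr{univ},\circledast}$ as the fibre product $\Theta_*\Phi^*\mc{A}^{\circledast}\times_{\mbf{\Delta}}\Theta_*\Theta^*\bp\RMod$ using that $\Theta_*$ is a right adjoint, identify the two defining conditions of $\bp\mc{B}$ with $(\mc{A}^{\circledast})^{\vee}$ and the ``constant'' subcategory of $\Theta_*\Theta^*\bp\RMod$, reduce to fibres via \cite[3.3.1.5]{HTT}, and settle the $\bp\RMod$ factor by exploiting that $\mr{id}_{[n]}$ is initial in $\mr{Tw}^{\mr{op}}_{[n]}\mbf{\Delta}$ together with the Kan extension criterion. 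The paper is more terse---it does not name your intermediate category $\widetilde{\bp\RMod}$ and invokes \cite[4.3.2.15]{HTT} directly rather than \cite[4.3.2.17]{HTT}---but the argument is the same.
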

\begin{proof}
 Recall that the functor $\Theta_*\colon(\sSet)_{/\mr{Tw}^{\mr{op}}\mbf{\Delta}}\rightarrow(\sSet)_{/\mbf{\Delta}}$
 is a right adjoint functor, and commutes with pullbacks.
 Thus, we have the isomorphisms
 \begin{equation*}
  \Theta_*(\overline{\pi}\circ i)^*\mc{M}^{\mr{univ},\circledast}
   \cong
   \Theta_*\bigl(\Phi^*\mc{A}^{\circledast}
   \times_{\mr{Tw}^{\mr{op}}\mbf{\Delta}}\Theta^*\bp\RMod
   \bigr)
   \cong
   \Theta_*\Phi^*\mc{A}^{\circledast}
   \times_{\mbf{\Delta}}\Theta_*\Theta^*\bp\RMod.
 \end{equation*}
 Thus, we have the canonical functor
 $(\mc{A}^{\circledast})^\vee\times_{\mbf{\Delta}}\bp\RMod
 \rightarrow
 \Theta_*(\overline{\pi}\circ i)^*\mc{M}^{\mr{univ},\circledast}$.
 This induces the functor in the statement of the lemma. We must show
 that it is an equivalence. Since the functor preserves Cartesian edges
 over $\mbf{\Delta}$, it suffices to show the equivalence for each
 $[n]\in\mbf{\Delta}$ by \cite[3.3.1.5]{HTT}.
 For a simplicial set $K$ and an $\infty$-category $\mc{C}$, let
 $\mr{Fun}(K,\mc{C})^{\mr{equiv}}$ be the full subcategory of
 $\mr{Fun}(K,\mc{C})$ spanned by functors sending any edge of $K$ to an
 equivalent edge in $\mc{C}$.
 We only need to show that the constant functor
 $c\colon\RMod_{\Delta^n}\rightarrow\mr{Fun}(\mr{Tw}^{\mr{op}}_{[n]}
 \mbf{\Delta},\mr{RMod}_{\Delta^n})^{\mr{equiv}}$ is a categorical
 equivalence. Consider the following left Kan extension diagram:
 \begin{equation*}
  \xymatrix{
   \{[n]\rightarrow[n]\}\ar[r]\ar[d]_{i}&\RMod_{\Delta^n}\ar[d]\\
  \mr{Tw}^{\mr{op}}_{[n]}\mbf{\Delta}\ar[r]\ar@{-->}[ru]&\{*\}.
   }
 \end{equation*}
 Invoking \cite[4.3.2.15]{HTT}, the restriction by $i$ is a trivial
 fibration, which gives a quasi-inverse to $c$.
\end{proof}

\begin{dfn}
 \label{strremarkoncatprod}
 Let $\alpha\colon\mr{Tw}^{\mr{op}}_{[n]}\mbf{\Delta}'\rightarrow
 \mr{RM}_{\Delta^n}$ be the functor defined by sending
 $\phi\colon[0]\rightarrow[n]$ to $(0_{\phi(0)},1)$.
 \begin{enumerate}
  \item A vertex of $\Theta'_*\pi'^*(\mr{RM}^{\mr{univ}})$ corresponds
	to a map $f\colon\mr{Tw}^{\mr{op}}_{[n]}\mbf{\Delta}'\rightarrow
	\mr{RM}^{\mr{univ}}_{(0,1),[n]}\cong
	(\mr{RM}_{\Delta^n})_{(0,1)}\times\mr{RMod}_{\Delta^n}$, where
	$(\mr{RM}_{\Delta^n})_{(0,1)}:=\mr{RM}_{\Delta^n}
	\times_{\mr{RM}}\{(0,1)\}\cong\Delta^n$, for some $n$.
	We consider the full subcategory $\bp\mr{Pre}\Str$ of
	$\Theta'_*\pi'^*(\mr{RM}^{\mr{univ}})$ spanned by the vertices
	satisfying the following conditions:
	\begin{enumerate}
	 \item The composition $\mr{pr}_1\circ
	       f\colon\mr{Tw}^{\mr{op}}_{[n]}\mbf{\Delta}'
	       \rightarrow(\mr{RM}_{\Delta^n})_{(0,1)}$ is equal to
	       $\alpha$;
	 
	 \item The composition $\mr{pr}_2\circ
	       f\colon\mr{Tw}^{\mr{op}}_{[n]}\mbf{\Delta}'
	       \rightarrow\mr{RMod}_{\Delta^n}$ is constant.
	\end{enumerate}
	 
  \item  We put
	 $\bp\Str:=\Theta'_*\pi'^*(\mc{M}^{\mr{univ},\circledast})
	 \times_{\Theta'_*\pi'^*(\mr{RM}^{\mr{univ}})}\bp\mr{Pre}\Str$.
 \end{enumerate}
\end{dfn}

\begin{rem*}
 \begin{enumerate}
  \item Informally, an object of $\bp\Str$ consists of $\mc{M}^{\circledast}\rightarrow\mr{RM}_{\Delta^n}$ in $\mr{RMod}_{\Delta^n}$ and objects
	$M_0,\dots,M_n\in\mc{M}_{(0,1)}$ such that $M_i$ is an object over $(0_i,1)\in\mr{RM}_{\Delta^n}$.
	This construction is a variant of the enriched string construction appeared in \cite[4.7.1]{HA} and \cite[7.3]{GH}.
	Even though an object of $\bp\Str$ is not a ``string'', we decided to call it a string
	because we decorate $\bp\Str$ to define the $\infty$-category of enriched strings in Definition \ref{dfnenstr},
	which is an analogy of the one in \cite{HA} or \cite{GH}.

  \item \label{remstrcatfibpro-1}
        We have the following diagram:
	\begin{equation*}
	 \xymatrix{
	  \bp\Str_{[n]}\ar@{-->}[rr]\ar@{-->}[dd]\ar@{-->}[rd]&&
	  \RMod_{\Delta^n}\ar[d]\\
	 &\mr{Fun}(\mr{Tw}^{\mr{op}}_{[n]}\mbf{\Delta}',
	  \mc{M}^{\mr{univ},\circledast}_{[n]})\ar[r]^-{b}\ar[d]&
	  \mr{Fun}(\mr{Tw}^{\mr{op}}_{[n]}\mbf{\Delta}',
	  \RMod_{\Delta^n})\\
	 \{*\}\ar[r]^-{a}&
	  \mr{Fun}(\mr{Tw}^{\mr{op}}_{[n]}\mbf{\Delta}',\mr{RM}_{\Delta^n}).
	  }
	\end{equation*}
	Here the dashed functors exhibits $\bp\Str$ as a limit of the
	diagram in $\sSet$. The map $a$ is a categorical fibration by
	\cite[2.4.6.5]{HTT} since $\mr{RM}_{\Delta^n}$ is gaunt,
	and $b$ is also a categorical fibration since
	$\mc{M}^{\mr{univ},\circledast}\rightarrow\mr{RM}
	\times\bp\RMod_{\mc{A}}$ is a coCartesian fibration. This
	implies that the limit in $\sSet$ is actually a limit in
	$\Cat_\infty$. In particular, the category $\bp\Str$ does not
	depend on the choice of a ``model'' of
	$\mc{M}^{\mr{univ},\circledast}$ up to equivalences, and the
	functor $\bp\Str_{[n]}\rightarrow\RMod_{\Delta^n}$ is a
	categorical fibration.
  
  \item We have the following diagram
	\begin{equation*}
	 \xymatrix{
	  \bp\Str\ar@{-->}[rr]\ar@{^{(}->}[d]&&\bp\RMod\ar[d]\\
	 \Theta'_*\pi'^*(\mr{RM}^{\mr{univ}})\ar[r]^-{c}&
	  \Theta'_*\pi'^*(\mr{RM}\times\bp\RMod)\ar[r]^-{\cong}&
	  \Theta'_*\Theta'^*(\bp\RMod).
	  }
	\end{equation*}
	By \cite[B.4.5]{HTT}, the map $c$ is a categorical
	fibration, and the fiber over $[n]\in\mbf{\Delta}$ coincides
	with $b$ above. By definition of $\bp\Str$, we have the
	dashed arrows making the diagram commutative whose fiber over
	$[n]$ is map of \ref{remstrcatfibpro-1}.
 \end{enumerate}
\end{rem*}

\begin{dfn}
 \label{dfnenstr}
 A vertex of
 $\overline{\Theta}_*\overline{\pi}^*(\mc{M}^{\mr{univ},\circledast})$
 consists of maps $g\colon\Delta^1\times
 \mr{Tw}^{\mr{op}}_{[n]}\mbf{\Delta}\rightarrow
 \mc{M}^{\mr{univ},\circledast}\times_{\bp\mr{RMod}}\mr{RMod}_{\Delta^n}$
 over $\mr{RM}$ for some $n$. We define $\bp\Str^{\mr{en},+}$ to be the
 full subcategory of
 $\overline{\Theta}_*\overline{\pi}^*(\mc{M}^{\mr{univ},\circledast})$
 spanned by the vertices satisfying the following conditions, which are
 analogous to the conditions of enriched strings
 (cf.\ \cite[7.3.7]{GH}):
 \begin{enumerate}
  \item\label{dfnenstr-1}
       The vertex sits over $\bp\Str$;
       
  \item\label{dfnenstr-2}
       Given a map
       $\phi\colon([k]\rightarrow[n])\rightarrow([l]\rightarrow[n])$ in
       $\mr{Tw}^{\mr{op}}_{[n]}\mbf{\Delta}$ such that $\phi(0)=0$, the
       edge $g(0,\phi)$ in $\mc{M}^{\mr{univ},\circledast}$ is a
       coCartesian edge with respect to the coCartesian fibration
       $s\colon\mc{M}^{\mr{univ},\circledast}\rightarrow\mr{RM}$;

  \item\label{dfnenstr-3}
       The map
       $g\colon\Delta^1\times\mr{Tw}^{\mr{op}}_{[n]}\mbf{\Delta}
       \rightarrow\mc{M}^{\mr{univ},\circledast}$
       is a $s$-left Kan extension of
       $g|_{\{0\}\times\mr{Tw}^{\mr{op}}_{[n]}}$;

  \item\label{dfnenstr-4}
       Given a map
       $\phi\colon([k]\rightarrow[n])\rightarrow([l]\rightarrow[n])$ in
       $\mr{Tw}^{\mr{op}}_{[n]}\mbf{\Delta}$, the edge
       $g(1,\phi)$ in $\mc{M}^{\mr{univ},\circledast}$ is a coCartesian
       edge with respect to the coCartesian fibration
       $s$.
 \end{enumerate}
\end{dfn}

\begin{rem*}
 \begin{enumerate}
  \item Given a map $\phi$ in $\mr{RM}$, let
	$\mr{Fun}_{\phi}(\Delta^1,\mc{M}^{\mr{univ},\circledast})
	^{\mr{coCart}}$ be the full subcategory of
	$\mr{Fun}(\Delta^1,\mc{M}^{\mr{univ},\circledast})$
	spanned by coCartesian edges over $\phi$.
	The canonical inclusion $\mr{Fun}_{\phi}
	(\Delta^1,\mc{M}^{\mr{univ},\circledast})^{\mr{coCart}}
	\rightarrow\mr{Fun}(\Delta^1,\mc{M}^{\mr{univ},\circledast})$
	is a categorical fibration because $\mr{RM}$ is gaunt and
	coCartesian edges are preserved by equivalence.
	The conditions except for \ref{dfnenstr-1} can be rephrased as
	certain edges in $\mc{M}^{\circledast}$ are coCartesian edges
	over some specified maps $\phi$ in $\mr{RM}$.
	Thus, we have the pullback diagrams in $\sSet$:
	\begin{equation*}
	 \xymatrix{
	  \mc{X}\ar[r]\ar[d]&
	  \mr{Fun}(\Delta^1\times\mr{Tw}^{\mr{op}}_{[n]}\mbf{\Delta},
	  \mc{M}^{\mr{univ},\circledast}_{[n]})\ar[d]^{a}\\
	 \bp\Str_{[n]}\ar[r]&
	  \mr{Fun}(\mr{Tw}^{\mr{op}}_{[n]}\mbf{\Delta}',
	  \mc{M}^{\mr{univ},\circledast}_{[n]}),
	  }\qquad
	  \xymatrix{
	  \bp\Str^{\mr{en},+}_{[n]}
	  \ar[r]\ar[d]&
	  \prod\mr{Fun}_{\phi}
	  (\Delta^1,\mc{M}^{\mr{univ},\circledast})
	  ^{\mr{coCart}}\ar[d]^{b}\\
	 \mc{X}\ar[r]&\prod\mr{Fun}(\Delta^1,\mc{M}^{\mr{univ},
	  \circledast}).
	  }
	\end{equation*}
	The functor $a$ is a categorical fibration by
	\cite[2.2.5.4]{HTT}, and $b$ is also a categorical fibration by
	observation above. This implies that the functor
	$\bp\Str^{\mr{en},+}_{[n]}\rightarrow\bp\Str_{[n]}$ is a
	categorical fibration, and $\bp\Str^{\mr{en},+}$ does not depend
	on the model of $\mc{M}^{\mr{univ},\circledast}$.
       
  \item Let us have a closer look at objects of $\bp\Str^{\mr{en},+}$.
	Vertices of $\bp\Str^{\mr{en},+}$ correspond to functors
	$g\colon\Delta^1\times
	\mr{Tw}^{\mr{op}}_{[n]}\mbf{\Delta}\rightarrow
	\mc{M}^{\mr{univ},\circledast}\times_{\bp\mr{RMod}}
	\mr{RMod}_{\Delta^n}$ for some $n$.
	Because $g$ is over $\bp\Str$, the map
	\begin{equation*}
	 \mr{Tw}^{\mr{op}}_{[n]}\mbf{\Delta}'\xrightarrow{\{0\}}
	  \Delta^1\times\mr{Tw}^{\mr{op}}_{[n]}\mbf{\Delta}
	  \xrightarrow{g}
	  \mc{M}^{\mr{univ},\circledast}\times_{\bp\mr{RMod}}
	  \mr{RMod}_{\Delta^n}
	  \xrightarrow{\mr{pr}_2}
	  \mr{RMod}_{\Delta^n}
	\end{equation*}
	is constant, and determines a coCartesian fibration of
	generalized $\infty$-operads
	$\mc{M}^{\circledast}\rightarrow\mr{RM}_{\Delta^n}$.
	Let $\phi_k\colon[n-k]\rightarrow[n]$ be an inert function
	considered as in $\mr{Tw}_{[n]}^{\mr{op}}\mbf{\Delta}$ such that
	$\phi(0)=k$. Then we can write $g(0,\phi_k)\simeq M_{k}\boxtimes
	A_{k+1}\boxtimes A_{k+2}\boxtimes\dots\boxtimes A_n$ with
	$M_k\in\mc{M}^{\circledast}_{(0,1)}$, $A_i\in\mc{A}$.
	For a careful reader, we note that $g(0,\phi_k)$ may not lie in
	$\mc{M}^{\circledast}$, but a generalized $\infty$-operad {\em
	equivalent} to $\mc{M}^{\circledast}$.
	Condition \ref{dfnenstr-3}, \ref{dfnenstr-4} implies that $A_i$
	are the same even if we change $k$.
	
	Now, let $\phi\colon[m]\rightarrow[n]$ a function considered as
	an object of $\mr{Tw}_{[n]}^{\mr{op}}\mbf{\Delta}$.
	Put $a^{\phi}_0=0$, and for $j>0$, define $a_j^{\phi}$
	inductively as follows: $a_{j+1}^{\phi}$ is the minimum number
	$a_j^{\phi}<k\leq m$ such that $\phi(a_j^{\phi})\neq\phi(k)$.
	This can be depicted as follows:
	\begin{equation*}
	 \xymatrix@R=8pt{
	  0=a^{\phi}_0\ar[rd]&\dots\ar[d]&a^{\phi}_1-1\ar[ld]&
	  a^{\phi}_1\ar[rd]&\dots\ar[d]&a^{\phi}_2-1\ar[ld]&
	  a^{\phi}_2\ar[rd]&
	  \dots\ar[d]\\
	 &\phi(a^{\phi}_0)&&&\phi(a^{\phi}_1)&&&\phi(a^{\phi}_2)
	  }
	\end{equation*}
	Then condition \ref{dfnenstr-2} implies that
	$g(0,\phi)=M_{\phi(0)}\boxtimes B_1\boxtimes B_2\dots\boxtimes
	B_m$
	where
	$B_{a_j^{\phi}}=A_{\phi(a_{j-1}^{\phi})+1}\otimes\dots\otimes
	A_{\phi(a_j^{\phi})}$ for $j>0$, and $B_i=\mbf{1}$ otherwise.

	As a summary, a vertex consists of a sequence
	\begin{equation*}
	 M_{0}\boxtimes	A_{1}\boxtimes A_{2}\boxtimes\dots\boxtimes A_n
	  \rightarrow
	  M_1\boxtimes	A_{2}\boxtimes\dots\boxtimes A_n
	  \rightarrow\dots\rightarrow
	  M_n
	\end{equation*}
	with lots of ``redundant information'' determined from the above
	sequence in an essentially unique manner.
 \end{enumerate}
\end{rem*}

\begin{lem}
 \label{basicpropofstr}
 Let $p\colon\mc{M}^{\mr{univ},\circledast}\rightarrow
 \mr{RM}\times\mbf{\Delta}$, $q\colon\bp\Str\rightarrow\mbf{\Delta}$,
 $r\colon\bp\Str^{\mr{en},+}\rightarrow\mbf{\Delta}$, and
 $\alpha\colon\bp\RMod\rightarrow\mbf{\Delta}$ be the canonical maps. We
 recall maps $f$, $g$, $h$ from {\normalfont\ref{setupmodspM}}.
 \begin{enumerate}
  \item\label{basicpropofstr-1}
       Let $x$ be a vertex of $\mc{M}^{\mr{univ},\circledast}$.
       Assume we are given a map $\phi\colon[m]\rightarrow[n]$ in
       $\mbf{\Delta}$, and let $e_{\phi}$ be a Cartesian edge in
       $\mr{RM}\times\bp\RMod$ over $\mbf{\Delta}$ with endpoint
       $f(x)$. Assume that $e_{\phi}$ has a $g$-coCartesian lifting
       $e'_{\phi}$ in $\mr{RM}^{\mr{univ}}$ with endpoint $h(x)$.
       Then there exists a $h$-Cartesian edge $y\rightarrow x$ in
       $\mc{M}^{\mr{univ},\circledast}$ which lifts $e'_{\phi}$.
       
  \item The maps $q$, $r$ are Cartesian fibrations.
	
  \item\label{basicpropofstr-3}
       The map $\bp\Str^{\mr{en},+}\rightarrow\bp\Str$ sends
       $q$-Cartesian edges to $r$-Cartesian edges, the map
       $\bp\Str\rightarrow\bp\RMod$ from Remark {\normalfont\ref{strremarkoncatprod}}
       sends $q$-Cartesian edges to $\alpha$-Cartesian edges, and
       $\bp\Str^{\mr{en},+}\rightarrow\bp\RMod$ sends $r$-Cartesian
       edges to $\alpha$-Cartesian edges.
	
  \item\label{basicpropofstr-5}
       The maps $\bp\Str_{[n]}\rightarrow\bp\RMod_{\Delta^n}$,
       $\bp\Str^{\mr{en},+}_{[n]}\rightarrow\bp\RMod_{\Delta^n}$
       are coCartesian fibrations, and the map
       $\bp\Str^{\mr{en},+}_{[n]}\rightarrow\bp\Str_{[n]}$ preserves
       coCartesian edges.
 \end{enumerate}
\end{lem}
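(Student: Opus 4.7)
The plan is to establish the four statements in sequence, with part \ref{basicpropofstr-1} providing the fundamental lifting device on which parts (2)--(4) depend. The guiding idea is that although $h$ is fiberwise over $\mbf{\Delta}$ a coCartesian fibration (since the fibers $\mc{M}^{\mr{univ},\circledast}_{[n]}\to\mr{RM}^{\mr{univ}}_{[n]}$ come from straightening), variation in the $\mbf{\Delta}$-direction is controlled by base change in the sense of Lemma \ref{compofcartRModM}, and this is precisely what produces $h$-Cartesian edges.

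For part \ref{basicpropofstr-1}, I would use Lemma \ref{compofcartRModM} to identify $\mc{M}^{\mr{univ},\circledast}_{[m]}$ with the pullback of $\mc{M}^{\mr{univ},\circledast}_{[n]}$ along the functor $\mr{RM}_{\Delta^m}\to\mr{RM}_{\Delta^n}$ induced by $\phi\colon[m]\to[n]$ combined with $\phi_*\colon\RMod_{\Delta^n}\to\RMod_{\Delta^m}$. The Cartesian edge $e_\phi$ in $\mr{RM}\times\bp\RMod$ is (up to equivalence) the identity in the $\mr{RM}$-coordinate and a Cartesian edge $\mc{M}_x'\to\mc{M}_x$ in $\bp\RMod$ realizing $\phi_*$; its $g$-coCartesian lift $e_\phi'$ selects a target vertex in $\mr{RM}^{\mr{univ}}$ equal to $h(x)$. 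The pullback identification then supplies an object $y\in\mc{M}^{\mr{univ},\circledast}_{[m]}$ together with an edge $y\to x$ covering $e_\phi'$. To verify that this edge is $h$-Cartesian, I would appeal to Lemma \ref{cartedgedetelem} and base change along $e_\phi$ to $\Delta^1$; the resulting fiber map is, by the pullback identification, the base change functor on straightenings, under which the constructed edge becomes Cartesian essentially by inspection of mapping spaces.

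For part (2), given $\phi\colon[m]\to[n]$ and a vertex of $\bp\Str_{[n]}$ consisting of a module $\mc{M}^{\circledast}\to\mr{RM}_{\Delta^n}$ together with objects $M_0,\dots,M_n$ (in the sense of Definition \ref{strremarkoncatprod}), I would build the Cartesian lift as the pair formed by the pullback module $\phi^*\mc{M}^{\circledast}$ over $\mr{RM}_{\Delta^m}$ and the family of $h$-Cartesian lifts of the $M_{\phi(i)}$ furnished by part \ref{basicpropofstr-1}. Cartesianness is checked via the description of $\bp\Str$ as the iterated fiber product in Remark \ref{strremarkoncatprod}, reducing it to Cartesianness of the two constituents, namely of the base-change edge in $\bp\RMod$ and of the componentwise $h$-Cartesian edges. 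The analogous construction applies to $\bp\Str^{\mr{en},+}$, and one verifies the conditions \ref{dfnenstr-2}--\ref{dfnenstr-4} are preserved under base change because coCartesian edges of $\mc{M}^{\mr{univ},\circledast}\to\mr{RM}$ pull back to coCartesian edges of the pullback module. Part (3) is then a direct consequence of the explicit construction of these Cartesian lifts, as the projections to $\bp\RMod$ and from $\bp\Str^{\mr{en},+}$ to $\bp\Str$ are, by definition of the lifts, given by the same base change.

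For part \ref{basicpropofstr-5}, fix $[n]$ and consider a morphism $\mc{M}^{\circledast}\to\mc{N}^{\circledast}$ in $\bp\RMod_{\Delta^n}$ and an object of $\bp\Str_{[n]}$ over $\mc{M}^{\circledast}$ with string data $M_0,\dots,M_n$. Letting $F$ denote the underlying functor $\mc{M}\to\mc{N}$, the coCartesian lift is the triple $(\mc{N}^{\circledast},FM_0,\dots,FM_n)$, and its coCartesianness is immediate from the universal property of the coCartesian edges of $\mc{M}^{\mr{univ},\circledast}$ associated with the morphism $\mc{M}^{\circledast}\to\mc{N}^{\circledast}$. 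The construction for $\bp\Str^{\mr{en},+}_{[n]}$ proceeds by pushing forward the full enriched-string datum along $F$; the coCartesianness conditions \ref{dfnenstr-2}--\ref{dfnenstr-4} are preserved because the morphism of modules preserves coCartesian edges. Compatibility of the two coCartesian structures is immediate. The main obstacle is part \ref{basicpropofstr-1}: combining the pullback identification of Lemma \ref{compofcartRModM} with the detection criterion of Lemma \ref{cartedgedetelem} at the level of individual edges requires a careful analysis, after which parts (2)--(4) are essentially bookkeeping.
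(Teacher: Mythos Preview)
Your overall strategy is correct and matches the paper's: part~\ref{basicpropofstr-1} is the lifting device, and parts (2)--(4) use it to construct and verify Cartesian/coCartesian lifts in the larger categories. Your argument for part~\ref{basicpropofstr-1} is essentially the paper's (reduce via Lemma~\ref{cartedgedetelem} to the fiber over $e_\phi$, then use the pullback description from Lemma~\ref{compofcartRModM}), though the paper carries out the final step by an explicit identification of the fiber of $h$ with a coCartesian fibration over $\Gamma^\vee_\phi$ rather than a direct mapping-space computation.

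The gap is in part (2). Your plan says Cartesianness ``is checked via the description of $\bp\Str$ as the iterated fiber product in Remark~\ref{strremarkoncatprod}, reducing it to Cartesianness of the two constituents.'' But that fiber-product description is \emph{fiberwise} over $[n]\in\mbf{\Delta}$; it does not directly give you a model for edges of $\bp\Str$ lying over a nontrivial $\phi\colon[m]\to[n]$, and Cartesianness in a limit diagram is not in general detected componentwise without further argument. The paper instead works with the ambient description $\bp\Str\subset\Theta'_*\pi'^*(\mc{M}^{\mr{univ},\circledast})$ (and similarly for $\bp\Str^{\mr{en},+}$), constructs the candidate Cartesian edge as a $p$-\emph{right Kan extension} along $\mr{Tw}^{\mr{op}}_{[a]}\mbf{\Delta}\hookrightarrow\mr{Tw}^{\mr{op}}_{\phi}\mbf{\Delta}$ (existence via \cite[4.3.2.15]{HTT}, using that the relevant slice categories have initial objects and that the needed $p$-limits are exactly the $h$-Cartesian edges supplied by part~\ref{basicpropofstr-1}), and then verifies Cartesianness by \cite[B.4.8]{HA}. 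This Kan-extension-plus-\cite[B.4.8]{HA} package is the missing technical tool in your plan; once you have it, parts (2) and (3) go through as you say.

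For part~\ref{basicpropofstr-5} your description of the coCartesian lift is correct, but again ``immediate from the universal property'' is optimistic. The paper obtains the coCartesian fibration structure by writing $\bp\Str^{\mr{en},+}_{[n]}$ as a full subcategory of $t_*u^*(\mc{M}^{\mr{univ},\circledast})$ and invoking \cite[3.2.2.12]{HTT}, whose explicit description of coCartesian edges also gives preservation under $\bp\Str^{\mr{en},+}_{[n]}\to\bp\Str_{[n]}$.
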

\begin{proof}
 Let us show \ref{basicpropofstr-1}.
 Since $f$, $g$, $h$ are categorical fibrations, we may replace an edge
 we wish to lift by an edge equivalent to it.
 In view of Lemma \ref{cartedgedetelem} applied to the diagram
 (\ref{funddiagmod}), we may replace the diagram by the diagram of
 fibers over $e_\phi$.
 Write $f(x)=(X,\mc{M}^{\circledast})$ where $X\in\mr{RM}$ and
 $\mc{M}^{\circledast}\rightarrow\mr{RM}_{\Delta^n}$ be in
 $\RMod_{\Delta^n}$. By replacing $e_{\phi}$ by an edge equivalent to
 it, we may assume that $e_\phi$ is a morphism of the form
 $(\mr{id}_X,\iota)\colon
 (X,\mc{M}^{\circledast}\times_{\mr{RM}_{\Delta^n}}\mr{RM}_{\Delta^m})
 \rightarrow(X,\mc{M}^{\circledast})$, where $\iota$ is the canonical
 functor. We have the coCartesian fibration
 $\mc{M}^{\circledast}_X\rightarrow(\mr{RM}_{\Delta^n})_X$,
 and let $i\in\Delta^n$ be the vertex over which $x$ is
 lying. The fiber $(\mr{RM}_{\Delta^n})_X$ is equivalent to $\Delta^n$
 or $\{*\}$ depending on whether $X$ begins with $0$ or $1$.
 We have the $g$-coCartesian lifting $e'_{\phi}$ if and only if there
 exists $j\in[m]$ such that $\phi(j)=i$ in the case where $X$ begins
 with $0$ and it always have a lifting when $X$ begins with $1$.
 By replacing $e'_{\phi}$ by an edge equivalent to it, we may assume
 that $e'_{\phi}$ is of the form
 $\bigl((0_j,1\dots,1),\mc{M}^{\circledast}\times
 _{\mr{RM}_{\Delta^n}}\mr{RM}_{\Delta^m}\bigr)\rightarrow
 \bigl((0_i,1,\dots,1),\mc{M}^{\circledast}\bigr)$ when $X$ begins
 with $0$. If $X$ begins with $1$, erase $0_j$, $0_i$ from the map
 above. From now on, we only treat the case where $X$ begins with $0$
 since the other case is similarly and easier to check.
 Let $\Gamma^\vee_\phi:=
 \Gamma^\vee\times_{\mbf{\Delta},\phi}\Delta^1$ using the notation of
 \ref{gammaanddual}.
 Then the fiber of $h$ over $e'_{\phi}$ is Cartesian equivalent to the
 projection
 \begin{equation*}
  F\colon(\Delta^1\times\mc{M}^{\circledast}_X)
   \times_{\Delta^1\times\Delta^n}
   \Gamma^\vee_\phi\rightarrow\Gamma^\vee_\phi.
 \end{equation*}
 Since $\mc{M}^{\circledast}_X\rightarrow\Delta^n$ is a coCartesian
 fibration, this functor is coCartesian fibration as well.
 Invoking \cite[5.2.2.4]{HTT} (or Lemma \ref{cartedgedetelem}), we may
 replace $F$ by $F\times_{\Gamma^\vee_\phi,e'_\phi}\Delta^1$.
 Because $e'_\phi$ is a coCartesian edge in $\Gamma^\vee_\phi$ over
 $\Delta^1$, $F\times_{\Gamma^\vee_\phi,e'_\phi}\Delta^1$ is equivalent
 to $\Delta^1\times(\mc{M}^{\circledast}_X\times_{\Delta^n}
 \Delta^{\{i\}})\rightarrow\Delta^1$, and thus, we have a Cartesian lift.

 Let us show that $r$ is a Cartesian fibration.
 The argument works similarly, or even simpler, for $q$, by replacing
 $\mr{Tw}^{\mr{op}}\mbf{\Delta}'$ by $\mr{Tw}^{\mr{op}}\mbf{\Delta}$, so
 we omit. Take a vertex of
 $\bp\Str^{\mr{en},+}$ corresponding to a map
 $g\colon\Delta^1\times\mr{Tw}^{\mr{op}}_{[a]}
 \mbf{\Delta}\rightarrow\mc{M}^{\mr{univ},\circledast}$,
 and take a map $\phi\colon[b]\rightarrow[a]$.
 We define an edge $\phi^*(g)\rightarrow g$ over $\phi$ as follows.
 Since $\Theta$ is a coCartesian fibration, we have a functor
 $\phi_*\colon\mr{Tw}^{\mr{op}}_{[b]}\mbf{\Delta}\rightarrow
 \mr{Tw}^{\mr{op}}_{[a]}\mbf{\Delta}$. We wish to take a $p$-right
 Kan extension $e\colon\Delta^1\times\mr{Tw}^{\mr{op}}_{\phi}
 \mbf{\Delta}\rightarrow\mc{M}^{\mr{univ},\circledast}$
 of $g$ along $\Delta^1\times\mr{Tw}^{\mr{op}}_{[a]}\mbf{\Delta}
 \hookrightarrow\Delta^1\times\mr{Tw}^{\mr{op}}_{\phi}\mbf{\Delta}$, and
 define the corresponding edge in $\bp\Str^{\mr{en},+}$
 as the desired edge.
 Let us check the existence of the extension. By \cite[4.3.2.15]{HTT},
 for $v:=(i,v'\colon[k]\rightarrow[b])\in
 \Delta^1\times\mr{Tw}^{\mr{op}}_{[b]}
 \mbf{\Delta}$, we only need to check that the diagram on the left
 below
 \begin{equation*}
  \xymatrix@C=50pt{
   (\Delta^1\times\mr{Tw}^{\mr{op}}_{[a]}\mbf{\Delta})_{v/}
   \ar[r]^-{g}\ar[d]&
   \mc{M}^{\mr{univ},\circledast}\ar[d]^{h}\\
  (\Delta^1\times\mr{Tw}^{\mr{op}}_{[a]}
   \mbf{\Delta})_{v/}^{\triangleleft}
   \ar[r]\ar@{-->}[ur]&
   \mr{RM}^{\mr{univ}}
   }\qquad
   \xymatrix@C=50pt{
   \Delta^0\ar[r]^-{g(I)}\ar[d]&
   \mc{M}^{\mr{univ},\circledast}\ar[d]^{h}\\
  (\Delta^0)^{\triangleleft}
   \ar[r]\ar@{-->}[ur]^-{e'}&
   \mr{RM}^{\mr{univ}}
   }
 \end{equation*}
 has a $p$-limit. The object $I:=((i,v')\rightarrow(i,\phi_*(v')))$
 in $\Delta^1\times\mr{Tw}^{\mr{op}}_{\phi}\mbf{\Delta}$ is an initial
 object in
 $(\Delta^1\times\mr{Tw}^{\mr{op}}_{[a]}\mbf{\Delta})_{v/}$. By
 \cite[4.3.1.7]{HTT}, we need to check that the induced diagram
 on the right above extends to a $p$-limit diagram.
 Thus we are reduced to checking the existence of a $p$-Cartesian edge
 $e'((\Delta^0)^{\triangleleft})$ by \cite[4.3.1.4]{HTT}, where $g(I)$
 is over $((0_{\phi
 v'(0)},\underbrace{1,\dots,1}_{k+1}),[a])\in
 \mr{RM}_{\Delta^a}\times\mbf{\Delta}$, and the cone point is sent to
 $((0_{v'(0)},\underbrace{1,\dots,1}_{k+1}),[b])
 \in\mr{RM}_{\Delta^b}\times\mbf{\Delta}$. 
 Thus, the existence of a $h$-Cartesian edge is exactly the content of
 \ref{basicpropofstr-1}.

 It remains to check that the edge $\phi^*(g)\rightarrow g$ is a
 $q$-Cartesian edge. In view of \cite[2.4.1.4]{HTT},
 given any map $\Delta^n\rightarrow\mbf{\Delta}$, we need to solve the
 lifting problem
 \begin{equation*}
  \xymatrix@C=70pt{ 
   \Lambda^n_n\times_{\mbf{\Delta}}\mr{Tw}^{\mr{op}}\mbf{\Delta}'
   \ar[r]^-{f}\ar[d]&
   \mc{M}^{\mr{univ},\circledast}
   \ar[d]\\
  \Delta^n\times_{\mbf{\Delta}}\mr{Tw}^{\mr{op}}\mbf{\Delta}'
   \ar[r]\ar@{-->}[ur]&
   \mr{RM}^{\mr{univ}}
   }
 \end{equation*}
 where
 $f|_{\Delta^{\{n-1,n\}}\times_{\mbf{\Delta}}\mr{Tw}^{\mr{op}}\mbf{\Delta}'}$
 is the edge $e$. Apply \cite[B.4.8]{HA} with
 $\mc{C}=\Delta^n\times_{\mbf{\Delta}}\mr{Tw}^{\mr{op}}\mbf{\Delta}'$,
 $\mc{C}^0:=\{n\}\times_{\Delta^n}\mc{C}$. We need to check that
 $e\colon\mr{Tw}^{\mr{op}}_{\phi}\mbf{\Delta}'\rightarrow
 \mc{M}^{\mr{univ},\circledast}$ is a $h$-right Kan extension of $g$,
 which follows from the construction.

 The claim \ref{basicpropofstr-3} follows by concrete description of
 Cartesian edges.
 For claim \ref{basicpropofstr-5}, consider the following maps
 \begin{equation*}
  \{*\}
   \xleftarrow{t}
   \Delta^1\times\mr{Tw}^{\mr{op}}_{[n]}\mbf{\Delta}
   \xrightarrow{u}
   \mr{RM}\times\mbf{\Delta},
 \end{equation*}
 where $u$ is the map induced by $\overline{\pi}$.
 Invoking \cite[3.2.2.12]{HTT}, the map
 $t_*u^*(\mc{M}^{\mr{univ},\circledast})\rightarrow
 t_*u^*(\mr{RM}\times\RMod_{\Delta^n})$ is a coCartesian fibration. Let $\mc{X}$
 be the full subcategory of $\mr{RM}^{\mr{univ}}$ spanned by functors
 $\Delta^1\times\mr{Tw}^{\mr{op}}_{[n]}\mbf{\Delta}\rightarrow
 \RMod_{\Delta^n}$ such that all the edges are sent to equivalent edges
 and the restriction to $\mr{Tw}^{\mr{op}}_{[n]}\mbf{\Delta}'$ is
 constant. Then $\bp\Str_{[n]}^{\mr{en},+}$ is a full subcategory of the
 pullback of $t_*u^*(\mc{M}^{\mr{univ},\circledast})$ by $\mc{X}$.
 The concrete description of coCartesian edges of \cite[3.2.2.12]{HTT}
 allows us to show that the map
 $\bp\Str_{[n]}^{\mr{en},+}\rightarrow\mc{X}$ is a coCartesian
 fibration. By arguing similarly to the last half of the proof of Lemma
 \ref{analeasydualap}, $\mc{X}\rightarrow\RMod_{\Delta^n}$ is a trivial
 fibration, thus the claim follows for $\bp\Str_{[n]}^{\mr{en},+}$.
 The claim for $\bp\Str_{[n]}$ can be shown similarly. The preservation
 of coCartesian edges follows by the concrete description.
\end{proof}

\subsection{}
We apply dualizing construction of \S\ref{dualconst} to the Cartesian
fibrations $\bp\Str$ and $\bp\Str^{\mr{en},+}$ over
$\mbf{\Delta}$, and induce coCartesian fibrations
\begin{equation*}
 \Str:=\mb{D}^{-1}_{\mbf{\Delta}}
  (\bp\Str)\rightarrow\mbf{\Delta}^{\mr{op}},\quad
  \Str^{\mr{en},+}:=\mb{D}^{-1}_{\mbf{\Delta}}(\bp\Str^{\mr{en},+})
  \rightarrow\mbf{\Delta}^{\mr{op}}.
\end{equation*}
Recall that
\begin{equation*}
 \mb{D}^{-1}_{\mbf{\Delta}}(\bp\mc{B})
  \simeq
  \mc{A}^{\circledast}\times\RMod
\end{equation*}
by Theorem \ref{dualthm}.
Using the notation of \ref{dfnofbcstar}, we put $\Str^{\sim}:=\Str\basech\Str^{\simeq}_{[0]}$.
Since $\Str\rightarrow\mbf{\Delta}^{\mr{op}}$ is a coCartesian
fibration, so is $\Str^{\sim}$. We put
$\Str^{\mr{en},+,\sim}:=\Str^{\mr{en},+}\basech
(\Str^{\mr{en},+}_{[0]})^{\simeq}$.
By the functoriality of $\mb{D}^{-1}$, we have the following commutative
diagram:
\begin{equation*}
 \xymatrix{
  \Str^{\mr{en},+,\sim}\ar[r]^-{\alpha}\ar[d]_{\iota}&
  \mc{A}^{\circledast}\times_{\mbf{\Delta}^{\mr{op}}}
  \RMod\ar[r]^-{\mr{pr}_1}&
  \mc{A}^{\circledast}\ar[d]\\
 \Str^\sim\ar[r]&\RMod\ar[r]&
  \mbf{\Delta}^{\mr{op}}.
  }
\end{equation*}
Here $\iota$ and $\alpha$ are the functors induced by taking dual
of the corresponding maps in (\ref{commadjstrdiag}).
We wish to take the left Kan extension of $\alpha$ along $\iota$.

\begin{lem*}
 \label{genopestr}
 \begin{enumerate}
  \item\label{genopestr-1}
       The map
       $\Str^{\mr{en},+,\sim}_{[0]}\rightarrow\Str^{\sim}_{[0]}$ is a
       categorical equivalence of Kan complexes.
  
  \item\label{genopestr-2}
       The coCartesian fibrations
	$\Str^{\sim}\rightarrow\mbf{\Delta}^{\mr{op}}$ and
	$\Str^{\mr{en},+,\sim}\rightarrow\mbf{\Delta}^{\mr{op}}$ are
       generalized $\infty$-operads.
       
  \item\label{genopestr-3}
       The functor $\Str^{\sim}\rightarrow\RMod$ induces a categorical equivalence
       $\Str^{\sim}\rightarrow\RMod\basech\Str_{[0]}^{\sim}\simeq\RMod^{\circledast}\basech\Str_{[0]}^{\sim}$
       {\normalfont(}see {\normalfont\ref{modelofrmod}} for the notation{\normalfont)}.
 \end{enumerate}
\end{lem*}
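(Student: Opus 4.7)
The plan is to reduce each of the three parts to an explicit computation at the level of fibers, exploiting the concrete functor-category presentations of $\bp\Str$ and $\bp\Str^{\mr{en},+}$ recorded in Definition \ref{strremarkoncatprod} and Definition \ref{dfnenstr}.

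For part \ref{genopestr-1}, I would first observe that $\mr{Tw}^{\mr{op}}_{[0]}\mbf{\Delta}\cong\mbf{\Delta}^{\mr{op}}$ and $\mr{Tw}^{\mr{op}}_{[0]}\mbf{\Delta}'\cong\Delta^0$, so a vertex of $\bp\Str_{[0]}$ reduces to a pair $(\mc{M}^{\circledast}\rightarrow\mr{RM},M_0\in\mc{M}_{(0,1)})$, while a vertex of $\bp\Str^{\mr{en},+}_{[0]}$ is such a pair together with an extension $g\colon\Delta^1\times\mbf{\Delta}^{\mr{op}}\rightarrow\mc{M}^{\mr{univ},\circledast}$. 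Conditions \ref{dfnenstr-2} and \ref{dfnenstr-4} of Definition \ref{dfnenstr} force $g$ along $\{0\}$ and $\{1\}$ to be generated by $s$-coCartesian transport of $M_0$ along inert maps, while \ref{dfnenstr-3} presents $g$ as an $s$-left Kan extension of $g|_{\{0\}\times\mr{Tw}^{\mr{op}}_{[0]}\mbf{\Delta}}$. Invoking \cite[4.3.2.15]{HTT} together with Lemma \ref{basicpropofstr}.\ref{basicpropofstr-1} to supply the requisite pointwise $s$-Cartesian lifts, the forgetful functor becomes a trivial fibration. Since $\mb{D}^{-1}$ preserves fibers over $[0]$ and passing to groupoid cores preserves equivalences, the desired statement follows.

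For part \ref{genopestr-2}, the Segal condition for $\Str^{\sim}$ amounts to decomposing each fiber $\Str^{\sim}_{[n]}$ as an iterated fibered product of $\Str^{\sim}_{\{i,i+1\}}$'s glued over $\Str^{\sim}_{\{i\}}$. My plan is to combine Lemma \ref{compofcartRModM}, which identifies the restrictions of $\mc{M}^{\mr{univ},\circledast}_{[n]}$ along face maps with their lower-dimensional counterparts, with the Segal property of $\RMod$ established in Lemma \ref{basicproprmodsimp}.\ref{basicproprmodsimp-2}, and with the observation that $\mr{Tw}^{\mr{op}}_{[n]}\mbf{\Delta}'$ decomposes as the colimit of $\mr{Tw}^{\mr{op}}_{\{i,i+1\}}\mbf{\Delta}'$ glued along the $\sigma^i\colon[0]\rightarrow[n]$; applying $\Theta'_*\pi'^*$ translates this into the required decomposition. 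The same strategy handles $\Str^{\mr{en},+,\sim}$. The remaining $\pi$-limit condition for generalized $\infty$-operads follows from the fact that $\bp\Str$ and $\bp\Str^{\mr{en},+}$ are presented as honest limits in $\Cat_\infty$ of categorical fibrations, and limits commute with limits.

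For part \ref{genopestr-3}, once part \ref{genopestr-2} is available, both $\Str^{\sim}$ and $\RMod^{\circledast}\basech\Str^{\sim}_{[0]}$ are generalized $\infty$-operads over $\mbf{\Delta}^{\mr{op}}$. Their fibers at $[0]$ coincide tautologically by the definition of $\basech$, and the Segal condition from part \ref{genopestr-2} reduces the comparison to the fibers at $[1]$. A direct computation, using Lemma \ref{compofcartRModM} to describe $\Str^{\sim}_{[1]}$ over $\RMod_{\Delta^1}$, shows that both fibers are equivalent to $\RMod_{\Delta^1}\times^{\mr{cat}}_{\RMod_{\Delta^0}^{\times 2}}(\Str^{\sim}_{[0]})^{\times 2}$. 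I expect the main obstacle to lie in part \ref{genopestr-1}: verifying that the pointwise $s$-Cartesian lifts genuinely assemble into an $s$-left Kan extension in a contractibly canonical way requires a careful diagrammatic argument in the spirit of \cite[4.7.1]{HA} and \cite[7.3]{GH}, now carried out in the base-parametrized setting where the ambient coCartesian fibration $\mc{M}^{\mr{univ},\circledast}\rightarrow\mr{RM}\times\bp\RMod$ varies nontrivially in its second factor.
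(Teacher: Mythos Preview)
Your approach to part~\ref{genopestr-1} is essentially the paper's: both reduce to showing $\bp\Str^{\mr{en},+}_{[0]}\rightarrow\bp\Str_{[0]}$ is a trivial fibration via \cite[4.3.2.15]{HTT}. One small slip: the pointwise lifts you need are $s$-\emph{co}Cartesian, not $s$-Cartesian---they come immediately from the fact that $s\colon\mc{M}^{\mr{univ},\circledast}\rightarrow\mr{RM}$ is a coCartesian fibration, and Lemma~\ref{basicpropofstr}.\ref{basicpropofstr-1} (which concerns $h$-Cartesian edges over $\mr{RM}^{\mr{univ}}$) is not the relevant input here. Your treatment of part~\ref{genopestr-3} is also fine and matches the paper's direct fiberwise computation.

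The real gap is in part~\ref{genopestr-2}, and your self-assessment that the main obstacle lies in part~\ref{genopestr-1} is misplaced. For $\Str^{\sim}$ your decomposition argument works, because $\mr{Tw}^{\mr{op}}_{[n]}\mbf{\Delta}'\cong\coprod_{i\in[n]}\{i\}$ is discrete. But the sentence ``The same strategy handles $\Str^{\mr{en},+,\sim}$'' hides a genuine difficulty: $\mr{Tw}^{\mr{op}}_{[n]}\mbf{\Delta}$ is \emph{not} a colimit of the $\mr{Tw}^{\mr{op}}_{\{i,i+1\}}\mbf{\Delta}$'s glued along vertices. Objects of $\mr{Tw}^{\mr{op}}_{[n]}\mbf{\Delta}$ are arbitrary maps $[k]\rightarrow[n]$, not just those factoring through some $\{i,i+1\}$, so the Segal decomposition on the source side does not come for free. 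The paper handles this by adapting the argument of \cite[4.7.1.13]{HA}: it introduces the full subcategories $\mr{Tw}^{\mr{op}}_{[n]}\mbf{\Delta}^0$ (maps factoring through some inert $\rho^i\colon[1]\rightarrow[n]$) and $\mr{Tw}^{\mr{op}}_{[n]}\mr{Inc}^i$ (inert maps), builds auxiliary $\infty$-categories $\mc{X}$, $\mc{Y}_0$, $\mc{Y}_1$ of restricted functors, and then shows via pointwise Kan extension that the comparison maps among them are trivial fibrations. This is where the nontrivial diagrammatic work lives, and your proposal does not supply it. (Incidentally, since both fibrations are coCartesian, the ``$\pi$-limit'' clause of the generalized-operad axioms is subsumed by the Segal condition via \cite[2.1.2.12]{HA}; your ``limits commute with limits'' remark is not the right mechanism.)
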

\begin{proof}
 Let us show \ref{genopestr-1}. By definition,
 $\Str^{\mr{en},+,\sim}_{[0]}$ and $\Str^{\sim}_{[0]}$ are Kan
 complexes. By Lemma \ref{propspccatinf}, it is enough to show that the
 map $\Str^{\mr{en},+}_{[0]}\rightarrow\Str_{[0]}$ is a categorical
 equivalence. For this, it suffices to show that the
 map $\bp\Str^{\mr{en},+}_{[0]}\rightarrow\bp\Str_{[0]}$ is a trivial
 fibration. Since $\bp\Str^{\mr{en},+}_{[0]}$ is a full subcategory of
 $\mr{Fun}_{\mr{RM}}(\Delta^1\times\mr{Tw}^{\mr{op}}_{[0]}
 \mbf{\Delta},\mc{M}^{\circledast}_{[0]})$ and $\bp\Str_{[0]}$ has a
 similar description, we invoke \cite[4.3.2.15]{HTT}.
 
 Let us show \ref{genopestr-2}. Since these are coCartesian fibrations,
 by \cite[2.1.2.12]{HA}, we only need to check the Segal condition.
 We only treat the case $\Str^{\mr{en},+,\sim}$ as the verification is
 similar, and this case is more complicated.
 It suffices to show that $\Str^{\mr{en},+}$ is a generalized $\infty$-operad.
 Since the Segal condition is stable under taking dual,
 it suffices to show that the map
 \begin{equation*}
  \bp\Str^{\mr{en},+}_{[n]}\rightarrow
   \bp\Str^{\mr{en},+}_{\{0,1\}}
   \times^{\mr{cat}}_{\bp\Str^{\mr{en},+}_{\{1\}}}
   \bp\Str^{\mr{en},+}_{\{1,2\}}
   \dots
   \times^{\mr{cat}}_{\bp\Str^{\mr{en},+}_{\{n-1\}}}
   \bp\Str^{\mr{en},+}_{\{n-1,n\}}
 \end{equation*}
 is a categorical equivalence. The argument is similar to that of
 \cite[4.7.1.13]{HA}.
 Let $I\subset[n]$ be a subset such that $m:=\#I$. Let $\mc{X}_I$ be the
 full subcategory of
 $\mr{Fun}(\Delta^1\times\mr{Tw}^{\mr{op}}_{[m]}\mbf{\Delta},
 \mc{M}^{\mr{univ},\circledast}_{[n]})$ spanned by functors which can be
 lifted to a functor $\Delta^1\times\mr{Tw}^{\mr{op}}_{[m]}\mbf{\Delta}
 \rightarrow
 \mc{M}^{\mr{univ},\circledast}_{[n]}\times_{\mr{RM}_{\Delta^n}}
 \mr{RM}_{\Delta^I}$ such that the composition with
 $\mc{M}^{\mr{univ},\circledast}_{[n]}\times_{\mr{RM}_{\Delta^n}}
 \mr{RM}_{\Delta^I}\rightarrow\mc{M}^{\mr{univ},\circledast}_{[m]}$
 belongs to $\bp\Str^{\mr{en},+}_{[m]}$.
 By Lemma \ref{compofcartRModM}, we have
 $\mc{X}_{I}\simeq\bp\Str^{\mr{en},+}_{I}\times_{\RMod_{I}}
 \RMod_{\Delta^n}$. Since $\bp\RMod_{\mbf{\Delta}}$ satisfies the Segal
 condition, it suffices to show that the map
 $\bp\Str^{\mr{en},+}\rightarrow\mc{X}_{\{0,1\}}
 \times^{\mr{cat}}_{\mc{X}_{\{1\}}}
 \mc{X}_{\{1,2\}}\times^{\mr{cat}}\dots
 \times^{\mr{cat}}\mc{X}_{\{n-1,n\}}$ is a categorical equivalence.

 Let $\mr{Tw}^{\mr{op}}_{[n]}\mbf{\Delta}^0$ be the
 full subcategory of $\mr{Tw}^{\mr{op}}_{[n]}\mbf{\Delta}$ spanned by
 maps $[m]\rightarrow[n]$ which factors through an inert morphism of the
 form $\rho^i\colon[1]\rightarrow[n]$.
 Let $p\colon\mc{M}^{\mr{univ},\circledast}_{[n]}\rightarrow\mr{RM}$,
 which is a coCartesian fibration. We let $\mc{X}$ be the
 full subcategory of $\mr{Fun}_{\mr{RM}}
 (\Delta^1\times\mr{Tw}^{\mr{op}}_{[n]}\mbf{\Delta}^0,
 \mc{M}^{\mr{univ},\circledast}_{[n]})$ spanned by functors such that
 \begin{enumerate}
  \item We have the inclusion
	$\mr{Tw}^{\mr{op}}_{[n]}\mbf{\Delta}'\rightarrow
	\mr{Tw}^{\mr{op}}_{[n]}\mbf{\Delta}^0
	\xrightarrow{\{0\}\times\mr{id}}
	\Delta^1\times\mr{Tw}^{\mr{op}}_{[n]}\mbf{\Delta}^0$.
	Given a vertex, if we restrict the functor along this inclusion,
	the functor belongs to $\bp\Str$;
	
  \item Let $\phi\colon[1]\rightarrow[n]$ be an inert map. Since
	$\Theta$ is a coCartesian fibration, we have the map
	$\phi_*\colon\mr{Tw}^{\mr{op}}_{[1]}\mbf{\Delta}\rightarrow
	\mr{Tw}^{\mr{op}}_{[n]}\mbf{\Delta}$.
	By the first condition, given a vertex, the functor
	$\Delta^1\times\mr{Tw}^{\mr{op}}_{[1]}\mbf{\Delta}\rightarrow
	\mc{M}^{\mr{univ},\circledast}_{[n]}$ induced by $\phi_*$
	factors through $\mc{M}^{\mr{univ},\circledast}_{[1]}$.
	Then, this functor belongs to $\bp\Str^{\mr{en},+}_{[1]}$.
 \end{enumerate}
 This $\mc{X}$ is a model for the product and we must show that the map
 $\bp\Str^{\mr{en},+}\rightarrow\mc{X}$ is a categorical equivalence.

 Now, let $\mr{Tw}^{\mr{op}}_{[n]}\mr{Inc}^1$ (resp.\
 $\mr{Tw}^{\mr{op}}_{[n]}\mr{Inc}^0$) be the full subcategory of
 $\mr{Tw}^{\mr{op}}_{[n]}\mbf{\Delta}$ spanned by inert
 maps $[k]\rightarrow[n]$ (resp.\ inert maps $[k]\rightarrow[n]$ where
 $k=0,1$). For $i=0,1$, let $\mc{Y}_i$ be the full subcategory of
 $\mr{Fun}_{\mr{RM}}(\Delta^1\times\mr{Tw}_{[n]}^{\mr{op}}\mr{Inc}^i,
 \mc{M}^{\mr{univ},\circledast}_{[n]})$
 spanned by functors which satisfies the conditions of Definition
 \ref{dfnenstr} if we replace $\mr{Tw}^{\mr{op}}_{[n]}\mbf{\Delta}$
 by $\mr{Tw}^{\mr{op}}_{[n]}\mr{Inc}^i$.
 We have the following commutative diagram of simplicial sets on the
 left induced by the commutative diagram on the left:
 \begin{equation*}
  \xymatrix{
   \bp\Str^{\mr{en},+}\ar[r]^-{\theta_2}\ar[d]&
   \mc{Y}_1\ar[d]^{\tau}\\
  \mc{X}\ar[r]^-{\theta_1}&\mc{Y}_0,
   }
   \hspace{5em}
   \xymatrix{
   \mr{Tw}^{\mr{op}}_{[n]}\mbf{\Delta}&
   \mr{Tw}^{\mr{op}}_{[n]}\mr{Inc}^1\ar[l]\\
  \mr{Tw}^{\mr{op}}_{[n]}\mbf{\Delta}^0\ar[u]&
   \mr{Tw}^{\mr{op}}_{[n]}\mr{Inc}^0.\ar[u]\ar[l]
   }
 \end{equation*}
 It suffices to show that $\theta_1$, $\theta_2$, $\tau$ are categorical
 equivalences. The verification for $\tau$ is similar to the
 proof of \cite[4.7.1.13]{HA}, so we omit.
 Let us check that $\theta_2$ is a trivial fibration.
 The verification of $\theta_1$ is similar, so we omit.
 The strategy is similar to \cite[4.7.1.13]{HA}.
 In view of \cite[4.3.2.15]{HTT}, it suffices to show the following two
 assertions:
\begin{enumerate}
 \item For any $G\in\mc{Y}_1$, $p$-left Kan extension of $G$ along the
       inclusion $\mr{Tw}^{\mr{op}}_{[n]}\mr{Inc}^1\hookrightarrow
       \mr{Tw}^{\mr{op}}_{[n]}\mbf{\Delta}$ exists;

 \item Any
       $F\in\mr{Fun}(\Delta^1\times\mr{Tw}^{\mr{op}}_{[n]}\mbf{\Delta})$
       is in $\mc{X}_2$ if and only if
       $G:=F|_{\Delta^1\times\mr{Tw}^{\mr{op}}_{[n]}\mr{Inc}^1}$ is in
       $\mc{X}_1$ and $F$ is a $p$-left Kan extension of $G$.
\end{enumerate}
 The verification is standard: Fix an object $C:=(a,[k]\rightarrow[n])$
 in $\Delta^1\times\mr{Tw}^{\mr{op}}_{[n]}\mbf{\Delta}$ and we wish to
 show the existence of the $p$-colimit of the diagram
 \begin{equation*}
  (\Delta^1\times\mr{Tw}^{\mr{op}}_{[n]}\mr{Inc}^1)
   \times_{\Delta^1\times\mr{Tw}^{\mr{op}}_{[n]}\mbf{\Delta}}
   (\Delta^1\times\mr{Tw}^{\mr{op}}_{[n]}\mbf{\Delta})_{/C}
   \rightarrow
   \mc{M}^{\mr{univ},\circledast}_{[n]}
 \end{equation*}
 This category
 has an initial object. More precisely if we write
 $C=(a,\phi\colon[k]\rightarrow[n])$, there exists a unique inert map
 $\psi\colon[k']\hookrightarrow[n]$ such that $\psi(0)=\phi(0)$ and
 $\psi(k')=\phi(k)$. The initial object is $(a,\psi)\rightarrow C$.
 Since $p$ is coCartesian, we get the existence by \cite[4.3.1.4]{HTT}.
 This construction also tells us that
 $F$ is a $p$-left Kan extension if and only if the induced map
 $F(a,[k]\rightarrow[n])\rightarrow F(a,[k']\rightarrow[n])$ is a
 $p$-coCartesian edge. Thus, we also have the second assertion.
 
 Finally, let us show \ref{genopestr-3}. Since we have
 \begin{equation*}
  \Str^{\sim}\simeq\Str\basech\Str_{[0]}^{\simeq},
   \qquad
   \RMod\basech\Str_{[0]}^{\sim}\simeq
   (\RMod\basech\Str_{[0]})\basech\Str_{[0]}^{\simeq},
 \end{equation*}
 it suffices to show the map $\Str\rightarrow\RMod\basech\Str_{[0]}$ is
 an equivalence. This is equivalent to showing the induced functor
 $\bp\Str\rightarrow\bp\RMod\basech\Str_{[0]}$ is an equivalence.
 Since both $\bp\Str$ and $\bp\RMod^{\circledast}\basech\Str_{[0]}$ are
 Cartesian fibrations over $\mbf{\Delta}$ and preserves coCartesian
 edges by Lemma \ref{basicpropofstr},
 it suffices to check the equivalence for each fiber over $\mbf{\Delta}$
 by \cite[3.3.1.5]{HTT}. We choose the following commutative diagram
 (which is possible up to contractible space of choices)
 \begin{equation*}
  \xymatrix@C=50pt{
   \mc{M}^{\mr{univ}}_{[n]}\ar[r]^-{\prod\rho^i_!}\ar[d]&
   \prod_{i\in[n]}\mc{M}^{\mr{univ}}_{[0]}\ar[d]\\
  \RMod_{[n]}\ar[r]^-{\prod\rho^i_!}&\prod_{i\in[n]}\RMod_{[0]}.
   }
 \end{equation*}
 This diagram induces the map $\bp\Str_{[n]}\rightarrow\RMod_{\Delta^n}
 \times^{\mr{cat}}_{(\RMod_{\Delta^0})^{(\times(n+1))}}
 \bp\Str_{[0]}^{\times(n+1)}$.
 It is reduced to showing that this is an equivalence.
 We have an isomorphism
 $\mr{Tw}_{[n]}^{\mr{op}}\mbf{\Delta}'\cong\coprod_{i\in[n]}\{i\}$ sending
 $\phi\colon[0]\rightarrow[n]$ to $\phi(0)$.
 We consider the maps
 $\{i\}\rightarrow\mr{RM}_{\Delta^{\{i\}}}\rightarrow\mr{RM}_{\Delta^n}$
 where the first map sends to $(0,1)$.
 Unwinding the definition, this isomorphism induces the equivalences
 \begin{align*}
  \bp\Str_{[n]}&\cong
  \mr{Fun}_{\mr{RM}_{\Delta^n}}
  (\coprod_{i\in[n]}\{i\},\mc{M}^{\mr{univ},\circledast}_{[n]})
  \times_{\mr{Fun}(\coprod\{i\},\RMod_{\Delta^n}),\alpha}
  \RMod_{\Delta^n}\\
  &\simeq
  \prod_{i\in[n]}\mr{Fun}_{\mr{RM}_{\Delta^n}}
  (\{i\},\mc{M}^{\mr{univ},\circledast}_{[n]})
  \times^{\mr{cat}}_{\prod\mr{Fun}(\{i\},\RMod_{\Delta^n}),\Delta}
  \RMod_{\Delta^n},
 \end{align*}
 where $\alpha$ is induced by the unique map
 $\coprod\{i\}\rightarrow\{*\}$, $\Delta$ is the diagonal map.
 The second equivalence follows from Remark \ref{strremarkoncatprod}.
 Using this, we may compute
 \begin{align*}
  \RMod_{\Delta^n}
  &\times^{\mr{cat}}
  _{(\RMod_{\Delta^0})^{\times(n+1)}}
  (\bp\Str_{[0]}^\sim)^{\times(n+1)}\\
  &\simeq
  \RMod_{\Delta^n}
  \times^{\mr{cat}}
  _{\prod\RMod_{\Delta^{\{i\}}}}
  \prod\mr{Fun}_{\mr{RM}_{\Delta^{\{i\}}}}
  (\{i\},\mc{M}^{\mr{univ},\circledast}_{\{i\}}).
 \end{align*}
 Finally, we have
 \begin{align*}
  \mr{Fun}_{\mr{RM}_{\Delta^n}}(\{i\},\mc{M}^{\mr{univ}}_{[n]})
  &\cong
  \mr{Fun}_{\mr{RM}_{\{i\}}}((0_i,1),\mc{M}^{\mr{univ}}_{[n]}
  \times^{\mr{cat}}_{\mr{RM}_{\Delta^n}}\mr{RM}_{\{i\}})\\
  &\rightarrow
  \mr{Fun}_{\mr{RM}_{[0]}}((0,1),\mc{M}^{\mr{univ}}_{[0]}
  \times^{\mr{cat}}_{\RMod_{\Delta^{\{i\}}}}\RMod_{\Delta^n})\\
  &\cong
  \mr{Fun}_{\mr{RM}_{[0]}}((0,1),\mc{M}^{\mr{univ}}_{[0]})
  \times^{\mr{cat}}_{\RMod_{\Delta^{\{i\}}}}\RMod_{\Delta^n}
 \end{align*}
 The middle map is a categorical equivalence by Lemma
 \ref{compofcartRModM}. Combining these three equivalences, we have the
 desired equivalence.
\end{proof}

\begin{lem}
 \label{exisintmorobj}
 Let $\mc{M}^{\circledast}\rightarrow\mr{RM}_{\Delta^1}$ be an object of $\LinCat_{\Delta^1}$.
 Let $\mc{M}^{\circledast}_i\rightarrow\mr{RM}$ be the pullback by $\mr{RM}_{\Delta^{\{i\}}}\rightarrow\mr{RM}_{\Delta^1}$ for $i=0,1$.
 Assume we are given $M_i\in\mc{M}_i$.
 Then there exists an object $\Mor_{\mc{M}}(M_0,M_1)$ equipped with a map
 $M_0\boxtimes\Mor_{\mc{M}}(M_0,M_1)\rightarrow M_1$ over the active map in $\mc{M}^{\circledast}$ having the universal property that for any
 $A\in\mc{A}$, the induced map
 \begin{equation*}
  \mr{Map}_{\mc{A}}(A,\Mor_{\mc{M}}(M_0,M_1))\rightarrow
   \mr{Map}_{\mc{M}}(M_0\otimes_{\mc{M}}A,M_1)
 \end{equation*}
 is a homotopy equivalence.
 If $F^{\circledast}\colon\mc{M}^{\circledast}_0\rightarrow\mc{M}_1^{\circledast}$ is the monoidal functor of generalized
 $\infty$-operads classified by $\mc{M}^{\circledast}$
 {\normalfont(}{\em i.e.}\ the functor obtained by applying the construction of {\normalfont\cite[5.2.1]{HTT}} to the map
 $\mc{M}^{\circledast}\rightarrow\mr{RM}_{\Delta^1}\rightarrow\Delta^1${\normalfont)},
 then $\Mor_{\mc{M}}(M_0,M_1)\simeq\Mor_{\mc{M}_1}(F(M_0),M_1)$, where $\Mor_{\mc{M}_1}$ is the morphism object
 {\normalfont(}cf.\ {\normalfont\cite[4.2.1.33]{HA})}.
\end{lem}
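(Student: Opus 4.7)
The strategy is to reduce to the classical construction of morphism objects for the single presentable $\mc{A}$-linear category $\mc{M}_1$, by passing through the functor $F\colon\mc{M}_0\rightarrow\mc{M}_1$ classified by $\mc{M}^{\circledast}$; this will simultaneously produce the object $\Mor_{\mc{M}}(M_0,M_1)$ and the claimed identification $\Mor_{\mc{M}}(M_0,M_1)\simeq\Mor_{\mc{M}_1}(F(M_0),M_1)$.

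First, I would construct $F^{\circledast}$: composing $\mc{M}^{\circledast}\rightarrow\mr{RM}_{\Delta^1}$ with the projection $\mr{RM}_{\Delta^1}\rightarrow\Delta^1$ (using the presentation of $\mr{RM}_{S}$ from Lemma \ref{baRMpro}) gives a coCartesian fibration over $\Delta^1$, and applying \cite[5.2.1]{HTT} yields a functor of generalized $\mr{RM}$-operads $F^{\circledast}\colon\mc{M}_0^{\circledast}\rightarrow\mc{M}_1^{\circledast}$; since $\mc{M}^{\circledast}$ is base preserving, $F^{\circledast}$ restricts to an equivalence on the common fiber $\mc{A}^{\circledast}=\mc{M}^{\circledast}\times_{\mr{RM}_{\Delta^1}}\mf{a}$, and in particular one has canonical equivalences $F(M_0\otimes A)\simeq F(M_0)\otimes A$ for every $A\in\mc{A}$. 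Since $\mc{M}_1^{\circledast}$ is a presentable $\mc{A}$-module, \cite[4.2.1.33]{HA} provides a morphism object $\Mor_{\mc{M}_1}(F(M_0),M_1)\in\mc{A}$ with universal counit $F(M_0)\otimes\Mor_{\mc{M}_1}(F(M_0),M_1)\rightarrow M_1$ in $\mc{M}_1$. I would then define $\Mor_{\mc{M}}(M_0,M_1):=\Mor_{\mc{M}_1}(F(M_0),M_1)$.

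To produce the universal map $M_0\boxtimes\Mor_{\mc{M}}(M_0,M_1)\rightarrow M_1$ over an active edge in $\mc{M}^{\circledast}$, and to verify its universal property, I would decompose the active map $(0_0,1,1)\rightarrow(0_1,1)$ in $\mr{RM}_{\Delta^1}$ into the composite of the active edge $(0_0,1,1)\rightarrow(0_0,1)$ in $\mr{RM}_{\Delta^{\{0\}}}$ (encoding the $\mc{A}$-action on $\mc{M}_0$) followed by the edge $(0_0,1)\rightarrow(0_1,1)$ lying over the non-degenerate edge of $\Delta^1$ (encoding transport by $F$). The coCartesian lift of this composite sends $M_0\boxtimes\Mor_{\mc{M}}(M_0,M_1)$ first to $M_0\otimes\Mor_{\mc{M}}(M_0,M_1)\in\mc{M}_0$ and then to $F(M_0\otimes\Mor_{\mc{M}_1}(F(M_0),M_1))\simeq F(M_0)\otimes\Mor_{\mc{M}_1}(F(M_0),M_1)\in\mc{M}_1$, at which point I compose with the counit to land in $M_1$. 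Via this factorization, the space of active morphisms $M_0\boxtimes A\rightarrow M_1$ in $\mc{M}^{\circledast}$ is identified with the mapping space $\mr{Map}_{\mc{M}_1}(F(M_0)\otimes A,M_1)$, and the universal property of $\Mor_{\mc{M}_1}$ delivers the homotopy equivalence in the statement.

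The main obstacle is justifying the decomposition of the active edge and the attendant identification of mapping spaces. This amounts to checking that the generalized $\infty$-operad structure on $\mc{M}^{\circledast}\rightarrow\mr{RM}_{\Delta^1}$ makes the coCartesian transport along $(0_0,1,1)\rightarrow(0_1,1)$ compatible with the Segal decomposition into the active edge in $\mr{RM}_{\Delta^{\{0\}}}$ and the ``$\Delta^1$-direction'' edge; this is a direct consequence of the Segal condition and the monoidality of $F^{\circledast}$ (which is built into the definition of $F^{\circledast}$ as a map of generalized operads), but one must unravel the combinatorics of $\mr{RM}_{\Delta^1}$ carefully.
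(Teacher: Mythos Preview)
Your proposal is correct and follows essentially the same route as the paper: both reduce to the morphism object $\Mor_{\mc{M}_1}(F(M_0),M_1)$ by identifying $\mr{Map}_{\mc{M}}(M_0\otimes A,M_1)$ with $\mr{Map}_{\mc{M}_1}(F(M_0)\otimes A,M_1)$ via the $\mc{A}$-linearity of $F$. The only packaging difference is that the paper works directly in the underlying $\infty$-category $\mc{M}$ over $\Delta^1$---using the natural transformation $\iota_0\rightarrow\iota_1\circ F$ supplied by coCartesian edges (cf.\ \cite[5.2.1.4]{HTT}) to get $\mr{Map}_{\mc{M}}(\iota_0(X),Y)\simeq\mr{Map}_{\mc{M}_1}(F(X),Y)$---whereas you phrase the same step operadically as a factorization of the active edge $(0_0,1,1)\rightarrow(0_1,1)$ in $\mr{RM}_{\Delta^1}$; the ``obstacle'' you flag is exactly what the paper's coCartesian-edge argument handles.
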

\begin{proof}
 Consider the functor
 \begin{equation*}
  \mr{Map}(M_0\otimes_{\mc{M}}(-),M_1)\colon
  \mc{A}^{\mr{op}}\xrightarrow{(M_0\otimes,M_1)}
   \mc{M}^{\mr{op}}\times\mc{M}
   \xrightarrow{\mr{Map}}
   \Spc.
 \end{equation*}
 It suffices to show that this functor is equivalent to
 $\mr{Map}_{\mc{M}_1}(F(M_0)\otimes_{\mc{M}_1}(-),M_1)$.
 Using \cite[5.2.1.4]{HTT}, choose a functor
 $G\colon\mc{M}_0\times\Delta^1\rightarrow\mc{M}$ associated to
 $\mc{M}$. Let $\iota_i\colon\mc{M}_i\rightarrow\mc{M}$ be the canonical
 functor, and $F':=\iota_1\circ F$. Then $G$ determines a map of
 functors $\iota_0\rightarrow F'$. This induces the map of functors
 \begin{equation*}
  \mr{Map}_{\mc{M}}\bigl(F'(-),(-)\bigr)\rightarrow
   \mr{Map}_{\mc{M}}\bigl(\iota_0(-),(-)\bigr)\colon
   \mc{M}_0^{\mr{op}}\times\mc{M}\rightarrow\Spc.
 \end{equation*}
 This induces an equivalence
 $\mr{Map}_{\mc{M}_1}\bigl(F(-),(-)\bigr)\xrightarrow{\sim}
 \mr{Map}_{\mc{M}}\bigl(\iota_0(-),(-)\bigr)$.
 Thus, we have
 \begin{equation*}
  \mr{Map}_{\mc{M}}\bigl(M_0\otimes_{\mc{M}}(-),M_1\bigr)
  \xleftarrow{\sim}
  \mr{Map}_{\mc{M}_1}\bigl(F(M_0\otimes_{\mc{M}}(-)),
  M_1\bigr)
  \simeq
  \mr{Map}_{\mc{M}_1}\bigl(F(M_0)\otimes_{\mc{M}_1}(-),M_1\bigr),
 \end{equation*}
 and we get the desired equivalence.
\end{proof}

\subsection{}
To proceed, we need to restrict our attention to $\LinCat$ in $\RMod$.
Let $\mc{A}^{\circledast}$ be a presentable monoidal $\infty$-category (cf.\ \ref{presmoncatint}).
We put ${}_{\mc{L}}\Str^{\sim}:=\Str^{\sim}\times_{\RMod^{\circledast}}\LinCat^{\circledast}_{\mc{A}}$,
${}_{\mc{L}}\Str^{\mr{en},+,\sim}:=\Str^{\mr{en},+,\sim}\times_{\RMod^{\circledast}}\LinCat^{\circledast}_{\mc{A}}$
where the fiber products are taken in $\mr{Op}^{\mr{ns},\mr{gen}}_{\infty}$.
Assume we are given a diagram
\begin{equation*}
 \xymatrix{
  \mc{A}\ar[r]\ar[d]_f&\mc{O}\ar[d]^{p}\\
 \mc{B}\ar[r]\ar@{-->}[ur]&\mbf{\Delta}^{\mr{op}}.
  }
\end{equation*}
Assume that $f$ is a map of generalized $\infty$-operads.
An {\em operadic $p$-left Kan extension of this diagram} consists of a factorization
$\mc{X}:=(\mc{A}\times\Delta^1)\coprod_{\mc{A}\times\{1\},f}\mc{B}\xrightarrow{h'}\mc{M}\xrightarrow{h''}\mbf{\Delta}^{\mr{op}}\times\Delta^1$
where $h'$ is an inner anodyne and $h''$ is a $\Delta^1$-family of generalized $\infty$-operads (cf.\ \cite[A.3.1]{GH}),
and an operadic $p$-left Kan extension of $\mc{M}\times_{\Delta^1}\{0\}\simeq\mc{A}\rightarrow\mc{O}$ along the
inclusion $\mc{M}\times_{\Delta^1}\{0\}\hookrightarrow\mc{M}$ (cf.\ \cite[A.3.3]{GH}).
These data give rise to the dashed arrow above rendering the diagram commutative.
Indeed, the arrow is the composition of functors $\mc{B}\cong\mc{X}\times_{\Delta^1}\{1\}\rightarrow\mc{M}\times_{\Delta^1}\{1\}\subset\mc{M}\rightarrow\mc{O}$.

\begin{prop*}
 Let $\mc{A}^{\circledast}$ be a presentable monoidal $\infty$-category, and consider
 the following diagram:
 \begin{equation*}
  \xymatrix{
   {}_{\mc{L}}\Str^{\mr{en},+,\sim}
   \ar[r]\ar[d]&
   \mc{A}^{\circledast}\ar[d]^{p}\\
  {}_{\mc{L}}\Str^{\sim}
   \ar[r]\ar@{-->}[ru]^{\mc{H}}&
   \mbf{\Delta}^{\mr{op}}.
   }
 \end{equation*}
 Then the diagram admits an operadic $p$-left Kan extension $\mc{H}$.
 For $\mc{M}^{\circledast}\rightarrow\mr{RM}_{\Delta^1}$ in
 $\LinCat_{\Delta^1}$ and an object $(M_0,M_1)$ of
 ${}_{\mc{L}}\Str^{\sim}$ over $\mc{M}^{\circledast}$, we have
 $\mc{H}(M_0,M_1)\simeq\mr{Mor}_{\mc{M}}(M_0,M_1)$.
\end{prop*}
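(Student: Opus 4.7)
The plan is to apply the existence criterion for operadic left Kan extensions along a morphism of generalized $\infty$-operads (see \cite[A.3]{GH}), and then identify the resulting object via Lemma \ref{exisintmorobj}. By Lemma \ref{genopestr}, both ${}_{\mc{L}}\Str^{\sim}$ and ${}_{\mc{L}}\Str^{\mr{en},+,\sim}$ are generalized $\infty$-operads over $\mbf{\Delta}^{\mr{op}}$, and the inclusion is a categorical equivalence on the fiber over $[0]$. Combined with Lemma \ref{basicpropofstr}.\ref{basicpropofstr-5}, which describes the coCartesian structure over the base, the inclusion ${}_{\mc{L}}\Str^{\mr{en},+,\sim} \hookrightarrow {}_{\mc{L}}\Str^{\sim}$ is a map of generalized $\infty$-operads preserving the underlying objects.

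To verify the existence criterion, I would fix an object $X$ of ${}_{\mc{L}}\Str^{\sim}$ lying over $[n] \in \mbf{\Delta}^{\mr{op}}$ and analyze the operadic slice $({}_{\mc{L}}\Str^{\mr{en},+,\sim})^{\mr{act}}_{/X}$ spanned by diagrams connected to $X$ by active morphisms. Existence of the operadic left Kan extension reduces to existence of an operadic colimit in $\mc{A}^{\circledast}$ of the composite functor into $\mc{A}^{\circledast}$; since the diagram factors through a single fiber $\mc{A}^{\circledast}_{\left<m\right>} \simeq \mc{A}^{\times m}$ and $\mc{A}$ is presentable with tensor product preserving colimits separately in each variable, these colimits exist and are compatible with the operadic structure.

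Next, I would identify the value $\mc{H}(M_0, M_1)$ in the case $n=1$ directly, and reduce the general case to this via the Segal condition (using Lemma \ref{basicproprmodsimp}.\ref{basicproprmodsimp-2} and the fact that operadic Kan extensions are compatible with Segal decomposition). For $n=1$ with input $(M_0, M_1)$ over $\mc{M}^{\circledast} \to \mr{RM}_{\Delta^1}$, the explicit description of vertices of $\bp\Str^{\mr{en},+}$ as sequences $M_0 \boxtimes A \to M_1$ (with $A \in \mc{A}$) shows that the active slice category is equivalent to the $\infty$-category of pairs $(A, \alpha)$ with $\alpha \colon M_0 \boxtimes A \to M_1$ a morphism in $\mc{M}^{\circledast}$ over an active map in $\mr{RM}_{\Delta^1}$. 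By Lemma \ref{exisintmorobj}, this functor on $\mc{A}^{\mr{op}}$ is corepresented by $\mr{Mor}_{\mc{M}}(M_0, M_1)$ with its evaluation map; hence the required operadic colimit is precisely this morphism object.

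The main obstacle I anticipate is the careful bookkeeping required to unpack the active slice category at an object of ${}_{\mc{L}}\Str^{\sim}$ and match it with the functor corepresented by $\mr{Mor}_{\mc{M}}$. The objects of $\bp\Str^{\mr{en},+}$ carry substantial redundant data (values of $g$ at all objects of $\mr{Tw}^{\mr{op}}_{[n]}\mbf{\Delta}$, determined up to essentially unique choice by the coCartesian-edge conditions \ref{dfnenstr-2}--\ref{dfnenstr-4}), and dualising via $\mb{D}^{-1}$ before passing to active slices introduces further unwinding. Once the reduction to the universal property of $\mr{Mor}_{\mc{M}}$ is established, however, the identification follows immediately from Lemma \ref{exisintmorobj} and the proposition is proved.
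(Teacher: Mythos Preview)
Your overall strategy matches the paper's: invoke the existence criterion for operadic left Kan extensions from \cite[A.3]{GH}, then identify the value via Lemma \ref{exisintmorobj}. But two steps are not right. The claim that ``the diagram factors through a single fiber $\mc{A}^{\circledast}_{\langle m\rangle}$'' is false: the active slice over $M\in{}_{\mc{L}}\Str^{\sim}_{[1]}$ contains objects lying over every $[m]\in\mbf{\Delta}^{\mr{op}}$ (any $[m]$ admits an active map to $[1]$), so the composite to $\mc{A}^{\circledast}$ does not land in one fiber. The paper instead uses that $\mc{A}^{\circledast}$ is compatible with small colimits, so that by \cite[A.2.7]{GH} an ordinary colimit in $\mc{A}$ is automatically operadic---but this is applied only \emph{after} reducing to a diagram with an initial object. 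Also, \cite[A.3.4]{GH} already restricts the criterion to objects $M$ over $[1]$; your treatment of general $[n]$ and ``reduction via the Segal condition'' is unnecessary.

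The substantive gap is the jump from ``the active slice at $(M_0,M_1)$'' to ``pairs $(A,\alpha)$ with $\alpha\colon M_0\boxtimes A\to M_1$''. The full active slice is much larger, and the paper makes this reduction via two cofinality arguments using \cite[4.1.2.15]{HTT}. First, because $\Str^{\mr{en},+,\sim}\to\Str^{\sim}$ preserves coCartesian edges over $\mbf{\Delta}^{\mr{op}}$ (Lemma \ref{basicpropofstr}.\ref{basicpropofstr-3}, after dualizing), the slice is a coCartesian fibration over $(\mbf{\Delta}^{\mr{op}}_{\mr{act}})_{/[1]}$, so the inclusion of the fiber over $[1]$ is left cofinal. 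Second, the same mechanism applied to the coCartesian fibration $\bp\Str^{\mr{en},+}_{[1]}\to\bp\RMod_{[1]}$ (Lemma \ref{basicpropofstr}.\ref{basicpropofstr-5}) lets one further restrict to the fiber $\Str^{\mr{en},+,\sim}_{\mc{M}}\times_{\Str^{\sim}_{\mc{M}}}(\Str^{\sim}_{\mc{M}})_{/M}$ over the fixed $\mc{M}^{\circledast}$. Only then does one obtain the category of pairs $(A,\alpha)$, whose initial object is $(\Mor_{\mc{M}}(M_0,M_1),\mathrm{eval})$. You flag this as ``bookkeeping'', but these cofinality reductions are where the actual content lies, and they depend on the specific structural results about $\bp\Str$ and $\bp\Str^{\mr{en},+}$ established earlier.
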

\begin{proof}
 By small object argument, we may take a factorization of the map
 \begin{equation*}
  ({}_{\mc{L}}\Str^{\mr{en},+,\sim}\times\Delta^1)\coprod_{{}_{\mc{L}}
   \Str^{\mr{en},+,\sim}\times\{1\}}{}_{\mc{L}}\Str^{\sim}
   \rightarrow
   \mbf{\Delta}^{\mr{op}}\times\Delta^1
 \end{equation*}
 into inner anodyne followed by an inner fibration $X\rightarrow\mbf{\Delta}^{\mr{op}}\times\Delta^1$.
 Since $\mbf{\Delta}^{\mr{op}}\times\Delta^1$ is (nerve of) a category, the latter inner fibration is in fact a categorical fibration,
 and thus it exhibits $X$ as a $\Delta^1$-family of generalized $\infty$-operads.
 We abbreviate ${}_{\mc{L}}(-)$ as $(-)$ to ease the notation. For a generalized $\infty$-operad $\mc{O}^{\circledast}$,
 we denote by $\mc{O}^{\circledast}_{\mr{act}}$ the subcategory of $\mc{O}^{\circledast}$ consisting of active maps.
 Let $M\in\Str^{\sim}_{[1]}$.  Invoking \cite[A.3.4]{GH}, it suffices to
 show that the diagram
 \begin{equation*}
  \xymatrix@C=50pt{
   \Str^{\mr{en},+,\sim}\times_{\Str^{\sim}}
   (\Str^{\sim}_{\mr{act}})_{/M}
   \ar[r]\ar[d]&
   \mc{A}^{\circledast}\ar[d]^{\pi}\\
  (\Str^{\mr{en},+,\sim}\times_{\Str^{\sim}}
   (\Str^{\sim}_{\mr{act}})_{/M})
   ^{\triangleright}
   \ar[r]\ar@{-->}[ur]&
   \mbf{\Delta}^{\mr{op}}
   }
 \end{equation*}
 extends to an operadic $\pi$-colimit diagram.
 Since $\Str^{\mr{en},+,\sim}\rightarrow\Str^{\sim}$ preserves
 coCartesian edges over $\mbf{\Delta}^{\mr{op}}$ by Lemma
 \ref{basicpropofstr}.\ref{basicpropofstr-3}, the map
 $\Str^{\mr{en},+,\sim}\times_{\Str^{\sim}}
 (\Str^{\sim}_{\mr{act}})_{/M}
 \rightarrow(\mbf{\Delta}^{\mr{op}}_{\mr{act}})_{/[1]}$
 is a coCartesian fibration using \cite[2.4.3.2]{HTT}.
 This implies that the inclusion
 $\Str^{\mr{en},+,\sim}_{[1]}\times_{\Str^{\sim}_{[1]}}
 (\Str^{\sim}_{[1]})_{/M}\rightarrow
 \Str^{\mr{en},+,\sim}\times_{\Str^{\sim}}
 (\Str^{\sim}_{\mr{act}})_{/M}$
 is left cofinal by \cite[4.1.2.15]{HTT}.
 Thus, by (non-symmetric analogue of) \cite[3.1.1.4]{HA},
 it suffices to show the existence of the operadic colimit indexed by $\Str^{\mr{en},+,\sim}_{[1]}\times_{\Str^{\sim}_{[1]}}(\Str^{\sim}_{[1]})_{/M}$.
 
 Let $\mc{M}^{\circledast}\rightarrow\mr{RM}_{\Delta^1}$ be the generalized $\infty$-operad over which $M$ is defined.
 We put $\Str_{\mc{M}}^{(\mr{en},+,)\sim}:=\bp\Str^{(\mr{en},+,)\sim}_{[1]}\times_{\RMod_{\Delta^1}}\{\mc{M}\}$.
 By Lemma \ref{basicpropofstr}, $\bp\Str^{\mr{en},+,\sim}_{[1]}\rightarrow\bp\Str^{\sim}_{[1]}$ is a
 map between coCartesian fibration over $\bp\RMod^{\circledast}_{[1]}$ which preserves coCartesian edges.
 Thus, by the same argument as above, we are reduced to showing the existence of operadic colimit indexed by
 $\Str^{\mr{en},+,\sim}_{\mc{M}}\times_{\Str^{\sim}_{\mc{M}}}(\Str^{\sim}_{\mc{M}})_{/M}$.
 Let $F_M\colon\mr{Tw}^{\mr{op}}_{[1]}\mbf{\Delta}'\rightarrow\mc{M}^{\circledast}$ be the functor corresponding to $M$.
 Put $M_i:=F_M(\{i\}\rightarrow[1])$ in $\mc{M}^{\circledast}_{[0]}\times_{\mr{RM}_{\Delta^1}}\mr{RM}_{\Delta^{\{i\}}}$.
 Unwinding the definition, existence of $\Mor_{\mc{M}}(M_0,M_1)$ is equivalent to the existence of an initial object
 of $\Str^{\mr{en},+,\sim}_{\mc{M}}\times_{\Str^{\sim}_{\mc{M}}}(\Str^{\sim}_{\mc{M}})_{/M}$.
 Thus an initial object exist by Lemma \ref{exisintmorobj}, and in particular,
 the (ordinary) colimit indexed by $\Str^{\mr{en},+,\sim}_{\mc{M}}\times_{\Str^{\sim}_{\mc{M}}}(\Str^{\sim}_{\mc{M}})_{/M}$ exists
 whose value at the cone point is nothing but $\Mor_{\mc{M}}(M_0,M_1)$.
 Since $\mc{A}^{\circledast}$ is compatible with small colimits (cf.\ \cite[3.1.23]{GH}),
 \cite[A.2.7]{GH} implies that the colimit is in fact an operadic colimit as required.
\end{proof}

\subsection{}
\label{laxfuncdef}
Let us carry out one of the main constructions of this paper.
Let $\mbf{C}$ be an $(\infty,2)$-category, and assume we are given a
$2$-functor
$\mbf{D}\colon\mbf{C}\rightarrow\twoLinCat_{\mc{A}}^{\mr{2-op}}$ of
$(\infty,2)$-categories.
Let $D\colon\mc{C}\rightarrow\LinCat_{\mc{A}}$ be the
functor of $\infty$-categories associated to $\mbf{D}$, and assume we are given the
following commutative diagram
\begin{equation*}
 \xymatrix@C=40pt{
  \mc{C}^{\simeq}\ar[r]^-{M}\ar[d]&
  {}_{\mc{L}}\Str^{\sim}_{[0]}\ar[d]\\
 \mc{C}\ar[r]^-{D}&\LinCat_{\mc{A}}.
  }
\end{equation*}
Recall that ${}_{\mc{L}}\Str^{\sim}_{[0]}$ is, informally, the $\infty$-category of pairs $(\mc{M},X)$
where $\mc{M}\in\LinCat_{\mc{A}}$ and $X\in\mc{M}$, and that the right vertical functor sends $(\mc{M},X)$ to $\mc{M}$.
Recall also that giving the $2$-functor $\mbf{D}$ is equivalent to giving a monoidal
functor $D^{\circledast}\colon\mc{C}^{\circledast}\rightarrow
\LinCat^{\circledast}_{\mc{A}}$ of the underlying generalized $\infty$-operads.
We have the functor
$\mc{C}^{\circledast}_{[0]}\simeq\mc{C}^{\simeq}\xrightarrow{M}
{}_{\mc{L}}\Str^{\sim}_{[0]}$, also denoted by $M$.
Using this, we have the map of
generalized $\infty$-operads
\begin{equation*}
 \mc{H}_M\colon
  \mc{C}^{\circledast}
  \xrightarrow{D^{\circledast}\basech M}
  \LinCat^{\circledast}\basech
  {}_{\mc{L}}\Str_{[0]}^{\sim}
  \xleftarrow{\sim}
  {}_{\mc{L}}\Str^{\sim}
  \xrightarrow{\mc{H}}
  \mc{A}^{\circledast},
\end{equation*}
where the equivalence follows by Lemma \ref{genopestr}.
Let us describe this informally.
For a $1$-morphism $f\colon X\rightarrow Y$ in $\mbf{C}$, we have the
map $\mbf{D}(f)\colon\mbf{D}(X)\rightarrow\mbf{D}(Y)$. The functor $M$
defines $M(X)\in\mbf{D}(X)$, $M(Y)\in\mbf{D}(Y)$. Then $\mc{H}_M$ sends
$f$ to $\Mor_{\mbf{D}(f)}(M(X),M(Y))$.

In our application, it is not hard to construct $M$.
Assume that $\mc{C}$ admits an initial object $\emptyset$.
Fix an object $I\in DF(\emptyset)$. Consider the diagram
\begin{equation*}
 \xymatrix@C=40pt{
  \{\emptyset\}\ar[r]^-{I}\ar[d]&
  {}_{\mc{L}}^{\backprime}\Str_{[0]}\ar[d]^{p}\\
 \mc{C}\ar[r]_-{D\circ F}\ar@{-->}[ur]^-{M_I}&
  \LinCat_{\mc{A},\Delta^0}.
  }
\end{equation*}
The functor $p$ is a categorical fibration by Remark \ref{strremarkoncatprod}.
Since $p$ is equivalent to the base change of
$\mc{M}^{\mr{univ},\circledast}_{[0],(0,1)}(\simeq\bp\Str_{[0]})
\rightarrow\RMod_{\Delta^0}$ which is a coCartesian fibration,
$p$ is a coCartesian fibration by \cite[2.4.4.3]{HTT}.
Thus we may take a $p$-left Kan extension of $I$. This extension is
denoted by $M_I$.
By the above construction, we have the map of generalized
$\infty$-operads
$\mc{H}_{M_I}\colon\mc{C}^{\circledast}\rightarrow
\mc{A}^{\circledast}$
associated to $I$.

\begin{rem*}
 \begin{enumerate}
  \item Recall the construction of the classifying $(\infty,2)$-category of Example \ref{inftwocatintro}.
	The map $\mc{H}_M$ of generalized $\infty$-operads induces a non-unital	right-lax functor of $(\infty,2)$-categories
	(cf.\ \cite[Ch.10, 3.1.3]{GR})
	\begin{equation*}
	 \mbf{H}_{M}\colon\mbf{C}\dashrightarrow\mbf{B}
	  \mc{A}^{\circledast}.
	\end{equation*}
	Let $f\colon X\rightarrow Y$ be a $1$-morphism in $\mbf{C}$, and let $D(f)\colon D(X)\rightarrow D(Y)$ be the associated
	$1$-morphism in $\twoLinCat_{\mc{A}}$.
	Then the functor $M$ defines $M(X)\in D(X)$ for each $X$, and
	\begin{equation*}
	 \mbf{H}_M(f)\simeq\Mor_{D(Y)}\bigl(D(f)(M(X)),M(Y)\bigr)
	\end{equation*}
	by viewing $1$-morphisms in $\mbf{B}\mc{A}^{\circledast}$ as
	objects of $\mc{A}$. This interpretation of $\mc{H}_M$ is more
	conceptual, but {\it a priori} discards some information from
	$\mc{H}_M$ when we take the localization to pass from
	$\mc{A}^{\circledast}$ to $\mbf{B}\mc{A}^{\circledast}$.
	We believe that $\mbf{H}_M$ is more essential than $\mc{H}_M$,
	and the construction in the next section, for which we use
	$\mc{H}_M$ rather than $\mbf{H}_M$ crucially,
	should be able to be carried out within the realm of
	$(\infty,2)$-categories.
       
  \item In \S\ref{secExam}, we apply this construction to
	Gaitsgory-Rozenblyum's 6-functor formalism. Then
	$\mc{H}_{M_I}(f)$ becomes the corresponding bivariant homology
	theory in the sense of Fulton-MacPherson \cite[\S2]{FM}. The
	$2$-functor $\mc{H}_{M_I}$ is supposed to encode all the axioms
	of the theory.
	However, it is still not satisfactory because treating
	$(\infty,2)$-categories is not as easy as treating
	$\infty$-categories. In \S\ref{funtobitheo}, we will extract a
	functor between $\infty$-categories which is much easier to
	handle, yet retains some important features of bivariant
	homology theory.
 \end{enumerate}
\end{rem*}

\subsection{}
\label{exploflacon}
Assume that the functor $M\colon\mc{C}^{\simeq}\rightarrow\Str_{[0]}$
can be lifted to a functor
$\widetilde{M}\colon\mc{C}'\rightarrow\Str_{[0]}$ compatible with
$D\colon\mc{C}\rightarrow\LinCat_{\mc{A}}$.
The construction above yields a functor $\mc{H}_M|_{\mc{C}'}$
sending a sequence
$C_0\xrightarrow{f_1}C_1\rightarrow\dots\xrightarrow{f_n} C_n$ in
$\mc{C}$ to
$\Mor_{D(f_1)}(M(C_0),M(C_1))\boxtimes\dots\boxtimes
\Mor_{D(f_n)}(M(C_{n-1}),M(C_n))$ in
$\mc{A}^{\circledast}_{[n]}$. On the other hand, we also have a functor
$\mbf{1}_{\widetilde{M}}$ sending the sequence to
$\mbf{1}_{\mc{A}}\boxtimes\dots\boxtimes\mbf{1}_{\mc{A}}$, where we take
$n$-times product and $\mbf{1}_{\mc{A}}$ is a unit-object of $\mc{A}$.
On the other hand, we have a map $\widetilde{M}(f_i)\colon
M(C_{i-1})\rightarrow M(C_i)$. This yields a map
$\mbf{1}_{\mc{A}}\rightarrow\mr{Mor}_{D(f_i)}(M(C_{i-1}),M(C_i))$.
Thus it is natural to expect for a map of functors
$\mbf{1}_{\widetilde{M}}\rightarrow\mc{H}_M|_{\mc{C}'}$,
which we will construct in the rest of this section.
This map will be used in the next section.

Recall the notation of \ref{gammaanddual}.
Let $\alpha\colon\Gamma^\vee\rightarrow\mr{Tw}^{\mr{op}}\mbf{\Delta}$
be the unique functor over $\mbf{\Delta}$ sending $([n],i)$ to
$a_i\colon[n-i]\rightarrow[n]$ in $\mr{Tw}^{\mr{op}}\mbf{\Delta}$ such
that $a_i(0)=i$.
Let $z\colon\Gamma^\vee\rightarrow\mr{RM}\times\mbf{\Delta}$ be the map
sending $([n],i)$ to $(0,1)\in\mr{RM}$ over $\mbf{\Delta}$.
Put
\begin{equation*}
 D:=(\Delta^{[1]}\times\Gamma^{\vee})
  \coprod_{\{0\}\times\Gamma^\vee}
  (\Delta^{[1]}\times\mr{Tw}^{\mr{op}}\mbf{\Delta}).
\end{equation*}
The maps $\Gamma^\vee\rightarrow\mbf{\Delta}$ and
$\Theta\colon\mr{Tw}^{\mr{op}}\mbf{\Delta}\rightarrow\mbf{\Delta}$
induce the map $\Theta_D\colon D\rightarrow\mbf{\Delta}$.
We may check easily that $D$ is (nerve of) a category, and
$\Theta_D$ is a coCartesian fibration.
We also have a map of simplicial sets
$\pi_D\colon D\rightarrow\mr{RM}\times\mbf{\Delta}$
such that the restriction to
$\Delta^{[1]}\times\mr{Tw}^{\mr{op}}\mbf{\Delta}$ is $\overline{\pi}$,
and restriction to $\{1\}\times\Gamma^{\vee}$ is $z$ and
$\Delta^1\times([n],i)$ is the unique active map.

\begin{dfn*}
 \label{plplstrdfn}
 \begin{enumerate}
  \item \label{plplstrdfn-1}
	Let $\bp\Str^{\mr{en},++}$ be the full subcategory of
	$\Theta_{D,*}\pi_D^*(\mc{M}^{\mr{univ},\circledast})$ spanned by
	functors $F\colon D_{[n]}:=D\times_{\mbf{\Delta},\Theta_D}
	\{[n]\}\rightarrow\mc{M}^{\mr{univ},\circledast}$ satisfying the
	following conditions:
	\begin{enumerate}
	 \item \label{plplstrdfn-a}
	       The restriction
	       $F|_{\Delta^1\times\mr{Tw}_{[n]}^{\mr{op}}\mbf{\Delta}}$
	       belongs to $\bp\Str^{\mr{en},+}_{[n]}$;

	 \item \label{plplstrdfn-b}
	       The functor $F$ is an $p$-left Kan extension of
	       $F|_{\Delta^1\times\mr{Tw}_{[n]}^{\mr{op}}\mbf{\Delta}}$
	       where $p\colon\mc{M}^{\mr{univ},\circledast}\rightarrow
	       \mr{RM}\times\mbf{\Delta}$.
	\end{enumerate}
	
  \item \label{plplstrdfn-2}
	Let $\bp\gamma(\mc{M}^{\mr{univ},\circledast})
	\rightarrow\mbf{\Delta}$ be the full subcategory of
	$\gamma^\vee_*z^*(\mc{M}^{\mr{univ},\circledast})\cong
	\gamma^\vee_*\gamma^{\vee *}
	(\mc{M}^{\mr{univ},\circledast}_{(0,1)})$ spanned by the
	functors
	$\Gamma^\vee_{[n]}\cong\Delta^n\rightarrow
	\mc{M}^{\mr{univ},\circledast}_{[n],(0,1)}$ such that
	the composition
	$\Delta^n\rightarrow\mc{M}^{\mr{univ},\circledast}_{[n],(0,1)}
	\rightarrow\mr{RM}^{\mr{univ}}_{[n],(0,1)}\simeq\Delta^n\times
	\RMod_{\Delta^n}$ is of the form $\mr{id}\times m$ where
	$m\colon\Delta^n\rightarrow\RMod_{\Delta^n}$
	factors through $\RMod_{\Delta^n}^{\simeq}$.
 \end{enumerate}
\end{dfn*}

The functors $\{1\}\times\Gamma^\vee\rightarrow D\leftarrow
\Delta^1\times\mr{Tw}^{\mr{op}}\mbf{\Delta}$ induce the diagram
\begin{equation*}
  \bp\gamma(\mc{M}^{\mr{univ},\circledast})
  \leftarrow
  \bp\Str^{\mr{en},++}
  \xrightarrow{\alpha}
  \bp\Str^{\mr{en},+}.
\end{equation*}

\begin{lem}
 \label{compoftwoconsseqstr}
 \begin{enumerate}
  \item The map
	$\bp\Str^{\mr{en},++}\rightarrow\mbf{\Delta}$ is a Cartesian
	fibration, $\alpha$ preserves Cartesian edges, and
	$\alpha_{[n]}$ is a trivial fibration. In particular, $\alpha$
	is a Cartesian equivalence.
	As usual, we put $\Str^{\mr{en},++}:=
	\mb{D}^{-1}_{\mbf{\Delta}}(\bp\Str^{\mr{en},++})$.
	
  \item The map $\bp\gamma(\mc{M}^{\mr{univ},\circledast})
	\rightarrow\mbf{\Delta}$
        is a Cartesian fibration. Moreover, we have a canonical
        equivalence
	$\gamma^\vee_*(\mc{M}^{\mr{univ}}_{[0],(0,1)}\times\Gamma^\vee)
	^{\simeq}_{/\mbf{\Delta}}\simeq
	\bp\gamma(\mc{M}^{\mr{univ},\circledast})^{\simeq}_{/\mbf{\Delta}}$
	of right fibrations over $\mbf{\Delta}$
	{\normalfont(}recall {\normalfont\ref{maxminKancpx}} for the notation{\normalfont)}.
	We put $\gamma(\mc{M}^{\mr{univ},\circledast}):=
	\mb{D}^{-1}_{\mbf{\Delta}}
	(\bp\gamma(\mc{M}^{\mr{univ},\circledast}))$
	as usual.
 \end{enumerate}
\end{lem}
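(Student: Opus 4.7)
My plan is to handle the two parts in sequence, reusing the machinery that was already developed for $\bp\Str$ and $\bp\Str^{\mr{en},+}$ in Lemma~\ref{basicpropofstr}.

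For part~(1), I will first check that $\bp\Str^{\mr{en},++}\to\mbf{\Delta}$ is Cartesian and that $\alpha$ preserves Cartesian edges by imitating the Cartesian-edge construction in the proof of Lemma~\ref{basicpropofstr}. Given an object $F\in\bp\Str^{\mr{en},++}$ over $[n]$ and a map $\phi\colon[m]\rightarrow[n]$, the Cartesian lift is built by: first, restricting $F$ along $\Delta^1\times\phi_*\colon\Delta^1\times\mr{Tw}^{\mr{op}}_{[m]}\mbf{\Delta}\rightarrow\Delta^1\times\mr{Tw}^{\mr{op}}_{[n]}\mbf{\Delta}$ (this piece is handled exactly as in Lemma~\ref{basicpropofstr}, yielding an object of $\bp\Str^{\mr{en},+}$); second, extending over the extra vertices $\{1\}\times\Gamma^{\vee}_{[m]}$ by choosing $h$-Cartesian lifts supplied by Lemma~\ref{basicpropofstr}.\ref{basicpropofstr-1}; and finally verifying the Kan-extension condition via \cite[4.3.1.4]{HTT} and \cite[B.4.8]{HA}. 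By construction, $\alpha$ preserves the Cartesian edges built this way.

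For the trivial fibration claim on fibers, I will apply \cite[4.3.2.15]{HTT} to the inclusion $\Delta^1\times\mr{Tw}^{\mr{op}}_{[n]}\mbf{\Delta}\hookrightarrow D_{[n]}$: it suffices to show that for each vertex $v=(1,([n],i))$ in the complement, the slice $(\Delta^1\times\mr{Tw}^{\mr{op}}_{[n]}\mbf{\Delta})_{/v}$ formed inside $D_{[n]}$ admits a terminal object, and that evaluation at this terminal vertex produces a $p$-colimit diagram. By the construction of $D$ and the functor $\alpha$ sending $([n],i)$ to $a_i\colon[n-i]\rightarrow[n]$ with $a_i(0)=i$, the arrow $(1,a_i)\rightarrow v$ is terminal in this slice; so the $p$-left Kan extension reduces to evaluation at $(1,a_i)$, whose existence is automatic. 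Hence $\alpha_{[n]}$ is a trivial fibration, which, in combination with the preservation of Cartesian edges, gives a Cartesian equivalence by \cite[3.3.1.5]{HTT}.

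For part~(2), the proof that $\bp\gamma(\mc{M}^{\mr{univ},\circledast})\rightarrow\mbf{\Delta}$ is Cartesian is a simplified version of the argument in part~(1): Cartesian lifts of $\phi\colon[m]\rightarrow[n]$ are obtained vertex-by-vertex on $\Gamma^{\vee}_{[m]}\cong\Delta^m$ by applying Lemma~\ref{basicpropofstr}.\ref{basicpropofstr-1}, and the same \cite[B.4.8]{HA}-argument upgrades locally Cartesian edges to Cartesian edges. For the comparison of underlying right fibrations, I will construct the equivalence pointwise over $[n]\in\mbf{\Delta}$ and check that it intertwines Cartesian edges. On fibers, $\gamma^{\vee}_*(\mc{M}^{\mr{univ}}_{[0],(0,1)}\times\Gamma^{\vee})_{[n]}\cong\mr{Fun}(\Delta^n,\mc{M}^{\mr{univ}}_{[0],(0,1)})$, while $\bp\gamma(\mc{M}^{\mr{univ},\circledast})_{[n]}$ consists of functors $\Delta^n\rightarrow\mc{M}^{\mr{univ},\circledast}_{[n],(0,1)}$ whose projection to $\RMod_{\Delta^n}$ factors through the core and whose projection to $(\mr{RM}_{\Delta^n})_{(0,1)}\cong\Delta^n$ is the identity. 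The main point is that, after restricting to the core $\RMod_{\Delta^n}^{\simeq}$, Lemma~\ref{compofcartRModM} identifies the fiber of $\mc{M}^{\mr{univ},\circledast}_{[n],(0,1)}$ over $(i,\mc{M}^{\circledast})$ with the fiber of $\mc{M}^{\mr{univ},\circledast}_{[0],(0,1)}$ over $\sigma^i_*(\mc{M}^{\circledast})$, and that this identification is coherent as $i$ varies because all transitions in the core are equivalences. Packaging these identifications yields the desired equivalence of right fibrations.

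The step I expect to be the main obstacle is the last one: bookkeeping the coherences between the fiberwise identifications furnished by Lemma~\ref{compofcartRModM} so as to assemble an honest equivalence of right fibrations over $\mbf{\Delta}$ (rather than just a levelwise equivalence). The cleanest route is probably to construct an explicit map of simplicial sets from one side to the other, exhibit it as a right fibration over $\mbf{\Delta}$ whose fibers are equivalences by Lemma~\ref{compofcartRModM}, and conclude by \cite[3.3.1.5]{HTT}.
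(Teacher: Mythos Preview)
Your proposal for part~(1) matches the paper's argument exactly: the Cartesian fibration and preservation of Cartesian edges are handled by the same method as Lemma~\ref{basicpropofstr}, and $\alpha_{[n]}$ is a trivial fibration by \cite[4.3.2.15]{HTT}.

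For part~(2) your approach is correct in spirit but the paper organizes it differently in a way that sidesteps exactly the coherence bookkeeping you flag as the main obstacle. For the Cartesian fibration claim, rather than building Cartesian lifts by hand via Lemma~\ref{basicpropofstr}.\ref{basicpropofstr-1}, the paper simply invokes \cite[3.2.2.12]{HTT}: since $\gamma^{\vee}$ is coCartesian, $\gamma^{\vee}_*z^*(\mc{M}^{\mr{univ},\circledast})\to\mbf{\Delta}$ is Cartesian, and the full subcategory condition is stable. For the equivalence, instead of assembling fiberwise identifications from Lemma~\ref{compofcartRModM}, the paper sets up an abstract construction: for any coCartesian fibration $F\colon\mc{C}\to\mc{D}$ it forms $\mc{C}_F:=\mc{C}\times_{\mc{D}}\gamma^{\vee*}\gamma^{\vee}_*(\mc{D}\times\Gamma^{\vee})$, defines $\gamma(\mc{C}_F)\subset\gamma^{\vee}_*\gamma^{\vee*}\mc{C}_F$ by the same condition as in Definition~\ref{plplstrdfn}.\ref{plplstrdfn-2}, and proves once and for all that the natural map $\gamma(\mc{C}_F)\to\gamma^{\vee}_*(\mc{C}\times\Gamma^{\vee})$ is a categorical equivalence of Cartesian fibrations (again via \cite[3.2.2.12]{HTT} and a fiberwise check). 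It then specializes to $F\colon\mc{M}^{\mr{univ},\circledast}_{[0],(0,1)}\times_{\RMod_{\Delta^0}}\RMod_{\Delta^0}^{\mr{str}}\to\RMod_{\Delta^0}^{\mr{str}}$, where the identification of $\mc{C}_F$ with the relevant piece of $\mc{M}^{\mr{univ},\circledast}_{(0,1)}$ comes from Proposition~\ref{inftwoRMod} (which gives $\Gamma^{\vee}\times_{\mbf{\Delta}}\bp\RMod^{\circledast,\mr{str}}_{\mbf{\Delta}}\simeq\Gamma^{\vee}\times_{\mbf{\Delta}}\Phi^{\mr{Cart}}(\gamma,\RMod^{\mr{str}}_{\Delta^0})$), not from Lemma~\ref{compofcartRModM}. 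The upshot is that the global comparison map is built once at the abstract level, so no ad hoc coherence assembly is needed; your suggested final route (``construct an explicit map of simplicial sets\ldots and conclude by \cite[3.3.1.5]{HTT}'') is precisely what this abstraction accomplishes.
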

\begin{proof}
 Let us show the first claim. We can check that $\bp\Str^{\mr{en},++}$
 is a Cartesian fibration by exactly the same argument as Lemma
 \ref{basicpropofstr}. By description of coCartesian edges, we see that
 $\alpha$ preserves Cartesian edges. The fiber $\alpha_{[n]}$ is trivial
 fibration by \cite[4.3.2.15]{HTT}.
 
 Let us show the second assertion. The first claim is a straightforward
 application of \cite[3.2.2.12]{HTT},
 so let us check the second claim. To ease the notations, we abbreviate
 $\gamma^\vee_*$, $\gamma^{\vee*}$ by $\gamma_*$, $\gamma^*$.
 Let $F\colon\mc{C}\rightarrow\mc{D}$
 be a coCartesian fibration of $\infty$-categories. We may consider the
 following diagram:
 \begin{equation*}
  \xymatrix{
   \mc{C}_F
   \ar[d]_{F'}\ar[r]\ar@{}[rd]|\square&
   \mc{C}\ar[d]^{F}\\
  \gamma^*\gamma_*(\mc{D}\times\Gamma^\vee)
   \ar[r]\ar[d]&
   \mc{D}\\
  \gamma_*(\mc{D}\times\Gamma^\vee).&
   }
 \end{equation*}
 All the vertical arrows are coCartesian fibrations.
 We define $\gamma(\mc{C}_F)$ to be the full subcategory of
 $\gamma_*\gamma^*\mc{C}_F$ spanned by vertices
 $\Delta^n\rightarrow\mc{C}_{F,[n]}$ such that the composition
 $\Delta^n\rightarrow\mc{C}_{F,[n]}\rightarrow
 \gamma^*\gamma_*(\mc{D}\times\Gamma^\vee)_{[n]}\cong
 \Delta^n\times\mr{Fun}(\Delta^n,\mc{D})$ is of the form
 $\mr{id}\times m$ where $m$ factors through
 $\mr{Fun}(\Delta^n,\mc{D})^{\simeq}$.
 We claim that the composition
 \begin{equation*}
  \gamma(\mc{C}_F)\subset\gamma_*\gamma^*\mc{C}_F
   \rightarrow
   \gamma_*\gamma^*(\mc{C}\times\mbf{\Delta})
   \cong
   \gamma_*(\mc{C}\times\Gamma^\vee)
 \end{equation*}
 is a categorical equivalence between Cartesian fibrations.
 Indeed, using \cite[3.2.2.12]{HTT}, we can check that
 this is a map between Cartesian fibrations that preserves Cartesian
 edges. The induced map between fibers over $[n]\in\mbf{\Delta}$ can be
 computed explicitly, and the equivalence follows.
 Moreover, the equivalence induces the equivalence
 \begin{equation*}
  \gamma\bigl(\mc{C}_F\times_{\gamma_*(\mc{D}\times\Gamma^\vee)}
   \gamma_*(\mc{D}\times\Gamma^\vee)^{\simeq}_{/\mbf{\Delta}}\bigr)
   \xrightarrow{\sim}
   \gamma_*(\mc{C}\times\Gamma^\vee)^{\simeq}_{/\mbf{\Delta}}.
 \end{equation*}
 Now let
 $\bp\RMod_{\mbf{\Delta}}^{\circledast,\mr{str}}\subset
 \bp\RMod_{\mbf{\Delta}}^{\circledast}$ be the subcategory spanned by
 all objects of $\bp\RMod^{\circledast}_{\mc{A}}$ and morphisms
 $\mc{M}^{\circledast}\rightarrow\mc{N}^{\circledast}$ over
 $[n]\rightarrow[m]$ which sends coCartesian edge over
 $\mr{RM}_{\Delta^n}$ to coCartesian edge over $\mr{RM}_{\Delta^m}$.
 Let $\RMod_{\Delta^n}^{\mr{str}}$ be the fiber over $[n]$.
 We apply the observation above to the coCartesian fibration
 $F\colon\mc{M}^{\mr{univ},\circledast}_{[0],(0,1)}
 \times_{\RMod_{\Delta^0}}\RMod_{\Delta^0}^{\mr{str}}\rightarrow
 \RMod^{\mr{str}}_{\Delta^0}$.
 The unstraightening of $F'$ is the composition
 \begin{equation*}
   \Gamma^\vee\times_{\mbf{\Delta}}\Phi^{\mr{Cart}}
   (\gamma,\RMod^{\mr{str}}_{\Delta^0})
   \simeq
   \gamma^*\gamma_*(\RMod^{\mr{str}}_{\Delta^0}\times\Gamma^\vee)
   \rightarrow
   \RMod^{\mr{str}}_{\Delta^0}
   \xrightarrow{\mr{St}(F)}
   \Cat_\infty.
 \end{equation*}
 By Proposition \ref{inftwoRMod}, we have
 $\Gamma^\vee\times_{\mbf{\Delta}}
 \bp\RMod^{\circledast,\mr{str}}_{\mbf{\Delta}}
 \simeq
 \Gamma^\vee\times_{\mbf{\Delta}}\Phi^{\mr{Cart}}
 (\gamma,\RMod^{\mr{str}}_{\Delta^0})$.
 Unwinding the definition, the unstraightening of the composition
 $\Gamma^\vee\times_{\mbf{\Delta}}
 \bp\RMod^{\circledast,\mr{str}}_{\mbf{\Delta}}\rightarrow\Cat_\infty$
 can be identified with
 $\mc{M}^{\mr{univ},\circledast}_{(0,1)}\rightarrow
 \mr{RM}^{\mr{univ}}_{(0,1)}$ base changed to
 $\bp\RMod_{\mbf{\Delta}}^{\circledast,\mr{str}}$.
 Thus, the claim follows by the observation above.
\end{proof}

\begin{dfn}
 Let $p\colon\mc{A}^{\circledast}\rightarrow\mbf{\Delta}^{\mr{op}}$ be a
 monoidal $\infty$-category.
 \begin{enumerate}
  \item An edge $e\colon\Delta^1\rightarrow\mc{A}^{\circledast}$ is said
	to exhibit $e(1)$ as a {\em unit object} if $e$ is
	$p$-coCartesian edge and $p(0)=[0]$.
	
  \item Let $\mc{A}^{\circledast}_{\mbf{1}}$ be the full subcategory of
	$\mr{Fun}(\Delta^1,\mc{A}^{\circledast})$ spanned by unit
	objects.
 \end{enumerate}
\end{dfn}

The inclusion
$\mc{A}^{\circledast}_{\mbf{1}}\rightarrow\mc{A}^{\circledast}$ is a
categorical fibration. Moreover, the map
$\mc{A}^{\circledast}_{\mbf{1}}\rightarrow\mbf{\Delta}^{\mr{op}}$ is a
trivial fibration by a similar argument to \cite[3.2.1.4]{HA}.
We put $\Str^{\mr{en},\mbf{1}+}:=\Str^{\mr{en},++}
\times_{\mc{A}^{\circledast}}\mc{A}^{\circledast}_{\mbf{1}}$.

\begin{rem*}
 Informally objects of $\bp\Str^{\mr{en},++}$ consists of the data of
 $\bp\Str^{\mr{en},+}$, which contains a sequence in
 $\mc{M}^{\mr{univ},\circledast}_{[n]}$ of the form
 \begin{equation*}
  M_0\boxtimes A_1\boxtimes\dots\boxtimes A_n
   \rightarrow M_1\boxtimes A_2\boxtimes\dots\boxtimes A_n
   \rightarrow\dots\rightarrow M_n,
 \end{equation*}
 together with a sequence
 \begin{equation*}
  M_0\otimes A_1\otimes\dots\otimes A_n
   \rightarrow M_1\otimes A_2\otimes\dots\otimes A_n
   \rightarrow\dots\rightarrow M_n
 \end{equation*}
 in $\mc{M}^{\mr{univ}}_{[0],(0,1)}$.
 Since $\otimes$ is defined essentially uniquely, it is not surprising
 that $\alpha\colon\bp\Str^{\mr{en},++}\rightarrow\bp\Str^{\mr{en},+}$
 is a trivial fibration.
 Furthermore, $\Str^{\mr{en},\mbf{1}+}$ consists of data as above such
 that any $A_i$ is a unit object of $\mc{A}^{\circledast}$ for any $i$.
\end{rem*}

\subsection{}
The main feature of this construction can be seen from the following
lemma:

\begin{lem*}
 The composition
 $\Str^{\mr{en},\mbf{1}+}\rightarrow\Str^{\mr{en},++}
 \rightarrow
 \gamma(\mc{M}^{\mr{univ},\circledast})$
 is a categorical equivalence.
\end{lem*}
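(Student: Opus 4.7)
My plan is to reduce the statement to a fiberwise verification over $\mbf{\Delta}^{\mr{op}}$ and then produce an explicit quasi-inverse via Kan extension. First I would note that $\Str^{\mr{en},++}\rightarrow\mbf{\Delta}^{\mr{op}}$ and $\gamma(\mc{M}^{\mr{univ},\circledast})\rightarrow\mbf{\Delta}^{\mr{op}}$ are coCartesian fibrations (dually obtained from the Cartesian fibrations $\bp\Str^{\mr{en},++}$ and $\bp\gamma(\mc{M}^{\mr{univ},\circledast})$ in Lemma \ref{compoftwoconsseqstr}), and since $\mc{A}^{\circledast}_{\mbf{1}}\rightarrow\mc{A}^{\circledast}$ is a full subcategory inclusion stable under equivalence, the pullback $\Str^{\mr{en},\mbf{1}+}$ is again a coCartesian fibration over $\mbf{\Delta}^{\mr{op}}$, and the composition preserves coCartesian edges. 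By \cite[3.3.1.5]{HTT}, it suffices to check that the induced map on fibers over each $[n]\in\mbf{\Delta}^{\mr{op}}$ is a categorical equivalence. Functoriality of $\mb{D}^{-1}_{\mbf{\Delta}}$ identifies these fibers with the corresponding fibers of the back-prime versions, so I would compare $\bp\Str^{\mr{en},\mbf{1}+}_{[n]}$ and $\bp\gamma(\mc{M}^{\mr{univ},\circledast})_{[n]}$ directly.

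Unwinding Definition \ref{plplstrdfn}, an object of $\bp\Str^{\mr{en},++}_{[n]}$ is a functor $F\colon D_{[n]}\rightarrow\mc{M}^{\mr{univ},\circledast}_{[n]}$ whose restriction to $\Delta^1\times\mr{Tw}^{\mr{op}}_{[n]}\mbf{\Delta}$ encodes an enriched string $M_0\boxtimes A_1\boxtimes\dots\boxtimes A_n\rightarrow\dots\rightarrow M_n$ (with the $A_i$'s lying in $\mc{A}$), and whose restriction to $\{1\}\times\Gamma^\vee_{[n]}$ records the collapsed sequence $M_0\otimes A_1\otimes\dots\otimes A_n\rightarrow\dots\rightarrow M_n$ in $\mc{M}^{\mr{univ},\circledast}_{[n],(0,1)}$, all glued by the $p$-left Kan extension condition \ref{plplstrdfn-b}. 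The unit-object constraint defining $\bp\Str^{\mr{en},\mbf{1}+}_{[n]}$ forces each $A_i$ to be a unit of $\mc{A}$, in which case $M_i\otimes\mbf{1}\otimes\dots\otimes\mbf{1}$ becomes canonically equivalent to $M_i$ via coCartesian edges over active maps in $\mr{RM}$, so the underlying collapsed sequence carries the same homotopical information as a sequence $M_0\rightarrow M_1\rightarrow\dots\rightarrow M_n$ in $\mc{M}^{\mr{univ},\circledast}_{[n],(0,1)}$, which is precisely the datum recorded by $\bp\gamma(\mc{M}^{\mr{univ},\circledast})_{[n]}$.

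To construct a quasi-inverse I would apply \cite[4.3.2.15]{HTT}: start from an object of $\bp\gamma(\mc{M}^{\mr{univ},\circledast})_{[n]}$, choose unit objects for the $A_i$'s (contractible choice by the trivial fibration $\mc{A}^{\circledast}_{\mbf{1}}\rightarrow\mbf{\Delta}^{\mr{op}}$), extend to the string $M_0\boxtimes\mbf{1}\boxtimes\dots\boxtimes\mbf{1}\rightarrow\dots\rightarrow M_n$ via $p$-coCartesian lifts over the inert edges of $\mr{Tw}^{\mr{op}}_{[n]}\mbf{\Delta}$ (conditions \ref{dfnenstr-2}, \ref{dfnenstr-3}, \ref{dfnenstr-4}), and finally produce the desired $F$ on $D_{[n]}$ by $p$-left Kan extension along $\Delta^1\times\mr{Tw}^{\mr{op}}_{[n]}\mbf{\Delta}\hookrightarrow D_{[n]}$. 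The main obstacle will be verifying that the relevant Kan extensions exist and are essentially unique---that is, for each vertex $v$ of $D_{[n]}\setminus(\Delta^1\times\mr{Tw}^{\mr{op}}_{[n]}\mbf{\Delta})$ the relevant slice category admits an initial object with a $p$-coCartesian edge to $v$, and the resulting extension automatically satisfies the coCartesian conditions of Definition \ref{dfnenstr} because multiplication by a unit object produces a coCartesian edge over the corresponding active map in $\mr{RM}$. Once this is set up, the restriction and extension assemble into inverse trivial fibrations as in the second-half arguments of Lemma \ref{genopestr}, giving the desired categorical equivalence.
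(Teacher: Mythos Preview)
Your overall strategy—reduce to fibers via \cite[3.3.1.5]{HTT}, then use Kan-extension trivial fibrations à la \cite[4.3.2.15]{HTT}—is the same as the paper's. The step from $\bp\Str^{\mr{en},\mbf{1}}_{[n]}$ to $\bp\Str^{\mr{en},\mbf{1}+}_{[n]}$ via left Kan extension along $\Delta^1\times\mr{Tw}^{\mr{op}}_{[n]}\mbf{\Delta}\hookrightarrow D_{[n]}$ is fine and is exactly the paper's map $\psi$.

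The gap is in your step ``choose unit objects for the $A_i$'s, then extend to the string $M_0\boxtimes\mbf{1}\boxtimes\dots\boxtimes\mbf{1}\rightarrow\dots\rightarrow M_n$ via $p$-coCartesian lifts''. As written this is not a single application of \cite[4.3.2.15]{HTT}: you have not exhibited a full subcategory $\mc{C}^0\subset\mc{C}$ such that restriction identifies $\bp\Str^{\mr{en},\mbf{1}}_{[n]}$ with Kan extensions of $\bp\gamma(\mc{M}^{\mr{univ},\circledast})_{[n]}$-data on $\mc{C}^0$. The $\gamma$-data lives on $\{1\}\times\Gamma^{\vee}_{[n]}\subset D_{[n]}$, but morphisms in $D_{[n]}$ go \emph{towards} that subcategory, so a left Kan extension from it yields nothing useful on the $\mr{Tw}^{\mr{op}}$-part; and a right Kan extension would have to produce unit objects as $p$-limits, which is not how units arise (they come from coCartesian pushforward along $(0,1)\rightarrow(0,1,\dots,1)$ in $\mr{RM}$). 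The ``contractible choice'' of units via $\mc{A}^{\circledast}_{\mbf{1}}\rightarrow\mbf{\Delta}^{\mr{op}}$ is true pointwise but does not by itself assemble into a functor on $\bp\gamma(\mc{M}^{\mr{univ},\circledast})_{[n]}$.

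The paper's device for exactly this step is an auxiliary enlargement $D^+\supset D_{[n]}$: one adjoins a copy $\{-1\}\times\Delta^n$ sitting over $(0,1)\in\mr{RM}$ together with a retract-pair category $E$ (objects $-1,0$ with $b\circ a=\mr{id}_{-1}$) and an extra $2$-simplex passing through $(1)\in\mr{RM}$. Left Kan extending from $\{-1\}\times\Delta^n$ to all of $D^+$ then \emph{is} a single \cite[4.3.2.15]{HTT} step, and the shape of $D^+$ forces the extension both to insert units (via the coCartesian push along $a$) and to factor through the unit edge in $\mc{A}^{\circledast}_{\mbf{1}}$ (via the added $\Delta^2$). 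This produces an intermediate $\infty$-category $\mc{S}_D$ with trivial fibrations $\tau\colon\mc{S}_D\rightarrow\bp\gamma(\mc{M}^{\mr{univ},\circledast})_{[n]}$ and $\phi\colon\mc{S}_D\rightarrow\bp\Str^{\mr{en},\mbf{1}}_{[n]}$, closing the zigzag you were aiming for. Your sketch would become a complete proof once you introduce this $D^+$ and route the argument through $\mc{S}_D$.
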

\begin{proof}
 Using the description of coCartesian edges in Lemma
 \ref{basicpropofstr}, we can check that the map is a functor
 between coCartesian fibrations that preserves coCartesian edges, so it
 suffices to show that the fiber over $[n]\in\mbf{\Delta}^{\mr{op}}$ is
 an equivalence. In this case,
 $\Gamma^\vee\times_{\mbf{\Delta}}\{[n]\}\cong\Delta^n$.
 We consider the following category $D^+$:
 Let $E$ be the category of two objects $-1$, $0$ such that
 $\mr{Hom}(-1,0)=\{a\}$, $\mr{Hom}(0,-1)=\{b\}$,
 $\mr{Hom}(-1,-1)=\{\mr{id}\}$, $\mr{Hom}(0,0)=\{\mr{id},a\circ b\}$.
 We consider $D':=(E\times\Delta^n)
 \coprod_{\{0\}\times\Delta^n}(\Delta^1\times\mr{Tw}^{\mr{op}}_{[n]}
 \mbf{\Delta})$, where the coproduct is taken in the
 category of small (ordinary) categories. There is a unique map
 $f\colon(-1,0)\rightarrow(1,[n]\rightarrow[n])$ where
 $0\in\Delta^n$. Using this morphism, we define
 $D^+:=D'\coprod_{f,\Delta^{\{0,2\}}}\Delta^2$ where the coproduct is
 taken in the category of small categories.
 The functor $b\colon\Delta^1\rightarrow E$ induces the
 faithful functor $D_{[n]}\rightarrow D'$.
 There is a unique extension of $D\rightarrow\mr{RM}$ to $D'$. Note that
 the morphism $a\times\mr{id}_i$ of $E\times\Delta^n$ is sent to the
 map $(0,1)\rightarrow(0,1,\dots,1)$. This map can further be extended
 to $D^+$ by putting $\Delta^{\{1\}}$ to $(1)$ in $\mr{RM}$.
 Let $C\subset D^+$ be the full subcategory consisting of objects in
 $\Delta^1\times\mr{Tw}^{\mr{op}}_{[n]}\mbf{\Delta}$ and $\Delta^2$.

 Now, let $\mc{S}_D$ be the full subcategory of
 $\mr{Fun}_{\mr{RM}}(D^+,\mc{M}^{\mr{univ},\circledast}_{[n]})$ which is
 spanned by functors $F$ such that $F|_{\{-1\}\times\Delta^n}$ belongs
 to $\bp\gamma(\mc{M}^{\mr{univ},\circledast})$ and which is a $p$-left
 Kan extension along the inclusion $\{-1\}\times\Delta^n\hookrightarrow
 D^+$, where
 $p\colon\mc{M}^{\mr{univ},\circledast}_{[n]}\rightarrow\mr{RM}$.
 We put $\bp\Str^{\mr{en},\mbf{1}}:=\bp\Str^{\mr{en},+}
 \times_{\mc{A}^{\circledast}}\mc{A}^{\circledast}_{\mbf{1}}$.
 We have the following diagram
 \begin{equation*}
  \xymatrix@C=40pt{
   \mc{S}_D\ar[r]_-{}\ar[rd]_{\tau}\ar@/^10pt/[rr]^-{\phi}&
   \bp\Str^{\mr{en},\mbf{1}+}_{[n]}\ar[r]_-{\psi}\ar[d]&
   \bp\Str^{\mr{en},\mbf{1}}_{[n]}\\
  &\bp\gamma(\mc{M}^{\mr{univ},\circledast})&
   }
 \end{equation*}
 It suffices to show that $\phi$, $\psi$, $\tau$ are categorical
 equivalences. Consider the following two left Kan extension diagrams:
 \begin{equation*}
  \xymatrix{
   \Delta^n\times\{-1\}
   \ar[r]\ar[d]&
   \mc{M}^{\mr{univ},\circledast}_{[n]}\ar[d]^{p}\\
  D^+\ar[r]\ar@{-->}[ur]&\mr{RM},
   }
   \qquad
   \xymatrix{
   C
   \ar[r]\ar[d]&
   \mc{M}^{\mr{univ},\circledast}_{[n]}\ar[d]^{p}\\
  D^+\ar[r]\ar@{-->}[ur]&\mr{RM}.
   }
 \end{equation*}
 Invoking \cite[4.3.2.15]{HTT}, these diagrams yields the trivial
 fibrations $\tau$, $\phi$. The map $\psi$ is a trivial fibration by
 Lemma \ref{compoftwoconsseqstr}.
\end{proof}

\subsection{}
\label{constonetohm}
This lemma yields a functor
\begin{equation*}
 \gamma(\mc{M}^{\mr{univ},\circledast})
  \xleftarrow{\sim}
  \Str^{\mr{en},\mbf{1}+}
  \rightarrow
  \Str^{\mr{en},++}
  \xrightarrow{\alpha}
  \Str^{\mr{en},+}
\end{equation*}
which is defined up to a contractible space of choices.
Let us apply this construction to the situation in \ref{laxfuncdef}.
Assume that we are given an $(\infty,2)$-functor $\mbf{D}$, and assume
that the functor $\mc{C}^{\simeq}\rightarrow\mc{C}$ factors as
$\mc{C}^{\simeq}\rightarrow\mc{C}'\rightarrow\mc{C}$.
We assume further that the map
$M\colon\mc{C}^{\simeq}\rightarrow\Str^{\sim}_{[0]}$ is lifted to
$\widetilde{M}\colon\mc{C}'\rightarrow\Str_{[0]}$ which is compatible
with $D$.
If $\mc{C}'$ admits an initial object, the functor $M_I$ in
\ref{laxfuncdef} satisfies this condition.
Since
$\Str_{[0]}\simeq\mc{M}^{\mr{univ},\circledast}_{[0],(0,1)}$,
we have the composition
\begin{equation*}
 s_M\colon
 \gamma_*(\mc{C}'\times\Gamma)^{\simeq}
  _{/\mbf{\Delta}^{\mr{op}}}
  \xrightarrow{\widetilde{M}}
  \gamma_*({}_{\mc{L}}\Str_{[0]}\times\Gamma)^{\simeq}
  _{/\mbf{\Delta}^{\mr{op}}}
  \rightarrow
  {}_{\mc{L}}\gamma(\mc{M}^{\mr{univ},\circledast})
  \rightarrow
  {}_{\mc{L}}\Str^{\mr{en},+},
\end{equation*}
where the second functor follows from Lemma \ref{compoftwoconsseqstr}.
Note that the last two functors are base changed from
$\RMod^{\circledast}$ to $\LinCat^{\circledast}$.
Thus, we have a diagram
\begin{equation*}
 \xymatrix{
  \gamma_*(\mc{C}'\times\Gamma)
  ^{\simeq}_{/\mbf{\Delta}^{\mr{op}}}
  \ar[r]^-{s_M}&
  {}_{\mc{L}}\Str^{\mr{en},+,\sim}
  \ar[r]^-{A}\ar[d]_{\iota}&
  \mc{A}^{\circledast}\ar[d]\\
 &{}_{\mc{L}}\Str^{\sim}
  \ar[r]\ar[ur]^{\mc{H}}&
  \mbf{\Delta}^{\mr{op}}.
  }
\end{equation*}
Recall that the operadic left Kan extension is equipped with a map
$A\rightarrow\mc{H}\circ\iota$.
Putting $\mbf{1}_{\widetilde{M}}:=A\circ s_M$,
$\mc{H}_{M}|_{\mc{C}'}:=\mc{H}\circ\iota\circ s_M$.
Then we have the map of functors
\begin{equation*}
 \mbf{1}_{\widetilde{M}}
  \rightarrow
  \mc{H}_M|_{\mc{C}'}
  \colon
  \Phi^{\mr{co}}(\gamma,\mc{C}')^{\simeq}_{/\mbf{\Delta}^{\mr{op}}}=
  \gamma_*(\mc{C}'\times\Gamma)
  ^{\simeq}_{/\mbf{\Delta}^{\mr{op}}}
  \rightarrow
  \mc{A}^{\circledast}
\end{equation*}
that we informally explained at the beginning of \ref{exploflacon}.

\section{Bivariant homology functor}
\label{funtobitheo}
In the last section, we ``extracted'' a functor which encodes axioms of
bivariant theory. However, the description is still very inexplicit.
Assume we are given a ``6-functor formalism'' for the category of
schemes $\mr{Sch}$.
Then there should be an associated cohomology theory
$\mr{H}^*\colon\mr{Sch}^{\mr{op}}\rightarrow\mr{Mod}_R$.
There should also be an associated Borel-Moore homology
$\mr{H}^{\mr{BM}}\colon\mr{Sch}^{\mr{prop}}\rightarrow\mr{Mod}_R$.
Here, $\mr{Sch}^{\mr{prop}}$ is the category of schemes and consider
only proper morphisms as morphisms.
The goal of this section is to construct a unified functor from which we
can retrieve these two theories easily, as well as explaining the
relations of these theories.

\subsection{}
\label{functorseup}
Let $\mr{Sch}$ be a category which admits finite limits.
In particular, it admits a final object. We fix a final object denoted
by $*$. Let $\mr{prop}$, $\mr{sep}$ be a class of morphisms of
$\mr{Sch}$ which satisfy conditions \cite[Ch.7, 1.1.1]{GR} if we put
$\mathit{adm}=\mr{prop}$, $\mathit{vert}=\mr{sep}$,
$\mathit{horiz}=\mr{all}$. We put
$\mbf{Corr}:=\mbf{Corr}(\mr{Sch})^{\mr{prop}}_{\mr{sep};\mr{all}}$.
Let $\mc{A}^{\circledast}$ be a presentable monoidal $\infty$-category,
and $\mc{A}:=\mc{A}^{\circledast}_{[1]}$.
We assume we are given the following diagram
\begin{equation*}
 \xymatrix@C=50pt{
  \mr{Sch}^{\mr{op}}
  \ar[r]^-{\mc{D}}\ar[d]&\LinCat_{\mc{A}}\ar[d]\\
 \mbf{Corr}\ar[r]^-{\mbf{D}}&
  \twoLinCat_{\mc{A}}^{2\mbox{-}\mr{op}}.
  }
\end{equation*}
We fix two objects $I,J\in\mc{D}(*)$. By \ref{laxfuncdef}, we have the
functors
$M_I,M_J\colon\mr{Sch}^{\mr{op}}\rightarrow{}_{\mc{L}}\Str_{[0]}$. The
image of $X\in\mr{Sch}^{\mr{op}}$ is denoted by $I_X$, $J_X$.
For $f\colon X\rightarrow Y$ in $\mr{Sch}$, we put
$f^*:=\mc{D}(f)$. Similarly, if $f\in\mr{sep}$, let $V_f$ be the
$1$-morphism $X\rightarrow Y$
\begin{equation*}
 \xymatrix{
  X\ar@{=}[r]\ar[d]_{f}&X\\Y.&}
\end{equation*}
in $\mbf{Corr}$. We put
$f_!:=\mbf{D}(V_f)\colon\mc{D}(X)\rightarrow\mc{D}(Y)$.

\begin{ex*}
 As the notation suggests,
 the main example we have in mind is the case where $\mr{Sch}$ is the
 category of schemes of finite type over a base scheme $S$,
 and $\mr{prop}$ is the class of proper morphisms, and $\mr{sep}$ is the
 class of separated morphisms.
\end{ex*}

\subsection{}
We denote by $\widetilde{\mr{Ar}}^{\mr{prop}}_{\mr{sep}}(\mr{Sch})$ the category whose objects consists of morphisms $X\rightarrow Y$ in $\mr{sep}$.
Assume we are given two objects $f_1\colon X_1\rightarrow Y_1$ and $f_0\colon X_0\rightarrow Y_0$.
Let $S(f_1,f_0)$ be the set of diagrams of the form below on the left
\begin{equation}
 \label{diagmortilar}
 \xymatrix{
  X_1\ar[d]_{f_1}&
  W_{10}\ar[d]_{\widetilde{f}_0}\ar[r]^-{\widetilde{g}}\ar[l]_{\alpha}\ar@{}[rd]|\square&
  X_0\ar[d]^{f_0}\\
 Y_1&Y_1\ar[r]^-{g}\ar@{=}[l]&
  Y_0,}
  \qquad
  \xymatrix{
  X_1\ar@{=}[d]&W_{10}\ar[r]^-{\widetilde{g}}\ar[l]_-{\alpha}\ar[d]^{\beta}&X_0\ar@{=}[d]\\
 X_1&W'_{10}\ar[r]^-{\widetilde{g}'}\ar[l]_-{\alpha'}&X_0,}
\end{equation}
where $\alpha$ belongs to $\mr{prop}$.
Take two elements of $S(f_1,f_0)$ defined by $W_{10}$ and $W'_{10}$.
We denote the corresponding morphisms of the diagram of $W'_{10}$ by putting $(-)^{\prime}$ ({\it e.g.}\ $\alpha'\colon W'_{10}\rightarrow X_1$).
The elements $W_{10}$ and $W'_{10}$ in $S(f_1,f_0)$ are said to be {\em equivalent}, denoted by $W_{10}\sim W'_{10}$,
if $g=g'$ and there exists a diagram of the form above on the right.
Note that, since $g=g'$, the morphism $\beta$ is determined uniquely and an isomorphism.
We define the set of morphisms from $f_1$ to $f_0$ by $S(f_1,f_0)/\sim$.
The composition of $(X_2\rightarrow Y_2)\rightarrow(X_1\rightarrow Y_1)\rightarrow (X_0\rightarrow Y_0)$ is
defined by the following diagram
\begin{equation*}
 \xymatrix@R=10pt{
 W_{10}\times_{Y_1}Y_2\ar[r]\ar[d]\ar@{}[rd]|\square&
 W_{10}\ar[r]\ar[d]\ar@{}[rddd]|\square&X_0\ar[ddd]\\
 W_{21}\ar[r]\ar[d]\ar@{}[rdd]|\square&X_1\ar[dd]&\\
 X_2\ar[d]&&\\
 Y_2\ar[r]&Y_1\ar[r]&Y_0.
  }
\end{equation*}
The functor
$\widetilde{\mr{Ar}}^{\mr{prop}}_{\mr{sep}}(\mr{Sch})\rightarrow\mr{Sch}$
sending $X\rightarrow Y$ to $Y$ is a Grothendieck fibration.

\begin{rem*}
 \label{straitencatarrprop}
 For $Y\in\mr{Sch}$, let $\mr{Sch}_{\mr{sep}/Y}^{\mr{prop}}$ be the subcategory
 of of $\mr{Sch}_{/Y}$ consisting of objects $f\colon X\rightarrow Y$ in
 $\mr{Sch}_{/Y}$ such that $f$ is in $\mr{sep}$, and a morphism
 $(X\rightarrow Y)\rightarrow(X'\rightarrow Y)$ in $\mr{Sch}_{/Y}$ depicted as
 \begin{equation*}
  \xymatrix{X\ar[d]\ar[r]^-{\alpha}&X'\ar[d]\\
  Y\ar@{=}[r]&Y}
 \end{equation*}
 belongs to $\mr{Sch}_{\mr{sep}/Y}^{\mr{prop}}$ if $\alpha$ is in $\mr{prop}$.
 Given a morphism $Y_1\rightarrow Y_0$, we can choose a base change functor
 $\mr{Sch}_{/Y_0}^{\mr{prop}}\rightarrow\mr{Sch}_{/Y_1}^{\mr{prop}}$.
 This yields a functor $\mr{Sch}^{\mr{op}}\rightarrow\Cat_{1}$ sending $Y$ to $(\mr{Sch}_{/Y}^{\mr{prop}})^{\mr{op}}$
 where $\Cat_1$ is the $(2,1)$-category of ($1$-)categories,
 in other words, a pseudo-functor.
 The Grothendieck fibration $\widetilde{\mr{Ar}}^{\mr{prop}}_{\mr{sep}}(\mr{Sch})\rightarrow\mr{Sch}$ is associated to this pseudo-functor.
\end{rem*}

Now, the goal of this section is to show the following theorem:

\begin{thm}
 \label{mainthmcons}
 Under the setting of {\normalfont\ref{functorseup}}, there exists a functor
 $\mr{H}\colon\widetilde{\mr{Ar}}^{\mr{prop}}_{\mr{sep}}(\mr{Sch})^{\mr{op}}\rightarrow\mc{A}$
 such that for $f\in\mr{sep}$, the object $\mr{H}(f)\in\mc{A}$ is
 equivalent to $\Mor_{\mbf{D}(V_f)}(I_X,J_Y)\simeq\Mor(f_!(I_X),J_Y)$.
 Assume we are given a morphism $m\colon f_1\rightarrow f_0$ given by
 the diagram {\normalfont(\ref{diagmortilar})} on the left.
 Then $\mr{H}(m)$ is equivalent to the composition of the following morphisms
 \begin{align*}
  \mr{Mor}(f_{0!}(I_{X_0}),J_{Y_0})
  &\rightarrow
  \mr{Mor}(g^*f_{0!}(I_{X_0}),g^*J_{Y_0})
  \simeq
  \mr{Mor}(\widetilde{f}_!\widetilde{g}^*(I_{X_0}),g^*J_{Y_0})\\
  &\simeq
  \mr{Mor}(f_{1!}\alpha_!\alpha^*(I_{X_1}),J_{Y_1})
  \rightarrow
  \mr{Mor}(f_{1!}(I_{X_1}),J_{Y_1}).
 \end{align*}
\end{thm}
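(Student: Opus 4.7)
The plan is to apply the $(\infty,2)$-categorical machinery of \S\ref{constfun} to the $2$-functor $\mbf{D}$ and then extract an ordinary $1$-functor on $\widetilde{\mr{Ar}}^{\mr{prop}}_{\mr{sep}}(\mr{Sch})^{\mr{op}}$ by exploiting the Grothendieck-fibration structure of Remark \ref{straitencatarrprop}.

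For the values on objects, since $*$ is an initial object of $\mr{Sch}^{\mr{op}}$, the construction at the end of \ref{laxfuncdef} applied to $\mbf{D}$ and to the objects $I,J\in\mc{D}(*)$ produces functors $M_I,M_J\colon\mr{Sch}^{\mr{op}}\to{}_{\mc{L}}\Str^{\sim}_{[0]}$ with $M_I(X)=I_X$ and $M_J(X)=J_X$. A two-sided variant of the construction of $\mc{H}_M$ -- obtained by feeding $M_I$ into the source slot and $M_J$ into the target slot of the pairing underlying Definition \ref{dfnenstr} -- yields a map of generalized $\infty$-operads $\mc{H}_{I,J}\colon\mbf{Corr}^{\circledast}\to\mc{A}^{\circledast}$, and the analog of Lemma \ref{exisintmorobj} identifies its value on a $1$-morphism $V_f\colon X\to Y$ with $\mr{Mor}_{\mc{D}(Y)}(f_!I_X,J_Y)$, giving $\mr{H}$ on objects.

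For morphisms, I would interpret a morphism $m\colon f_1\to f_0$ in $\widetilde{\mr{Ar}}$ as two pieces of data inside $\mbf{Corr}^{\circledast}$. First, the correspondence $Y_0\xleftarrow{g}Y_1=Y_1$ defines a $1$-morphism $\widehat{g}\colon Y_0\to Y_1$ in $\mbf{Corr}$ with $\mbf{D}(\widehat{g})=g^*$; the composition $\widehat{g}\circ V_{f_0}\simeq V_{f_1}\circ V_\alpha$, where $V_\alpha\colon X_0\to X_1$ is the correspondence $X_0\xleftarrow{\widetilde{g}}W_{10}\xrightarrow{\alpha}X_1$, encodes the base change $g^*f_{0!}\simeq\widetilde{f}_{0!}\widetilde{g}^*$. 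Second, since $\alpha\in\mr{prop}$, the map $\alpha$ is itself a $2$-morphism in $\mbf{Corr}$ from the endomorphism correspondence $X_1\xleftarrow{\alpha}W_{10}\xrightarrow{\alpha}X_1$ to $\mr{id}_{X_1}$; under the $2$-op-twisted $\mbf{D}$ this becomes the unit $\mr{id}\to\alpha_!\alpha^*$ in $\twoLinCat_{\mc{A}}$, furnished at the level of $\mc{H}_{I,J}$ by the natural transformation $\mbf{1}_{\widetilde{M}_I}\to\mc{H}_{M_I}|_{\mc{C}'}$ of \ref{constonetohm}. Composing the $\widehat{g}$-pullback, the base-change equivalence and the unit produces exactly the claimed formula for $\mr{H}(m)$.

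To assemble the object-level and morphism-level prescriptions into a functor on all of $\widetilde{\mr{Ar}}^{\mr{op}}$, I would invoke the Grothendieck fibration $\widetilde{\mr{Ar}}\to\mr{Sch}$: fiberwise over $Y$ the functor $(\mr{Sch}_{\mr{sep}/Y}^{\mr{prop}})^{\mr{op}}\to\mc{A}$ is obtained purely from the ``proper'' ingredient $\mbf{1}_{\widetilde{M}_I}\to\mc{H}_{M_I}$ applied inside $\mr{Sch}_{/Y}$, while the cleaving along $g\colon Y_1\to Y_0$ is the $\widehat{g}$-pullback. The main obstacle will be showing that these two structures combine into a single strict (rather than pseudo- or lax) $1$-functor; I expect this to follow from the operadic coherence already built into $\mc{H}_{I,J}$ as a map of generalized $\infty$-operads, together with the explicit $\mbf{1}$-to-$\mc{H}$ natural transformation of \ref{constonetohm}, which together encode all the required higher simplices in a way compatible with the simplicial structure of $\mbf{Corr}^{\circledast}$.
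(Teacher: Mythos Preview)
Your proposal correctly isolates the two kinds of morphism data --- the ``proper'' direction coming from $2$-morphisms of $\mbf{Corr}$ and the ``pullback'' direction coming from $1$-morphisms --- and correctly identifies that assembling these coherently is the main obstacle. But your resolution of that obstacle is not a proof: saying that ``operadic coherence already built into $\mc{H}_{I,J}$'' should encode the higher simplices is precisely the difficulty, not its solution. A map of generalized $\infty$-operads $\mbf{Corr}^{\circledast}\to\mc{A}^{\circledast}$ gives you composition data along $1$-morphisms, and the transformation $\mbf{1}_{\widetilde{M}}\to\mc{H}_M$ gives you a separate family of $2$-cells, but nothing in \S\ref{constfun} tells you how to splice these two into a single functor out of $\widetilde{\mr{Ar}}^{\mr{op}}$; that is exactly what \S\ref{funtobitheo} is for.

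The paper's mechanism is quite different from what you outline. First, rather than a ``two-sided variant'' of $\mc{H}_M$, the paper encodes the asymmetry between $I$ and $J$ by passing to $\mr{Sch}^{\triangleright}$ and setting $M(\infty)=I$, $M|_{\mr{Sch}}=J$; the shift $\overleftarrow{s}\colon[n]\mapsto[n]^{\triangleleft}$ then inserts the $I$-slot uniformly (see \ref{oversandqcons} and the diagram (\ref{dfnofphocoprop-diag2})). Second, the paper does not try to glue fiberwise and base-change functors along the Grothendieck fibration; instead it builds a map $\mc{H}_\bullet$ of simplicial $\infty$-categories (\ref{taketensordefH}, Lemma \ref{fundlemHprop}) and then applies a general ``integration'' functor $\mr{Int}$ (Proposition \ref{propinfcatext}) which converts such a simplicial map into an honest functor of $\infty$-categories. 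The construction of $\mr{Int}$ and the verification that $\mr{Int}(\widetilde{M}_\bullet p)\simeq\mc{D}$ for a Cartesian fibration $p\colon\mc{D}\to\mc{C}$ occupies \ref{integlconst}--Corollary \ref{corfactsystf} and is the real content you are missing: it rests on an inner-anodyne lemma (Lemma \ref{innanodtwar}) for certain saturated subcomplexes of $\mr{Ar}(\Delta^n)$ and a factorization-system argument (Corollary \ref{corfactsystf}). Without something playing the role of $\mr{Int}$, your sketch produces at best a pseudo-functor on the homotopy $1$-category, not an $\infty$-functor.
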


\begin{rem*}
 \begin{enumerate}
  \item Consider the functor $i_0\colon\mr{Sch}\rightarrow
	\widetilde{\mr{Ar}}^{\mr{prop}}_{\mr{sep}}(\mr{Sch})$ sending
	$X$ to $X\rightarrow X$. Then $\mr{H}\circ i_0^{\mr{op}}\colon
	\mr{Sch}^{\mr{op}}\rightarrow\mc{A}$ is called the
	{\em cohomology theory}. On the other hand, let
	$\mr{Sch}^{\mr{prop}}_{\mr{sep}}$ be the subcategory of
	$\mr{Sch}$ consisting of objects $X$ such that the map
	$X\rightarrow*$ is in $\mr{sep}$ and morphisms $X\rightarrow Y$
	which are in $\mr{prop}$. Consider the functor
	$i_1\colon\mr{Sch}^{\mr{prop}}_{\mr{sep}}\rightarrow
	\widetilde{\mr{Ar}}^{\mr{prop}}_{\mr{sep}}(\mr{Sch})^{\mr{op}}$
	sending $X$ to $X\rightarrow*$. Then $\mr{H}\circ i_1$ is called
	the {\em Borel-Moore homology theory}.
	Finally, the functor $\mr{H}$ describes the relations
	between these theories.
	
  \item Even though we can unify cohomology and Borel-Moore homology
	theories, it is not completely satisfactory because we are not
	able to retrieve all the features of bivariant homology
	theory. In fact, bivariant homology theory has 3 operations:
	contravariant functoriality with respect to all the morphisms,
	covariant functoriality with respect to proper morphisms, and
	product structure.
	In our treatment, product structure is missing.
	We wonder if there is an upgraded version of the functor
	$\mr{H}$ so that all the axioms \cite[\S2]{FM} can be
	incorporated.
 \end{enumerate}
\end{rem*}

\subsection{}
Let $\infty$ be the cone point of $\mr{Sch}^{\triangleright}$.
We have the functor
$c\colon\mr{Sch}^{\triangleright}\rightarrow\mr{Sch}$
sending $\infty$ to $*$ such that the composition
$\mr{Sch}\rightarrow\mr{Sch}^{\triangleright}\rightarrow\mr{Sch}$ is
isomorphic to the identity. A morphism in $\mr{Sch}^{\triangleright}$ is
defined to be in $\mr{sep}$, $\mr{prop}$ if and only if the image by $c$
is in $\mr{sep}$, $\mr{prop}$. We have the functor
\begin{equation*}
 \mbf{Corr}(\mr{Sch}^{\triangleright})^{\mr{prop}}_{\mr{sep};\mr{all}}
  \rightarrow
  \mbf{Corr}(\mr{Sch})^{\mr{prop}}_{\mr{sep};\mr{all}}
  \xrightarrow{\mbf{D}}
  \twoLinCat_{\mc{A}}^{2\mbox{-}\mr{op}}.
\end{equation*}
Now, we have the functor
$M\colon(\mr{Sch}^{\triangleright})^{\simeq}\rightarrow\Str^{\sim}_{[0]}$
such that $M(\infty)=I$ and $M|_{\mr{Sch}}=J_{-}$.
We denote by $\mr{Corr}^{\circledast}$ the generalized $\infty$-operad
defining
$\mbf{Corr}(\mr{Sch}^{\triangleright})^{\mr{prop}}_{\mr{sep};\mr{all}}$.

Till \ref{insertone}, we will focus on constructing the following
sequence of functors between $\infty$-categories over
$\mbf{\Delta}^{\mr{op}}$:
\begin{equation}
 \label{seqfuncdefex}
  \mc{H}_{\top}\colon
  \Phi^{\mr{co}}
  (\Gamma\times\Delta^1,\mr{Sch}^{\mr{op}})^{\mr{prop}}
  \xrightarrow{\alpha}
  \overleftarrow{s}^*\mr{Corr}^{\circledast}
  \xleftarrow[\sim]{\beta}
  \Psi^{\star}\overleftarrow{s}^*\mr{Corr}^{\circledast}
  \xrightarrow{\Psi\mc{H}_M}
  \Psi(\overleftarrow{s}^*\mc{A}^{\circledast}).
\end{equation}
The undefined notations will be introduced later.
Since $\beta$ is a categorical equivalence, $\mc{H}_{\top}$ is defined
canonically up to a contractible space of choices.

If we are given a Cartesian fibration
$f\colon\mc{C}\rightarrow\mbf{\Delta}^{\mr{op}}$ and an
$\infty$-category $\mc{D}$, we introduced the notation
$\Phi^{\mr{co}}(f,\mc{D})$ in \ref{constsimplcat}.
In this section, this $\infty$-category is also denoted by
$\Phi^{\mr{co}}(\mc{C},\mc{D})$ especially when the structural map $f$
is clear. In particular, when we use this notation, the
$\infty$-category $\mc{C}$ is always considered over
$\mbf{\Delta}^{\mr{op}}$.

\begin{rem*}
 A vertex of $\Phi^{\mr{co}}(\Gamma\times\Delta^1,
 \mr{Sch}^{\mr{op}})^{\mr{prop}}$ over $[n]\in\mbf{\Delta}^{\mr{op}}$ is
 a  diagram in $\mr{Sch}$ of the form (\ref{diagarmap}). The functor
 $\mc{H}_{\top}$ sends this diagram to a sequence
 \begin{align*}
  \mr{Mor}(I_{X_0},J_{Y_0})&\boxtimes
  \mr{Mor}(J_{Y_0},J_{Y_1})\boxtimes
  \mr{Mor}(J_{Y_1},J_{Y_2})\boxtimes
  \dots\boxtimes
  \mr{Mod}(J_{Y_{n-1}},J_{Y_n})\\
  &\rightarrow
  \mr{Mor}(I_{X_1},J_{Y_1})\boxtimes
  \mr{Mor}(J_{Y_1},J_{Y_2})\boxtimes\dots\boxtimes
  \mr{Mod}(J_{Y_{n-1}},J_{Y_n})\\
  &\rightarrow
  \dots
  \rightarrow
  \mr{Mor}(I_{X_n},J_{Y_n})
 \end{align*}
 in $\mc{A}^{\circledast}$. Here, $\mr{Mor}(J_{Y_i},J_{Y_{i+1}})$ is
 taken over the functor $\mc{D}(f\colon Y_{i+1}\rightarrow
 Y_i)=:f^*\colon\mc{D}(Y_i)\rightarrow\mc{D}(Y_{i+1})$. 
 Since we have the morphism
 $\mbf{1}_{\mc{A}}\rightarrow\mr{Mor}(J_{Y_i},J_{Y_{i+1}})$
 corresponding to $f^*J_{Y_i}\simeq
 J_{Y_{i+1}}\xrightarrow{\mr{id}}J_{Y_{i+1}}$ the above sequence yields
 a sequence in $\mc{A}^{\circledast}$
 \begin{align*}
  \mr{Mor}(I_{X_0},J_{Y_0})\boxtimes\mbf{1}\boxtimes\dots\boxtimes\mbf{1}
   \rightarrow
  \mr{Mor}(I_{X_1},J_{Y_1})\boxtimes\mbf{1}\boxtimes\dots\boxtimes\mbf{1}
  \rightarrow\dots\rightarrow
  \mr{Mor}(I_{X_n},J_{Y_n}).
 \end{align*}
 This functor $\mc{H}_{\top,\mbf{1}}$ will be constructed in
 \ref{insertone}. Taking the tensor product in $\mc{A}$, this sequence
 yields
 \begin{equation*}
  \mr{Mor}(I_{X_0},J_{Y_0})
   \rightarrow
  \mr{Mor}(I_{X_1},J_{Y_1})
  \rightarrow\dots\rightarrow
  \mr{Mor}(I_{X_n},J_{Y_n}).
 \end{equation*}
 This is the functor $\mc{H}_{\bullet}$ which will be constructed in
 \ref{taketensordefH}.
\end{rem*}

\subsection{}
\label{oversandqcons}
Let $\overleftarrow{s}\colon\mbf{\Delta}^{\mr{op}}\rightarrow
\mbf{\Delta}^{\mr{op}}$ be the functor $(-)^{\triangleleft}$.
First, let us prepare some result on $\overleftarrow{s}$. Let
$F\colon\mbf{\Delta}^{\mr{op}}\times\Delta^1\rightarrow
\mbf{\Delta}^{\mr{op}}$ be a functor sending $([n],i)$ to $[n+1-i]$,
$F|_{\mbf{\Delta}^{\mr{op}}\times\Delta^{\{0\}}}=\overleftarrow{s}$,
$F|_{\mbf{\Delta}^{\mr{op}}\times\Delta^{\{1\}}}=\mr{id}$, and
$([n],0)\rightarrow([n],1)$ to the map
$d\colon[n]^{\triangleleft}\rightarrow[n]$ such that
$d|_{[n]}=\mr{id}$. This defines a natural transformation of functors
$\overleftarrow{S}\colon\overleftarrow{s}\rightarrow\mr{id}$.

Let $A\colon\mbf{\Delta}^{\mr{op}}\rightarrow\sSet^+$ be a functor such
that
\begin{quote}
 (*) For any vertex $[n]\in\mbf{\Delta}^{\mr{op}}$, $A([n])$ is an
 $\infty$-category, and for any inert map $f\colon[n]\rightarrow[m]$
 such that $m\in[m]$ is sent to $n\in[n]$, the functor between
 $\infty$-categories $A(f)$ is a categorical fibration.
\end{quote}
Since $A$ is assumed to be fibrant with respect to the projective model
structure, $\mc{A}:=\mr{N}^+_A\mbf{\Delta}^{\mr{op}}\rightarrow
\mbf{\Delta}^{\mr{op}}$ (cf.\ \cite[\S 3.2.5]{HTT} for the notation) is
a coCartesian fibration. The natural transformation
$\overleftarrow{S}$ induces a functor
$A\circ\overleftarrow{s}\rightarrow A$ in
$(\sSet^+)^{\mbf{\Delta}^{\mr{op}}}$.
By assumption (*), this morphism is a fibration with respect to the
projective model structure. Thus, invoking \cite[3.2.5.18]{HTT}, we
have a fibration of coCartesian fibrations
\begin{equation}
 \label{dfnoffunctmapqshift}
 q_{\mc{A}}\colon\overleftarrow{s}^*\mc{A}\simeq
  \mr{N}_{A\circ\overleftarrow{s}}^+(\mbf{\Delta}^{\mr{op}})
  \rightarrow
  \mr{N}^+_A(\mbf{\Delta}^{\mr{op}})=:\mc{A}
\end{equation}
where the first isomorphism of simplicial sets by (adjoint of)
\cite[3.2.5.14]{HTT}.
We often abbreviate $q_{\mc{A}}$ by $q$.
This map $q_{\mc{A}}$ is, in particular, a categorical fibration by
\cite[B.2.7]{HA}.

\begin{rem*}
 Let $\mc{A}\rightarrow\mbf{\Delta}^{\mr{op}}$ be a coCartesian
 fibration. Then $\mc{A}$ is coCartesian equivalent to
 $\mr{N}^+_A(\mbf{\Delta}^{\mr{op}})$ such that $A$ satisfies (*).
 Indeed, let $\mbf{\Delta}^{\mr{op}}\rightarrow\sSet^+$ be a functor,
 and put the Reedy model structure on
 $(\sSet^+)^{\mbf{\Delta}^{\mr{op}}}$ associated to the Cartesian model
 structure on $\sSet^+$.
 Recall that for a simplicial set $A$ and
 $X\in(\sSet^+)^{\mbf{\Delta}^{\mr{op}}}$, an object $\mr{hom}(A,X)$ in
 $\sSet^+$ is defined in \cite[4.1]{Dug}. Now, for any cofibration
 $A\rightarrow B$ of simplicial sets and a Reedy fibrant object $X$, the
 induced map $\mr{hom}(B,X)\rightarrow\mr{hom}(A,X)$ is a fibration in
 $\sSet^+$ by \cite[4.5]{Dug}. In particular, in view of
 \cite[4.2]{Dug}, the the map $X([n])\rightarrow X([m])$ is a fibration
 for any injective map $[m]\rightarrow[n]$.
 This implies that any Reedy fibrant object satisfies (*).
 Now, for $\mf{F}_{\overline{X}}(\mbf{\Delta}^{\mr{op}})$, where
 $\overline{X}$ is the object
 $(\mc{A}\rightarrow\mbf{\Delta}^{\mr{op}})^{\natural}$ in
 $(\sSet^+)_{/\mr{N}(\mbf{\Delta}^{\mr{op}})}$, there exists a objectwise
 weak equivalence, thus Reedy weak equivalence,
 $\mf{F}_{\overline{X}}(\mbf{\Delta}^{\mr{op}})\simeq
 A_\bullet$ such that $A_\bullet$ is a Reedy fibrant, thus the claim follows
\end{rem*}

\begin{lem}
 \label{shiftfunccarted}
 Let further assume that $\mc{A}$ is an $\infty$-operad.
 Let $e\colon x\rightarrow y$ be an edge in $\overleftarrow{s}^*\mc{A}$
 over $[n]$ such that $\sigma^0_!x\rightarrow\sigma^0_!y$
 is an equivalence. Then $e$ is a $q$-Cartesian edge.
 If $t\in\overleftarrow{s}^*\mc{A}$ is an object such that
 $\sigma^0_!(t)$ is a final object in $\mc{A}_{[1]}$, then $t$ is
 $q$-final.
\end{lem}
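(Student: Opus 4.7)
The strategy is to reduce both claims via the Segal decomposition of the $\infty$-operad $\mc{A}$ to a calculation on the fiber over $[n]$. Since $q$ is a map of coCartesian fibrations over $\mbf{\Delta}^{\mr{op}}$ which preserves coCartesian edges, Lemma \ref{cartedgedetelem} reduces the first claim to showing that $e$ is $q_{[n]}$-Cartesian for the fiber map $q_{[n]}\colon\mc{A}_{[n+1]}\to\mc{A}_{[n]}$. For the second claim, decomposing mapping spaces over the coCartesian fibrations to $\mbf{\Delta}^{\mr{op}}$ and using that $q$ commutes with coCartesian transport, the $q$-finality of $t$ reduces to showing that $\mr{Map}_{\mc{A}_{[n+1]}}(s',t)\to\mr{Map}_{\mc{A}_{[n]}}(q_{[n]}s',q_{[n]}t)$ is an equivalence for every $s'\in\mc{A}_{[n+1]}$.

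The central dictionary is the following. Under the Segal decomposition
\[\mc{A}_{[n+1]}\simeq\mc{A}_{\{0,1\}}\times^{\mr{cat}}_{\mc{A}_{\{1\}}}\mc{A}_{\{1,\dots,n+1\}},\]
the map $q_{[n]}$ is identified with projection onto the second factor (after the shift identification $\mc{A}_{\{1,\dots,n+1\}}\simeq\mc{A}_{[n]}$ induced by $\overleftarrow{S}$ at $[n]$, namely the $\mbf{\Delta}$-map $[n]\hookrightarrow[n+1]$, $i\mapsto i+1$). Applying $\overleftarrow{s}=(-)^{\triangleleft}$ to the inert morphism $\sigma^0\colon[0]\to[n]$ yields the inert inclusion $\{0,1\}\hookrightarrow[n+1]$, so the coCartesian lift $\sigma^0_!\colon(\overleftarrow{s}^*\mc{A})_{[n]}\to(\overleftarrow{s}^*\mc{A})_{[0]}$ is identified with projection onto the first Segal factor $\mc{A}_{\{0,1\}}\simeq\mc{A}_{[1]}$. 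In particular, the hypothesis ``$\sigma^0_!(e)$ is an equivalence'' translates to the first-factor component $e_{\{0,1\}}\colon x_{\{0,1\}}\to y_{\{0,1\}}$ being an equivalence in $\mc{A}_{\{0,1\}}$.

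The remainder is formal. The target map $\mc{A}_{\{0,1\}}\to\mc{A}_{\{1\}}$ comes from the inert $\mbf{\Delta}$-map $\sigma^1\colon[0]\to[1]$ sending max to max, so by condition (*) on $A$ it is a categorical fibration; Lemma \ref{fiberprodcatmorphc} therefore computes the mapping spaces in the Segal pullback as a genuine homotopy pullback. For the first claim, applying this computation on both sides and cancelling the common factor $\mr{Map}_{\mc{A}_{\{1,\dots,n+1\}}}(\bar{z},\bar{x})$ reduces the $q_{[n]}$-Cartesian condition on $e$ to the Cartesian condition for $e_{\{0,1\}}$ over the target map, which is automatic once $e_{\{0,1\}}$ is an equivalence. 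For the second claim, the same decomposition reduces the desired equivalence to showing that $\mr{Map}_{\mc{A}_{\{0,1\}}}(s'_{\{0,1\}},t_{\{0,1\}})\to\mr{Map}_{\mc{A}_{\{1\}}}(S'_1,T_1)$ is an equivalence. The source is contractible by the finality of $t_{\{0,1\}}$ in $\mc{A}_{\{0,1\}}\simeq\mc{A}_{[1]}$, and the target is contractible because $\mc{A}_{\{1\}}\simeq\mc{A}_{[0]}$ is a contractible Kan complex (as $\mc{A}$ is an $\infty$-operad), so both sides being contractible forces the map to be an equivalence.

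The main technical hurdle is establishing the dictionary: one must carefully trace the action of $\overleftarrow{s}$ on the inert $\sigma^0$ and the natural transformation $\overleftarrow{S}$ to identify $q_{[n]}$ and $\sigma^0_!$ with the second and first Segal projections respectively. Once this is set up, the remainder is a direct application of Lemma \ref{fiberprodcatmorphc} and standard homotopy pullback manipulations.
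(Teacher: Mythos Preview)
Your proof is correct and follows essentially the same approach as the paper: both arguments compute the relevant mapping spaces via the Segal decomposition of the $\infty$-operad $\mc{A}$ and observe that the $\sigma^0_!$-component splits off as an independent factor (controlled by the hypothesis), leaving the $q$-part unchanged. The only organizational difference is that you first reduce to the fiber over $[n]$ (via Lemma~\ref{cartedgedetelem} for the Cartesian claim and coCartesian transport for $q$-finality) and then use the two-step splitting $\mc{A}_{[n+1]}\simeq\mc{A}_{\{0,1\}}\times^{\mr{cat}}_{\mc{A}_{\{1\}}}\mc{A}_{\{1,\dots,n+1\}}$, whereas the paper works directly with $\mr{Map}^{\phi}$ for arbitrary $\phi\colon[m]\to[n]$ and decomposes all the way to a product of $\mc{A}_{[1]}$-factors in one step.
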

\begin{proof}
 Let $\mc{C}\rightarrow\mbf{\Delta}^{\mr{op}}$ be a map,
 $\phi\colon[a]\rightarrow[b]$ in $\mbf{\Delta}^{\mr{op}}$
 and $c\in\mc{C}_{[a]}$,
 $d\in\mc{C}_{[b]}$. Then $\mr{Map}^{\phi}(c,d)$ denotes the union of
 connected components of $\mr{Map}(c,d)$ lying over $\phi$.
 Let us show the first claim.
 Since $q$ is an inner fibration, it suffices to
 show by \cite[2.4.4.3]{HTT} that for any $\phi\colon[m]\rightarrow[n]$
 in $\mbf{\Delta}^{\mr{op}}$ and
 $z\in(\overleftarrow{s}^*\mc{A})_{[m]}$, the diagram
 \begin{equation*}
  \xymatrix{
   \mr{Map}^{\phi}_{\overleftarrow{s}^*\mc{A}}(z,x)\ar[r]\ar[d]&
   \mr{Map}^{\phi}_{\overleftarrow{s}^*\mc{A}}(z,y)\ar[d]\\
  \mr{Map}^{\phi}_{\mc{A}}(q(z),q(x))\ar[r]&
   \mr{Map}^{\phi}_{\mc{A}}(q(z),q(y))
   }
 \end{equation*}
 induced by $e$ is a homotopy Cartesian diagram.
 Since $A$ is an $\infty$-operad, for any $w\in\mc{A}_{[n]}$, we have homotopy
 equivalences
 \begin{align*}
  &\mr{Map}^{\phi}_{\overleftarrow{s}^*\mc{A}}(z,w)
  \simeq
  \mr{Map}_{(\overleftarrow{s}^*\mc{A})_{[0]}}((\sigma^0\circ\phi)_!(z),\sigma^0_!(w))
  \times
  \prod_{0<i\leq n}
  \mr{Map}_{\mc{A}_{[1]}}\bigl((\rho^i\circ\phi)_!(q(z)),
  \rho^i_!(q(w))\bigr)\\
  &\mr{Map}^{\phi}_{\mc{A}}(q(z),q(w))
  \simeq
  \prod_{0<i\leq n}
  \mr{Map}_{\mc{A}_{[1]}}\bigl((\rho^i\circ\phi)_!(q(z)),
  \rho^i_!(q(w))\bigr)
 \end{align*}
 where $\rho^i\colon[n]\rightarrow[1]$ is the inert map,
 which concludes the proof. Let us show the second
 claim. In view of \cite[4.3.1.13]{HTT}, it suffices to show that
 $(t,\mr{id})$ is a final object of
 $\overleftarrow{s}^*\mc{A}\times_{\mc{A}}\mc{A}_{/q(t)}$. For this, we
 must show that for any $z':=(z,q(z)\rightarrow q(t))$, the space
 $\mr{Map}(z',(t,\mr{id}))$ is weakly contractible.  Applying Lemma
 \ref{fiberprodcatmorphc}, the verification is similar to the first
 half.
\end{proof}

\begin{rem*}
 The edge $e$ is {\em not} $q$-coCartesian.
\end{rem*}

\subsection{}
\label{intgamanddiag}
Recall the notations from \ref{gammaanddual}.
Let $\phi\colon[n]\rightarrow[m]$ in $\mbf{\Delta}^{\mr{op}}$, and put
$D_\phi:=\Gamma\times_{\gamma,\mbf{\Delta}^{\mr{op}},\phi}\Delta^1$.
We put $D_i:=D_{\phi}\times_{\Delta^1}\{i\}$ for $i=0,1$.
Let us describe the category $D_\phi$ more explicitly. It can be
depicted as the following diagram:
\begin{equation*}
  \xymatrix@C=10pt{
  [n]\ar[d]_{\phi}&
  0\ar[r]&1\ar[r]&\dots\ar[r]&\phi(0)\ar[r]\ar[dlll]_{\alpha_0}\ar[r]&
  \dots\ar[r]&\phi(k)\ar[r]\ar[dll]_{\alpha_k}&
  \dots\ar[r]&\phi(m)\ar[r]\ar[dll]_{\alpha_m}&
  \dots\ar[r]&0\\
 [m]&
  0\ar[r]&1\ar[r]&\dots\ar[r]&k\ar[r]&\dots\ar[r]&m.}
\end{equation*}
Here $\alpha_k$ for $k\in[m]$ is the unique map from $([n],\phi(k))$ to
$([m],k)$.
We define a function $\phi'\colon[\phi(m)]\rightarrow[m]$
by $\phi'(i):=\min\bigl\{k\in[m]\mid i\leq\phi(k)\bigr\}$
for $i\in[n]$. The unique map from $([n],i)$ to $([m],\phi'(i))$ is
denoted by $\beta_i$. By construction, $\alpha_k=\beta_{\phi(k)}$.

Let us define another functor
$\delta\colon\Gamma\rightarrow\mbf{\Delta}^{\mr{op}}$. We put
$\delta([n],i):=[n-i]$. For a map $f\colon([n],i)\rightarrow([m],j)$
given by a map $\phi\colon[m]\rightarrow[n]$, we define
$\delta(f)\colon[n-i]\rightarrow[m-j]$ in $\mbf{\Delta}^{\mr{op}}$ to be
map corresponding to the function sending $k\in[m-j]$ to $\phi(k+j)-i$.
Note that if $\phi$ is an inert map, then so is $\delta(f)$ for any $f$
over $\phi$.

\begin{dfn}
 \label{dfnofphocoprop}
 Let
 $\Phi^{\mr{co}}(\Gamma\times\Delta^1,\mr{Sch}^{\mr{op}})^{\mr{prop}}$
 be the simplicial subset of $\Phi^{\mr{co}}(\Gamma\times\Delta^1,
 \mr{Sch}^{\mr{op}})$ consisting of simplices
 $\Delta^k\rightarrow\Phi^{\mr{co}}(\Gamma\times\Delta^1,
 \mr{Sch}^{\mr{op}})$ satisfying the following two conditions:
 \begin{itemize}
  \item For any vertex corresponding to a functor
	$f\colon\Gamma_{[n]}\times\Delta^1\rightarrow
	\mr{Sch}^{\mr{op}}$,
	the diagram $f|_{\Delta^{\{i,i+1\}}\times\Delta^1}$, considered
	as a square in $\mr{Sch}$, is a pullback square for any $0\leq
	i<n$;
	
  \item for any edge corresponding to a functor $f\colon
	D_{\phi}\times\Delta^1\rightarrow\mr{Sch}^{\mr{op}}$ over
	$\phi\colon[n]\rightarrow[m]$ in $\mbf{\Delta}^{\mr{op}}$, the
	morphism $f(\alpha_k,\{1\})$ is proper and $f(\alpha_k,\{0\})$
	is an equivalence for any $k\in[m]$.
 \end{itemize}
\end{dfn}

Now, let us construct a functor
$\alpha\colon\Phi^{\mr{co}}(\Gamma\times\Delta^1,\mr{Sch}^{\mr{op}})^{\mr{prop}}
\rightarrow
\overleftarrow{s}^*\mr{Corr}^{\circledast}$ of categories.
A vertex of
$\Phi^{\mr{co}}(\Gamma\times\Delta^1,\mr{Sch}^{\mr{op}})^{\mr{prop}}$
corresponds to a diagram
$F\colon(\Delta^n\times\Delta^1)^{\mr{op}}\rightarrow\mr{Sch}$ as
follows:
\begin{equation}
 \label{diagarmap}
 \xymatrix{
  X_n\ar[r]\ar[d]\ar@{}[rd]|\square&
  X_{n-1}\ar[r]\ar[d]\ar@{}[rd]|\square&
  \dots\ar[r]\ar@{}[rd]|\square&
  X_1\ar[d]\ar[r]\ar@{}[rd]|\square&X_0\ar[d]\\
 Y_n\ar[r]&Y_{n-1}\ar[r]&\dots\ar[r]&Y_1\ar[r]&Y_0.
  }
\end{equation}
Here, we put $X_i:=F(i,1)$ and $Y_i:=F(i,0)$.
We note that the functor $F$ is contravariant, which is why $(i,1)$
corresponds to $X_i$.
A morphism from $(X_i\rightarrow Y_i)$ to $(X'_i\rightarrow Y'_i)$ is a
morphism of diagrams such that the morphisms $X_i\rightarrow X'_i$ are
proper and morphisms $Y_i\rightarrow Y'_i$ are equivalences.
The functor $\alpha$ in (\ref{seqfuncdefex}) is defined to be the functor
sending the diagram above to the following object in
$\mr{Seq}_{n+1}\mr{Corr}(\mr{Sch}^{\triangleright})$
\begin{equation*}
 \xymatrix{
  X_n\ar[r]\ar[d]\ar@{}[rd]|\square&
  \dots\ar[r]\ar@{}[rd]|\square&
  X_2\ar[r]\ar@{}[rd]|\square\ar[d]&
  X_1\ar[d]\ar[r]\ar@{}[rd]|\square&X_0\ar[d]\ar[r]
  &{\infty}.\\
 Y_n\ar[d]^{=}\ar[r]&\dots\ar[r]&Y_2\ar[r]\ar[d]^{=}&
  Y_1\ar[r]\ar[d]^{=}&Y_0&\\
 Y_n\ar[r]\ar[d]^{=}&\dots\ar[r]&Y_2\ar[r]\ar[d]^{=}&Y_1&&\\
 Y_n\ar[r]\ar[d]^-{=}&\dots\ar[r]&Y_2&&&\\
 \vdots\ar[d]^{=}&\dots&&&&\\
 Y_n&&&&&
  }
\end{equation*}
By definition, we have the following commutative diagram of functors
\begin{equation}
 \label{dfnofphocoprop-diag2}
 \xymatrix@C=20pt{
  \Phi^{\mr{co}}(\Gamma\times\Delta^1,\mr{Sch}^{\mr{op}})^{\mr{prop}}
  \ar[rr]\ar[d]_{\Delta^{\{0\}}\rightarrow\Delta^1}&&
  \overleftarrow{s}^*\mr{Corr}^{\circledast}
  \ar[d]^{q_{\mr{Corr}}}\\
 \Phi^{\mr{co}}(\Gamma,\mr{Sch}^{\mr{op}})^{\simeq}
  _{/\mbf{\Delta}^{\mr{op}}}
  \ar[r]&(\mr{Corr}(\mr{Sch})^{\mr{prop}}
  _{\mr{sep};\mr{all}})^{\circledast}\ar[r]&
  \mr{Corr}^{\circledast}.
  }
\end{equation}

\subsection{}
\label{dfnofdeltandpsi}

Let $\mc{C}\rightarrow\mbf{\Delta}^{\mr{op}}$ be a
coCartesian fibration.
Let $\Psi\mc{C}:=\gamma_*\delta^*(\mc{C})$. By
\cite[3.2.2.12]{HTT}, the functor
$\Psi\mc{C}\rightarrow\mbf{\Delta}^{\mr{op}}$ is a coCartesian
fibration. A vertex of $\Psi\mc{C}$ over $[n]\in\mbf{\Delta}^{\mr{op}}$
corresponds to a functor $\Delta^n\rightarrow\mc{C}$ over
$\mbf{\Delta}^{\mr{op}}$ where
$\Delta^n\rightarrow\mbf{\Delta}^{\mr{op}}$ sends $i\in\Delta^n$ to
$[n-i]$ and $i\rightarrow i+1$ to $d^0\colon[n-i]\rightarrow[n-i-1]$.
Now, we define $\Psi^{\star}\mc{C}$ to be the full
subcategory of $\Psi\mc{C}$ spanned by the functors
$\phi\colon\Delta^n\rightarrow\mc{C}$ such
that the following condition holds:
\begin{itemize}
 \item For each $0\leq i<n$, the edge $\phi(i\rightarrow(i+1))$ in
       $\mc{C}$ is coCartesian over the map
       $[n-i]\rightarrow[n-i-1]$ in $\mbf{\Delta}^{\mr{op}}$ defined by
       the inert map $d^0\colon[n-i-1]\rightarrow[n-i]$.
\end{itemize}
Let $\mc{C}\rightarrow\mc{D}$ be a map of coCartesian fibrations over
$\mbf{\Delta}^{\mr{op}}$ which preserves coCartesian edges over the
inert map $d^0$.
Then the induced functor $\Psi\mc{C}\rightarrow\Psi\mc{D}$ induces
$\Psi^{\star}\mc{C}\rightarrow\Psi^{\star}\mc{D}$.

Now, let us describe coCartesian edges explicitly.
Recall the notations of \ref{intgamanddiag}.
Let $\phi\colon[m]\rightarrow[n]$ be a map in $\mbf{\Delta}^{\mr{op}}$.
Then the diagram $\delta\colon D_\phi\rightarrow\mbf{\Delta}^{\mr{op}}$
can be depicted as
\begin{equation*}
 \xymatrix@C=8pt{
  [n]\ar[d]_{\phi}&
  [n]\ar[r]&[n-1]\ar[r]&\dots\ar[r]&[n-\phi(0)]\ar[r]\ar[dlll]_-{\delta(\alpha_0)}\ar[r]&
  \dots\ar[r]&[n-\phi(k)]\ar[r]\ar[dll]_-{\delta(\alpha_k)}&
  \dots\ar[r]&[n-\phi(m)]\ar[r]\ar[dll]_-{\delta(\alpha_m)}&
  \dots\ar[r]&[0]\\
 [m]&
  [m]\ar[r]&[m-1]\ar[r]&\dots\ar[r]&[m-k]\ar[r]&\dots\ar[r]&[0].}
\end{equation*}
In the case $\phi$ is an inert map, the map $\delta(\alpha_k)$ is the
unique inert map which sends $0$ to $0$.
A vertex of $\Psi\mc{C}$ over $[n]$ (resp.\ $[m]$) is a functor
$D_0\rightarrow\mc{C}$ (resp.\ $D_1\rightarrow\mc{C}$) over
$\mbf{\Delta}^{\mr{op}}$, and an edge between these vertices is a
functor $F\colon D_\phi\rightarrow\mc{C}$.
The description of coCartesian edges of \cite[3.2.2.12]{HTT},
coCartesian edges over $\phi$ are exactly the functors $F$ such that
$F(\alpha_k)$ are coCartesian edges in $\mc{C}$ over
$\mbf{\Delta}^{\mr{op}}$. This description, in particular, implies that
the induced map $\Psi^\star\mc{C}\rightarrow\mbf{\Delta}^{\mr{op}}$
is a coCartesian fibration as well.

Now, consider the following diagram
\begin{equation*}
 \xymatrix@C=70pt{
  &\mbf{\Delta}^{\mr{op}}
  \ar[d]_{z}\ar@/^10pt/[rd]^-{\mr{id}}\ar@/_10pt/[ld]_-{\mr{id}}&\\
 \mbf{\Delta}^{\mr{op}}&
  \Gamma\ar[r]^-{\gamma}\ar[l]_-{\delta}&
  \mbf{\Delta}^{\mr{op}}
  }
\end{equation*}
where $z$ is the functor sending $[n]$ to $([n],0)$. The morphism of
functors
$\gamma_*\delta^*\rightarrow\gamma_*z_*z^*\delta^*\simeq\mr{id}$ induces
the functor $\Psi\mc{C}\rightarrow\mc{C}$
over $\mbf{\Delta}^{\mr{op}}$. Thus, we have a functor
$G\colon\Psi^{\star}\mc{C}\rightarrow
\mc{C}$. Our desired functor $\beta$ in (\ref{seqfuncdefex}) is the
functor $G$ in the case where
$\mc{C}=\overleftarrow{s}^*\mr{Corr}^{\circledast}$.

\begin{rem*}
 Let us informally describe objects of
 $\Psi(\overleftarrow{s}^*\mc{A}^{\circledast})$.
 An object of $\mc{A}^{\circledast}_{[n]}$
 is denoted by $M_1\boxtimes M_2\boxtimes\dots\boxtimes M_n$ where
 $M_i\in\mc{A}$ by identifying $\mc{A}^{\circledast}_{[n]}$ and
 $\mc{A}^n$. Then objects of
 $\Psi(\overleftarrow{s}^*\mc{A}^{\circledast})$ over
 $[n]\in\mbf{\Delta}^{\mr{op}}$ are diagrams of the form
 \begin{equation*}
  (M^0_{-\infty}\boxtimes M^0_1\boxtimes M^0_2
   \boxtimes\dots\boxtimes M^0_n)
   \rightarrow
   (M^1_{-\infty}\boxtimes M^1_2\boxtimes\dots\boxtimes M^1_n)
   \rightarrow\dots\rightarrow
   (M^n_{-\infty}),
 \end{equation*}
 where the map $(M^i_{-\infty}\boxtimes M^i_{i+1}\boxtimes\dots\boxtimes
 M^i_n)\rightarrow(M^{i+1}_{-\infty}\boxtimes M^{i+1}_{i+2}\boxtimes\dots\boxtimes
 M^{i+1}_n)$ consists of data $M^i_{-\infty}\otimes M^i_{i+1}\rightarrow
 M^{i+1}_{-\infty}$ and $M^i_j\rightarrow M^{i+1}_j$ for $j\geq i+2$.
 Objects of $\Psi\mc{A}^{\circledast}$ over $[n]$ are diagrams without
 the $M^i_{-\infty}$-factors. In order for it to belong to
 $\Psi^{\star}\mc{A}^{\circledast}$, the map $M^i_j\rightarrow
 M^{i+1}_j$ should be equivalences. Finally, $\mc{H}_{\top}$ sends the
 diagram (\ref{diagarmap}) to the diagram above such that
 $M^i_{-\infty}\simeq\Mor(I_{X_i},J_{Y_i})$,
 $M^i_j\simeq\Mor(J_{Y_j},J_{Y_j})$.
\end{rem*}

\begin{lem}
 \label{eapropofpsi}
 \begin{enumerate}
  \item Let $p\colon\mc{C}\rightarrow\mc{D}$ be a categorical fibration
	between coCartesian fibrations over $\mbf{\Delta}^{\mr{op}}$.
	Then $\Psi p\colon\Psi\mc{C}\rightarrow\Psi\mc{D}$ is a
	categorical fibration. Moreover, an edge $e\colon
	D_\phi\rightarrow\mc{C}$ of $\Psi\mc{C}$
	is $\Psi p$-Cartesian if $e(\beta_i)$ is $p$-Cartesian for
	$i\leq\phi(m)$ and $e([n],i)$ for $i>\phi(m)$ is $p$-final
	{\normalfont(}recall from {\normalfont\ref{intgamanddiag}} for the definition of $\beta_i${\normalfont)}.
	
  \item\label{eapropofpsi-2}
       The functor $G$ is a categorical equivalence.
 \end{enumerate}
\end{lem}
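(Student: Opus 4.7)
The plan is to handle the two parts separately. For part~(1), I will first verify $\Psi p$ is a categorical fibration, then analyze the Cartesian edges via the universal property of $\gamma_*$. Pulling back along $\delta$ preserves categorical fibrations (since base change preserves them), so $\delta^* p$ is a categorical fibration over $\Gamma$. The pushforward $\gamma_*$ along the Cartesian fibration $\gamma$ also preserves categorical fibrations: the right lifting property with respect to $K' \hookrightarrow K$ for $\gamma_* p$ translates via the adjunction of \ref{defpushpullsim} into the right lifting property for $p$ with respect to $K' \times_{\mbf{\Delta}^{\mr{op}}} \Gamma \hookrightarrow K \times_{\mbf{\Delta}^{\mr{op}}} \Gamma$, which holds because $p$ is a categorical fibration.

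For the characterization of $\Psi p$-Cartesian edges, I will unwind the adjunction. An edge $e \colon D_\phi \to \mc{C}$ of $\Psi\mc{C}$ is $\Psi p$-Cartesian iff it satisfies the $p$-relative right Kan extension property along the inclusion $D_1 \hookrightarrow D_\phi$ with respect to the restriction $e|_{D_1}$. Applying the pointwise criterion (the relative version of \cite[4.3.2.15]{HTT} combined with \cite[4.3.1.4, 4.3.1.7]{HTT}) vertex by vertex on $D_0$: for $v = ([n], i)$ with $i \le \phi(m)$, the slice $(D_1)_{v/} := (D_\phi)_{v/} \times_{D_\phi} D_1$ admits $\beta_i \colon ([n],i) \to ([m], \phi'(i))$ as initial object, and the relative limit condition reduces to $e(\beta_i)$ being $p$-Cartesian; for $v = ([n], i)$ with $i > \phi(m)$, the slice $(D_1)_{v/}$ is empty (no $k \in [m]$ satisfies $i \le \phi(k)$), and the relative limit over an empty diagram is exactly the condition that $e(v)$ is $p$-final. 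This matches the statement.

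For part~(2), I will verify the two hypotheses of \cite[3.3.1.5]{HTT}: that $G$ preserves coCartesian edges over $\mbf{\Delta}^{\mr{op}}$ and induces equivalences on each fiber. For the fiberwise statement, $\Psi^\star\mc{C}_{[n]}$ is the full subcategory of $\mr{Fun}_{\mbf{\Delta}^{\mr{op}}}(\Delta^n, \mc{C})$ (with structural map $\delta|_{\Delta^n}\colon \Delta^n \to \mbf{\Delta}^{\mr{op}}$, $i \mapsto [n-i]$) spanned by functors sending each $i \to i+1$ to a $\mc{C} \to \mbf{\Delta}^{\mr{op}}$-coCartesian edge. Evaluation at $0 \in \Delta^n$ is a trivial fibration onto $\mc{C}_{[n]}$ by an iterated coCartesian lifting argument along the totally ordered diagram $\Delta^n$. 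For the preservation of coCartesian edges: a coCartesian edge $F \colon D_\phi \to \mc{C}$ over $\phi \colon [n] \to [m]$ satisfies $F(\alpha_k)$ $p$-coCartesian for all $k$; in particular $F(\alpha_0)$ is coCartesian over $\delta(\alpha_0) \colon [n-\phi(0)] \to [m]$. The $\Psi^\star$-condition forces $F$ restricted to the chain $([n],0) \to ([n], 1) \to \cdots \to ([n], \phi(0))$ in $D_0$ to be a composition of coCartesian edges, hence itself coCartesian over $[n] \to [n-\phi(0)]$. The edge $G(F)$ corresponds to the composition $([n], 0) \to ([n], \phi(0)) \xrightarrow{\alpha_0} ([m], 0)$, and being the composition of two coCartesian edges is coCartesian over $\phi$.

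The main obstacle is the combinatorial bookkeeping in part~(1): identifying the correct slice categories $(D_1)_{v/}$ for each type of vertex $v \in D_0$ and translating the abstract relative right Kan extension condition into the explicit pointwise form in the statement. Once these slices are computed and the correspondence with $p$-Cartesian edges versus $p$-final objects is pinned down via \cite[4.3.1.4, 4.3.1.7]{HTT}, the result assembles cleanly.
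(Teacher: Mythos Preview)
Your proposal is correct and follows essentially the same route as the paper. The paper cites \cite[B.4.5]{HA} for the categorical fibration claim and \cite[B.4.8]{HA} for the Cartesian-edge criterion via $p$-right Kan extension along $D_1\hookrightarrow D_\phi$, then treats the two cases $i\le\phi(m)$ (initial object $\beta_i$, reducing to $e(\beta_i)$ $p$-Cartesian) and $i>\phi(m)$ (empty slice, reducing to $p$-finality) exactly as you do; for part~(2) it likewise invokes \cite[3.3.1.5]{HTT} and \cite[4.3.2.15]{HTT}. One small point: the statement only asserts a sufficient condition for $\Psi p$-Cartesianness, so your ``iff'' should be weakened to ``if''.
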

\begin{proof}
 Let us check the first assertion.
 The functor $\Psi p$ is a categorical fibration by \cite[B.4.5]{HA}.
 Consider the following diagram
 \begin{equation*}
  \xymatrix{
   D_1\ar[r]^-{v}\ar[d]_{i}&\mc{C}\ar[d]^{p}\\
  D_{\phi}\ar[r]^-{w}\ar[ur]^{e}&\mc{D}.}
 \end{equation*}
 In order to show that the edge $e$ is Cartesian, it suffices to show
 that the diagram above is a $p$-right Kan extension diagram by invoking
 \cite[B.4.8]{HA}, as usual.
 For this, we must show that the diagram
 $(D_\phi)_{([n],i)/}^{\triangleleft}\rightarrow\mc{C}$ is a $p$-limit
 diagram.
 Let $i\leq\phi(m)$. Then we have the map $\beta_i$ in $D_\phi$.
 This is an initial object of $(D_{\phi})_{([n],i)/}$. Thus, the
 assumption that $e(\beta_i)$ is a $p$-Cartesian edge implies that the
 diagram is $p$-limit.
 If $i>\phi(m)$, then $(D_\phi)_{([n],i)/}$ is empty.
 Then $e([n],i)$ must be a $p$-final object, which follows by
 assumption.

 Let us show the second claim. The functor $G$ is a functor between
 coCartesian fibrations over $\mbf{\Delta}^{\mr{op}}$, and moreover, it
 sends coCartesian edges to coCartesian edges. Thus, by
 \cite[3.3.1.5]{HTT}, it suffices to show that the fibers are trivial
 fibrations. This follows from \cite[4.3.2.15]{HTT}.
\end{proof}

\subsection{}
\label{insertone}
We currently have the following diagram
\begin{equation*}
 \xymatrix@C=40pt{
  \Phi^{\mr{co}}(\Gamma\times\Delta^1,\mr{Sch}^{\mr{op}})^{\mr{prop}}
  \ar[r]_-{\beta^{-1}\circ\alpha}
  \ar[d]_{\iota}^{=\{\Delta^{\{0\}}\hookrightarrow\Delta^1\}}
  \ar@/^15pt/[rr]^-{\mc{H}'_{\top}}&
  \Psi^\star\overleftarrow{s}^*\mr{Corr}^{\circledast}
  \ar[r]\ar[d]^{\Psi q_{\mr{Corr}}}&
  X\ar[r]^-{\rho}\ar[d]_{F'}\ar@{}[rd]|\square&
  \Psi(\overleftarrow{s}^*\mc{A}^{\circledast})
  \ar[d]_{F}^{=\Psi q_{\mc{A}}}\\
 \Phi^{\mr{co}}(\Gamma,\mr{Sch}^{\mr{op}})^{\simeq}
  _{/\mbf{\Delta}^{\mr{op}}}
  \ar[r]\ar@/_15pt/[rrd]_{\mc{H}}&
  \Psi^{\star}\mr{Corr}^{\circledast}
  \ar[r]&
  \Psi^{\star}\mc{A}^{\circledast}
  \ar[d]^{G}_{\sim}\ar[r]&
  \Psi\mc{A}^{\circledast}
  \\
 &&\mc{A}^{\circledast}.&
  }
\end{equation*}
Let $\mc{E}$ be the collection of the edges
$D_\phi\rightarrow\mc{A}^{\circledast}$ of $\Psi\mc{A}^{\circledast}$
such that $\phi$ is the identity. Let $\mc{E}'$ be the collection of the
$F$-Cartesian edges in $\Psi(\overleftarrow{s}^*\mc{A}^{\circledast})$
which sits over edges in $\mc{E}$. Now, the condition (A) of
\cite[3.1.1.6]{HTT} follows by Lemma \ref{eapropofpsi}, (B) follows by
definition, and (C) follows by combining Lemma \ref{shiftfunccarted} and
Lemma \ref{eapropofpsi}. Thus, invoking \cite[3.1.1.6]{HTT}, the
map $(\Psi(\overleftarrow{s}^*\mc{A}^{\circledast}),\mc{E}')\rightarrow
(\Psi\mc{A}^{\circledast},\mc{E})$ in $\sSet^+$ has the right lifting
property with respect to any marked anodyne.
We put the induced marking on $\Psi^{\star}\mc{A}^{\circledast}$ from
$\Psi\mc{A}^{\circledast}$, and to $X$ by the pullback diagram.
The marked simplicial sets are denoted by
$\overline{\Psi^{\star}\mc{A}^{\circledast}}$ and $\overline{X}$
respectively. The map $F'\colon\overline{X}\rightarrow
\overline{\Psi^{\star}\mc{A}^{\circledast}}$ also has the right lifting
property with respect to any marked anodyne.

For $X,Y\in\sSet^+$, the marked simplicial set $X^Y$ is denoted by
$\mr{Fun}^+(Y,X)$.
Now, consider the following sequence of functors:
\begin{align*}
 \mr{Fun}^+(\Phi^{\mr{co}}(\Gamma\times
 \Delta^1,\mr{Sch}^{\mr{op}})^{\mr{prop},\flat},\overline{X})
 &\xrightarrow{a}
 \mr{Fun}^+(\Phi^{\mr{co}}(\Gamma\times
 \Delta^1,\mr{Sch}^{\mr{op}})^{\mr{prop},\flat},
 \overline{\Psi^{\star}\mc{A}^{\circledast}})\\
 \mr{Fun}(\Phi^{\mr{co}}(\Gamma\times
 \Delta^1,\mr{Sch}^{\mr{op}})^{\mr{prop}},\Psi^{\star}\mc{A}^{\circledast})
 &\xrightarrow[\sim]{b}
 \mr{Fun}(\Phi^{\mr{co}}(\Gamma\times
 \Delta^1,\mr{Sch}^{\mr{op}})^{\mr{prop}},\mc{A}^{\circledast}).
\end{align*}
By \cite[3.1.2.3]{HTT}, $a$ has the right lifting property with respect
to any marked anodyne.
On the other hand, by \cite[1.2.7.3]{HTT}, $b$ is a categorical
equivalence.

Now, by \ref{constonetohm}, we have the map $\mbf{1}\rightarrow\mc{H}$ in
the $\infty$-category
$\mr{Fun}(\Phi^{\mr{co}}(\Gamma,\mr{Sch}^{\mr{op}})^{\simeq},
\mc{A}^{\circledast})$.
By composing with $\iota$, this induces the map
$f\colon\mbf{1}\circ\iota\rightarrow(G\circ F')\circ\mc{H}'_{\top}$ in
$\mr{Fun}(\Phi^{\mr{co}}(\Gamma\times
\Delta^1,\mr{Sch}^{\mr{op}})^{\mr{prop}},\mc{A}^{\circledast})$.
Because $b$ is a categorical equivalence, we can take a map $f'$ such
that $b(f')\simeq f$.
For each object $S\in\Phi^{\mr{co}}(\Gamma\times
\Delta^1,\mr{Sch}^{\mr{op}})^{\mr{prop}}$, we have the edge $f(S)$ of
$\Phi^{\mr{co}}(\Gamma\times\Delta^1,\mr{Sch}^{\mr{op}})^{\mr{prop}}$. The
image of this edge $f(S)$ in $\mbf{\Delta}^{\mr{op}}$ is constant.
Since $G$ is a functor over
$\mbf{\Delta}^{\mr{op}}$, the image of $f'(S)$ in
$\mbf{\Delta}^{\mr{op}}$ is constant. Thus, by definition of $\mc{E}$,
$f'$ defines a marked edge of
$\mr{Fun}^+(\Phi^{\mr{co}}(\Gamma\times\Delta^1,\mr{Sch}^{\mr{op}})
^{\mr{prop},\flat},\overline{\Psi^{\star}\mc{A}^{\circledast}})$.
Since $a$ has the right lifting property with respect to any marked
anodyne, we may lift the marked edge $f'$ along $a$, and we get a
functor $\mc{H}'_{\top,\mbf{1}}\colon
\Phi^{\mr{co}}(\Gamma\times\Delta^1,\mr{Sch}^{\mr{op}})^{\mr{prop}}
\rightarrow X$ and a map
$\mc{H}'_{\top,\mbf{1}}\rightarrow\mc{H}'_{\top}$ whose composition with
$G\circ F'$ is equivalent to $f$. Finally, put
$\mc{H}_{\top,\mbf{1}}:=\rho\circ\mc{H}'_{\top,\mbf{1}}$.
By construction, the edge
$\mc{H}_{\top,\mbf{1}}(S)\rightarrow\mc{H}_{\top}(S)$ is $F$-Cartesian
for any $S\in\Phi^{\mr{co}}(\Gamma\times\Delta^1,\mr{Sch}^{\mr{op}})
^{\mr{prop}}$.

\subsection{}
\label{taketensordefH}
Let
$\delta^+\colon\Gamma\times\Delta^1\rightarrow\mbf{\Delta}^{\mr{op}}$ be
the functor whose restriction to $\Gamma\times\{0\}$ is $\delta$ and
$\delta^+(([n],i),1):=[0]$ such that
$\delta^+\bigl((([n],i),0)\rightarrow(([n],i),1)\bigr)$ is equal to the
map $[n-i]\rightarrow[0]$ in $\mbf{\Delta}^{\mr{op}}$ corresponding to
the function $[0]\rightarrow[n-i]$ sending $0$ to $n-i$.
We have the following diagram
\begin{equation*}
 \xymatrix@C=40pt{
  &&\Gamma\ar[d]^{\iota_0}
  \ar@/_10pt/[dl]_{\delta}\ar@/^10pt/[dr]^{\gamma}&\\
 \mbf{\Delta}^{\mr{op}}&
  \mbf{\Delta}^{\mr{op}}\ar[l]_-{\overleftarrow{s}}&
  \Gamma\times\Delta^1
  \ar[r]^-{\gamma^+}\ar[l]_{\delta^+}&
  \mbf{\Delta}^{\mr{op}}\\
 &&\Gamma\ar[u]_{\iota_1}
  \ar@/^10pt/[ull]^-{[1]}\ar@/_10pt/[ru]_-{\gamma}&
  }
\end{equation*}
where $\iota_i$ is the inclusion into $\Delta^{\{i\}}\subset\Delta^1$
and $\gamma^+:=\gamma\circ\mr{pr}_1$.
Let $\mc{C}\rightarrow\mbf{\Delta}^{\mr{op}}$ be a coCartesian
fibration.
We put $\Psi^+\mc{C}:=\gamma^+_*\circ
(\delta^+\circ\overleftarrow{s})^*(\mc{C})$.
We have the map
$\theta_0\colon\Psi^+\rightarrow\Psi\circ\overleftarrow{s}^*$ by using
the adjunction $\mr{id}\rightarrow\iota_{0,*}\iota_0^*$, and
$\theta_1\colon\Psi^+\rightarrow\Phi^{\mr{co}}(\Gamma,(-)_{[1]})$ by
using the adjunction $\mr{id}\rightarrow\iota_{1,*}\iota_1^*$.
We define a category $\Psi^{+,\star}\mc{C}$ by the full subcategory of
$\Psi^+\mc{C}$ spanned by vertices corresponding to the functors
$\phi\colon\Delta^n\times\Delta^1\simeq\Gamma_{[n]}\times\Delta^1
\rightarrow\mc{C}\times
_{\mbf{\Delta}^{\mr{op}},\overleftarrow{s}}\mbf{\Delta}^{\mr{op}}$ over
$\mbf{\Delta}^{\mr{op}}$ satisfying the following condition:
\begin{itemize}
 \item for each $i\in\Delta^n$, the edge $\phi(\{i\}\times\Delta^1)$ is
       a coCartesian edge over the unique active map
       $[n]^{\triangleleft}\rightarrow[0]^{\triangleleft}$ in
       $\mbf{\Delta}^{\mr{op}}$.
\end{itemize}
Similarly to the proof of Lemma \ref{eapropofpsi}.\ref{eapropofpsi-2},
the map
$\theta_0\colon\Psi^{+,\star}\mc{C}\rightarrow
\Psi(\overleftarrow{s}^*\mc{C})$ is a categorical equivalence.
We now define $\mc{H}_\bullet$ as in the following diagram:
\begin{equation*}
 \xymatrix{
  \Phi^{\mr{co}}(\Gamma\times\Delta^1,\mr{Sch}^{\mr{op}})^{\mr{prop}}
  \ar[r]^-{\mc{H}_{\top,\mbf{1}}}\ar@/_10pt/[drr]_-{\mc{H}_\bullet}&
  \Psi(\overleftarrow{s}^*\mc{A}^{\circledast})&
  \Psi^{+,\star}(\mc{A}^{\circledast})
  \ar[l]^-{\sim}_-{\theta_0}\ar[d]^{\theta_1}\\
 &&\Phi^{\mr{co}}(\Gamma,\mc{A}).
  }
\end{equation*}

\begin{lem*}
 \label{fundlemHprop}
 The functor $\mc{H}_\bullet$ preserves coCartesian edges.
\end{lem*}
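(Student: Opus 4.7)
The plan is to verify that $\mc{H}_\bullet$ preserves coCartesian edges by tracing an arbitrary coCartesian edge $e$, lying over some $\phi\colon[m]\to[n]$ in $\mbf{\Delta}^{\mr{op}}$, through the factorization $\mc{H}_\bullet=\theta_1\circ\theta_0^{-1}\circ\mc{H}_{\top,\mbf{1}}$. First I would describe coCartesian edges explicitly at both ends. By the unstraightening description of $\gamma_*$-constructions and Definition~\ref{dfnofphocoprop}, a coCartesian edge in $\Phi^{\mr{co}}(\Gamma\times\Delta^1,\mr{Sch}^{\mr{op}})^{\mr{prop}}$ over $\phi$ is a diagram $D_\phi\times\Delta^1\to\mr{Sch}^{\mr{op}}$ whose $(\alpha_k,\{0\})$-slice is an equivalence (forced by the $\mr{prop}$-marking) and whose $(\alpha_k,\{1\})$-slice is also an equivalence (forced by coCartesianness in the ambient $\Phi^{\mr{co}}$). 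Dually, a coCartesian edge of $\Phi^{\mr{co}}(\Gamma,\mc{A})$ over $\phi$ is a diagram $D_\phi\to\mc{A}$ whose $\alpha_k$-restrictions are equivalences in $\mc{A}$ for every $k\in[m]$.

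Next, I would follow $e$ through $\mc{H}'_\top=\beta^{-1}\circ\alpha$ and the map induced by $\mc{H}_M$: the map $\alpha$ replaces the pair of equivalences at an $\alpha_k$-slice by an equivalence-correspondence in $\mbf{Corr}$, and the monoidal functor $\mc{H}_M\colon\mr{Corr}^{\circledast}\to\mc{A}^{\circledast}$ sends equivalences to equivalences in $\mc{A}^{\circledast}$, so that the image of $e$ in $\Psi^\star\mc{A}^\circledast$ is a $D_\phi$-diagram whose $\alpha_k$-slices are equivalences. The unit-insertion $\mc{H}_{\top,\mbf{1}}$, defined in~\ref{insertone} as an $F$-Cartesian lift of $\mc{H}_\top$ vertex-by-vertex for $F=\Psi q_{\mc{A}}$, unpacks via Lemma~\ref{shiftfunccarted}: a $q_{\mc{A}}$-Cartesian edge is precisely one whose $\sigma^0_!$-image is an equivalence (with other components unit objects). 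Transporting along the categorical equivalence $\theta_0$ of Lemma~\ref{compoftwoconsseqstr} yields a representative in $\Psi^{+,\star}\mc{A}^\circledast$, namely a functor $\Delta^{(-)}\times\Delta^1\to\mc{A}^\circledast$ whose $\{0\}$-layer records the unit-decorated diagram and whose $\{1\}$-layer records the tensored diagram; combining with the preceding observation, each $\alpha_k$-face at both layers is an equivalence in $\mc{A}^\circledast$. Applying $\theta_1$, which is the restriction to $\{1\}$ via $\iota_1^*$, we obtain a functor $D_\phi\to\mc{A}$ whose $\alpha_k$-edges are equivalences, which is exactly the coCartesian condition in $\Phi^{\mr{co}}(\Gamma,\mc{A})$.

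The hard part will be making rigorous the propagation of ``equivalence in $\Psi^\star\mc{A}^\circledast$'' to ``equivalence in $\Psi^{+,\star}\mc{A}^\circledast$'' through the $F$-Cartesian lift; this has to be deduced from a combination of the preceding observation about $\mc{H}_\top$ and the explicit Cartesian-edge characterisations. I plan to handle it by first showing that vertical equivalences in the source are preserved by $\mc{H}_{\top,\mbf{1}}$ — a direct consequence of all vertex-wise structures being lifts along categorical fibrations — and then using the explicit description of $F'$-Cartesian edges in $X$ from~\ref{insertone} together with the fibrewise computations of Lemma~\ref{eapropofpsi} to reduce the general case to the vertex-wise one.
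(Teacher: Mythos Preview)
Your direct-tracing approach has a genuine gap, and it is precisely the reason the paper takes a different route. In $\Psi\mc{C}$ (and in $\Psi^{\star}\mc{C}$, $\Psi^{+,\star}\mc{C}$), an edge over $\phi\colon[n]\to[m]$ in $\mbf{\Delta}^{\mr{op}}$ is a diagram $D_\phi\to\mc{C}$ lying over $\delta$; the coCartesian condition (see~\ref{dfnofdeltandpsi}) asks that each $F(\alpha_k)$ be a coCartesian edge in $\mc{C}$ \emph{over $\delta(\alpha_k)$}, and $\delta(\alpha_k)\colon[n-\phi(k)]\to[m-k]$ is in general a non-trivial, non-inert map. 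Hence the $\alpha_k$-slices at the intermediate stages are not ``equivalences in a fiber'' as you assert, but coCartesian edges over arbitrary maps of $\mbf{\Delta}^{\mr{op}}$. Now the functor $\mc{H}_M\colon\mr{Corr}^{\circledast}\to\mc{A}^{\circledast}$ constructed in~\ref{laxfuncdef} is only a map of generalized $\infty$-operads (a right-lax monoidal functor), so it preserves inert coCartesian edges but \emph{not} coCartesian edges over active or general maps. Consequently $\Psi(\overleftarrow{s}^*\mc{H}_M)$ need not send coCartesian edges over a general $\phi$ to coCartesian edges, and your chain of implications breaks at exactly this step. Your remark that ``$\mc{H}_M$ sends equivalences to equivalences'' is true but irrelevant: the $\alpha_k$-slices are not equivalences.

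The paper's proof circumvents this by a two-step argument. First, it shows that $\mc{H}_\bullet$ preserves \emph{inert} coCartesian edges: when $\phi$ is inert, each $\delta(\alpha_k)$ is again inert (in fact $0$-inert), so the operad map $\mc{H}_M$ does preserve the relevant coCartesian edges, and one can then push this through $\Psi$, the unit-insertion, and $\theta_1\circ\theta_0^{-1}$. Second, for an arbitrary coCartesian edge $e\colon v_0\to v_1$ over $\phi\colon[n]\to[m]$, one composes with the inert $\sigma^i\colon[0]\to[n]$: since $\sigma^i\circ\phi$ is always inert, the composite $\mc{H}_\bullet(f)\circ\mc{H}_\bullet(e)$ is coCartesian by the first step, and comparing with the factorization through a chosen coCartesian lift $\xi$ of $\phi$ shows that the comparison edge in the fiber becomes an equivalence after evaluation at each $\sigma^i$; hence it is an equivalence in $\mr{Fun}(\Delta^n,\mc{A})$. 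This reduction is the essential idea your proposal is missing.
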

\begin{proof}
 For a coCartesian fibration
 $p\colon\mc{C}\rightarrow\mbf{\Delta}^{\mr{op}}$,
 an edge in $\mc{C}$ is said to be an {\em inert edge} if it is
 $p$-coCartesian over an inert map in $\mbf{\Delta}^{\mr{op}}$.
 First, let us check that $\mc{H}_\bullet$ preserves inert edges.
 Preservation for $\alpha$ is easy to check, that for $\beta$ follows
 because it is a Cartesian equivalence.
 Let $[n]\rightarrow[m]$ be the inert map sending $0\in[m]$ to
 $0\in[n]$. A coCartesian edge over such a map is called a
 {\em $0$-inert map}.
 Let $\mc{C}\rightarrow\mc{D}$ be a functor between
 coCartesian fibrations over $\mbf{\Delta}^{\mr{op}}$ such that
 $0$-inert maps are preserved. Then, by the description of coCartesian
 edges in \ref{dfnofdeltandpsi}, the induced functor
 $\Psi\mc{C}\rightarrow\Psi\mc{D}$ preserves inert edges.
 In particular, for a map of coCartesian fibrations
 $\mc{C}'\rightarrow\mc{D}'$ over $\mbf{\Delta}^{\mr{op}}$ which
 preserves inert edges, the induced functor
 $\Psi\overleftarrow{s}^*\mc{C}'\rightarrow
 \Psi\overleftarrow{s}^*\mc{D}'$ preserves inert edges because
 $\overleftarrow{s}^*\mc{C}'\rightarrow\overleftarrow{s}^*\mc{D}'$
 preserves $0$-inert edges. Thus, $\mc{H}_{\top}$ preserves inert
 edges. Now, for any inert map $[n]\rightarrow[m]$ in
 $\mbf{\Delta}^{\mr{op}}$ sending $0\in[m]$ to $0\in[n]$, a map
 $X\rightarrow Y$ in $\overleftarrow{s}^*\mc{A}^{\circledast}$ is
 coCartesian over $\mbf{\Delta}^{\mr{op}}$ if and only if
 $\sigma^0_!X\rightarrow\sigma^0_!Y$ is an equivalence and
 $q_{\mc{A}}(X)\rightarrow q_{\mc{A}}(Y)$ is a coCartesian edge in
 $\mc{A}^{\circledast}$. The description of coCartesian edges in
 \ref{dfnofdeltandpsi} implies that $\mc{H}_{\top,\mbf{1}}$ preserves
 inert edge as well. Finally, in order to check that $\mc{H}_\bullet$
 preserves inert edge, we may describe coCartesian edges of
 $\Psi^{+,\star}\mc{A}^{\circledast}$ similarly to
 \ref{dfnofdeltandpsi}, and using the fact that
 $F\circ\mc{H}_{\top,\mbf{1}}$ is a lifting of $\mbf{1}$ along $G$.
 
 We have shown that $\mc{H}_\bullet$ preserves inert edges. Let us treat
 the general case.
 Let $\phi\colon[m]\rightarrow[n]$ be a map in
 $\mbf{\Delta}^{\mr{op}}$, and $e\colon v_0\rightarrow v_1$ be a
 coCartesian edge in $\Phi^{\mr{co}}(\dots)^{\mr{prop}}$ over $\phi$.
 We wish to show that $\mc{H}_\bullet(e)$ is a
 coCartesian edge. Let $\xi\colon\mc{H}_\bullet(v_0)\rightarrow w$ be a
 coCartesian edge over $\phi$. Then we have a map
 $D\colon\Delta^2\rightarrow\Phi^{\mr{co}}(\Delta,\mc{A})$ such that
 $D(\Delta^{\{0,1\}})=\xi$, $D(\Delta^{\{0,2\}})=\mc{H}_\bullet(e)$.
 Put $A:=D(\Delta^{\{1,2\}})$. We must show that $A$ is an equivalence.
 Let $f\colon v_1\rightarrow v_2$ be a coCartesian edge
 over $\sigma^i$. We have a diagram
 $(\Delta^1\times\Delta^1)^{\triangleleft}\rightarrow
 \Phi^{\mr{co}}(\Gamma,\mc{A})$ depicted as
 \begin{equation*}
  \xymatrix@C=50pt{
   \ar@{}[rd]|(.6){D}&w\ar@{~>}[r]\ar[d]^{A}&w'\ar[d]^{B}\\
  \mc{H}_\bullet(v_0)
   \ar[r]_-{\mc{H}_\bullet(e)}\ar@{~>}@/^10pt/[ur]^-{\xi}&
   \mc{H}_\bullet(v_1)
   \ar@{~>}[r]_-{\mc{H}_\bullet(f)}&
   \mc{H}_\bullet(v_2)\\
  [m]\ar[r]^-{\phi}&[n]\ar[r]^-{\sigma^i}&[0]
   }
 \end{equation*}
 The arrows $\rightsquigarrow$ mean that the edges are coCartesian.
 Note that $\mc{H}_\bullet(f)$ is coCartesian since $\sigma^i$ is an
 inert map. Let $a\colon X\rightarrow Y$ be a map in
 $\mr{Fun}(\Delta^n,\mc{A})$. Then it is an equivalence if and only if
 $\sigma^i_!(a)\in\mc{A}$ is an equivalence for any $i$.
 Thus, it suffices to show that $B$ is an
 equivalence. Now, $\phi\circ\sigma^i$ is an inert map, so a composition
 of $\mc{H}_\bullet(f)$ and $\mc{H}_\bullet(e)$ is coCartesian. Thus,
 $B$ is an equivalence by \cite[2.4.1.7, 2.4.1.5]{HTT}.
\end{proof}

\subsection{}
\label{integlconst}
We have constructed the functor $\mc{H}_\bullet$. By taking the
straightening functor, this is a functor between certain simplicial
objects in $\Cat_\infty$.
We need to extract a functor of $\infty$-categories
``associated to'' $\mc{H}_\bullet$.
In fact, a simplicial $\infty$-category $\mc{C}_\bullet$ has two
directions of morphisms. A morphism of $\mc{C}_0$, namely an object of
$\mr{Fun}(\Delta^1,\mc{C}_0)$, and an object of $\mc{C}_1$, namely an
object of $\mr{Fun}(\Delta^0,\mc{C}_1)$. We wish to ``integrate'' these
two types of morphisms. The functor $\mr{Int}$ we will construct in the
rest of this section enables us to do this.

Let $\mc{C}$ be an $\infty$-category. By \ref{constsimplcat}, the
simplicial object
$\Delta^{\bullet}\colon\mbf{\Delta}\rightarrow\Cat_\infty$
induces the functor
$M_\bullet\mc{C}:=\mr{Fun}(\Delta^{\bullet},\mc{C})\colon
\mbf{\Delta}^{\mr{op}}\rightarrow\Cat_\infty$.
Now, let $p\colon\mc{D}\rightarrow\mc{C}$ be a Cartesian
fibration.
An edge $\Delta^1\rightarrow\mc{D}$ is said to be
{\em $p$-equivalent} if the edge
$\Delta^1\rightarrow\mc{D}\rightarrow\mc{C}$ is an equivalence.
We define $\widetilde{M}_np$ to be the subcategory of
$M_n\mc{D}$ spanned by functors $\Delta^n\rightarrow\mc{D}$ such that
any induced edge $\Delta^1\rightarrow\Delta^n\rightarrow\mc{D}$ is
$p$-Cartesian, and morphisms $\Delta^n\times\Delta^1\rightarrow\mc{D}$
such that for any vertex $k$ of $\Delta^n$, the induced edge
$\Delta^1\xrightarrow{\{k\}\times\mr{id}}\Delta^n\times\Delta^1
\rightarrow\mc{D}$ is $p$-equivalent. Then $M_\bullet\mc{D}$ 
induces the functor
$\widetilde{M}_\bullet p\colon\mbf{\Delta}^{\mr{op}}\rightarrow
\Cat_\infty$.

\begin{prop*}
 \label{propinfcatext}
 There exists a functor
 $\mr{Int}\colon\mr{Fun}(\Delta^{\bullet},\Cat_\infty)
 \rightarrow\Cat_\infty$ such that the following holds.
 \begin{enumerate}
  \item\label{propinfcatext-1}
       Let $\mc{C}$ be an $\infty$-category. Then we have a canonical
       functor $D\colon\mr{Int}(M_\bullet\mc{C})\rightarrow\mc{C}$;
	
  \item\label{propinfcatext-2}
       If we are given a Cartesian fibration
       $p\colon\mc{D}\rightarrow\mc{C}$ of $\infty$-categories, the
       induced functor $\mr{Int}(\widetilde{M}_\bullet p)
       \rightarrow\mr{Int}(M_\bullet\mc{D})\xrightarrow{D}\mc{D}$
       is an equivalence.
	 
  \item\label{propinfcatext-3}
       For $\mc{C}_\bullet\in\mr{Fun}(\Delta^\bullet,\Cat_\infty)$, we
       have the functor
       $\alpha\colon\mc{C}_0^{\simeq}\rightarrow
       \mr{Int}(\mc{C}_\bullet)^{\simeq}$
       and for $x,y\in\mc{C}_0$, a functor
       \begin{equation*}
	\alpha_{x,y}\colon
	 \{x\}
	 \times^{\mr{cat}}_{\mc{C}_0^{\simeq},\{0\}}
	 \mr{Fun}(\Delta^1,\mc{C}_0)^{\simeq}
	 \times^{\mr{cat}}_{\{1\},\mc{C}_0^{\simeq},s}
	 \mc{C}_1^{\simeq}
	 \times^{\mr{cat}}_{t,\mc{C}_0^{\simeq}}
	 \{y\}
	 \rightarrow
	 \mr{Map}_{\mr{Int}(\mc{C}_\bullet)}(\alpha(x),\alpha(y)).
       \end{equation*}
       The maps $\alpha$ and $\alpha_{x,y}$ are functorial with respect
       to $\mc{C}_\bullet$.
       If $\mc{C}_\bullet=M_\bullet\mc{C}$ with an $\infty$-category
       $\mc{C}$, we have $\mc{C}_0\simeq\mc{C}$,
       $\mc{C}_1\simeq\mr{Fun}(\Delta^1,\mc{C})$ by definition.
       Under this identification, we have
       $D^{\simeq}\circ\alpha\simeq\mr{id}$ and the following diagram
       commutes:
       \begin{equation*}
	\xymatrix{
	 \{x\}\times^{\mr{cat}}_{\mc{C}^{\simeq},\{0\}}
	 \mr{Fun}(\Delta^1,\mc{C})^{\simeq}
	 \times^{\mr{cat}}_{\{1\},\mc{C}^{\simeq},\{0\}}
	 \mr{Fun}(\Delta^1,\mc{C})^{\simeq}
	 \times^{\mr{cat}}_{\{1\},\mc{C}^{\simeq}}
	 \{y\}
	 \ar[r]^-{\alpha_{x,y}}\ar[rd]&
	 \mr{Map}_{\mr{Int}(\mc{C}_\bullet)}(\alpha(x),\alpha(y))
	 \ar[d]^{D(\alpha(x),\alpha(y))}\\
	&\mr{Map}_{\mc{C}}(x,y).
	 }
       \end{equation*}
       where the diagonal map is the composition map.
 \end{enumerate}
\end{prop*}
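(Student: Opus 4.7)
The plan is to take $\mr{Int}(\mc{C}_\bullet)$ to be the colimit of $\mc{C}_\bullet$ in $\Cat_\infty$; explicitly, the localization of the coCartesian unstraightening $\mr{Un}^{\mr{co}}(\mc{C}_\bullet)\to\mbf{\Delta}^{\mr{op}}$ at the collection of its coCartesian edges. This defines an $\infty$-functor
\begin{equation*}
 \mr{Int}\colon\mr{Fun}(\mbf{\Delta}^{\mr{op}},\Cat_\infty)\to\Cat_\infty,
\end{equation*}
and all three asserted properties will be checked through this model.

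Properties \ref{propinfcatext-1} and \ref{propinfcatext-3} are essentially formal. For \ref{propinfcatext-1}, evaluation at the last vertex defines a natural transformation $M_\bullet\mc{C}\to\mr{const}_\mc{C}$ from $[n]\mapsto\mr{Fun}(\Delta^n,\mc{C})$ to the constant simplicial object at $\mc{C}$, and $D$ is the induced map on colimits. For \ref{propinfcatext-3}, the map $\alpha\colon\mc{C}_0^{\simeq}\to\mr{Int}(\mc{C}_\bullet)^{\simeq}$ is the restriction to equivalences of the canonical inclusion of the fiber over $[0]\in\mbf{\Delta}^{\mr{op}}$. The map $\alpha_{x,y}$ is built from the standard zig-zag description of morphisms in a localization: given an arrow $x\to z$ in $\mc{C}_0$ and an element $c\in\mc{C}_1$ with $s(c)=z$ and $t(c)=y$, the $\sigma^1$- and $\sigma^0$-coCartesian lifts identify $y$ with an object of $\mc{C}_1$ equivalent to $c$ whose image under $s$ is $z$; inverting these two coCartesian edges assembles a morphism $x\to y$ in $\mr{Int}(\mc{C}_\bullet)$. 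Functoriality in $\mc{C}_\bullet$ is manifest. For $\mc{C}_\bullet=M_\bullet\mc{C}$, the identity $D^{\simeq}\circ\alpha\simeq\mr{id}$ holds because $\alpha(x)$ is represented by the constant diagram at $x$ while $D$ is evaluation at the last vertex, and the claimed commutativity of the composition diagram follows by directly unwinding how the two evaluation face maps compose two successive arrows in $\mc{C}$.

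The main content is property \ref{propinfcatext-2}. For a Cartesian fibration $p\colon\mc{D}\to\mc{C}$, evaluation at the last vertex defines $\mr{ev}_n\colon\widetilde{M}_n p\to\mc{D}$, natural in $[n]$ with respect to the simplicial structure, so it descends to a canonical comparison map $\mr{Int}(\widetilde{M}_\bullet p)\to\mc{D}$. The key geometric input is that every edge of $\mc{D}$ factors, essentially uniquely, as a $p$-equivalence followed by a $p$-Cartesian edge; under the map $\alpha_{d_0,d_1}$ of \ref{propinfcatext-3}, such a factorization is precisely an arrow in $\widetilde{M}_0 p$ (the $p$-equivalence part) composed with an element of $\widetilde{M}_1 p$ (the Cartesian part), so mapping spaces in $\mr{Int}(\widetilde{M}_\bullet p)$ should match those in $\mc{D}$. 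Essential surjectivity is automatic since every object of $\mc{D}$ lies in $\widetilde{M}_0 p$.

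The hard part will be promoting this pointwise factorization into a genuine equivalence of $\infty$-categories. I plan to identify $\mr{Un}^{\mr{co}}(\widetilde{M}_\bullet p)\to\mc{D}$ as a localization at the coCartesian edges, either by constructing a quasi-inverse $\mc{D}\to\mr{Int}(\widetilde{M}_\bullet p)$ that records the $(p\text{-equivalence},\,p\text{-Cartesian})$ factorization of each edge and checking the two compositions are equivalent to the identity, or by working at the level of marked simplicial sets in the style of \cite[\S3.2]{HTT} and verifying that the relevant map is a Cartesian equivalence. In either approach, the careful $\infty$-categorical bookkeeping of the essentially unique factorization is the main technical obstacle.
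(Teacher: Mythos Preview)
Your proposed $\mr{Int}$ as the colimit in $\Cat_\infty$ does not satisfy property~\ref{propinfcatext-2}. Test it on $p=\mr{id}_{\mc{D}}$: every edge is $p$-Cartesian and the $p$-equivalent edges are exactly the equivalences of $\mc{D}$, so $\widetilde{M}_n(\mr{id}_{\mc{D}})$ is the subcategory of $\mr{Fun}(\Delta^n,\mc{D})$ on all objects and only equivalences, i.e.\ $\mr{Fun}(\Delta^n,\mc{D})^{\simeq}=\mr{Seq}_n(\mc{D})$. Thus $\widetilde{M}_\bullet(\mr{id}_{\mc{D}})$ is the complete Segal space of $\mc{D}$, a diagram valued in $\Spc\subset\Cat_\infty$. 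Since the inclusion $\Spc\hookrightarrow\Cat_\infty$ preserves colimits (Lemma~\ref{propspccatinf}), your $\mr{Int}(\widetilde{M}_\bullet(\mr{id}_{\mc{D}}))$ is the geometric realization of $\mr{Seq}_\bullet(\mc{D})$ in $\Spc$, namely the classifying space of $\mc{D}$, not $\mc{D}$. Property~\ref{propinfcatext-2} then fails for any $\mc{D}$ that is not already a groupoid, and no amount of bookkeeping in the last paragraph can repair this.

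The conceptual issue is that a simplicial $\infty$-category carries two directions of morphisms---the internal morphisms of each $\mc{C}_n$ and the simplicial structure maps---and $\mr{Int}$ is meant to \emph{interleave} them (each arrow of the output is a vertical morphism followed by a horizontal one), not to invert one of them. Taking the colimit localizes the simplicial direction away and destroys the non-invertible information carried by $\mc{C}_1$. The paper instead passes to the associated bisimplicial space via $\mr{Seq}_\bullet$ levelwise, then applies a \emph{limit} over $\mr{Tw}^{\mr{op}}\Delta^n$ (the functor $L$ of \ref{introgamm}) to merge the two directions into a single simplicial space, and finally applies $\mr{JT}$. The key identification is $L(\mc{D}_{\bullet,\bullet})([n])\simeq\mr{Fun}(\mr{Ar}(\Delta^n),\mc{D})^{\simeq}$ (Corollary~\ref{compofcolcat}), after which property~\ref{propinfcatext-2} reduces to the factorization-system statement of Corollary~\ref{corfactsystf}. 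Your intuition about the $(p\text{-equivalence},\,p\text{-Cartesian})$ factorization is exactly right, but it enters as a factorization system on $\mc{D}$ used to trivialize a restriction map $\mr{Fun}(\mr{Ar}(\Delta^n),\mc{D})'\to\mr{Fun}(\Delta^n,\mc{D})$, not as a localization of an unstraightening.
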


\begin{rem*}
 Consider $\mc{C}_\bullet=\widetilde{M}_{\bullet}p$ for a Cartesian
 fibration $p\colon\mc{D}\rightarrow\mc{C}$.
 In this case, the fiber product of assertion \ref{propinfcatext-3} is
 the space of maps of the form
 \begin{equation*}
  \xymatrix{
   x\ar[d]_-{a}&\\
  \star\ar[r]^-{b}&y}
 \end{equation*}
 where $a$ is $p$-equivalent and $b$ is $p$-Cartesian.
\end{rem*}

For now, we complete the proof of the theorem assuming the above
proposition is known.

\begin{proof}[Proof of Theorem \ref{mainthmcons}]
 Let
 $p\colon\widetilde{\mr{Ar}}(\mr{Sch})^{\mr{prop}}_{\mr{sep}}
 \rightarrow\mr{Sch}$
 be the Cartesian fibration (constructed in Remark \ref{straitencatarrprop}).
 By Definition \ref{dfnofphocoprop}, we have $\widetilde{M}_\bullet
 p\simeq\mr{St}(\Phi^{\mr{co}}(\Gamma\times\Delta^1,
 \mr{Sch}^{\mr{op}})^{\mr{prop}})$.
 On the other hand, we have $\mr{St}(\Phi^{\mr{co}}(\Gamma,\mc{A}))
 \simeq M_{\bullet}\mc{A}$. By Lemma \ref{fundlemHprop},
 we can take the straightening of the functor $\mc{H}_\bullet$.
 Thus, we have the diagram
 \begin{equation*}
  \xymatrix@C=40pt{
   \mr{Int}(\mr{St}(\Phi^{\mr{co}}(\Gamma\times\Delta^1,
   \mr{Sch}^{\mr{op}})^{\mr{prop}}))
   \ar[d]_{\sim}\ar[r]^-{\mr{Int}\mr{St}(\mc{H}_\bullet)}&
   \mr{Int}(\mr{St}(\Phi^{\mr{co}}(\Gamma,\mc{A})))
   \ar@{-}[r]^-{\sim}&
   \mr{Int}(M_{\bullet}\mc{A})\ar[d]\\
  \widetilde{\mr{Ar}}(\mr{Sch})^{\mr{prop}}_{\mr{sep}}&&
   \mc{A}.
   }
 \end{equation*}
 The composition of these functors is the desired functor.
 The required properties follow by unwinding the construction and the
 description \ref{propinfcatext}.\ref{propinfcatext-3}.
\end{proof}

The rest of this section is devoted to proving the proposition.
We need some preparations.

\subsection{}
\label{introgamm}
Let $F\colon \mc{C}\rightarrow\mc{D}$ be a Cartesian fibration of
(ordinary) categories. Let us construct a category $\mc{T}$ as follows:
The objects consists of pairs $(c\rightarrow c',d)$ where $d\in\mc{D}$
and $c\rightarrow c'$ is a map in the fiber $\mc{C}_d$. A morphism
$(c_0\rightarrow c'_0,d_0)\rightarrow(c_1\rightarrow c'_1,d_1)$ is a
pair of a morphism $d_0\rightarrow d_1$ in $\mc{D}$ and a diagram
\begin{equation*}
 \xymatrix{
  c_0\ar[d]&c\ar@{~>}[r]^-{\alpha}\ar[l]_{\beta}&c_1\ar[d]\\
 c'_0\ar[rr]&&c'_1
  }
\end{equation*}
where $\alpha$ is a Cartesian edge over $d_0\rightarrow d_1$ and $\beta$
is a morphism in $\mc{C}_{d_0}$. We apply this construction to the case
where $F=\gamma\colon\Gamma\rightarrow\mbf{\Delta}^{\mr{op}}$,
and get a Cartesian fibration
$p_{\mc{T}}\colon\mc{T}\rightarrow\mbf{\Delta}^{\mr{op}}$ whose fiber
over $[n]\in\mbf{\Delta}^{\mr{op}}$ is $\mr{Tw}^{\mr{op}}\Delta^n$.

\begin{rem*}
 This is nothing but the unfurling construction of Barwick
 (cf.\ \cite[3.2]{BGN}),
 and the construction above can be generalized to a Cartesian fibration
 between $\infty$-categories. We restricted our attention to ordinary
 categories just to avoid too much complications.
\end{rem*}

Now, let us construct a functor
$F\colon\mc{T}\rightarrow\mbf{\Delta}^{\mr{op}}\times
\mbf{\Delta}^{\mr{op}}$.
Integers $0\leq a\leq b\leq n$ determine a map $a\rightarrow b$ in
$\Delta^n$. This map is denoted by $ab$. The object of $\mc{T}$ over
$[n]\in\mbf{\Delta}^{\mr{op}}$ defined by $ab$ is denoted by $(n,ab)$.
We put $F(n,ab):=([a],[n-b])$. Now, let
$f\colon(n,ab)\rightarrow(m,a'b')$ be a morphism.
Let $\phi_f\colon[m]\rightarrow[n]$ be a function corresponding to the
morphism $[n]\rightarrow[m]$ in $\mbf{\Delta}^{\mr{op}}$, and by
definition of morphisms in $\mc{T}$, $f$ can be written as a diagram
\begin{equation*}
 \xymatrix{
  a\ar[d]&\widetilde{a}\ar@{~>}[r]\ar[l]\ar[d]&a'\ar[d]\\
 b\ar[r]&\widetilde{b}\ar@{~>}[r]&b',
  }
\end{equation*}
in $\Gamma$ where $\rightsquigarrow$ are Cartesian edges over
$\mbf{\Delta}^{\mr{op}}$.
By construction, we have $\phi_f(a')=\widetilde{a}$,
$\phi_f(b')=\widetilde{b}$, $\widetilde{a}\leq a$, and
$b\leq\widetilde{b}$. Let $\psi_{\mr{a}}:=\phi_f|_{[a']}$,
which yields a function $[a']\rightarrow[\widetilde{a}]$ since
$\phi_f(a')=\widetilde{a}$.
We also define a function
$\psi_{\mr{b}}\colon[m-b']\rightarrow[n-\widetilde{b}]$
by $\psi_{\mr{b}}(i):=\phi_f(i+b')-\widetilde{b}$,
which is well-defined since $\phi_f(b')=\widetilde{b}$.
For $c\leq d$, let $d^0_{c,d}\colon[c]\rightarrow[d]$
be the function such that $d(i)=i+d-c$, namely the inert map such that
$d_{c,d}^0(c)=d$.
We define the map
$F(f)\colon([a],[n-b])\rightarrow([a'],[m-b'])$ by
$(\psi_{\mr{a}},d^0_{n-\widetilde{b},n-b}\circ\psi_{\mr{b}})$.

Now, assume we are given a functor
$\mbf{\Delta}^{\mr{op}}\times\mbf{\Delta}^{\mr{op}}\rightarrow\mc{C}$.
Assume that $\mc{C}$ admits finite limits.
Invoking \cite[4.3.3.7]{HTT}, we have
the right Kan extension functor
$p_{\mc{T},*}\colon\mr{Fun}(\mc{T},\mc{C})
\rightarrow
\mr{Fun}(\mbf{\Delta}^{\mr{op}},\mc{C})$, which is a right adjoint to
the restriction functor
$p_{\mc{T}}^*\colon\mr{Fun}(\mbf{\Delta}^{\mr{op}},\mc{C})\rightarrow
\mr{Fun}(\mc{T},\mc{C})$.
Note that for $f\colon\mc{T}\rightarrow\mc{C}$,
$p_{\mc{T},*}(f)([n])\simeq\invlim_{\mc{T}_{[n]}}(f)$ by
\cite[4.3.1.9]{HTT} since $p_{\mc{T}}$ is a Cartesian fibration.
Using this functor, we define
\begin{equation*}
 L\colon
 \mr{Fun}(\mbf{\Delta}^{\mr{op}}\times\mbf{\Delta}^{\mr{op}},\mc{C})
  \xrightarrow{\circ F}
  \mr{Fun}(\mc{T},\mc{C})
  \xrightarrow{p_{\mc{T},*}}
  \mr{Fun}(\mbf{\Delta}^{\mr{op}},\mc{C}).
\end{equation*}

Finally, let us construct the functor by the composition
\begin{align}
 \label{defintfunc}
 \mr{Int}\colon
 \mr{Fun}(\mbf{\Delta}^{\mr{op}},\Cat_\infty)
 &\xleftarrow{\sim}
 \mr{Fun}(\mbf{\Delta}^{\mr{op}},\mc{CSS})
 \rightarrow
 \mr{Fun}(\mbf{\Delta}^{\mr{op}},
 \mr{Fun}(\mbf{\Delta}^{\mr{op}},\Spc))\\
 \notag
 &\simeq
 \mr{Fun}(\mbf{\Delta}^{\mr{op}}\times\mbf{\Delta}^{\mr{op}},\Spc)
 \xrightarrow{L}
 \mr{Fun}(\mbf{\Delta}^{\mr{op}},\Spc)
 \xrightarrow{\mr{JT}}
 \Cat_\infty.
\end{align}
Here $\mr{JT}$ is the localization functor in \ref{inftwocatintro}.
We wish to show that this functor satisfies the conditions of
Proposition \ref{propinfcatext}.

\subsection{}
\label{defofsqaureet}
An object $i\rightarrow j$ of $\mr{Ar}(\Delta^n)$ is denoted by $(i;j)$.
For example the simplicial set $\mr{Ar}(\Delta^2)$ can be depicted as
\begin{equation*}
 \xymatrix{
  (0;0)\ar[d]&&\\
 (0;1)\ar[d]\ar[r]&(1;1)\ar[d]&\\
 (0;2)\ar[r]&(1;2)\ar[r]&(2;2).
  }
\end{equation*}
A functor $\sigma\colon\Delta^k\times\Delta^l\rightarrow
\mr{Ar}(\Delta^n)$ is said to be a {\em square} if
$\sigma(i,j)=(a+i;b+j)$ for some
integers $a$, $b$. {\em Small squares} are the squares such that
$k,l\leq 1$.
The map $\sigma$ is a monomorphism of simplicial sets, so squares can be
viewed as simplicial subsets of $\mr{Ar}(\Delta^n)$.
Let $\phi\colon\Delta^k\rightarrow\mr{Ar}(\Delta^n)$ be a
$k$-simplex. The ordered pair $(\phi(0),\phi(k))$ of vertices of
$\mr{Ar}(\Delta^n)$ is called the {\em terminal pair} of $\phi$.
Let $\sigma$ be a square. A simplicial subset $X$ of $\mr{Ar}(\Delta^n)$
is said to be {\em saturated} if a simplex $\phi$ with terminal pair $T$
is contained in $X$, then any simplex with terminal pair $T$ belongs to
$X$.

\begin{ex*}
 \begin{enumerate}
  \item\label{defofsqaureet-1}
       Let $X$ be the union of all the squares in
       $\mr{Ar}(\Delta^n)$. Then $X$ is saturated. Indeed, any square is
       saturated and the union of saturated simplicial sets is
       saturated, we get the claim. Let
       $\phi\colon\Delta^k\rightarrow\mr{Ar}(\Delta^n)$ be a
       $k$-simplex, and put $\phi(0)=(i_0;j_0)$, $\phi(k)=(i_k;j_k)$.
       We can check that $\phi$ belongs to $X$ if and only if $i_k\leq
       j_0$.
       
       Let us give an alternative description of $X$ which is useful
       for us. We also denote elements of $\mr{Tw}(\Delta^n)$ by
       $(a;b)$. For $(a;b)\in\mr{Tw}(\Delta^n)$, consider the map
       $\phi\colon\Delta^a\times\Delta^{n-b}\rightarrow\mr{Ar}(\Delta^n)$
       such that $\phi(i,j)=(i,b+j)$, which is in fact a square.
       This gives us a functor
       $\Delta^{\bullet}\times\Delta^{n-\bullet}\colon
       \mr{Tw}(\Delta^n)\rightarrow\sSet$ and the morphism of functors
       $\Delta^{\bullet}\times\Delta^{n-\bullet}\rightarrow X$
       where $X$ denotes the constant functor. Thus, we get a map
       $\indlim_{\mr{Tw}(\Delta^n)}\Delta^{\bullet}\times
       \Delta^{n-\bullet}\rightarrow X$. This map is an isomorphism of
       simplicial sets.
	
  \item\label{defofsqaureet-2}
       Let $X$ be the union of the small squares in
       $\mr{Ar}(\Delta^n)$. Then $X$ is saturated. Indeed, let
       $\phi\colon\Delta^k\rightarrow\mr{Ar}(\Delta^n)$ be a
       $k$-simplex, and put $\phi(0)=(i_0;j_0)$, $\phi(k)=(i_k;j_k)$.
       Then $\phi$ belongs to $X$ if and only if $i_k\leq j_0$,
       $i_k-i_0\leq 1$, and $j_k-j_0\leq 1$.
 \end{enumerate}
\end{ex*}

\begin{lem}
 \label{innanodtwar}
 Let $X$ be a simplicial subset of $\mr{Ar}(\Delta^n)$ which is
 saturated and any small square $\sigma$ belongs to $X$.
 Then the inclusion $X\rightarrow\mr{Ar}(\Delta^n)$ is an inner
 anodyne.
\end{lem}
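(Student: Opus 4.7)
My plan is to prove the lemma by constructing an explicit transfinite filtration $X = W_0 \subset W_1 \subset \dots \subset W_N = \mathrm{Ar}(\Delta^n)$ where each inclusion $W_s \hookrightarrow W_{s+1}$ is a pushout of an inner horn inclusion. Since inner anodyne morphisms are closed under transfinite composition, this suffices.

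The first step is to observe that since $X$ is saturated, membership of a non-degenerate simplex $\sigma = (v_0 < v_1 < \dots < v_k)$ in $X$ depends only on its terminal pair $T(\sigma) := (v_0, v_k)$. Writing $v_s = (i_s; j_s)$, the hypothesis that $X$ contains every small square means that every terminal pair $T = ((i;j),(i';j'))$ with $i'-i \leq 1$ and $j'-j \leq 1$ is realized in $X$; call the remaining terminal pairs \emph{bad}. Thus I need to attach precisely the simplices whose terminal pair is bad. I would order the bad pairs $T = ((i;j),(i';j'))$ lexicographically by the quadruple $\bigl((i'-i)+(j'-j),\, i'-i,\, i,\, j\bigr)$ and process them in this order, at each stage attaching all simplices whose terminal pair equals the current $T$. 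The ordering is chosen so that every proper face of a simplex $\sigma$ with terminal pair $T$ has terminal pair strictly smaller than $T$, except possibly when the face shares the same terminal pair (which can happen only for non-extremal faces).

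For a fixed bad pair $T = (v,w)$, let $P_T := [v,w]$ be the interval subposet of $\mathrm{Ar}([n])$ between $v$ and $w$, and write $W_{<T} \subset \mathrm{Ar}(\Delta^n)$ for the subcomplex built at the previous stage. I would show that $W_{<T} \cap N(P_T)$ already contains every simplex of $N(P_T)$ whose terminal pair is strictly smaller than $T$, i.e.\ every simplex that omits either $v$ or $w$. Thus attaching the simplices with terminal pair $T$ is a pushout of a certain inclusion $A_T \hookrightarrow N(P_T)$, where $A_T$ is spanned by the simplices of $N(P_T)$ not running all the way from $v$ to $w$. The heart of the proof is that $A_T \hookrightarrow N(P_T)$ is inner anodyne; the baby case $T = ((0;0),(0;2))$ in $\mathrm{Ar}(\Delta^2)$ is literally the inner horn inclusion $\Lambda^2_1 \hookrightarrow \Delta^2$, and the general case is a standard inductive argument filling inner horns $\Lambda^{k+1}_r \to \Delta^{k+1}$ (with $0 < r < k+1$) for the unique $(k+1)$-simplices of $N(P_T)$ obtained by inserting an extra vertex adjacent to $v$ or $w$, carefully ordered by (dimension, shape) so that at each sub-step the missing inner face and the $(k+1)$-simplex itself are added simultaneously while all other faces are already present.

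The principal obstacle is the combinatorial bookkeeping needed to ensure this inductive filling works inside the possibly-irregular interval $P_T = [v,w]$. When $v$ and $w$ lie off the anti-diagonal (so that $[v,w]$ is a full rectangle $[v_1,w_1]\times[v_2,w_2]$), the argument reduces to the familiar inner-anodyne statement about rectangles of simplices analogous to \cite[2.1.2.6]{HA}. When $v$ or $w$ lies on the anti-diagonal, $P_T$ is a proper subposet of a rectangle, and I must check that the vertices I wish to insert as horn-fillers still lie in $P_T$; here the total order on bad pairs (prioritizing smaller $(i'-i)+(j'-j)$) ensures that all intermediate terminal pairs created during the fillings were already processed, so the requisite faces are present in $W_{<T}$. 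Once this combinatorial induction is set up, each atomic step is manifestly a pushout of an inner horn inclusion, completing the proof.
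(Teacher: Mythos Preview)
Your global strategy is correct and matches the paper's: filter by terminal pairs, ordered primarily by the ``length'' $(i'-i)+(j'-j)$, and reduce to showing that for each bad pair $T=(v,w)$ the inclusion $A_T\hookrightarrow N(P_T)$ is inner anodyne. The observation that faces of a simplex with terminal pair $T$ either share that terminal pair or have strictly smaller length is exactly what makes the filtration work, and the paper uses the same reduction.

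The gap is in your local step. The sentence ``a standard inductive argument filling inner horns $\Lambda^{k+1}_r\to\Delta^{k+1}$ for the $(k{+}1)$-simplices obtained by inserting an extra vertex adjacent to $v$ or $w$'' does not describe a procedure that works. Already in the one-dimensional case $P_T\cong\Delta^3$ (so $A_T=\Delta^{\{0,1,2\}}\cup\Delta^{\{1,2,3\}}$), a pure dimension-by-dimension filling fails: once you attach one $2$-simplex containing $\{0,3\}$ via $\Lambda^2_1$, the \emph{other} $2$-simplex through $\{0,3\}$ has its full boundary present, so it cannot itself be attached by an inner horn; you must instead jump to the $3$-simplex and attach it via $\Lambda^3_2$, which brings the remaining $2$-face along for free. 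In the genuinely two-dimensional (rectangle or truncated-rectangle) case this bookkeeping becomes substantially more intricate, and ``inserting a vertex adjacent to $v$ or $w$'' does not organize it.

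The paper's device for this step is different from what you sketch. It works entirely with the \emph{maximal} (length-$k$) staircase paths from $v$ to $w$, ordered by a ``folding'' relation starting from the down-then-right path $\phi_0$. For each successive path $\phi'$, one attaches the full $k$-simplex via a \emph{generalized} inner horn $\Lambda^U\hookrightarrow\Delta^k$, where $U=\{0,k\}\cup\{\text{upper corners of }\phi'\}$; Joyal's lemma \cite[2.12(iv)]{J} supplies that this inclusion is inner anodyne. The point is that the face opposite an upper corner is shared with the path obtained by \emph{unfolding} that corner, which is strictly earlier in the order, while every other inner face has the same terminal pair and is new. This argument is uniform in whether $P_T$ is a full rectangle or truncated by the anti-diagonal, because unfolding an upper corner $(w;z)\rightsquigarrow(w{-}1;z{+}1)$ automatically stays inside $\mr{Ar}(\Delta^n)$; your separate concern about the truncated case dissolves once the folding order is in place.
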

\begin{proof}
 Given two vertices $a=(i;j)$, $b=(i';j')$ of $\mr{Ar}(\Delta^n)$,
 the distance of these points, denoted by $\mr{Dist}(a,b)$, is defined
 to be $\left|(i'-i)+(j'-j)\right|$ if either $i\leq i'$ and $j\leq j'$
 or $i\geq i'$ and $j\geq j'$, and $\infty$ otherwise.
 If $D(a,b)=\infty$, then there is no morphism in
 $\mr{Ar}(\Delta^n)$ from $a$ to $b$ or $b$ to $a$.
 We denote the distance by $D(a,b)$.
 Given a $k$-simplex $\phi\colon\Delta^k\rightarrow\mr{Ar}(\Delta^n)$,
 the length is defined to be the distance of the terminals, namely
 $\mr{Dist}(\phi(0),\phi(k))$.
 Let $X_k$ be the simplicial subset of $\mr{Ar}(\Delta^n)$ which is the
 union of $X$ and simplices of length $\leq k$. Since any small square
 belongs to $X$, we have $X_1=X$ and
 $X_{2n}=\mr{Ar}(\Delta^n)$. It suffices to check that
 $X_{k-1}\rightarrow X_k$ is an inner anodyne. When $k=2$, this follows
 by the assumption that any small square belongs to $X$, so we may
 assume that $k>2$.
 Let $S_k$ be the union of the empty set $\emptyset$ and the set of
 pairs $(a,b)$ of objects of $\mr{Ar}(\Delta^n)$ such that $D(a,b)=k$.
 Put a total order of $S_k$ such that $\emptyset$ is the minimum.
 Take $P\in S_k$. Let $Y_P$ be the
 union of $X_{k-1}$ and simplices whose terminals are $P'\in S_k$ for
 $P'\leq P$, especially, $P_{\emptyset}=X_{k-1}$.
 Let $P^+$ be the successor of $P$. It remains to
 show that $Y_P\rightarrow Y_{P^+}$ is an inner anodyne. If there exists
 a simplex of $Y_P$ with terminal pair in $P^+$,
 then $Y_P=Y_{P^+}$ since $X$ is assumed saturated.
 Thus, we may assume that no simplex of $Y_P$ has terminal pair $P^+$.
 Let $T$ be the finite set of simplices
 $\phi\colon\Delta^k\rightarrow\mr{Ar}(\Delta^n)$ with terminals $P^+$
 such that $\phi(l)\neq\phi(l+1)$ for any $l$.
 The last condition is equivalent to $\phi$ being non-degenerate.
 For any $\phi\in T$, the simplices
 $\phi|_{\Delta^{[k]\setminus\{k\}}}$,
 $\phi|_{\Delta^{[k]\setminus\{0\}}}$ belong to $X_{k-1}$.
 Now, for any subset $I\subset[k]$, $\phi|_{\Delta^{[k]\setminus I}}$
 belongs to $Y_P$ if and only if $I$ contains either $0$ or $k$.
 Indeed, ``if'' direction is clear by induction hypothesis.
 If $I$ does not contain both $0$, $k$,
 then the terminals of $\phi|_{\Delta^{[k]\setminus I}}$ is $P^+$ and
 this is not contained in $Y_P$, thus the claim.
 Let $P^+=((i,j),(i',j'))$. Let $\phi_0$ be the unique element of $T$ such
 that $\phi(l)=(i;j+l)$ for $l\leq j'-j$.
 \begin{center}
  \begin{tikzpicture}
   \fill (0,5) coordinate (O0) circle[radius=2pt];
   \fill (0,4) coordinate (O1) circle[radius=2pt];
   \fill (0,3) coordinate (O2) circle[radius=2pt];
   \fill (1,3) coordinate (O3) circle[radius=2pt];
   \fill (1,2) coordinate (O4) circle[radius=2pt];
   \fill (2,2) coordinate (O5) circle[radius=2pt];
   \fill (3,2) coordinate (O6) circle[radius=2pt];
   \fill (2,1) coordinate (O6');
   \fill (3,1) coordinate (O7) circle[radius=2pt];
   \fill (3,0) coordinate (O8) circle[radius=2pt];
   \fill (-7,5) coordinate (L0) circle[radius=2pt];
   \fill (-7,4) coordinate (L1) circle[radius=2pt];
   \fill (-7,3) coordinate (L2) circle[radius=2pt];
   \fill (-6,3) coordinate (L3) circle[radius=2pt];
   \fill (-6,2) coordinate (L4) circle[radius=2pt];
   \fill (-5,2) coordinate (L5) circle[radius=2pt];
   \fill (-5,1) coordinate (L6) circle[radius=2pt];
   \fill (-4,1) coordinate (L7) circle[radius=2pt];
   \fill (-4,0) coordinate (L8) circle[radius=2pt];
   \fill (-11,5) coordinate (M0) circle[radius=2pt];
   \fill (-11,4) coordinate (M1) circle[radius=2pt];
   \fill (-11,3) coordinate (M2) circle[radius=2pt];
   \fill (-11,2) coordinate (M3) circle[radius=2pt];
   \fill (-11,1) coordinate (M4) circle[radius=2pt];
   \fill (-11,0) coordinate (M5) circle[radius=2pt];
   \fill (-10,0) coordinate (M6) circle[radius=2pt];
   \fill (-9,0) coordinate (M7) circle[radius=2pt];
   \fill (-8,0) coordinate (M8) circle[radius=2pt];
   \foreach \a [evaluate=\a as \b using \a+1] in {0,...,7}
   {
   \draw[thick, decoration={markings,
   mark=at position 0.5 with \arrow{>}},
   postaction=decorate]
   (O\a)--(O\b);
   \draw[thick, decoration={markings,
   mark=at position 0.5 with \arrow{>}},
   postaction=decorate]
   (L\a)--(L\b);
   \draw[thick, decoration={markings,
   mark=at position 0.5 with \arrow{>}},
   postaction=decorate]
   (M\a)--(M\b);
   }
   \node at ($(M0)+(1.5,-1)$) {$\phi_0$};
   \node at ($(L0)+(1.5,-1)$) {$\phi$};
   \node at ($(O0)+(1,-1)$) {$\psi$};
   \draw [-latex,thick] (-3,3.2) to [bend left=40]
   node[midway,above] {fold} (-1,3.2);
   \draw [-latex,thick] (-1,2.8) to [bend left=40]
   node[midway,below] {unfold} (-3,2.8);
   \draw [fill=gray!30, opacity=.5]
   (O6')--(O7)--(O6)--(O5)--(O6')--cycle;
   \draw (O3) circle [radius=.15];
   \draw (O6) circle [radius=.15];
   \node (A) at ($(O1)+(3,.5)$) {upper corner};
   \draw [->] ($(A)+(0,-0.2)$) to [bend right=20]
   ($(O3)+(.15,.15)$);
   \draw [->] ($(A)+(0,-0.2)$) to [bend left=20]
   ($(O6)+(.15,.15)$);
  \end{tikzpicture}
 \end{center}
 An {\em upper corner} of $\phi\in T$ is $l\in[k]\setminus\{0,k\}$ such
 that $\phi(l-1)=(w-1;z)$, $\phi(l)=(w;z)$, and $\phi(l+1)=(w;z+1)$.
 For $\phi,\psi\in T$, we say that $\psi$ is obtained by {\em folding}
 $\phi$, or $\phi$ is obtained by {\em unfolding} $\psi$,
 if there exists $0<l<k$ such that $\phi(a)=\psi(a)$ for any
 $a\neq l$ and $\psi(l)=(w+1;z-1)$ if $\phi(l)=(w;z)$.
 Note that the $l$-vertex is an upper corner of $\psi$.
 We also note that the set of upper corners $U_\phi$ of $\phi\in T$
 completely determines $\phi$.
 Considering the number of foldings from $\phi_0$, we can put a total
 ordering on $T$ so that if $\psi$ is obtained by folding
 $\phi$ then $\phi<\psi$. Then $\phi_0$ is the minimum element in $T$.
 For $\phi\in T$, let $Z_\phi$ be the union of $Y_P$ and $\psi$ for
 $\psi\leq\phi$. Let $\phi'$ is the successor of $\phi$. It
 suffices to show that $Y_P\rightarrow Z_{\phi_0}$ and
 $Z_\phi\rightarrow Z_{\phi'}$ are inner anodynes. Since it is similar,
 we only check the latter case. Put
 $\Lambda^{U}:=\bigcup_{l\in U_{\phi'}\cup\{0,k\}}
 \Delta^{[k]\setminus\{l\}}$. We have the map
 $\Lambda^U\rightarrow\Delta^k\xrightarrow{\phi'}\mr{Ar}(\Delta^n)$.
 Since $k>2$,
 the inclusion $\Lambda^U\rightarrow\Delta^k$ is an inner anodyne by
 \cite[2.12 (iv)]{J}.
 Thus, it remains to show that
 $Z_{\phi'}=Z_\phi\sqcup_{\Lambda^U}\Delta^k$.
 For $l\in U_{\phi'}$, it is clear that
 $\phi'|_{\Delta^{[k]\setminus\{l\}}}$ is
 in $Z_{\phi}$ because the simplicial set $\psi\in T$ obtained from
 $\phi'$ by unfolding the corner $l$ satisfies $\psi\leq\phi$ by the
 choice of the ordering of $T$ and $\phi'|_{\Delta^{[k]\setminus\{l\}}}$
 is also a simplex of $\psi$. Let $\sigma$ be a simplicial subset of
 $\phi'$ which contains $U_{\phi'}\cup\{0,k\}$ as vertices. Assume
 $\sigma$ belongs to $Z_\phi$. Then there would exist $\psi\leq\phi$
 such that $\sigma$ is a simplex of $\psi$. However, since $\phi'$ is
 the minimum element in $T$ which contains all the vertices in
 $U_{\phi'}$, we should have $\psi\geq\phi'$. This is a contradiction,
 and we have $\sigma\not\in Z_\phi$, which completes the proof.
\end{proof}

\begin{cor}
 \label{compofcolcat}
 Let $\Delta^{\bullet,\bullet}\colon
 \mbf{\Delta}\times\mbf{\Delta}\rightarrow\Cat_\infty$ be the
 functor sending $([m],[n])$ to $\Delta^m\times\Delta^n$.
 Let $\mr{Ar}(\Delta^{\bullet})\colon\mbf{\Delta}
 \rightarrow\Cat_\infty$ be the functor
 sending $[n]\in\mbf{\Delta}$ to $\mr{Ar}(\Delta^n)$. Then we have a
 canonical equivalence of functors
 $L^{\mr{op}}(\Delta^{\bullet,\bullet})\simeq
 \mr{Ar}(\Delta^{\bullet})$.
\end{cor}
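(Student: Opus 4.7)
The plan is to compute $L^{\mr{op}}(\Delta^{\bullet,\bullet})([n])$ explicitly as a colimit in $\Cat_\infty$ and identify it with $\mr{Ar}(\Delta^n)$, then check that the equivalence is natural in $[n]$.

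First, I unwind what $L^{\mr{op}}$ does on a fixed level. Since $p_{\mc{T}}\colon\mc{T}\to\mbf{\Delta}^{\mr{op}}$ is a Cartesian fibration with fiber $\mr{Tw}^{\mr{op}}\Delta^n$ over $[n]$, the right Kan extension $p_{\mc{T},*}$ is computed fiberwise as a limit by \cite[4.3.1.9]{HTT}, so $L(X)([n])\simeq\invlim_{\mr{Tw}^{\mr{op}}\Delta^n}(X\circ F)$ for any $X\colon\mbf{\Delta}^{\mr{op}}\times\mbf{\Delta}^{\mr{op}}\to\mc{C}$. Dualizing and using the explicit description of $F$ from \ref{introgamm}, this gives
\begin{equation*}
 L^{\mr{op}}(\Delta^{\bullet,\bullet})([n])
 \;\simeq\;
 \indlim_{\mr{Tw}(\Delta^n)}\bigl(\Delta^a\times\Delta^{n-b}\bigr),
\end{equation*}
where $(a;b)\in\mr{Tw}(\Delta^n)$ is sent to $\Delta^a\times\Delta^{n-b}$ and the colimit is taken in $\Cat_\infty$.

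Second, I compare this $\Cat_\infty$-colimit with the strict colimit in $\sSet$. Note $\mr{Tw}(\Delta^n)$ is a poset and every transition map in the diagram $\Delta^\bullet\times\Delta^{n-\bullet}$ is a monomorphism of simplicial sets, so the diagram is projectively cofibrant for the Joyal model structure; hence the strict colimit computes the $\Cat_\infty$-colimit. Example \ref{defofsqaureet}.\ref{defofsqaureet-1} identifies this strict colimit, as a simplicial set, with the subcomplex $X_n\subset\mr{Ar}(\Delta^n)$ obtained as the union of all squares.

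Third, Example \ref{defofsqaureet}.\ref{defofsqaureet-1} already showed $X_n$ is saturated, and any small square is in particular a square so is contained in $X_n$. Therefore Lemma \ref{innanodtwar} applies and the inclusion $X_n\hookrightarrow\mr{Ar}(\Delta^n)$ is inner anodyne, hence a Joyal equivalence; combining with the previous step gives an equivalence $L^{\mr{op}}(\Delta^{\bullet,\bullet})([n])\simeq\mr{Ar}(\Delta^n)$.

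Finally, I must promote this to an equivalence of functors on $\mbf{\Delta}$. For $\phi\colon[m]\to[n]$ the induced map on $\mr{Tw}$ together with the coherent structural maps of $\Delta^{\bullet,\bullet}\circ F$ assemble the colimit construction into a functor on $\mbf{\Delta}$, and the canonical map to the constant functor $\mr{Ar}(\Delta^\bullet)$ (realizing each level as the union of squares inside $\mr{Ar}(\Delta^n)$) is a natural transformation whose components are the inner anodyne inclusions $X_n\hookrightarrow\mr{Ar}(\Delta^n)$. The only real obstacle is checking that the strict colimit agrees with the $\Cat_\infty$-colimit functorially in $[n]$; this is handled by the projective cofibrancy argument above applied uniformly, or alternatively by observing that both sides, viewed as functors $\mbf{\Delta}\to\sSet$ valued in Joyal-cofibrant objects with a levelwise Joyal equivalence, agree in $\mr{Fun}(\mbf{\Delta},\Cat_\infty)$.
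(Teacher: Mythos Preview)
Your overall strategy matches the paper's: construct a comparison map $L^{\mr{op}}(\Delta^{\bullet,\bullet}) \to \mr{Ar}(\Delta^\bullet)$, identify the source levelwise as a colimit over $\mr{Tw}(\Delta^n)$, argue that the strict colimit of simplicial sets computes the $\Cat_\infty$-colimit, and then apply Lemma~\ref{innanodtwar} together with Example~\ref{defofsqaureet}.\ref{defofsqaureet-1}.

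There is one genuine gap. The assertion that ``$\mr{Tw}(\Delta^n)$ is a poset and all transition maps are monomorphisms, hence the diagram is projectively cofibrant'' is not valid as a general principle: take the poset with two incomparable elements $a,b$ below a top element $c$ and the diagram $\Delta^0 \to \Delta^0 \leftarrow \Delta^0$; every transition map is a monomorphism, but the latching map at $c$ is the fold $\Delta^0 \sqcup \Delta^0 \to \Delta^0$, which is not a cofibration. Your conclusion is in fact correct for the diagram at hand, but it needs an argument. The paper supplies one by endowing $\mr{Tw}\Delta^n$ with a genuine (non-direct) Reedy structure: $\deg(a;b)=a+b$, direct maps raise $a$, inverse maps lower $b$. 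Then the latching object at $(a;b)$ is simply $\Delta^{a-1}\times\Delta^{n-b}$, so Reedy cofibrancy is immediate, and the matching categories are chains (or empty), so the Reedy category has fibrant constants; Hirschhorn \cite[15.10.2, 19.9.1]{Hir} then yields that the ordinary colimit is a homotopy colimit. Alternatively you can rescue projective cofibrancy directly: viewing $\mr{Tw}\Delta^n$ as a direct category, a cofinality reduction shows the latching map at $(a;b)$ is the inclusion
\[
(\Delta^{a-1}\times\Delta^{n-b})\cup_{\Delta^{a-1}\times\Delta^{n-b-1}}(\Delta^a\times\Delta^{n-b-1}) \hookrightarrow \Delta^a\times\Delta^{n-b},
\]
which is a monomorphism; but this has to be checked, not asserted.

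On naturality: the paper builds the comparison map \emph{before} the levelwise analysis, via the adjunction $(p^{\mr{op}}_{\mc{T},!}, p^{\mr{op},*}_{\mc{T}})$, by exhibiting an explicit transformation $\theta\colon \Delta^{\bullet,\bullet}\circ F^{\mr{op}} \to \mr{Ar}(\Delta^\bullet)\circ p_{\mc{T}}^{\mr{op}}$ of functors on $\mc{T}^{\mr{op}}$. This cleanly produces a map in $\mr{Fun}(\mbf{\Delta},\Cat_\infty)$ and reduces the corollary to checking that each component is an equivalence. Your account of naturality is essentially this, but stated informally; setting it up through the adjunction would make the ``uniform'' cofibrancy discussion at the end unnecessary. (Minor slip: $\mr{Ar}(\Delta^\bullet)$ is not a constant functor.)
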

\begin{proof}
 First, let us construct a functor
 $L^{\mr{op}}(\Delta^{\bullet,\bullet})\rightarrow
 \mr{Ar}(\Delta^{\bullet})$.
 Let $A$ be the composition
 $\mc{T}^{\mr{op}}\xrightarrow{F^{\mr{op}}}
 \mbf{\Delta}\times\mbf{\Delta}
 \xrightarrow{\Delta^{\bullet,\bullet}}\Cat_\infty$ and
 $B$ be the composition
 $\mc{T}^{\mr{op}}\rightarrow\mbf{\Delta}
 \xrightarrow{\mr{Ar}}\Cat_\infty$. We must define a morphism
 $\theta\colon A\rightarrow B$ of functors because we have the
 adjunction $(p^{\mr{op}}_{\mc{T},!},p^{\mr{op},*}_{\mc{T}})$.
 Let $(m,a'b')\rightarrow(n,ab)$ be a map in $\mc{T}^{\mr{op}}$
 corresponding to a map $f$ in $\mc{T}^{\mr{op}}$ using the notation of
 \ref{introgamm}. Let
 $\theta(n,ab)\colon\Delta^{a}\times\Delta^{n-b}\rightarrow
 \mr{Ar}(\Delta^n)$ be the functor sending $(i,j)$ to $(i;j+b)$.
 \begin{center}
 \begin{tikzpicture}
  \foreach \x [evaluate=\x as \m using 6-\x] in {0,...,6}
  {
  \foreach \y in {0,...,\m}
  \coordinate (O\x\y) at (\x,\y);
  }
  \foreach \x [evaluate=\x as \m using 6-\x] in {0,...,6}
  {
  \foreach \y in {0,...,\m}
  \fill (O\x\y) circle [radius=2pt];
  }
  \foreach \x [evaluate=\x as \m using 5-\x] in {0,...,5}
  {
  \foreach \a [evaluate=\a as \b using \a+1] in {0,...,\m}
  \draw [thick, decoration={markings,
  mark=at position 0.5 with \arrow{<}},
  postaction=decorate] (O\x\a)--(O\x\b);
  }
  \foreach \y [evaluate=\y as \m using 5-\y] in {0,...,5}
  {
  \foreach \x [evaluate=\x as \w using (1+\x)*10+\y] in {0,...,\m}
  \draw [thick, decoration={markings,
  mark=at position 0.5 with \arrow{>}},
  postaction=decorate] (O\x\y)--(O\w);
  };
  \node [left=5] at (O03) {$b$};
  \node [left=5] at (O00) {$n$};
  \node [left=5] at (O06) {$0$};
  \node [below=5] at (O00) {$0$};
  \node [below=5] at (O20) {$a$};
  \node [below=5] at (O60) {$n$};
  \draw [thick, rounded corners=10pt,fill=gray!30, opacity=.5]
  (-0.1,-0.1)--(2.1,-0.1)--(2.1,3.1)--(-0.1,3.1)--(-0.1,-0.1)--cycle;
  \node at (4,5) {$\mr{Ar}(\Delta^n)$};
  \node at (-4,1.5) {$\Delta^a\times\Delta^{n-b}$};
  \node at (-4,5) {$\theta(n,ab)$};
  \draw [-latex,thick] (-2.5,1.5) to [bend left=40] (-0.4,1.5);
 \end{tikzpicture} 
\end{center}
 Note that since $\mr{Hom}(x,y)$ is either
 a singleton or an empty set for any $x,y\in\mr{Ar}(\Delta^n)$,
 $\theta(n,ab)$ is determined uniquely.
 The following diagram commutes, thus we have the morphism $\theta$:
 \begin{equation*}
  \xymatrix@C=50pt{
   \Delta^{a'}\times\Delta^{m-b'}
   \ar[r]^-{\psi_{\mr{a}}\times\psi_{\mr{b}}}
   \ar[d]_{\theta(m,a'b')}&
   \Delta^a\times\Delta^{n-b}\ar[d]^{\theta(n,ab)}\\
  \mr{Ar}(\Delta^m)\ar[r]^-{\phi_f}&
   \mr{Ar}(\Delta^n).
   }
 \end{equation*}
 Now, let us show that this functor is an equivalence. It suffices to
 check this for each $[n]\in\mbf{\Delta}$. Then
 $L^{\mr{op}}(\Delta^{\bullet,\bullet})([n])\simeq
 \indlim_{(a;b)\in\mr{Tw}\Delta^n}\Delta^a\times\Delta^{n-b}$ where the
 colimit is taken in $\Cat_\infty$.
 Endow $\mr{Tw}\Delta^n$ with the Reedy category structure by declaring
 $\deg(ab)=a+b$, the direct subcategory as the one spanned by maps of
 the form $(a;b)\rightarrow(a';b)$ for $a\leq b$,
 the inverse subcategory as the one spanned by maps of the form
 $(a;b)\rightarrow(a;b')$ for $b\geq b'$ (cf.\ \cite[15.1.2]{Hir}).
 Then one can check that for any functor of (ordinary) categories
 $F\colon\mr{Tw}\Delta^n\rightarrow\mc{C}$, the latching object
 $\mr{L}_{(a;b)}F\simeq F(a-1;b)$. Thus, the map
 $\mr{L}_{(a;b)}\Delta^{\bullet,n-\bullet}\rightarrow\Delta^{a,n-b}$ is
 a cofibration, and the functor
 $\Delta^{\bullet,n-\bullet}\colon\mr{Tw}\Delta^n\rightarrow
 \sSet^+$ is a Reedy cofibrant diagram.
 Moreover, by \cite[15.10.2]{Hir}, it has fibrant constant.
 Thus, by \cite[19.9.1]{Hir}, the (ordinary) colimit of
 $\Delta^{\bullet,n-\bullet}$ as a simplicial set is
 a homotopy colimit by \cite[4.2.4.1]{HTT}.
 Now the desired claim follows from Lemma \ref{innanodtwar} as well as
 Example \ref{defofsqaureet}.\ref{defofsqaureet-1}.
\end{proof}

\begin{cor}
 \label{corfactsystf}
 Let $\mc{C}$ be an $\infty$-category, and $(S_L,S_R)$ be a factorization system
 {\normalfont(}cf.\ {\normalfont\cite[5.2.8.8]{HTT})}.
 Let $\mr{Fun}(\mr{Ar}(\Delta^n),\mc{C})^{\prime}$
 be the full subcategory of $\mr{Fun}(\mr{Ar}(\Delta^n),\mc{C})$
 spanned by the functors $F$ such that $F((i;j)\rightarrow(i;j+1))$ is
 in $S_L$ and $F((i;j)\rightarrow(i+1;j))$ is in $S_R$. 
 We have the functor $\Delta^n\rightarrow\mr{Ar}(\Delta^n)$ sending
 $i$ to $(i;i)$. The induced functor
 $\mr{Fun}(\mr{Ar}(\Delta^n),\mc{C})^{\prime}\rightarrow
 \mr{Fun}(\Delta^n,\mc{C})$ is a trivial fibration.
\end{cor}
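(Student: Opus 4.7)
The approach is induction on $n$, reducing everything to Lurie's characterization of factorization systems \cite[5.2.8.17]{HTT}: for a factorization system $(T_L,T_R)$ on any $\infty$-category $\mc{D}$, the restriction map $\mr{Fun}(\Delta^2,\mc{D})^{(T_L,T_R)}\to\mr{Fun}(\Delta^1,\mc{D})$ is a trivial fibration, combined with the fact \cite[5.2.8.18]{HTT} that factorization systems are inherited pointwise by functor categories $\mr{Fun}(K,\mc{C})$. First I would reformulate the trivial-fibration claim as the family of extension problems obtained by testing against $\partial\Delta^k\hookrightarrow\Delta^k$: by adjunction this gives the task of extending a labeled functor from $(\Delta^k\times\Delta^n)\cup(\partial\Delta^k\times\mr{Ar}(\Delta^n))$ to $\Delta^k\times\mr{Ar}(\Delta^n)$ respecting the $S_L/S_R$ conditions on horizontal/vertical edges.

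The base case $n=0$ is trivial since $\mr{Ar}(\Delta^0)=\Delta^0$, and the base case $n=1$ identifies $\mr{Ar}(\Delta^1)$ with $\Delta^2$ and the diagonal embedding with $\Delta^{\{0,2\}}\subset\Delta^2$, reducing the claim directly to \cite[5.2.8.17]{HTT} applied to $\mc{C}$.

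For the inductive step I would introduce the filtration $\Delta^n=P_0\subset P_1\subset\cdots\subset P_n=\mr{Ar}(\Delta^n)$ by super-diagonal distance, $P_k:=\{(i;j):j-i\leq k\}$. It then suffices to show that each restriction $\mr{Fun}(P_k,\mc{C})'\to\mr{Fun}(P_{k-1},\mc{C})'$ is a trivial fibration. The inclusion $P_{k-1}\hookrightarrow P_k$ adjoins the $k$-th super-diagonal $\{(i;i+k)\}_{0\leq i\leq n-k}$, and each new vertex $(i;i+k)$ is exactly the intermediate object in the $S_L$-then-$S_R$ factorization of the morphism $(i;i+k-1)\to(i+1;i+k)$, whose endpoints already lie in $P_{k-1}$. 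Collecting the new vertices into the chain $\{(i;i+k)\}_i\cong\Delta^{n-k}$ and the analogous chains $\{(i;i+k-1)\}_i$, $\{(i+1;i+k)\}_i$, I would identify the extension problem at this filtration step with a base-case ($n=1$) factorization problem for the $\infty$-category $\mr{Fun}(\Delta^{n-k},\mc{C})$ equipped with the pointwise factorization system \cite[5.2.8.18]{HTT}; if necessary, one refines by a sub-filtration adjoining one vertex $(i;i+k)$ at a time (say $i=n-k,n-k-1,\ldots,0$), reducing each sub-step to \cite[5.2.8.17]{HTT} over an appropriate slice functor category.

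The main obstacle will be the combinatorial bookkeeping to confirm that the higher-dimensional simplices of $\mr{Ar}(\Delta^n)$ adjoined in each filtration step correspond precisely to the higher simplices produced by the pointwise factorization in $\mr{Fun}(\Delta^{n-k},\mc{C})$, with no extraneous coherence required beyond the factorization system. Concretely, an $m$-simplex of $\mr{Ar}(\Delta^n)$ that involves several new vertices from the same super-diagonal simultaneously must be interpreted as an $m$-simplex of $\mr{Fun}(\Delta^{n-k},\mc{C})$ together with the required labeled factorization, and verifying this matching reduces to recognizing the relevant subposets of $P_k$ as products of $\mr{Ar}(\Delta^1)$ with standard simplices. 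Once this identification is in place, the trivial fibration property propagates up the filtration and yields the statement.
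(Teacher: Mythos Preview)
Your overall strategy---filter by super-diagonals and reduce each step to \cite[5.2.8.17]{HTT}---is exactly the skeleton of the paper's proof, but there is a genuine gap in the passage from $P_{k-1}$ to $P_k$ when $k\geq 2$. Because you take $P_k$ to be the \emph{full} simplicial subset of $\mr{Ar}(\Delta^n)$ on vertices at distance $\leq k$, the inclusion $P_{k-1}\hookrightarrow P_k$ adjoins not only the factorization $2$-simplices $(i;i{+}k{-}1)\to(i;i{+}k)\to(i{+}1;i{+}k)$ but also every simplex that mixes the new vertices $(i;i{+}k)$ with vertices far from the strip, for instance the edge $(0;0)\to(0;k)$ or the $2$-simplex $(0;0)\to(0;k)\to(k;k)$. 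Your proposed identification with a factorization problem in $\mr{Fun}(\Delta^{n-k},\mc{C})$ only sees the strip at distances $k{-}1$ and $k$; the subposet you are actually dealing with is not a product of $\mr{Ar}(\Delta^1)$ with a simplex (already for $k\geq 2$ the poset $P_k$ contains incomparable pairs such as $(0;2)$ and $(1;1)$), so the ``product recognition'' you gesture at will not go through as stated.

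The paper resolves precisely this obstacle by decoupling the two phenomena. It first proves a combinatorial lemma (Lemma~\ref{innanodtwar}): any saturated simplicial subset of $\mr{Ar}(\Delta^n)$ containing all small $(\leq 1)\times(\leq 1)$ squares includes into $\mr{Ar}(\Delta^n)$ by an inner anodyne map. This lets one replace $\mr{Ar}(\Delta^n)$ by the union $X$ of small squares at the cost of an inner-anodyne extension (hence a trivial fibration on $\mr{Fun}(-,\mc{C})$). Only then does the paper run a super-diagonal filtration $Y_0=\Delta^n\subset X_1\subset Y_1\subset\cdots\subset X$, but now \emph{inside $X$}: each step $Y_i\hookrightarrow X_{i+1}$ adds exactly the lower $2$-simplices $D_v$ and is a direct instance of \cite[5.2.8.17]{HTT}, while each $X_i\hookrightarrow Y_i$ fills the upper $2$-simplices $U_v$ and is inner anodyne. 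In short, the missing idea in your proposal is isolating the inner-anodyne content (Lemma~\ref{innanodtwar}) from the factorization content; once you do that, your super-diagonal induction becomes the paper's filtration.
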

\begin{proof}
 Let $X$ be the union of small squares in $\mr{Ar}(\Delta^n)$.
 Then the inclusion $X\rightarrow\mr{Ar}(\Delta^n)$ is an inner anodyne
 by Lemma \ref{innanodtwar} and
 \ref{defofsqaureet}.\ref{defofsqaureet-2}. We can similarly
 define $\mr{Fun}(X,\mc{C})^{\prime}$ in $\mr{Fun(X,\mc{C})}$ similarly
 to $\mr{Fun}(\mr{Ar}(\Delta^n),\mc{C})^{\prime}$.
 Let us check that the functors
 $\mr{Fun}(\mr{Ar}(\Delta^n),\mc{C})^{\prime}\xrightarrow{\alpha}
 \mr{Fun}(X,\mc{C})^{\prime}\xrightarrow{\beta}
 \mr{Fun}(\Delta^n,\mc{C})$
 are trivial fibrations. The functor $\alpha$ is the pullback of the map
 $\mr{Fun}(\mr{Ar}(\Delta^n),\mc{C})\rightarrow\mr{Fun}(X,\mc{C})$.
 This functor is a trivial fibration by \cite[2.3.2.5]{HTT}.
 It remains to show that $\beta$ is a trivial fibration.

 For a vertex $v=(a;b)\in\mr{Ar}(\Delta^n)$, let $D_v$ be the
 $2$-simplex of $\mr{Ar}(\Delta^n)$ such such that the $0$-vertices are
 $(a;b)$, $(a;b+1)$, $(a+1;b+1)$. If some of the vertices are not
 defined, we put $D_v=\emptyset$. Similarly, let $U_v$ be the
 $2$-simplex such that the $0$-vertices are $(a;b)$, $(a+1;b)$,
 $(a+1;b+1)$, and $\emptyset$ if some of the vertices are not defined.
 We put $Y_0:=\Delta^n\hookrightarrow\mr{Ar}(\Delta^n)$, and for $i>0$,
 we define $X_i:=Y_{i-1}\cup\bigcup_{v\in Y_{i-1}}D_v$,
 $Y_i:=X_i\cup\bigcup_{v\in X_i}U_v$ inductively.
 \begin{center}
  \begin{tikzpicture}
   \foreach \x [evaluate=\x as \m using 3-\x] in {0,...,3}
   {
   \foreach \y in {0,...,\m}
   \coordinate (O\x\y) at (\x,\y);
   }
   \foreach \x [evaluate=\x as \m using 3-\x] in {0,...,3}
   {
   \foreach \y in {0,...,\m}
   \fill (O\x\y) circle [radius=2pt];
   }
   \foreach \x [evaluate=\x as \m using 3-\x] in {0,...,3}
   {
   \foreach \y in {0,...,\m}
   \coordinate (P\x\y) at ($(\x,\y)+(6,0)$);
   }
   \foreach \x [evaluate=\x as \m using 3-\x] in {0,...,3}
   {
   \foreach \y in {0,...,\m}
   \fill (P\x\y) circle [radius=2pt];
   }
   \foreach \x [evaluate=\x as \m using 3-\x] in {0,...,3}
   {
   \foreach \y in {0,...,\m}
   \coordinate (Q\x\y) at ($(\x,\y)+(12,0)$);
   }
   \foreach \x [evaluate=\x as \m using 3-\x] in {0,...,3}
   {
   \foreach \y in {0,...,\m}
   \fill (Q\x\y) circle [radius=2pt];
   }
   \draw [thick, decoration={markings,mark=at position 0.5 with \arrow{>}},postaction=decorate] (O03)--(O12);
   \draw [thick, decoration={markings,mark=at position 0.5 with \arrow{>}},postaction=decorate] (O12)--(O21);
   \draw [thick, decoration={markings,mark=at position 0.5 with \arrow{>}},postaction=decorate] (O21)--(O30);
   \draw [thick, decoration={markings,mark=at position 0.5 with \arrow{>}},postaction=decorate] (P03)--(P12);
   \draw [thick, decoration={markings,mark=at position 0.5 with \arrow{>}},postaction=decorate] (P12)--(P21);
   \draw [thick, decoration={markings,mark=at position 0.5 with \arrow{>}},postaction=decorate] (P21)--(P30);
   \draw [thick, decoration={markings,mark=at position 0.5 with \arrow{>}},postaction=decorate] (P03)--(P02);
   \draw [thick, decoration={markings,mark=at position 0.5 with \arrow{>}},postaction=decorate] (P02)--(P12);
   \draw [thick, decoration={markings,mark=at position 0.5 with \arrow{>}},postaction=decorate] (P12)--(P11);
   \draw [thick, decoration={markings,mark=at position 0.5 with \arrow{>}},postaction=decorate] (P11)--(P21);
   \draw [thick, decoration={markings,mark=at position 0.5 with \arrow{>}},postaction=decorate] (P21)--(P20);
   \draw [thick, decoration={markings,mark=at position 0.5 with \arrow{>}},postaction=decorate] (P20)--(P30);
   \draw [thick, decoration={markings,mark=at position 0.5 with \arrow{>}},postaction=decorate] (Q03)--(Q12);
   \draw [thick, decoration={markings,mark=at position 0.5 with \arrow{>}},postaction=decorate] (Q12)--(Q21);
   \draw [thick, decoration={markings,mark=at position 0.5 with \arrow{>}},postaction=decorate] (Q21)--(Q30);
   \draw [thick, decoration={markings,mark=at position 0.5 with \arrow{>}},postaction=decorate] (Q03)--(Q02);
   \draw [thick, decoration={markings,mark=at position 0.5 with \arrow{>}},postaction=decorate] (Q02)--(Q12);
   \draw [thick, decoration={markings,mark=at position 0.5 with \arrow{>}},postaction=decorate] (Q12)--(Q11);
   \draw [thick, decoration={markings,mark=at position 0.5 with \arrow{>}},postaction=decorate] (Q11)--(Q21);
   \draw [thick, decoration={markings,mark=at position 0.5 with \arrow{>}},postaction=decorate] (Q21)--(Q20);
   \draw [thick, decoration={markings,mark=at position 0.5 with \arrow{>}},postaction=decorate] (Q20)--(Q30);
   \draw [thick, decoration={markings,mark=at position 0.5 with \arrow{>}},postaction=decorate] (Q02)--(Q11);
   \draw [thick, decoration={markings,mark=at position 0.5 with \arrow{>}},postaction=decorate] (Q11)--(Q20);
   \node at ($(O03)+(2,-0.5)$) {$Y_0$};
   \node at ($(P03)+(2,-0.5)$) {$X_1$};
   \node at ($(Q03)+(2,-0.5)$) {$Y_1$};
  \end{tikzpicture} 
 \end{center}
 We have $X_n=X$. We define $\mr{Fun}(X_i,\mc{C})^{\prime}$ and
 $\mr{Fun}(Y_i,\mc{C})^{\prime}$ likewise.
 Since the inclusion $X_i\rightarrow Y_i$ is an inner anodyne, the map
 $\mr{Fun}(Y_i,\mc{C})^{\prime}\rightarrow\mr{Fun}(X_i,\mc{C})^{\prime}$
 is a trivial fibration. The map
 $\mr{Fun}(X_{i+1},\mc{C})^{\prime}\rightarrow\mr{Fun}(Y_i,\mc{C})^{\prime}$
 is a trivial fibration by \cite[5.2.8.17]{HTT}. Thus, $\beta$ is a
 trivial fibration as required.
\end{proof}

\begin{proof}[Proof of Proposition \ref{propinfcatext}]
 We have already constructed the functor $\mr{Int}$.
 We will show that the functor satisfies the required properties.
 Let $\mc{C}_{n,m}:=\mr{Fun}(\Delta^n\times\Delta^m,\mc{C})^{\simeq}$.
 Then we have the functor
 $\mc{C}_{\bullet,\bullet}\colon\mbf{\Delta}^{\mr{op}}\times
 \mbf{\Delta}^{\mr{op}}\rightarrow\Spc$.
 Let $G\colon\mr{Fun}(\mbf{\Delta}^{\mr{op}},\Cat_\infty)\rightarrow
 \mr{Fun}(\mbf{\Delta}^{\mr{op}}\times\mbf{\Delta}^{\mr{op}},\Spc)$
 be the composition of functors in (\ref{defintfunc}).
 By construction, $G(M_\bullet\mc{C})\simeq\mc{C}_{\bullet,\bullet}$.
 In the situation of \ref{propinfcatext}.\ref{propinfcatext-2},
 we denote by $\widetilde{\mc{D}}_{n,m}$ be the full subcategory (thus
 space) of $\mc{D}_{n,m}$ spanned by of maps
 $\Delta^n\times\Delta^m\rightarrow\mc{D}$ such that for each vertex
 $i\in\Delta^n$, the edges $\{i\}\times\Delta^{\{j,j+1\}}$ are
 $p$-equivalent and for each $j\in\Delta^m$ the edges
 $\Delta^{\{i,i+1\}}\times\{j\}$ are $p$-Cartesian.

 We have the map $\Delta^n\rightarrow\mr{Ar}(\Delta^n)$ sending $i$ to
 $(i;i)$. This induces the functor of cosimplicial objects
 $\iota\colon\Delta^{\bullet}\rightarrow\mr{Ar}(\Delta^{\bullet})$.
 For an $\infty$-category $\mc{C}$, let $\mc{A}(\mc{C})$ be the
 simplicial spaces
 $\mr{Fun}(\mr{Ar}(\Delta^{\bullet}),\mc{C})^{\simeq}$,
 namely $\mr{Fun}(\mr{Ar}(\Delta^n),\mc{C})^{\simeq}$ is assigned to
 $[n]$. The functor $\iota$ induces the functor
 $\mc{A}(\mc{C})\rightarrow\mr{Seq}_{\bullet}(\mc{C})$.
 Moreover, we have
 \begin{align*}
  \mc{A}(\mc{C})
  &:=
  \mr{Fun}(\mr{Ar}(\Delta^{\bullet}),\mc{C})^{\simeq}
  \simeq
  \mr{Fun}(L^{\mr{op}}(\Delta^{\bullet,\bullet}),\mc{C})^{\simeq}
  \simeq
  \bigl(L(\mr{Fun}(\Delta^{\bullet,\bullet},\mc{C}))\bigr)^{\simeq}\\
  &\simeq
  L(\mr{Fun}(\Delta^{\bullet,\bullet},\mc{C})^{\simeq})
  =:L(\mc{C}_{\bullet,\bullet}).
 \end{align*}
 where the 1st equivalence follows by Corollary \ref{compofcolcat}, and
 the 3rd equivalence by Lemma \ref{propspccatinf}.
 For the 2nd equivalence, the functor can be constructed using the
 adjointness of $(p_{\mc{T},!},p_{\mc{T}}^*)$ and
 $(p_{\mc{T}}^*,p_{\mc{T},*})$. Then the equivalence is reduced to the
 equivalence for each term. In this situation, $p_{\mc{T},!}$ and
 $p_{\mc{T},*}$ can be computed by colimits and limits.
 Thus, we have the functor
 \begin{equation*}
  \mr{Int}(M_\bullet\mc{C}):=\mr{JT}(LG(M_\bullet\mc{C}))
   \simeq
   \mr{JT}(\mc{A}(\mc{C}))
   \rightarrow
   \mr{JT}(\mr{Seq}_\bullet\mc{C})\xrightarrow{\sim}\mc{C}.
 \end{equation*}
 This is the required functor of
 \ref{propinfcatext}.\ref{propinfcatext-1}.
 
 Now, let $\widetilde{\mc{A}}(p)$ be the simplicial subspaces of
 $\mc{A}(\mc{D})$ such that for each $[n]$, we consider the
 subspace spanned by the functors
 $\phi\colon\mr{Ar}(\Delta^n)\rightarrow\mc{D}$ such that
 ``vertical edges'' $\phi((i;j)\rightarrow(i;j+1))$ are $p$-equivalent and
 ``horizontal edges'' $\phi((i;j)\rightarrow(i+1;j))$ are
 $p$-Cartesian.
 Let $\mr{Fun}(\Delta^1,\mc{D})^{\mr{cart}}$ be the full
 subcategory of $\mr{Fun}(\Delta^1,\mc{D})$ spanned by $p$-Cartesian
 edges. Then the inclusion
 $\mr{Fun}(\Delta^1,\mc{D})^{\mr{cart}}\rightarrow
 \mr{Fun}(\Delta^1,\mc{D})$ is a categorical fibration by
 \cite[2.4.6.5]{HTT}. Similarly, we define
 $\mr{Fun}(\Delta^1,\mc{D})^{\mr{cons}}$ to be the full subcategory
 spanned by $p$-equivalent edges. Then we have
 \begin{equation*}
  \widetilde{\mc{A}}(p)\simeq
   \mc{A}(\mc{D})
   \times^{\mr{cat}}_{\prod_{E_\mr{h}}\mr{Fun}(\Delta^1,\mc{D})}
   \prod_{E_\mr{h}}\mr{Fun}(\Delta^1,\mc{D})^{\mr{cart}}
   \times^{\mr{cat}}_{\prod_{E_\mr{v}}\mr{Fun}(\Delta^1,\mc{D})}
   \prod_{E_\mr{v}}\mr{Fun}(\Delta^1,\mc{D})^{\mr{cons}},
 \end{equation*}
 where $E_{\mr{h}}$ (resp.\ $E_{\mr{v}}$) is the set of horizontal
 (res.\ vertical) edges in $\mr{Ar}(\Delta^n)$.
 Thus, may write $\widetilde{\mc{A}}(p)$ using limits.
 We also have similar presentation for
 $\widetilde{\mc{D}}_{\bullet,\bullet}$ using limits.
 Thus, we have
 $L(\widetilde{\mc{D}}_{\bullet,\bullet})\simeq\widetilde{\mc{A}}(p)$. It
 remains to check that the composition
 \begin{equation*}
  \widetilde{\mc{A}}(p)([n])
   \rightarrow
  \mr{Fun}(\mr{Ar}(\Delta^n),\mc{D})^{\simeq}
   \rightarrow
   \mr{Fun}(\Delta^n,\mc{D})^{\simeq}
 \end{equation*}
 is a homotopy equivalence of spaces. This follows from Corollary
 \ref{corfactsystf}.

 For $\mc{C}_\bullet$ in $\mr{Fun}(\mbf{\Delta}^{\bullet},\Cat_\infty)$,
 unwinding the definition, the space $LG(\mc{C}_\bullet)([0])$ is
 equivalent to $\mc{C}_0^{\simeq}$ and
 $LG(\mc{C}_\bullet)([1])$ is equivalent to
 $\mr{Fun}(\Delta^1,\mc{C}_0)^{\simeq}\times^{\mr{cat}}
 _{\{1\},\mc{C}_0^{\simeq},s}\mc{C}_1^{\simeq}$.
 Now, the adjunction map $LG(\mc{C}_\bullet)\rightarrow
 \mr{Seq}_\bullet\mr{JT}(LG(\mc{C}_\bullet))=:
 \mr{Seq}_\bullet\mr{Int}(\mc{C}_\bullet)$
 induces the desired maps of
 \ref{propinfcatext}.\ref{propinfcatext-3}.
\end{proof}

\begin{rem*}
 Let $\mc{C}$ be an $\infty$-category. The proof shows, in fact,
 $\mr{Fun}(\Delta^n,\mr{Int}(M_\bullet\mc{C}))^{\simeq}\simeq
 \mr{Fun}(\mr{Ar}(\Delta^n),\mc{C})^{\simeq}$.
\end{rem*}

\subsection{}
For the future use, let us make yet another analogous construction.
Under the setting of \ref{functorseup}, we define the category $\mr{Ar}^{\mr{prop}}_{\mr{sep}}(\mr{Sch})$
similarly to $\widetilde{\mr{Ar}}^{\mr{prop}}_{\mr{sep}}(\mr{Sch})$ as follows.
The objects are the same as $\widetilde{\mr{Ar}}^{\mr{prop}}_{\mr{sep}}(\mr{Sch})$.
A morphism $(X_1\rightarrow Y_1)\rightarrow (X_0\rightarrow Y_1)$ is a diagram
\begin{equation}
 \label{diagmortilar2}
 \xymatrix{
  X_1\ar[d]_{f_1}\ar[r]^-{\alpha}&
  W_{10}\ar[d]_{\widetilde{f}_0}\ar[r]^-{\widetilde{g}}\ar@{}[rd]|\square&
  X_0\ar[d]^{f_0}\\
 Y_1&Y_1\ar[r]^-{g}\ar@{=}[l]&
  Y_0,}
\end{equation}
where $\alpha\in\mr{prop}$, and we identify diagrams by the obvious equivalence relation similar to that of
$\widetilde{\mr{Ar}}^{\mr{prop}}_{\mr{sep}}(\mr{Sch})$.
In other words, this is the Grothendieck fibration over $\mr{Sch}$ associated to the pseudo-functor $\mr{Sch}^{\mr{op}}\rightarrow\Cat_1$
sending $Y$ to $\mr{Sch}_{/Y}^{\mr{prop}}$.
On the other hand, let $\mr{Un}(\mc{D})\rightarrow\mr{Sch}^{\mr{op}}$ be the coCartesian fibration classified by the functor
$\mr{Sch}^{\mr{op}}\xrightarrow{\mc{D}}\LinCat_{\mc{A}}\rightarrow\widehat{\Cat}_\infty$,
where the second functor is the one taking the underlying $\infty$-category.

\begin{thm*}
 Under the setting of {\normalfont\ref{functorseup}}, there exists a functor
 $\H_{\mr{c}}^*\colon\mr{Ar}^{\mr{prop}}_{\mr{sep}}(\mr{Sch})^{\mr{op}}\rightarrow\mr{Un}(\mc{D})$ of coCartesian fibrations over
 $\mr{Sch}^{\mr{op}}$ such that for $f\colon X\rightarrow Y\in\mr{sep}$,
 the object $\H_{\mr{c}}^*(f)\in\mr{Un}(\mc{D})$ is equivalent to $f_!(I_X)$.
 Assume we are given a morphism $m\colon f_1\rightarrow f_0$ in $\mr{Ar}^{\mr{prop}}_{\mr{sep}}(\mr{Sch})$
 given by the diagram {\normalfont(\ref{diagmortilar2})}.
 Then $\H_{\mr{c}}^*(m)$ is equivalent to the composition of the following morphisms
 \begin{equation*}
  g^*f_{0!}(I_{X_0})
   \simeq
   \widetilde{f}_{0!}\widetilde{g}^*(I_{X_0})
   \rightarrow
   \widetilde{f}_{0!}\alpha_!\alpha^*\widetilde{g}^*(I_{X_0})
   \simeq
   f_{1!}(I_{X_1}).
 \end{equation*}
 The functor $\H_{\mr{c}}^*$ preserves coCartesian edges.
\end{thm*}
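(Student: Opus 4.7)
The proof will proceed in parallel with that of Theorem \ref{mainthmcons}, with two main modifications: the target is $\mr{Un}(\mc{D})$ rather than $\mc{A}$, and only the section $M_I$ (not $M_J$) is needed. The orientation reversal of $\alpha$ between $\mr{Ar}$ and $\widetilde{\mr{Ar}}$ reflects reading the bivariant construction ``one-sidedly'' to produce the object $f_!I_X$ directly, rather than the pairing $\mr{Mor}(f_!I_X, J_Y)$.

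First, I would replace the operadic left Kan extension $\mc{H}\colon {}_\mc{L}\Str^\sim \to \mc{A}^\circledast$ of \ref{laxfuncdef} by a ``pushforward'' functor ${}_\mc{L}\Str^\sim \to \mr{Un}(\mc{D})$ sending a pair $(\mc{M}^\circledast \to \mr{RM}_{\Delta^1}, M_0 \in \mc{M}_0)$ to $F(M_0) \in \mc{M}_1$, where $F$ is the monoidal functor classified by $\mc{M}^\circledast$. Operadically, $F(M_0)$ is obtained as a coCartesian lift in $\mc{M}^{\circledast}$ over the active map $[1]\to[0]$ in $\mbf{\Delta}^{\mr{op}}$; this construction is strictly easier than the operadic left Kan extension of \S\ref{constfun}, because existence of coCartesian lifts is automatic rather than a universal-property condition. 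Applied to the $2$-functor $\mbf{D}$ together with the section $M_I$, this yields a map $\H_{\mr{c},M_I}^*\colon \mbf{Corr} \dashrightarrow \mr{Un}(\mc{D})$ (in a suitable lax sense) which at the level of $1$-morphisms assigns to $V_f\colon X\to Y$ the object $f_!I_X \in \mc{D}(Y)\subset\mr{Un}(\mc{D})$.

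Next, to produce a functor on $\mr{Ar}^{\mr{prop}}_{\mr{sep}}(\mr{Sch})^{\mr{op}}$, I would adapt the machinery of \S\ref{funtobitheo}. Define $\Phi^{\mr{co}}(\Gamma\times\Delta^1,\mr{Sch}^{\mr{op}})^{\mr{prop,Ar}}$ analogously to Definition \ref{dfnofphocoprop} but using the convention of (\ref{diagmortilar2}) with $\alpha$ reversed; by Remark \ref{straitencatarrprop} its straightening identifies with $\widetilde{M}_\bullet p$ for the Cartesian fibration $p\colon \mr{Ar}^{\mr{prop}}_{\mr{sep}}(\mr{Sch})\to\mr{Sch}$. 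Then build an analogue $\H_{\mr{c},\bullet}^*$ of the functor $\mc{H}_\bullet$ of \S\ref{taketensordefH} landing in $M_\bullet \mr{Un}(\mc{D})$. Since the target $\mr{Un}(\mc{D})$ sits over $\mr{Sch}^{\mr{op}}$ directly (rather than over $\mbf{\Delta}^{\mr{op}}$ via $\mc{A}^\circledast$), the ``tensor contraction'' and unit insertion steps of \S\ref{taketensordefH} and \S\ref{insertone} are absent. Applying $\mr{Int}$ of Proposition \ref{propinfcatext} then yields $\H_{\mr{c}}^*$, and the description of its values and morphisms follows by unwinding, using the final assertion of that proposition.

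Finally, preservation of coCartesian edges would be verified as in Lemma \ref{fundlemHprop}: it suffices to check preservation of inert edges and of edges over identities in $\mr{Sch}^{\mr{op}}$, and the nontrivial case reduces to the proper base-change isomorphism $g^*f_{0!}I_{X_0}\simeq\widetilde{f}_{0!}\widetilde{g}^*I_{X_0}$, which is built into $\mbf{D}$ being a $2$-functor. The main obstacle will be the operadic pushforward construction of the first step: although conceptually elementary, it requires redoing the string-category machinery of \S\ref{constfun} (Definitions \ref{dfnenstr} and \ref{plplstrdfn}, Lemma \ref{basicpropofstr}) with target $\mr{Un}(\mc{D})$ in place of $\mc{A}^\circledast$, together with a verification that the resulting construction is compatible with the Cartesian structure over $\mbf{\Delta}^{\mr{op}}$ inherited from $\bp\RMod_{\mbf{\Delta}}$.
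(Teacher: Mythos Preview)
Your outline is plausible in spirit but takes a substantially more complicated route than the paper, and the first step as written has a type mismatch. You propose a functor ${}_\mc{L}\Str^\sim \to \mr{Un}(\mc{D})$, but ${}_\mc{L}\Str^\sim$ lives over $\mbf{\Delta}^{\mr{op}}$ via $\LinCat^{\circledast}_{\mc{A}}$ while $\mr{Un}(\mc{D})$ lives over $\mr{Sch}^{\mr{op}}$; there is no direct map between them before pulling back along $\mbf{D}$, and even after pullback the target would need to be a simplicial object such as $\widetilde{M}_\bullet\mr{Un}(\mc{D})$, not $\mr{Un}(\mc{D})$ itself. Fixing this and then ``redoing the string-category machinery of \S\ref{constfun}'' is possible but is exactly the obstacle you flag, and the paper avoids it entirely.

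The paper's proof bypasses \S\ref{constfun} altogether. The key observation is that the $(\infty,2)$-functor $\mbf{D}\colon\mbf{Corr}\to\twoLinCat_{\mc{A}}^{2\text{-}\mr{op}}$, composed with the forgetful map to $\mbf{Cat}_\infty^{2\text{-}\mr{op}}$, corresponds (after passing to $(-)^{\sharp}:=(\bp(-))^{\mr{op}}$) to a functor $\bp\mr{Corr}^{\circledast}\to\coCart_\infty$, which by unstraightening in the base direction yields a single Cartesian fibration $G\colon\mc{C}\to\mr{Corr}^{\circledast,\sharp}\times_{\mbf{\Delta}^{\mr{op}}}\Gamma$ over $\mr{Corr}^{\circledast,\sharp}$ whose fiber $G_X$ over each object $X$ is a coCartesian fibration. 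The diagram (\ref{dfnofphocoprop-diag2}) gives a map $\alpha\colon\widetilde{M}\mr{Ar}^{\mr{op},\circledast}\to\overleftarrow{s}^*\mr{Corr}^{\circledast,\sharp}$, and one checks via a homotopy Cartesian square that $\overleftarrow{s}^*\mc{C}$ pulled back along $\alpha\times q'$ receives a map from $\mr{Un}(\mc{D})$. The object $I\in\mc{D}(*)$ gives a section $\mr{cone}_I$ over the cone-point stratum, and a single $\overleftarrow{s}^*G$-left Kan extension (whose existence uses only that each $G_X$ is coCartesian, via Lemma \ref{cartedgedetelem}) produces a map $\widetilde{M}\mr{Ar}^{\mr{op},\circledast}\to\widetilde{M}\mc{D}^{\circledast}$. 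Applying $\mr{Int}$ finishes. Thus the operadic and string-category apparatus, the unit-insertion step of \S\ref{insertone}, and the tensor-contraction step of \S\ref{taketensordefH} are all unnecessary here: the ``pushforward'' you want is realized as a coCartesian lift in the unstraightened universal family $\mc{C}$, not via an adapted $\Str^{\mr{en},+}$.
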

\begin{proof}
 In this proof, $\widehat{\Cat}_\infty$ is abbreviated as $\Cat_\infty$, and similarly for $\coCart_\infty$.
 Consider the functor
 \begin{equation*}
  \mr{Fun}(\mc{E},\coCart_{\infty})
   \rightarrow
   \mr{Fun}(\mc{E},\mr{Fun}(\Delta^1,\Cat_\infty))
   \cong
   \mr{Fun}(\Delta^1,\mr{Fun}(\mc{E},\Cat_\infty))
   \xrightarrow[\sim]{\mr{Un}}
   \mr{Fun}(\Delta^1,\Cart^{\mr{str}}(\mc{E}^{\mr{op}})).
 \end{equation*}
 Assume we are given a functor $F\colon\mc{E}\rightarrow\coCart_{\infty}$.
 Via this functor, this functor corresponds to a diagram of Cartesian fibrations over $\mc{E}^{\mr{op}}$
 \begin{equation*}
  \xymatrix@R=10pt{
   \mc{C}\ar[rd]\ar[rr]^-{G}&&\mc{D}\ar[ld]\\
  &\mc{E}^{\mr{op}}.&
   }
 \end{equation*}
 We may take $G$ to be a categorical fibration.
 For a coCartesian fibration $\mc{A}\rightarrow\mbf{\Delta}^{\mr{op}}$,
 we put $\mc{A}^{\sharp}:=(\bp\mc{A})^{\mr{op}}\rightarrow\mbf{\Delta}^{\mr{op}}$,
 the opposite of the dual Cartesian fibration, which is a coCartesian fibration.
 By definition, the $(\infty,2)$-functor $\mbf{Corr}\rightarrow\mbf{Cat}_\infty^{2-\mr{op}}$ corresponds to a functor between
 Cartesian fibrations $\bp\mr{Corr}^{\circledast}\rightarrow\coCart_\infty\times_{\Cat_\infty}\mbf{\Delta}$
 over $\mbf{\Delta}$, where $\mbf{\Delta}\rightarrow\Cat_\infty$ is the functor sending $[n]$ to $\Delta^n$. 
 This functor preserves Cartesian edges.
 Composing with the projection, we have the induced functor $\bp\mr{Corr}^{\circledast}\rightarrow\coCart_\infty$.
 By the observation above, we have the following diagram of {\em Cartesian} fibrations over $(\bp\mr{Corr}^{\circledast})^{\mr{op}}$
 \begin{equation*}
  \xymatrix@R=10pt{
   \mc{C}\ar[rd]\ar[rr]^-{G}&&\mr{Corr}^{\circledast,\sharp}\times_{\mbf{\Delta}^{\mr{op}}}\Gamma\ar[ld]\\
  &\mr{Corr}^{\circledast,\sharp},&
   }
 \end{equation*}
 where $\Gamma$ is the simplicial set defined in \ref{gammaanddual}.
 Note that $G$ may not be a coCartesian fibration, but for each object $X$ of $\mr{Corr}^{\circledast,\sharp}$,
 the fiber $G_X$ is a coCartesian fibration by construction.

 We denote $\mr{Ar}^{\mr{prop}}_{\mr{sep}}(\mr{Sch})$ by $\mr{Ar}$ in the following.
 For a {\em coCartesian} fibration $q\colon\mc{D}\rightarrow\mc{C}$,
 we denote $(\widetilde{M}_{\bullet}p^{\mr{op}})^{\mr{op}}$, using the construction in \ref{integlconst},
 by $\widetilde{M}q$ or $\widetilde{M}\mc{D}$ by abusing notations for simplicity.
 The coCartesian fibration $\mr{Ar}^{\mr{op}}\rightarrow\mr{Sch}^{\mr{op}}$, yields $\widetilde{M}_{\bullet}\mr{Ar}^{\mr{op}}$.
 The coCartesian fibration over $\mbf{\Delta}^{\mr{op}}$ obtained by
 unstraightening this is denoted by $\widetilde{M}\mr{Ar}^{\mr{op},\circledast}$.
 One checks easily that $\widetilde{M}\mr{Ar}^{\mr{op},\circledast}$ is equivalent to
 $\bigl(\Phi^{\mr{co}}(\Gamma\times\Delta^1,\mr{Sch}^{\mr{op}})^{\mr{prop}}\bigr)^{\sharp}$ of \ref{dfnofphocoprop}.
 Since the functors in the commutative diagram (\ref{dfnofphocoprop-diag2}) all preserves coCartesian edges over $\mbf{\Delta}^{\mr{op}}$,
 this yields the following commutative diagram:
 \begin{equation*}
  \xymatrix{
   \widetilde{M}\mr{Ar}^{\mr{op},\circledast}\ar[r]^-{\alpha}\ar[d]&
   \overleftarrow{s}^*\mr{Corr}^{\circledast,\sharp}
   \ar[d]^{q}\\
  M\mr{Sch}^{\mr{op},\circledast}\ar[r]^-{\mc{D}^{\circledast}}&
   \mr{Corr}^{\circledast,\sharp}.
   }
 \end{equation*}

 Now, we have the functor $q'\colon\Gamma\rightarrow\overleftarrow{s}^*\Gamma$ whose fiber over $[n]$
 is the inclusion $\Delta^n\rightarrow(\Delta^n)^{\triangleleft}$ avoiding the cone point.
 We have the coCartesian fibration $\mr{Un}(\mc{D})\rightarrow\mr{Sch}^{\mr{op}}$,
 and let $\widetilde{M}\mc{D}^{\circledast}$ be the coCartesian fibration over $\mbf{\Delta}^{\mr{op}}$
 corresponding to the functor $\widetilde{M}_{\bullet}\mr{Un}(\mc{D})$.
 By construction, we have the following homotopy Cartesian diagram of $\infty$-categories
 \begin{equation}
  \label{cartDcomp}\tag{$\star$}
   \xymatrix@C=50pt{
   \mr{Un}(\mc{D})\ar[d]\ar@{}[rd]|\square&
   \mc{X}\ar[r]\ar[d]\ar[l]\ar@{}[rd]|\square&
   \overleftarrow{s}^*\mc{C}\ar[d]^{\overleftarrow{s}^*G}\\
  \mr{Sch}^{\mr{op}}&
   \widetilde{M}\mr{Ar}^{\mr{op},\circledast}\times_{\mbf{\Delta}^{\mr{op}}}\Gamma
   \ar[r]^-{\alpha\times q'}\ar[l]&
   \overleftarrow{s}^*\bigl(\mr{Corr}^{\circledast,\sharp}\times_{\mbf{\Delta}^{\mr{op}}}\Gamma\bigr).
   }
 \end{equation}
 On the other hand, we have the functor $\overleftarrow{s}^*\mr{Corr}^{\circledast,\sharp}\rightarrow\mr{Sch}^{\mr{op}}$ sending to the value at the cone point.
 The composition $\widetilde{M}\mr{Ar}^{\mr{op},\circledast}\rightarrow\mr{Sch}^{\mr{op}}$
 with $\alpha$ is the constant functor at the final object $*$ by construction.
 Thus, the homotopy Cartesian diagram above induces the following homotopy Carteisan diagram
 \begin{equation*}
  \xymatrix@C=30pt{
   \widetilde{M}\mr{Ar}^{\mr{op},\circledast}\times\mc{D}(*)\ar[rr]\ar[d]_{\mr{pr}_1}\ar@{}[rrd]|\square&
   &\overleftarrow{s}^*\mc{C}\ar[d]^{\overleftarrow{s}^*G}\\
  \widetilde{M}\mr{Ar}^{\mr{op},\circledast}\ar[r]^-{\alpha}&
   \overleftarrow{s}^*\mr{Corr}^{\circledast,\sharp}\times_{\mbf{\Delta}^{\mr{op}}}\mbf{\Delta}^{\mr{op}}
   \ar[r]^-{\mr{id}\times i}&
   \overleftarrow{s}^*\bigl(\mr{Corr}^{\circledast,\sharp}\times_{\mbf{\Delta}^{\mr{op}}}\Gamma\bigr),
   }
 \end{equation*}
 where the right lower horizontal map $i\colon\mbf{\Delta}\rightarrow\overleftarrow{s}^*\Gamma$
 is the inclusion sending $[n]\in\mbf{\Delta}$ to $([n]^{\triangleleft},-\infty)\in\overleftarrow{s}^*\Gamma$.
 Using the fixed object $I\in\mc{D}(*)$, we have the map
 \begin{equation*}
  \mr{cone}_I\colon
   \widetilde{M}\mr{Ar}^{\mr{op},\circledast}
   \xrightarrow{\mr{id}\times\{I\}}
   \widetilde{M}\mr{Ar}^{\mr{op},\circledast}\times\mc{D}(*)
   \rightarrow
   \overleftarrow{s}^*\mc{C}.
 \end{equation*}
 We have the following commutative diagram:
 \begin{equation*}
  \xymatrix@C=50pt{
   \widetilde{M}\mr{Ar}^{\mr{op},\circledast}\times_{\mbf{\Delta}^{\mr{op}}}\mbf{\Delta}^{\mr{op}}
   \ar[r]^-{\mr{cone}_I}\ar@{^{(}->}[d]_{\mr{id}\times i}&
   \overleftarrow{s}^*\mc{C}\ar[d]^{\overleftarrow{s}^*G}\\
  \widetilde{M}\mr{Ar}^{\mr{op},\circledast}\times_{\mbf{\Delta}^{\mr{op}}}\overleftarrow{s}^*\Gamma
   \ar[r]^-{\alpha\times\mr{id}}\ar@{-->}[ru]&
   \overleftarrow{s}^*\bigl(\mr{Corr}^{\circledast,\sharp}\times_{\mbf{\Delta}^{\mr{op}}}\Gamma\bigr).
   }
 \end{equation*}
 We take a $\overleftarrow{s}^*G$-left Kan extension as above.
 The existence is ensured by the fact that the fiber $G_X$ for each $X\in\mr{Corr}^{\circledast,\sharp}$
 is a coCartesian fibration combining with Lemma \ref{cartedgedetelem}.
 By composing this extension with $\mr{id}\times q'$, the Cartesian diagram (\ref{cartDcomp}) induces a functor
 $\widetilde{M}\mr{Ar}^{\mr{op},\circledast}\times_{\mbf{\Delta}^{\mr{op}}}\Gamma\rightarrow\mc{D}$.
 By taking the adjoint, we have the functor
 $\widetilde{M}\mr{Ar}^{\mr{op},\circledast}\rightarrow M\mc{D}^{\circledast}$.
 Unwinding the definition, this functor through $\widetilde{M}\mc{D}^{\circledast}$,
 and get a functor $\widetilde{M}\mr{Ar}^{\mr{op},\circledast}\rightarrow\widetilde{M}\mc{D}^{\circledast}$.
 Finally, applying $\mr{Int}$ of (the coCartesian version of) Proposition \ref{propinfcatext},
 we have a functor $\H_{\mr{c}}^*\colon\mr{Ar}^{\mr{op}}\rightarrow\mr{Un}(\mc{D})$
 of coCartesian fibrations over $\mr{Sch}^{\mr{op}}$ which preserves coCartesian edges.
\end{proof}

\section{Examples}
\label{secExam}
In this section, we exhibit some concrete examples of
$(\infty,2)$-functor
$\mbf{Corr}(\mr{Sch})\rightarrow\twoLinCat_{R}$ to apply the results of
previous sections.

\subsection{}
We fix a noetherian scheme $S$, and let $\mr{Sch}(S)$ be a full
subcategory of the category of noetherian $S$-schemes which is stable
under pullbacks.
We denote by $\Tri$ be the $(2,1)$-category of triangulated categories,
triangulated functors, and invertible triangulated natural
transforms. Similarly, we denote by $\Tri^{\otimes}$ the
$(2,1)$-category of triangulated symmetric monoidal categories,
triangulated symmetric monoidal functors, and invertible triangulated
symmetric monoidal natural transformations.

\begin{dfn*}
 A {\em category of coefficients} is a functor
 $D\colon\mr{Sch}(S)^{\mr{op}}\rightarrow\Tri^{\otimes}$.
 For a morphism $f\colon X\rightarrow Y$ in $\mr{Sch}(S)$, we denote
 $D(f)\colon D(Y)\rightarrow D(X)$ by $f^*$.
 The category of coefficients is said to be {\em premotivic} (cf.\
 \cite[1.4.2]{CD}) if the following conditions are satisfied:
 \begin{itemize}
  \item For any {\em smooth separated morphism of finite type} $f$ in
	$\mr{Sch}(S)$, the $1$-morphism $f^*$, considered as a morphism
	in $\Tri$, admits a left adjoint $f_{\sharp}$, and $f^*$ and
	$f_{\sharp}$ satisfies some base change property (cf.\
	\cite[1.1.10]{CD});
	
  \item For any morphism $f$, $f^*$, considered as a morphism in $\Tri$,
	admits a right adjoint, denoted by $f_*$ (cf.\
	\cite[1.1.12]{CD});

  \item For any smooth separated morphism of finite type
	$f\colon X\rightarrow Y$, the canonical
	morphism $f_{\sharp}((-)\otimes f^*(-))\rightarrow
	f_{\sharp}(-)\otimes(-)$ of functors
	$D(X)\times D(Y)\rightarrow D(Y)$ is an equivalence (cf.\
	\cite[1.1.27]{CD});
	
  \item For any $X\in\mr{Sch}(S)$, the category $D(X)$ is closed
	({\em i.e.}\ admits an internal hom).
 \end{itemize}
 The category $\mr{Sch}(S)$ is assumed to be {\em adequate} in the
 sense of \cite[2.0.1]{CD}:
 \begin{itemize}
  \item It is closed under finite sums and pullbacks along morphisms of
	finite type;
	
  \item Any quasi-projective $S$-scheme belongs to $\mr{Sch}(S)$;
	
  \item Any separated morphism of finite type\footnote{
	In \cite{CD}, they do not impose the morphism to be of finite
	type. We think this is a typo, otherwise,
	all the separated morphisms in $\mr{Sch}(S)$ need to be of
	finite type.}
	in $\mr{Sch}(S)$ admits a
	compactification (cf.\ \cite[2.0.1 (c)]{CD} for more precise
	statement);
	
  \item Chow's lemma holds (cf.\ \cite[2.0.1 (d)]{CD} for more precise
	statement).
 \end{itemize}
 
 The category of coefficients is said to be {\em motivic} if it is
 premotivic, and moreover, satisfies the
 following conditions (see \cite[2.4.45]{CD} for more details):
 \begin{itemize}
  \item For $p\colon\mb{A}^1_T\rightarrow T$, the counit map
	$p_{\sharp}p^*\rightarrow\mr{id}$ is an equivalence
	(homotopy property);
	
  \item For a smooth separated morphism of finite type $f\colon
	X\rightarrow T$ and a section $s\colon T\rightarrow X$, the
	functor $f_{\sharp}s_*$ induces a categorical equivalence
	(stability property);
	
  \item We have $D(\emptyset)=0$, $i^*i_*\rightarrow\mr{id}$ is an
	equivalence for any closed immersion $Z\hookrightarrow T$, and
	$(j^*,i^*)$ is conservative where $j$ is the open immersion
	$T\setminus Z\hookrightarrow T$ (localization property);
	
  \item For any proper morphism $f$, $f_*$ admits a right adjoint
	(adjoint property).
 \end{itemize}
\end{dfn*}

One of the main results of the theory is the following theorem, which
roughly says that the proper base change theorem holds for motivic
category of coefficients.

\begin{thm}[Voevodsky, Ayoub, Cisinski-D\'{e}glise {\cite[2.4.26,
 2.4.28]{CD}}]
 \label{mainthmmotcoef}
 Let $D\colon\mr{Sch}(S)^{\mr{op}}\rightarrow\Tri^{\otimes}$ be a
 motivic category of coefficients. Then the support property and the
 proper base change property holds: Consider a Cartesian diagram in
 $\mr{Sch}(S)$
 \begin{equation*}
  \xymatrix{
   X'\ar[r]^-{g'}\ar[d]_{f'}\ar@{}[rd]|\square&X\ar[d]^{f}\\
  Y'\ar[r]^-{g}&Y.
   }
 \end{equation*}
 \begin{enumerate}
  \item If $f$ is proper and $g$ is an open immersion, the canonical map
	$g_!f'_*\rightarrow f_*g'_!$ of functors $D(X')\rightarrow D(Y)$,
	constructed using the equivalence
	$g'_!f'^*\xrightarrow{\sim}f^*g_!$, is an equivalence
	{\normalfont(}namely, the support property holds{\normalfont)}.
	
  \item If $f$ is proper, then the adjunction map $g^*f_*\rightarrow f'_*g'^*$ of functors $D(X)\rightarrow D(Y')$ is an equivalence
	{\normalfont(}namely, the proper base change property holds{\normalfont)}.
 \end{enumerate}
\end{thm}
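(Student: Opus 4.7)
The plan is to prove both assertions by reducing to two elementary cases: closed immersions (handled by the localization property) and smooth proper morphisms (ultimately reduced to projective spaces, handled by the stability property). The two assertions are closely linked: once the support property is established in full generality, the proper base change follows by a standard trick using the factorization through the graph of $g$.

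First, I would establish the support property (assertion (1)) by reducing to the case of closed immersions. Let $i\colon Z\hookrightarrow Y$ be the complementary closed immersion of $g\colon Y'\setminus Y\hookrightarrow Y$. By the localization property, the pair $(g^*, i^*)$ is conservative and we have a localization cofiber sequence $g_!g^*\to\mr{id}\to i_*i^*$. Pulling this sequence back through $f_*$ and using that both sides are compatible with filtered colimits and cofiber sequences, the comparison map $g_!f'_*\to f_*g'_!$ becomes an equivalence if and only if its ``closed complement'' $i_*i^*f_*\to f_*i''_*i''^*g'^*$ is, where $i''$ is the pullback of $i$. This reduces the claim to proving that $i^*f_*\simeq f''_*i''^*$ for any proper $f$, where $i''\colon Z\times_Y X\hookrightarrow X$. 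In other words, support property is equivalent to proper base change along closed immersions.

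Second, I would establish proper base change along closed immersions by Nagata compactification combined with Chow's lemma. Using the hypothesis that $\mr{Sch}(S)$ is adequate, any proper morphism $f\colon X\to Y$ admits a compactification, and Chow's lemma lets us dominate $f$ by a projective morphism. By an induction on the dimension of $X$ together with noetherian induction, it suffices to treat (a) closed immersions $f$ and (b) smooth projective morphisms $f$. Case (a) is again immediate from the localization property applied to the pair of closed immersions. For case (b), one factors $f$ as a closed immersion $X\hookrightarrow\mb{P}^n_Y$ followed by the projection $\mb{P}^n_Y\to Y$; the closed immersion is handled as in (a), so the real content is proper base change for $\mb{P}^n_Y\to Y$. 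For this projection, the adjoint property combined with the stability and homotopy properties yields the projective bundle formula, which gives an explicit description of $f_*$ in terms of the smooth operations $p_\sharp$ (where $p$ is the projection), and smooth base change (built into the premotivic structure) then gives the base change statement.

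The main obstacle I expect is the inductive reduction in the second step: carefully packaging Nagata compactification and Chow's lemma so that the induction on dimension closes, and verifying that at each stage the base change maps assemble coherently (in the $\infty$-categorical enhancement we care about here, this coherence is nontrivial and would likely require appealing to the six-functor machinery of \cite{GR} rather than an ad hoc diagram chase). A secondary obstacle is the passage from the smooth projective case to the projective bundle formula; this requires checking that the motive of $\mb{P}^n$ decomposes as a sum of Tate twists, which ultimately rests on the stability property together with a careful analysis of the zero section and its complement via the localization triangle. Once these inputs are assembled, deducing assertion (2) from assertion (1) is formal: factor $g\colon Y'\to Y$ through its graph $\Gamma_g\colon Y'\hookrightarrow Y'\times Y$ followed by the projection $Y'\times Y\to Y$, apply support property to the closed immersion $\Gamma_g$ and smooth base change to the projection, and combine.
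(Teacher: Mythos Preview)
The paper does not prove this theorem. It is stated as a result of Voevodsky, Ayoub, and Cisinski--D\'{e}glise, with an explicit citation to \cite[2.4.26, 2.4.28]{CD}, and no proof is given in the paper; the theorem is simply invoked as a black box in the proof of the subsequent Theorem~\ref{GRmainres}. So there is no ``paper's own proof'' against which to compare your proposal.

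That said, your sketch is a reasonable outline of how the argument runs in the cited reference, with one caveat worth flagging. Your final paragraph suggests deducing assertion (2) for arbitrary $g$ from assertion (1) by factoring $g$ through its graph $\Gamma_g\colon Y'\hookrightarrow Y'\times Y$ followed by the projection. But $\Gamma_g$ need not be an open immersion (it is a closed immersion when $g$ is separated), so the support property as stated in (1) does not apply to it; what you actually need there is proper base change along the closed immersion $\Gamma_g$, which is exactly what you already established in your second step. And for the projection $Y'\times Y\to Y$ you invoke smooth base change, but this projection is smooth only when $Y'\to S$ is; in general one needs the base change already obtained for the proper map $f$ against the second projection. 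So the logical flow is not ``(1) $\Rightarrow$ (2)'' but rather: both reduce to the same core computation (closed immersions plus the projective bundle case), and once that core is in hand both follow. This is indeed how \cite{CD} organizes it.
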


\subsection{}
\label{GRmainres}
The following theorem is essentially a consequence of Gaitsgory and
Rozenblyum's extension theorem as well as the theorem above.
The theorem roughly says as follows: Assume we are given a motivic
category of coefficients $D$, and assume we wish to upgrade this to an
$(\infty,2)$-functor from the category of correspondences to
$\twoLinCat$. Then all we need to construct is only an
$\infty$-enhancement of $D$, which is often easy to carry out.
The author learned the technique from the thesis of A. Khan \cite{K}.

\begin{thm*}
 Let $R$ be an $\mb{E}_\infty$-ring, and let
 $\mc{D}^*\colon\mr{Sch}(S)^{\mr{op}}\rightarrow\mr{CAlg}(\LinCat_{R})$
 be a functor. Assume that the composition
 \begin{equation*}
  \mr{Sch}(S)^{\mr{op}}\xrightarrow{\mc{D}^*}\mr{CAlg}(\LinCat_{R})
   \rightarrow
   \Tri^{\otimes}
 \end{equation*}
 is a motivic category of coefficients.
 Here, the second functor is defined by {\normalfont\cite[4.8.2.18]{HA}} and {\normalfont\cite[1.1.2.14]{HA}}.
 Then we have the following commutative diagram
 \begin{equation*}
  \xymatrix@C=50pt{
   \mr{Sch}(S)^{\mr{op}}
   \ar[r]^-{\mc{D}^*}\ar[d]&
   \LinCat_R\ar[d]\\
  \mbf{Corr}(S)^{\mr{prop}}_{\mr{sep};\mr{all}}
   \ar[r]^-{\mbf{D}^*_!}&
   \twoLinCat_R^{2\mbox{-}\mr{op}}.
   }
 \end{equation*}
 Here, $\mr{prop}$, $\mr{sep}$, $\mr{all}$ denote the classes of proper
 morphisms, separated morphisms, and all morphisms.
\end{thm*}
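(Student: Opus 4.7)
The plan is to invoke the Gaitsgory-Rozenblyum extension theorem for $2$-functors out of correspondence categories (cf.\ \cite[Ch.7, Thm 3.2.2]{GR}), which reduces the construction of $\mbf{D}^*_!$ to verifying adjointability and Beck-Chevalley data for the input functor $\mc{D}^*$. More precisely, to extend $\mc{D}^*\colon\mr{Sch}(S)^{\mr{op}}\rightarrow\LinCat_R$ to $\mbf{Corr}(S)^{\mr{prop}}_{\mr{sep};\mr{all}}$, the input required is threefold: (i) for each separated morphism $f$, the $1$-morphism $\mc{D}^*(f)=f^*$ in $\twoLinCat_R$ admits a left adjoint $f_!$; (ii) the Beck-Chevalley condition $g^*f_!\simeq f'_!g'^*$ holds for Cartesian squares with one separated leg and one arbitrary horizontal leg; (iii) for proper $f$, the left adjoint $f_!$ is canonically equivalent to the right adjoint $f_*$, providing the $2$-cells associated to proper morphisms.

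First I would verify the existence of the left adjoint. By Lemma \ref{adjcritlincat}\ref{adjcritlincat-3}, with $\mc{A}^{\circledast}=\RMod_R$, it suffices that $f^*$ admits a left adjoint on underlying presentable stable $\infty$-categories. For smooth separated morphisms this is the premotivic $f_{\sharp}$; for proper morphisms $f^*$ has the right adjoint $f_*$ which, by Theorem \ref{mainthmmotcoef}, also serves as left adjoint (so $f_!:=f_*$); and for a general separated morphism of finite type one chooses a Nagata compactification $f=p\circ j$ with $p$ proper and $j$ an open immersion and puts $f_!:=p_*\circ j_{\sharp}$ as in the Cisinski-D\'{e}glise/Ayoub construction. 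For the Beck-Chevalley data, the equivalence $g^*f_!\simeq f'_!g'^*$ is precisely the combination of smooth base change (for the $j_{\sharp}$ piece, built into the premotivic structure) and proper base change (for the $p_*$ piece, Theorem \ref{mainthmmotcoef}). The upgrade from a homotopy-category isomorphism to an $\infty$-categorical equivalence is automatic: the comparison map is a canonical natural transformation between colimit-preserving exact functors of presentable stable $\infty$-categories, and such a map is an equivalence iff it is so on underlying triangulated categories.

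The hard part is not any single verification but rather the coherent $\infty$-categorical assembly: one must show that the assignment $f\mapsto f_!$ is well-defined up to contractible choices of compactification, that the various base-change and composition coherences fit into a single $(\infty,2)$-categorical datum, and that the proper case (iii) is compatible with this datum so that $2$-cells in $\mbf{Corr}$ arising from proper morphisms act correctly. This coherence problem is exactly what is solved in the thesis of A.~Khan \cite{K} (and independently in work of Liu-Zheng); once the input (i)-(iii) is organized as a $2$-functor out of an appropriate subcategory of spans, the extension machine of \cite[Ch.7]{GR} produces the $2$-functor $\mbf{D}^*_!$ on $\mbf{Corr}(S)^{\mr{prop}}_{\mr{sep};\mr{all}}$. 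Commutativity of the displayed square is built into the extension procedure, since restricting $\mbf{D}^*_!$ along the embedding of $\mr{Sch}(S)^{\mr{op}}$ as the subcategory of spans with identity separated leg recovers $\mc{D}^*$ by construction.
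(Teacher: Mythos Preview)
Your formulation contains a genuine error that would make the argument fail as written. In (i) you claim that for every separated $f$ the functor $f^*$ admits a \emph{left adjoint} $f_!$; this is false. For an open immersion $j$ one indeed has $j_!=j_\sharp$ left adjoint to $j^*$, but for a proper morphism $p$ the functor $p_!:=p_*$ is the \emph{right} adjoint of $p^*$, and for a general separated $f=p\circ j$ the composite $p_*j_\sharp$ is neither a left nor a right adjoint of $f^*$. Your second paragraph compounds this: Theorem \ref{mainthmmotcoef} is proper base change and says nothing about $f_*$ being a left adjoint of $f^*$. Consequently you cannot invoke Lemma \ref{adjcritlincat}\ref{adjcritlincat-3} to produce $f_!$ as a $1$-morphism in $\twoLinCat_R$ for general separated $f$, and the single application of \cite[Ch.7, 3.2.2]{GR} you envisage does not apply.

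The paper's proof avoids this by never constructing $f_!$ for general separated $f$ as input. Instead it proceeds in two stages using two different Gaitsgory--Rozenblyum theorems. First, one checks the right Beck--Chevalley condition only for \emph{open immersions} (where $j^*$ genuinely has a left adjoint $j_!$) and applies \cite[Ch.7, 3.2.2(b)]{GR} to obtain a functor on $\mbf{Corr}(S)^{\mr{open}}_{\mr{open};\mr{all}}$. Second, after passing to $(-)^{1\&2\text{-}\mr{op}}$, one checks the left Beck--Chevalley condition for \emph{proper} morphisms (where $f^*$ has a right adjoint $f_*$ commuting with colimits, hence a $\twoLinCat_R$-adjoint by Lemma \ref{adjcritlincat}) together with the support property, and applies \cite[Ch.7, 5.2.4]{GR} with $\mathit{adm}=\mr{prop}$, $\mathit{co}\text{-}\mathit{adm}=\mr{open}$. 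It is this second theorem that internalises the Nagata compactification and produces the coherent $f_!$ for all separated $f$ as \emph{output}; one never has to assemble it by hand. What you attribute to Khan's thesis is exactly this two-step reduction to the open and proper atomic cases, not a separate coherence argument for a pre-existing $f_!$.
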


\begin{rem*}
 In fact, less amount of data is required to construct $\mbf{D}^*_!$ if we do not need $\infty$-enhancement of $\otimes$ and $\shom$.
 More precisely, in order to get $\mbf{D}^*_!$, it suffices to assume that we are given a functor $\mr{Sch}(S)^{\mr{op}}\rightarrow\LinCat_R$
 such that the induced functor $\mr{Sch}(S)^{\mr{op}}\rightarrow\Tri$ can be promoted to a motivic category of coefficients
 $\mr{Sch}(S)^{\mr{op}}\rightarrow\Tri^{\otimes}$.
\end{rem*}

\begin{proof}
 Consider the functor
 $\mr{Sch}(S)^{\mr{op}}\xrightarrow{\mc{D}^*}\LinCat_R\rightarrow
 \twoLinCat_R$. Let us show that this functor satisfies the right
 Beck-Chevalley condition (\cite[Ch.7, 3.1.5]{GR})
 with respect to open immersions,
 namely satisfies the following two conditions:
 \begin{itemize}
  \item For any open immersion $j\colon U\rightarrow X$ in
	$\mr{Sch}(S)$, the $1$-morphism $j^*\colon D(X)\rightarrow D(U)$
	in $\twoLinCat_R$ admits a left adjoint, denoted by $j_!$;
	
  \item For a Cartesian diagram in $\mr{Sch}(S)$
	\begin{equation}
	 \label{cartdiagschref}
	  \xymatrix{
	  X'\ar[d]_{f'}\ar[r]^-{g'}\ar@{}[rd]|\square&
	  X\ar[d]^{f}\\
	 Y'\ar[r]^-{g}&Y}
	\end{equation}
	such that $g$ is an open immersion, the canonical $2$-morphism
	of functors $g'_!\circ f'^*\rightarrow f^*\circ g_!$ is an
	equivalence.
 \end{itemize}
 For the existence of left adjoint, it suffices to check that the
 $1$-morphism $j^*$ considered as a functor between underlying
 $\infty$-category admits a left adjoint by Lemma \ref{adjcritlincat}.
 An exact functor $F$ between stable $\infty$-categories admits left or
 right adjoint if and only if so does the functor between its homotopy
 categories $\mr{h}F$ by \cite[3.3.1]{NRS}.
 Since an open immersion is separated smooth of finite type,
 this follows from the fact that the induced category of coefficients is
 premotivic. In order to show that the adjunction map is an equivalence,
 it suffices to show this for the associated homotopy category as well.
 Thus, the equivalence follows by the base change property of
 $f_{\sharp}$ in the axiom of premotivic category. Invoking
 \cite[Ch.7, 3.2.2 (b)]{GR}, we get a functor
 $\mbf{D}_1\colon\mbf{Corr}(S)^{\mr{open}}_{\mr{open};\mr{all}}
 \rightarrow\twoLinCat_R$, where $\mr{open}$ denotes the class of open
 immersions. Restricting $\mbf{D}_1$ to
 $\mbf{Corr}(S)^{\mr{iso}}_{\mr{open};\mr{all}}$, where $\mr{iso}$ is
 the class of isomorphisms, and take $(-)^{1\&2\mbox{-}\mr{op}}$ to both
 sides, we get the $2$-functor
 $(\mbf{Corr}(S)^{\mr{iso}}_{\mr{all};\mr{open}})^{2\mbox{-}\mr{op}}
 \rightarrow\twoLinCat_R^{1\&2\mbox{-}\mr{op}}$. Since the $2$-morphisms
 are equivalences in the category
 $\mbf{Corr}(S)^{\mr{iso}}_{\mr{all};\mr{open}}$, in other words it is
 an $(\infty,1)$-category, we have an equivalence
 $\mbf{Corr}(S)^{\mr{iso}}_{\mr{all};\mr{open}}\simeq
 (\mbf{Corr}(S)^{\mr{iso}}_{\mr{all};\mr{open}})^{2\mbox{-}\mr{op}}$ by
 inverting the $2$-morphisms. Thus, we obtain
 $\mbf{D}_2\colon\mbf{Corr}(S)^{\mr{iso}}_{\mr{all};\mr{open}}
 \rightarrow\twoLinCat_R^{1\&2\mbox{-}\mr{op}}$.

 Now, we wish to show that the composition
 $\mr{Sch}(S)\rightarrow\mbf{Corr}^{\mr{iso}}_{\mr{all};\mr{open}}
 \xrightarrow{\mbf{D}_2}\twoLinCat_R^{1\&2\mbox{-}\mr{op}}$ satisfies
 the left Beck-Chevalley condition (\cite[Ch.7, 3.1.2]{GR})
 with respect to ``$\mr{prop}$''.
 An adjoint pair of $1$-morphisms $(f,g)$ in a $2$-category
 $\mbf{C}$ is equivalent to giving an adjoint pair of $1$-maps $(f',g')$
 in the $2$-category $\mbf{C}^{1\&2\mbox{-}\mr{op}}$, where $f'$, $g'$
 are corresponding $1$-maps in $\mbf{C}^{1\&2\mbox{-}\mr{op}}$
 to $f$, $g$.
 Thus verifying the left Beck-Chevalley condition
 amounts to checking the following two conditions:
 \begin{itemize}
  \item For any proper morphism $f\colon X\rightarrow Y$, the
	$1$-morphism $f^*\colon D(Y)\rightarrow D(X)$ in $\twoLinCat_R$
	admits a right adjoint, denoted by $f_*$;

  \item For a Cartesian diagram (\ref{cartdiagschref}) in $\mr{Sch}(S)$
	such that $f$ is proper and $g$ is any morphism, the adjunction
	map $g^*f_*\rightarrow f'_*g'^*$ is an equivalence;
 \end{itemize}
 In order to check that the $1$-morphism $f^*$ admits a right adjoint in
 $\twoLinCat_R$, we need to show that the underlying functor, denoted by
 $(f^*)^{\circ}$, between $\infty$-category (without linear
 $\infty$-category structure)
 admits a right adjoint which commutes with small colimits by Lemma
 \ref{adjcritlincat}. Since $(f^*)^{\circ}$ is a morphism in $\PrL$ by
 definition of $\LinCat$, $(f^*)^{\circ}$ admits a right adjoint
 $f^{\circ}_*$.
 We need to check that this functor commutes with small
 colimits. Since $f_*^{\circ}$ is an exact functor by
 \cite[1.1.4.1]{HA}, it suffices to check that it commutes with small
 coproducts by \cite[4.4.2.7]{HTT}.
 This commutation is equivalent to the commutation of small
 coproducts of the functor $\mr{h}(f_*^{\circ})$ between associated homotopy categories.
 By \cite[5.2.2.9]{HTT},
 $\mr{h}(f_*^{\circ})$ is right adjoint to $\mr{h}((f^*)^{\circ})$,
 and $\mr{h}(f_*^{\circ})$ admits a right adjoint ${}^{\mr{h}}f^!$,
 because $f$ is proper, by the adjointness axiom of motivic category of
 coefficients.
 Thus the claim follows. The second condition can be checked in the
 homotopy category, which is nothing but Theorem \ref{mainthmmotcoef}.
 In addition to the left Beck-Chevalley condition, the condition
 \cite[Ch.7, 5.2.2]{GR} holds since the support property holds by
 Theorem \ref{mainthmmotcoef}.
 This enables us to invoke \cite[Ch.7, 5.2.4]{GR} for
 $\mathit{adm}=\mr{prop}$, $\mathit{co}\mbox{-}\mathit{adm}=\mr{open}$,
 and get a functor
 $\mbf{D}_3\colon\mbf{Corr}(S)^{\mr{prop}}_{\mr{all};\mr{sep}}\rightarrow\twoLinCat_R^{1\&2\mbox{-}\mr{op}}$.
 Finally, we take $(-)^{1\mbox{-}\mr{op}}$ to get $\mbf{D}^*_!$.
\end{proof}

\subsection*{Motivic theory of modules}
\subsection{}
\label{moduleconsmoti}
Assume we are in the situation of Theorem \ref{GRmainres}.
Let $R'$ be an $\mb{E}_\infty$-algebra over $R$.
Then we have the scalar extension functor
$\LinCat_{R}\rightarrow\LinCat_{R'}$ (cf.\ \cite[D.2.4]{HA}).
Thus, we have
$\mc{D}_{R'}\colon\mr{Sch}(S)\xrightarrow{\mc{D}}
\LinCat_{R}\rightarrow\LinCat_{R'}$.
Now, recall the notations of \ref{prmodfunct}.
Consider the following diagram:
\begin{equation*}
 \xymatrix{
  &\Prcat^{\mr{CAlg}}_{\mc{L}}\ar[d]^{\phi^{\mr{CAlg}}}
  \ar[r]^-{\mr{pr}_1\circ\Xi}&
  \mr{CAlg}(\LinCat_{R'}),\\
 \mr{Sch}(S)^{\mr{op}}
  \ar[r]^-{\mc{D}_{R'}}\ar@/^15pt/@{.>}[ur]^-{A}&
  \mr{CAlg}(\LinCat_{R'})
  }
\end{equation*}
Assume we are given a dotted arrow in the diagram so that the diagram
commutes. Then by composing with $\mr{pr}_1\circ\Xi$, we get a new
functor $\mr{Mod}_A(\mc{D}_{R'})\colon
\mr{Sch}(S)^{\mr{op}}\rightarrow\LinCat_{R'}$.
Assume that for any morphism $f$ in $\mr{Sch}(S)^{\mr{op}}$, the edge
$A(f)$ is $\phi^{\mr{CAlg}}$-coCartesian. In this case, by
\cite[7.2.13, 7.2.18]{CD}, the underlying theory of coefficients of
$\mr{Mod}_A(\mc{D}_{R'})$ is in fact a motivic theory of coefficients.
Indeed, the underlying category is compatible with \cite{CD} by
\cite[4.3.3.17]{HA}. Thus, we can apply Theorem \ref{GRmainres}.

Finally, the construction of $A$ is essentially the same as choosing a
commutative algebra object $A(S)$ of $\mc{D}_{R'}(S)$.
Because we assume that $A(f)$ is a coCartesian edge for any morphism $f$
in $\mr{Sch}(S)^{\mr{op}}$, the following is a $\phi^{\mr{CAlg}}$-left
Kan extension diagram:
\begin{equation*}
   \xymatrix{
  \{S\}\ar[d]\ar[r]^-{A(S)}&
  \Prcat^{\mr{CAlg}}_{\mc{L}}\ar[d]^{\phi^{\mr{CAlg}}}\\
 \mr{Sch}(S)^{\mr{op}}\ar[r]_-{\mc{D}_{R'}}\ar[ur]^-{A}&
  \mr{CAlg}(\LinCat_{R'}).
  }
\end{equation*}
Thus by \cite[4.3.2.15, 4.3.2.16]{HTT}, we have the claim.
Summing up, if we fix $A_S\in\mr{CAlg}(\mc{D}_{R'}(S))$, there exists an
$\infty$-enhancement of the motivic theory associating $f\colon
X\rightarrow S$ to $\mr{Mod}_{f^*A_S}(\mc{D}_{R'}(X))$.

\subsection*{\'{E}tale cohomology theory}

\subsection{}
\label{monoidalgrothcons}
Let $\mc{O}^{\otimes}$ be a symmetric $\infty$-operad.
Let $\mr{Mon}^{\mr{pres}}_{\mc{O}}(\Cat_\infty)$ be the subcategory of
$\mr{Mon}_{\mc{O}}(\Cat_\infty)$ (cf.\ \cite[2.4.2.1]{HA})
spanned by $\mc{O}$-monoidal $\infty$-categories which are compatible
with small colimits and each fiber over $X\in\mc{O}$ is presentable, and
those $\mc{O}$-monoidal functors which preserve small colimits.

\begin{lem*}
 Let $\mc{D}$ be a coCartesian symmetric monoidal $\infty$-category.
 \begin{equation*}
  \mr{Mon}^{\mr{pres}}_{\mc{D}}(\Cat_{\infty})
   \simeq
   \mr{Fun}(\mc{D},\mr{CAlg}(\PrL)).
 \end{equation*}
\end{lem*}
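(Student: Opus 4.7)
The plan is to reduce to two standard identifications and then restrict to the presentable subcategories on each side. First, for any symmetric $\infty$-operad $\mc{O}^{\otimes}$, there is a canonical equivalence
\[
 \mr{Mon}_{\mc{O}}(\Cat_\infty) \simeq \mr{Alg}_{\mc{O}}(\Cat_\infty^{\times}),
\]
where $\Cat_\infty^{\times}$ denotes the Cartesian symmetric monoidal structure on $\Cat_\infty$; this is standard (cf.\ \cite[2.4.2.5]{HA}) and converts $\mc{O}$-monoidal $\infty$-categories into $\mc{O}$-algebras in $\Cat_\infty$. Second, for any symmetric monoidal $\infty$-category $\mc{E}^{\otimes}$, restriction along the inclusion $\mc{D} \hookrightarrow \mc{D}^{\sqcup}$ of the $\langle 1\rangle$-fiber induces an equivalence
\[
 \mr{Alg}_{\mc{D}^{\sqcup}}(\mc{E}) \xrightarrow{\sim} \Fun(\mc{D}, \mr{CAlg}(\mc{E})),
\]
essentially because $\mc{D}^{\sqcup}$ is the free symmetric monoidal $\infty$-category on $\mc{D}$ (cf.\ \cite[2.4.3.18]{HA}): a symmetric monoidal functor out of $\mc{D}^{\sqcup}$ is determined by its restriction to $\mc{D}$ together with the canonical commutative algebra structure each value acquires. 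Specializing $\mc{E}^{\otimes} = \Cat_\infty^{\times}$ and composing with the first equivalence yields $\mr{Mon}_{\mc{D}^{\sqcup}}(\Cat_\infty) \simeq \Fun(\mc{D}, \mr{CAlg}(\Cat_\infty))$.

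Next I would restrict to the presentable subcategories. Under this equivalence, the fiber of a coCartesian fibration $\mc{C}^{\otimes} \to \mc{D}^{\sqcup}$ over $X \in \mc{D}$, viewed as a symmetric monoidal $\infty$-category, corresponds to the image $F(X) \in \mr{CAlg}(\Cat_\infty)$ of the associated functor $F$. The condition defining $\mr{Mon}^{\mr{pres}}_{\mc{D}}(\Cat_\infty)$---that each fiber be presentable and the monoidal product preserve small colimits separately in each variable---is precisely the condition that $F$ factor through $\mr{CAlg}(\PrL) \subset \mr{CAlg}(\Cat_\infty)$ by the standard characterization of presentable symmetric monoidal $\infty$-categories (cf.\ \cite[4.8.1.15]{HA}). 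Likewise, a morphism in $\mr{Mon}^{\mr{pres}}_{\mc{D}}(\Cat_\infty)$, namely a $\mc{D}$-monoidal functor which preserves fiberwise colimits, corresponds exactly to a natural transformation of $\mc{D}$-diagrams landing in $\mr{CAlg}(\PrL)$, since morphisms in $\mr{CAlg}(\PrL)$ are the symmetric monoidal functors that preserve small colimits. The desired equivalence follows.

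The main obstacle I expect is the careful handling of the second displayed equivalence above: while well-documented in spirit, HA typically formulates it for algebras in symmetric monoidal $\infty$-categories rather than in the fully operadic language we need here, and one must unwind the straightening/unstraightening dictionary to confirm that natural transformations between $\mc{D}^{\sqcup}$-algebras match natural transformations between $\mc{D}$-indexed diagrams of commutative algebra objects. Combined with the identification of presentability on both sides, this compatibility check is the only technical point; everything else is essentially formal.
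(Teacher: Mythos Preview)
Your proposal is correct and essentially the same as the paper's proof: both reduce to \cite[2.4.3.18]{HA} for the coCartesian operad $\mc{D}^{\sqcup}$. The only cosmetic difference is the order of operations---the paper first identifies $\mr{Mon}^{\mr{pres}}_{\mc{D}}(\Cat_\infty)\simeq\mr{Alg}_{\mc{D}}(\PrL)$ directly via \cite[4.8.1.9]{HA} and the fully faithful embedding $\PrL\hookrightarrow\Cat_\infty(\mc{K})$, and then applies \cite[2.4.3.18]{HA} to $\PrL^{\otimes}$, whereas you apply \cite[2.4.3.18]{HA} to $\Cat_\infty^{\times}$ first and restrict to the presentable subcategories afterward; the content is the same.
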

\begin{proof}
 The proof is similar to \cite[3.3.4.11]{GL}.
 Let $\mc{K}$ be the set of small simplicial sets. Then the inclusion
 $\PrL\rightarrow\Cat_\infty(\mc{K})$ is fully faithful.
 By \cite[4.8.1.9]{HA} and \cite[5.5.3.5]{HTT}, we have
 $\mr{Mon}^{\mr{pres}}_{\mc{D}}(\Cat_{\infty})
 \simeq\mr{Alg}_{\mc{D}}(\PrL)$.
 We invoke \cite[2.4.3.18]{HA} to conclude.
\end{proof}

\subsection{}
Let $S$ be a noetherian scheme, and $\mr{Sch}(S)$ be the category of noetherian $S$-schemes.
Let $\mr{\acute{E}t}$ be the full subcategory of $\mr{Fun}(\Delta^1,\mr{Sch}(S))$ spanned by \'{e}tale morphisms $X\rightarrow Y$ over $S$.
We have the functor $\mr{\acute{E}t}\rightarrow\mr{Sch}(S)$ sending $X\rightarrow Y$ to $Y$,
which is a Cartesian fibration. By straightening, we have the functor $\mr{\acute{E}t}\colon\mr{Sch}(S)^{\mr{op}}\rightarrow\Cat_\infty$
sending $T\in\mr{Sch}(S)$ to $\mr{\acute{E}t}(T)$, the category of \'{e}tale schemes over $T$.
We fix a ring $\Lambda$, and consider $\mr{Mod}^{\otimes}_{\Lambda}\in\mr{CAlg}(\PrL_{\mr{St}})$.
The construction \ref{algmonoidfuncto} induces a functor
\begin{equation*}
 \PSh_{\Lambda,*}\colon
  \mr{Sch}(S)\xrightarrow{\mr{\acute{E}t}}
  \Cat_\infty^{\mr{op}}
  \xrightarrow{\mr{op}}
  \Cat_\infty^{\mr{op}}
  \xrightarrow{\mr{Fun}(-,\mr{Mod}^{\otimes}_\Lambda)}
  \mr{CAlg}(\PrL_{\mr{st}})
\end{equation*}
sending $T\in\mr{Sch}(S)$ to $\PSh_\Lambda(T):=\mr{Fun}(\mr{\acute{E}t}(T)^{\mr{op}},\mr{Mod}^{\otimes}_{\Lambda})$,
the $\infty$-category of $\mr{Mod}_{\Lambda}$-valued presheaves with pointwise symmetric monoidal structure.
If we are given a morphism $f\colon X\rightarrow Y$ of $S$-schemes, the associated functor $\PSh_{\Lambda,*}(f)$ is given by the pushforward $f_*$.
By the (Cartesian version of) ``symmetric monoidal Grothendieck construction'' \cite[3.3.4.11]{GL}, the functor yields a symmetric monoidal {\em Cartesian} fibration
$\mc{P}_*\rightarrow\mr{Sch}(S)^{\mr{op}}$ in the sense of \cite[3.3.4.6]{GL}.
This is in fact a symmetric monoidal coCartesian fibration as well.
Indeed, since $f_*$ admits a left adjoint, it is a coCartesian fibration of $\infty$-categories.
Since the tensor product in $\mr{Mod}_{\Lambda}$ commutes with small colimits in each variable, $f^*$ is compatible with the tensor product as required.
By the (coCartesian version of) symmetric monoidal Grothendieck construction,
we get a functor $\PSh_{\Lambda}^*\colon\mr{Sch}(S)^{\mr{op}}\rightarrow\mr{CAlg}(\PrL_{\mr{st}})$.

Now, let $\mc{F}$ be a presheaf in $\PSh_\Lambda(T)$.
We say that $\mc{F}$ is a {\em sheaf} if for any \'{e}tale hypercovering $U_\bullet\rightarrow V$ where $V\in\mr{\acute{E}t}(T)$,
the induced map
\begin{equation*}
 \mc{F}(V)\rightarrow\invlim\mc{F}(U_\bullet)
\end{equation*}
is an equivalence. We define $\Shv_\Lambda(T)$ to be the full
subcategory of $\PSh_\Lambda(T)$ spanned by sheaves.
By \cite[1.3.4.3]{SAG}, the fully faithful inclusion
$\Shv_\Lambda(T)\hookrightarrow\PSh_\Lambda(T)$ admits a left
adjoint.
By \cite[2.1.2.2]{SAG}, we have an equivalence
$\mr{h}\Shv_\Lambda(T)\simeq D(T_{\mr{\acute{e}t}},\Lambda)$, where
the last category is the (ordinary) derived category.
By Lemma \ref{monoidalgrothcons}, the functor $\PSh^*_\Lambda$ gives rise to a coCartesian fibration of symmetric $\infty$-operads
$\PSh_\Lambda^{\otimes}\rightarrow\mr{Sch}(S)^{\mr{op},\times}$ with compatibility conditions.
In view of \cite[1.3.4.4]{SAG} (or \cite[3.2.2.6]{GL}),
we may invoke \cite[2.2.1.9]{HA} to get a coCartesian fibration
$\Shv_\Lambda^{\otimes}\rightarrow
\mr{Sch}(S)^{\mr{op},\times}$ which is the fiberwise localization.
Since presentability is preserved by localizations,
$\Shv_\Lambda^{\otimes}$ yields the functor
\begin{equation*}
 \Shv_\Lambda\colon\mr{Sch}(S)^{\mr{op}}\rightarrow\mr{CAlg}(\PrL_{\mr{st}}).
\end{equation*}
By construction, this is an $\infty$-enhancement of the functor
$\mr{Sch}(S)^{\mr{op}}\rightarrow\Tri^{\otimes}$ sending $T$ to
$D(T_{\mr{\acute{e}t}},\Lambda)$ with pullback functors. When $\Lambda$
is torsion and there exists an integer $n$ invertible in $S$ such that
$n\Lambda=0$, this functor forms a motivic category of
coefficients.
This is a consequence of marvelous works in SGA, but we need slightly to be careful since we are dealing with {\em unbounded} derived categories.
To check that it is premotivic, non-trivial points are to check the existence of $f_{\sharp}$ and the projection formula.
The existence of $f_{\sharp}$ follows from [SGA 4, Exp.\ XVIII, Thm 3.2.5].
For the projection formula, since all the functors involved commute with colimits,
we are reduced to checking the formula for compact objects, in which case it is well-known.
To show that it is motivic, the stability property follows from \cite[2.4.19]{CD}.
The adjoint property has been checked in \cite[1.2.3]{CDet}.
The other properties are standard.

\begin{rem*}
 We have treated the torsion cohomology theory, but we can further upgrade this to $\ell$-adic cohomology theory.
 To do this, there are at least 2 methods.
 One is to use the adic formalism as in the classical theory.
 The formalism is much complicated than the torsion theory, but crucial ideas can be found in \cite{GL}.
 The other method, suggested by the referee, is to use \cite[7.2.21]{CDet}.
 In view of this result, it suffices to construct an $\infty$-enhancement of 6-functor formalism of
 $\mr{DM}_{\mr{h}}$ using the notation of \cite{CDet}.
 This can be done in the same way as the $\infty$-enhancement of the 6-functor formalism of
 the motivic $\mb{A}^1$-homotopy theory that we will treat in the next paragraph.
 The detail is left to the reader.
\end{rem*}

\subsection*{Stable motivic $\mb{A}^1$-homotopy theory}

\subsection{}
Let $S$ be a noetherian scheme of finite Krull dimension. We put
$\mr{Sch}(S)$ to be the category of noetherian $S$-schemes of finite
Krull dimension. In Robalo's thesis \cite[9.3.1]{R}, the functor
\begin{equation*}
 \SH^{\otimes}\colon
  \mr{Sch}(S)^{\mr{op}}\rightarrow
  \mr{CAlg}(\PrL_{\mr{St}})\simeq
  \mr{CAlg}(\LinCat_{\mb{S}})
\end{equation*}
is constructed, where $\mb{S}$ denotes the sphere spectrum and the last
equivalence is from \cite[4.8.2.18]{HA}. Let us recall his construction
for the sake of completeness.
The first half of the construction is parallel to that of \'{e}tale
theory except that we use Nisnevich topology rather than \'{e}tale
topology and take $\Lambda:=\mb{S}$. Then we get a sheaf
\begin{equation*}
 \Shv^{\mr{Nis}}_{\mb{S}}\colon
  \mr{Sch}(S)^{\mr{op}}\rightarrow
  \mr{CAlg}(\PrL_{\mr{st}}).
\end{equation*}
We need two more operations to acquire $\SH$: localize $\mb{A}^1$
and invert $\mb{P}^1$. For each $T\in\mr{Sch}(S)$, let $S_T$ be the
collection of morphisms $\mbf{1}_T\rightarrow p_*p^*\mbf{1}_T$ of
$\Shv_{\mb{S}}(T)$ where $\mbf{1}_T$ is a unit object, and
$p\colon\mb{A}^1_T\rightarrow T$. We localize $\Shv_{\mb{S}}(T)$ by
$S_T$ (cf.\ \cite[5.5.4.15]{HTT}). Invoking Lemma
\ref{monoidalgrothcons} similarly to the construction of $\Shv$ out of
$\PSh$, we obtain a functor
$\Shv^{\mr{Nis},\mb{A}^1}_{\mb{S}}\colon
\mr{Sch}(S)^{\mr{op}}\rightarrow\mr{CAlg}(\PrL_{\mr{st}})$.
Finally, we need to invert $\mb{P}^1$.
In fact, this is the crucial part of Robalo's article \cite{R2}.
He constructed a map (cf.\ \cite[2.6]{R2})
\begin{equation*}
 \mr{Loc}\colon
 \mc{P}(\mathit{free}^{\otimes}(\Delta^0))^{\otimes}
  \rightarrow
  \mc{P}(\mc{L}^{\otimes}_{(\mathit{free}^{\otimes}(\Delta^0),*)}
  (\mathit{free}^{\otimes}(\Delta^0)))^{\otimes}
\end{equation*}
in $\mr{CAlg}(\PrL)$. Giving an object of $\mr{CAlg}(\PrL)
_{\mc{P}(\mathit{free}^{\otimes}(\Delta^0))^{\otimes}/}$ is equivalent
to giving a presentable symmetric monoidal category $\mc{C}^{\otimes}$
and an object $X\in\mc{C}$. Assume given
$X\in\Shv^{\mr{Nis},\mb{A}^1}_{\mb{S}}(S)$. The corresponding object of
$\mr{CAlg}(\PrL)_{\mc{P}(\mathit{free}^{\otimes}(\Delta^0))^{\otimes}/}$
is denoted by $X'$. Consider the following diagram
\begin{equation*}
 \xymatrix@C=50pt{
  \{S\}\ar[r]^-{X'}\ar[d]&\mr{CAlg}(\PrL)
  _{\mc{P}(\mathit{free}^{\otimes}(\Delta^0))^{\otimes}/}
  \ar[d]^{p}\\
 \mr{Sch}(S)^{\mr{op}}
  \ar[r]_-{\Shv^{\mr{Nis},\mb{A}^1}_{\mb{S}}}
  \ar@{-->}[ur]^{X''}&
  \mr{CAlg}(\PrL).
  }
\end{equation*}
By \cite[2.1.2.2]{HTT}, $p$ is a left fibration, and since $S$ is a
final object of $\mr{Sch}(S)$, we may take the $p$-left Kan extension.
For $\mc{C}^{\otimes}\in\mr{CAlg}(\PrL)$, let
$\mr{Cons}(\mc{C}^{\otimes})$ be the constant functor
$\mr{Sch}(S)^{\mr{op}}\rightarrow\mr{CAlg}(\PrL)$ at
$\{\mc{C}^{\otimes}\}$. The Kan extension determines a diagram in
$\mr{Fun}(\mr{Sch}(S)^{\mr{op}},\mr{CAlg}(\PrL))$
\begin{equation*}
 \Shv^{\mr{Nis},\mb{A}^1}_{\mb{S}}
  \xleftarrow{X''}
 \mr{Cons}(\mc{P}(\mathit{free}^{\otimes}(\Delta^0))^{\otimes})
  \xrightarrow{\mr{Cons}(\mr{Loc})}
 \mr{Cons}(\mc{P}(\mc{L}^{\otimes}
  _{(\mathit{free}^{\otimes}(\Delta^0),*)}
  (\mathit{free}^{\otimes}(\Delta^0)))^{\otimes}).
\end{equation*}
The pushout of this diagram is denoted by
$\Shv^{\mr{Nis},\mb{A}^1}_{\mb{S}}[X^{-1}]$.
Finally, let $p\colon\mb{P}^1_S\rightarrow S$. Let
$p^*:=\Shv^{\mr{Nis},\mb{A}^1}_{\mb{S}}(p)$, and let $p_*$ be a right
adjoint. Then we define
\begin{equation*}
 \SH^{\otimes}:=
  \Shv^{\mr{Nis},\mb{A}^1}_{\mb{S}}[(p_*p^*\mbf{1}_S)^{-1}].
\end{equation*}
By \cite[2.23]{R2}, this functor, in fact, lands in
$\mr{CAlg}(\PrL_{\mr{st}})$. By \cite[5.1.2.3]{HTT}, the pushout can be
computed object-wise. Namely, we have an equivalence
\begin{equation*}
 \SH^{\otimes}(X)\simeq
  \Shv^{\mr{Nis},\mb{A}^1}_{\mb{S}}(X)
  \coprod_{\mc{P}(\mathit{free}^{\otimes}(\Delta^0))^{\otimes}}
  \mc{P}(\mc{L}^{\otimes}
  _{(\mathit{free}^{\otimes}(\Delta^0),*)}
  (\mathit{free}^{\otimes}(\Delta^0)))^{\otimes}.
\end{equation*}
Thus by \cite[2.4.4, 2.37]{R2}, this coincides with the classical stable
$\mb{A}^1$-homotopy category of \cite[1.4.3]{CD}.
The underlying triangulated category forms a motivic
category of coefficients by \cite[2.4.48]{CD}, and we
may apply the previous theorem to get a 6-functor formalism.

\begin{ex}
 For the record, we summarize what we have constructed.
 Let $k$ be a perfect field, and take $S=\mr{Spec}(k)$.
 Let $R$ be a (discrete) ring.
 Recall that Voevodsky introduced the Eilenberg-MacLane spectrum
 $\mathit{HR}_{S}$ in $\mr{CAlg}(\SH(S))$, which is a
 $\mb{Z}$-module (cf.\ \cite[2.12]{CD2} for more detail).
 A principal application of the construction in \ref{moduleconsmoti} is
 when we take $A(S)$ to be $\mathit{HR}_{S}\otimes_{\mb{Z}}R$.
 This spectrum yields a motivic theory of coefficients as in
 \cite[4.3]{CD2}. Our construction above gives an $\infty$-enhancement
 of this theory.
 In particular, by Theorem \ref{mainthmcons} for
 $I=\mbf{1}_{\mr{Spec}(k)}$, $J=\mbf{1}_{\mr{Spec}(k)}(d)$, where $d$ is
 an integer and $(d)$ denotes the Tate twist, we have a functor
 \begin{equation*}
  \mr{H}(d)\colon
   \widetilde{\mr{Ar}}^{\mr{prop}}_{\mr{sep}}(\mr{Sch}(k))^{\mr{op}}
   \rightarrow
   \mr{Mod}_{R}\simeq\mc{D}(R),
 \end{equation*}
 such that $p\colon X\rightarrow Y$ over $k$ is sent to
 $\mr{Mor}_{\mr{Mod}_{\mathit{HR}_Y}}(p_!p^*\mbf{1}_Y,\mbf{1}_Y(d))$ in
 $\mc{D}(R)$. For example $\mr{H}(\mr{id}\colon X\rightarrow X)$
 coincides with the motivic cohomology $\mr{H}_{\mc{M}}^*(X,R(d))$, at
 least when $X$ is smooth (cf.\ \cite[11.2.3, 11.2.c]{CD}), and
 $\mr{H}(X\rightarrow\mr{Spec}(k))$ is nothing but the motivic
 Borel-Moore theory. The functor $\mr{H}$ unifies these two theories,
 and gives an $\infty$-enhancement.
\end{ex}

\subsection*{Arithmetic $\mc{D}$-module theory}
\subsection{}
Let $k$ be a perfect field of characteristic $p>0$,
and let $K$ be a complete discrete field of mixed characteristic with residue field an algebraic extension of $k$.
In \cite{A}, there exists a 6-functor formalism for schemes separated of finite type over $k$,
with $K$-linear coefficient category.
It is natural to expect that this formalism can be enhanced to an $(\infty,2)$-functor as in Theorem \ref{GRmainres}.
We might need more work than \'{e}tale or motivic case above because the construction of the pullback functor for
arithmetic $\mc{D}$-modules already requires various choices (parameterized by contractible spaces),
and it is not as straightforward as in those cases.

Tomoyuki Abe:\\
Kavli Institute for the Physics and Mathematics of the Universe
(WPI), University of Tokyo\\
5-1-5 Kashiwanoha, Kashiwa, Chiba, 277-8583, Japan\\
{\tt tomoyuki.abe@ipmu.jp}

\end{document}